\numberwithin{equation}{section}
\newtheorem{thm}{Theorem}[section]
\newtheorem{prop}[thm]{Proposition}
\newtheorem{lem}[thm]{Lemma}
\newtheorem{cor}[thm]{Corollary}
\theoremstyle{remark}
\newtheorem{rem}[thm]{Remark}
\newtheorem{dfn}[thm]{Definition}
\newcommand{\bc}{{\mathbb C}}
\newcommand{\pc}{{\mathbb P}}
\newcommand{\ui}{{\underline{i}}}
\newcommand{\uj}{{\underline{j}}}
\newcommand{\g}{{\mathfrak{g}}}
\newcommand{\n}{{\mathfrak{u}}}
\newcommand{\lfr}{{\mathfrak{l}}}
\newcommand{\fn}{{\mathfrak{n}}}
\newcommand{\nn}{{\mathfrak{n}^-}}
\newcommand{\p}{{\mathfrak{p}}}
\newcommand{\nc}{\newcommand}
\nc{\cO}{\mathcal O}
\nc{\cF}{\mathcal F}
\nc{\cL}{\mathcal L}
\nc{\msl}{\mathfrak{sl}}
\nc{\mgl}{\mathfrak{gl}}
\nc{\U}{\mathrm U}
\nc{\bH}{\EuScript H}
\nc{\Res}{\mathrm{Res\ }}
\newcommand{\bC}{{\mathbb C}}
\newcommand{\bR}{{\mathbb R}}
\newcommand{\bZ}{{\mathbb Z}}
\newcommand{\bP}{{\mathbb P}}
\newcommand{\bG}{{\mathbb G}}
\newcommand{\fp}{{\mathfrak p}}
\newcommand{\fb}{{\mathfrak b}}
\newcommand{\fh}{{\mathfrak h}}
\newcommand{\Fl}{\EuScript{F}}
\newcommand{\SF}{Sp\EuScript{F}}
\newcommand{\spa}{\mathrm{span}}
\newcommand{\bU}{{\bf U}}
\newcommand{\bV}{{\bf V}}
\newcommand{\bd}{{\bf d}}
\newcommand{\bs}{{\bf S}}
\newcommand{\bp}{{\bf p}}
\newcommand{\bS}{{\bf S}}
\nc{\ch}{\mathrm{ch}}
\nc{\la}{\lambda}
\nc{\msp}{\mathfrak{sp}}
\nc{\cd}{\cdots}
\nc{\hk}{\hookrightarrow}
\nc{\T}{\otimes}
\nc{\al}{\alpha}
\nc{\om}{\Omega}
\title[Symplectic degenerate flag varieties]
{Symplectic degenerate flag varieties}
\author{Evgeny Feigin, Michael Finkelberg and Peter Littelmann}
\address{Evgeny Feigin:\newline
Department of Mathematics, National Research University Higher School of Economics,\newline
20 Myasnitskaya st, 101000, Moscow, Russia\newline
{\it and }\newline
Tamm Theory Division,
Lebedev Physics Institute
}
\email{evgfeig@gmail.com}
\address{Michael Finkelberg:\newline
IMU, IITP, and National Research University Higher School of Economics,\newline
Department of Mathematics, 20 Myasnitskaya st, 101000, Moscow, Russia}
\email{fnklberg@gmail.com}
\address{Peter Littelmann:\newline
Mathematisches Institut, Universit\"at zu K\"oln,\newline
Weyertal 86-90, D-50931 K\"oln,Germany
}
\email{littelma@math.uni-koeln.de}
\begin{document}
\begin{abstract}
Let $\SF^a_\lambda$ be the degenerate symplectic flag variety. These are projective singular irreducible
$\bG_a^M$ degenerations of the classical flag varieties for symplectic group $Sp_{2n}$. 
We give an explicit
construction for the varieties $\SF^a_\lambda$ and construct their desingularizations,
similar to the Bott-Samelson resolutions in the classical case. We prove that $\SF^a_\la$ 
are normal locally complete intersections with terminal and rational singularities.
We also show
that these varieties are Frobenius split. Using the above mentioned results, we 
prove an analogue of the Borel-Weil-Bott theorem and obtain a $q$-character formula
for the characters of irreducible $Sp_{2n}$-modules via the Atiyah-Bott-Lefschetz fixed
points formula.  
\end{abstract}

\maketitle
%\subsection{}
\vskip 5pt\noindent
\section{Introduction}
Let $G$ be a complex simple algebraic group and $\g$ be the corresponding Lie algebra. 
Let $\g=\fb\oplus\fn^-$ be the Cartan decomposition, where $\fn^-$ is a nilpotent subalgebra 
in $\g$ and $\fb$ is the opposite Borel subalgebra. Let $B$ and $N^-$ 
be the subgroups in $G$ corresponding to $\fb$ and $\fn^-$.
The Lie algebra $\g$ has a degeneration $\g^a=\fb\oplus (\fn^-)^a$, where $(\fn^-)^a$ is 
the abelian Lie algebra with the underlying vector space $\fn^-$ (see \cite{Fe1}, \cite{Fe2}).
Here $(\fn^-)^a$ is an abelian ideal in $\g^a$ and $\fb$ acts on $(\fn^-)^a$ via the adjoint action on
the quotient $(\fn^-)^a\simeq \g/\fb$.
The corresponding Lie group $G^a$ is isomorphic to the semi-direct product $B\ltimes \bG_a^M$,
where $\bG_a$ is the additive group of the field and $M=\dim \fn^-$.  

For a dominant integral 
weight $\la$ let $V_\la$ be the corresponding irreducible highest weight $\g$-module.
Let $v_\la\in V_\la$ be a highest weight vector, recall that $V_\la=U(\fn^-)v_\la$.
The (generalized) flag varieties $G/P$ ($P$ being a parabolic subgroup of $G$) are known to be embedded 
into the projective spaces $\bP(V_\la)$ with $\la$ chosen in such a way that $P$ 
is the stabilizer of $\bC v_\la$. Explicitly, the image is given by the $G$-orbit
through the highest weight line $\bC v_\la\in\bP(V_\la)$. We denote the corresponding orbit by
$\Fl_\la$. 

Let $V_\la^a$ be the degeneration of $V_\la$ into a $\g^a$-module. More precisely,
$V_\la^a$ is the associated graded space with respect to the PBW filtration on $V_\la$
(see e.g. \cite{FFoL1}, \cite{FFoL2}). 
Denote by the same symbol $v_\la$ its image in $V_\la^a$, then
$V_\la^a=S^\bullet(\fn^-) v_\la$.
The degenerate flag variety $\Fl^a_\la$ is defined as the closure of the orbit $G^a (\bC v_\la)$ inside
$\bP(V_\la^a)$. 
In contrast with the classical situation, the orbit itself is not closed, it is only an open cell inside $\Fl^a_\la$. 

The varieties $\Fl^a_\la$ for $\g=\msl_n$ were studied in \cite{Fe1}, \cite{Fe2}, \cite{FF}.
It was shown that in this case the $\Fl^a_\la$ are singular projective algebraic varieties, which
are flat degenerations of the classical $\Fl_\la$. The varieties $\Fl^a_\la$ enjoy several
nice properties: as in the classical case, 
$\Fl^a_\la$ depends only on the class of regularity of $\la$;
they are irreducible normal Frobenius split locally complete intersections; 
they have a nice crepant desingularization isomorphic to a tower of  successive $\bP^1$ fibrations;  
the singularities of $\Fl^a_\la$ are rational. In addition, the analogue of the classical Borel-Weil-Bott
theorem still holds in the degenerate case.

In this paper we study the case of the symplectic group $G=Sp_{2n}$. Since we often use the connection 
between $SL_{2n}$ flag varieties and the $Sp_{2n}$ flag varieties, we use the notation $\SF^a_\la$
to distinguish it from the type ${\tt A}$ case. 
In the introduction we describe only the case of the complete degenerate symplectic flag variety,  which we
denote by $\SF^a_{2n}$. (In this case the highest weight $\la$ has to be regular, but, as in the classical case,
the orbit closure does not depend on a regular $\la$). However, in the main body of the paper we work 
out the general case of degenerate parabolic (partial) flag varieties as well.  

Let $W$ be a $2n$-dimensional vector space with
a basis $w_1,\dots,w_{2n}$. Let us equip $W$ with a symplectic form which pairs non-trivially  
$w_i$ and $w_{2n+1-i}$. We also denote by $pr_k:W\to W$, $k=1,\dots,2n$ the projection operators along
$w_k$ to the span of all the rest basis vectors.  Our first theorem is as follows:
\begin{thm}
$\SF^a_{2n}$ can be realized inside the product $\prod_{k=1}^n Gr_k(W)$ of Grassmannians as a subvariety of
collections $(V_1,\dots,V_n)$, $V_k\in Gr_k(W)$ satisfying the following conditions:
\[
pr_{k+1} V_k\subset V_{k+1},\ k=1,\dots,n-1,\ V_n \text{ is Lagrangian}. 
\]
These varieties are flat degenerations of the classical flag varieties $Sp_{2n}/B$.  
\end{thm}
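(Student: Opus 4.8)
The plan is to deduce the statement from the already-understood type ${\tt A}$ degenerate flag varieties and then to carve out the symplectic conditions. Fix compatible Borel subgroups, so that $Sp_{2n}\subset SL_{2n}$ and $\fn^-_{\msp_{2n}}\subset\fn^-_{\msl_{2n}}$; then $(\fn^-_{\msp_{2n}})^a$ sits inside $(\fn^-_{\msl_{2n}})^a$ as a $\fb_{\msp_{2n}}$-stable subideal, which yields an embedding of degenerate groups $Sp_{2n}^a\hookrightarrow SL_{2n}^a$. Recall that the $k$-th fundamental $Sp_{2n}$-module is the primitive part $\mathcal{H}_k=\ker\big(\iota_\omega\colon\Lambda^k W\to\Lambda^{k-2} W\big)$ (contraction against the symplectic form $\omega$), a $\msp_{2n}$-submodule of $\Lambda^k W$, which as an $\msl_{2n}$-module is the $k$-th fundamental module; by the results on PBW-degenerate symplectic fundamental modules (see \cite{FFoL1}, \cite{FFoL2}) its PBW degeneration $\mathcal{H}_k^a$ is realized as the $\msp_{2n}^a$-submodule $S^\bullet(\fn^-_{\msp_{2n}})(w_1\wedge\cdots\wedge w_k)$ of the degenerate module $(\Lambda^k W)^a$, whose underlying vector space is just $\Lambda^k W$ (the weight being minuscule for $SL_{2n}$). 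One then has the multiprojective realization $\SF^a_{2n}=\overline{Sp_{2n}^a\cdot([w_1],[w_1\wedge w_2],\dots,[w_1\wedge\cdots\wedge w_n])}\hookrightarrow\prod_{k=1}^n\bP(\mathcal{H}_k^a)$, and composing with the Plücker embeddings $Gr_k(W)\hookrightarrow\bP(\Lambda^k W)$ exhibits $\SF^a_{2n}$ as a closed subvariety of $\prod_{k=1}^n Gr_k(W)$; moreover, since $Sp_{2n}^a\hookrightarrow SL_{2n}^a$, the orbit of the base point lies inside the $SL_{2n}^a$-orbit, whose closure is the type ${\tt A}$ degenerate flag variety $\Fl^a_{\varpi_1+\cdots+\varpi_n}$ of $SL_{2n}$. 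By the realization of the latter (\cite{Fe2}, \cite{FF}), every point $(V_1,\dots,V_n)$ of $\SF^a_{2n}$ satisfies $pr_{k+1}V_k\subset V_{k+1}$ for $k<n$; and, using that the $n$-th Plücker coordinate of such a point lies in $\bP(\mathcal{H}_n^a)$ together with the fact that $\bP(\mathcal{H}_n^a)\cap Gr_n(W)$ is the Lagrangian Grassmannian (equivalently, by a direct computation of the degenerate action), $V_n$ is always Lagrangian. As both conditions are closed, the image of $\SF^a_{2n}$ is contained in the subvariety $X\subset\prod_{k=1}^n Gr_k(W)$ of the statement.

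It remains to prove the reverse inclusion $X\subseteq\SF^a_{2n}$, and for this I would show that $X$ is irreducible of dimension $n^2$. The key tool is the Bott--Samelson-type desingularization announced in the abstract: an explicit iterated tower $\widetilde X$ of $\bP^1$-fibrations together with a birational morphism $\widetilde X\to X$ which is surjective onto the whole solution scheme (the final step, producing a Lagrangian $V_n\supset pr_n V_{n-1}$ out of an isotropic $V_{n-1}$, being again a $\bP^1$-fibration). This forces $X$ to be irreducible with $\dim X=\dim\widetilde X=n^2=\dim Sp_{2n}/B=\dim\SF^a_{2n}$. Since $\SF^a_{2n}$ is irreducible (being an orbit closure) and sits inside the irreducible variety $X$ of the same dimension, $X=\SF^a_{2n}$, which proves the first assertion.

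For flatness I would use that the multi-homogeneous coordinate ring of $Sp_{2n}/B\hookrightarrow\prod_{k=1}^n Gr_k(W)$ is $\bigoplus_\la V_\la^*$ (over dominant $\la$ supported on $\varpi_1,\dots,\varpi_n$), carries the PBW filtration, and has associated graded ring the coordinate ring $\bigoplus_\la(V_\la^a)^*$ of $\SF^a_{2n}=X$ — here the multigraded Hilbert functions agree because $\dim V_\la^a=\dim V_\la$ for every $\la$, and the associated graded is reduced by the normality of $\SF^a_{2n}$ (established separately in the paper). The Rees algebra of this filtration is then $\bC[t]$-flat with general fibre $Sp_{2n}/B$ and special fibre $\SF^a_{2n}$; geometrically, it is the family over $\mathbb{A}^1$ cut out inside $\prod_{k=1}^n Gr_k(W)\times\mathbb{A}^1$ by the conditions $pr^{(t)}_{k+1}V_k\subset V_{k+1}$ ($k<n$) and $V_n$ Lagrangian, where $pr^{(t)}_{k+1}=\mathrm{id}-t\,(\mathrm{id}-pr_{k+1})$.

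The main obstacle is the reverse inclusion: a priori the scheme $X$, cut out by relations that interlace the type ${\tt A}$ degeneration conditions with the quadratic Lagrangian relations, could be reducible or of excess dimension, and ruling this out is exactly what forces one to build the Bott--Samelson-type resolution first — so this step and the construction of the desingularization are intertwined. Everything else reduces to unwinding known facts about PBW degenerations of fundamental modules, about the $\bG_a^M$-action on the open cell, and about Plücker coordinates.
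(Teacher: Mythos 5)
Your approach to the explicit description is essentially the paper's own: use the inclusion $Sp_{2n}^a \hookrightarrow SL_{2n}^a$ coming from $\fn^-_{\msp_{2n}} \subset \fn^-_{\msl_{2n}}$ (this is the content of Proposition~\ref{CgleichV} and its Corollary), invoke the type ${\tt A}$ realization to obtain the conditions $pr_{k+1}V_k \subset V_{k+1}$, add the Lagrangian/isotropy condition, and settle the reverse inclusion by irreducibility of the cut-out locus via the $\bP^1$-tower desingularization $SpR_{2n}$ (Lemma~\ref{surj}, Corollary~\ref{irred}). You correctly identify that the explicit description and the desingularization are logically intertwined; the paper itself forward-references the irreducibility corollary from Section~4 to Section~5.

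Where you genuinely diverge is flatness. The paper (Section~4.3) builds an explicit geometric family: it rescales by $\eta(s^{-1})$ so classical isotropic subspaces become isotropic for a degenerating family of forms $J_{s^2}$, takes the closure $Y$ inside $Gr_k(W)\times\bc$, forms a fibered product over $\bc$, intersects with the known type ${\tt A}$ flat family $M_\bd$ of \cite{Fe1}, and deduces flatness from irreducibility of the total space together with dominance over $\bc$ (Hartshorne III.9.7). Your Rees-algebra route is an attractive alternative, but as sketched it leaves gaps that the paper's argument avoids. To identify the special fiber $\mathrm{Proj}(\mathrm{gr}\,R)$ with $X$ you need to know that $\bigoplus_\la (V_\la^a)^*$ with the Cartan product from~\eqref{Cartan} \emph{is} the multihomogeneous coordinate ring of the orbit closure, not merely surjecting onto it; your dimension-count-plus-reducedness argument invokes normality of $\SF^a_{2n}$, which the paper proves only later (Section~6) using the explicit description and the desingularization, so the dependency order must be tracked with care. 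Moreover, your concrete family cut out by $pr^{(t)}_{k+1}V_k\subset V_{k+1}$ and $V_n$ Lagrangian does interpolate correctly between the two endpoints, but you would still need to prove it is irreducible (or flat by some other route) rather than merely asserting that it matches the Rees algebra family; the paper handles exactly this point by reducing to irreducibility of $M_\bd\cap Y_\bd$, which in turn rests on the already-established irreducibility of $\SF^a_\bd$. In short: same approach for the explicit description, a plausible but incompletely justified variant for flatness.
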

      
Our next task is to study the singularities of $\SF^a_{2n}$. As in the type ${\tt A}$ case, we construct the 
desingularization $SpR_{2n}$ of $\SF^a_{2n}$. We prove that $SpR_{2n}$ is a Bott-Samelson type variety, i.e.
it is isomorphic to a tower of successive $\bP^1$-fibrations. Using this desingularization, we prove our next 
theorem:  
\begin{thm}
The varieties $\SF^a_{2n}$ are normal locally complete intersections and thus Cohen-Macaulay and Gorenstein.
The singularities of $\SF^a_{2n}$ are terminal and rational. 
The varieties $\SF^a_{2n}$ are Frobenius split over $\overline{\mathbb F}_p$ 
for all primes $p$.
\end{thm}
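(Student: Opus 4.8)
The plan is to deduce everything from the Bott--Samelson type desingularization $\pi\colon SpR_{2n}\to\SF^a_{2n}$ constructed above, together with the description of $\SF^a_{2n}$ inside $Y:=\prod_{k=1}^{n}Gr_k(W)$ given by Theorem~1.1.

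\emph{Complete intersection, Cohen--Macaulay, Gorenstein.} By Theorem~1.1, $\SF^a_{2n}$ is the zero scheme in $Y$ of the tautological section of the vector bundle
\[
\mathcal{E}\ =\ \Bigl(\bigoplus_{k=1}^{n-1}\mathcal{V}_k^{\vee}\otimes(W/\mathcal{V}_{k+1})\Bigr)\ \oplus\ \Lambda^{2}\mathcal{V}_n^{\vee},
\]
where $\mathcal{V}_k$ is the tautological subbundle on $Gr_k(W)$, the $k$-th summand cutting out the incidence $pr_{k+1}V_k\subseteq V_{k+1}$ and the last summand the isotropy of $V_n$. A direct count gives $\mathrm{rk}\,\mathcal{E}=\dim Y-n^{2}$. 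On the other hand $SpR_{2n}$, being a tower of $\bP^{1}$-fibrations, is smooth and irreducible of dimension $n^{2}$ and surjects onto $\SF^a_{2n}$, so $\SF^a_{2n}$ is irreducible of dimension $n^{2}=\dim Y-\mathrm{rk}\,\mathcal{E}$, i.e.\ of expected codimension. Hence the Koszul complex of the defining section resolves $\cO_{\SF^a_{2n}}$, so $\SF^a_{2n}$ is a local complete intersection, in particular Cohen--Macaulay and Gorenstein, and adjunction gives $\omega_{\SF^a_{2n}}=(\omega_Y\otimes\det\mathcal{E})|_{\SF^a_{2n}}$.

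\emph{Frobenius splitting, normality, rationality.} Being an iterated $\bP^{1}$-bundle with sections, defined over $\bZ$, $SpR_{2n}$ is Frobenius split over $\overline{\mathbb F}_p$ for every $p$, compatibly with the boundary divisors of the tower, which by construction contain the exceptional locus of $\pi$; this is the standard Bott--Samelson argument. To transfer this to $\SF^a_{2n}$ I would first establish $\pi_{*}\cO_{SpR_{2n}}=\cO_{\SF^a_{2n}}$ and $R^{i}\pi_{*}\cO_{SpR_{2n}}=0$ for $i>0$. For an ample $\mathcal{L}$ on $\SF^a_{2n}$, the cohomology $H^{\bullet}(SpR_{2n},\pi^{*}\mathcal{L}^{m})$ is computed from the tower: it is concentrated in degree $0$ and has dimension $\dim V^a_{m\la}=\dim V_{m\la}$. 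Since $\SF^a_{2n}$ is a flat degeneration of $Sp_{2n}/B$ (Theorem~1.1), semicontinuity and Borel--Weil on the classical flag variety give $\dim H^{0}(\SF^a_{2n},\mathcal{L}^{m})\ge\dim V_{m\la}$, while the projection formula gives $\dim H^{0}(\SF^a_{2n},\mathcal{L}^{m})\le\dim H^{0}(SpR_{2n},\pi^{*}\mathcal{L}^{m})$; equality for all $m$ forces $\pi_{*}\cO_{SpR_{2n}}=\cO_{\SF^a_{2n}}$, and then the Leray spectral sequence (with Serre vanishing on $\SF^a_{2n}$) kills the higher direct images. Now $\pi_{*}\cO_{SpR_{2n}}=\cO_{\SF^a_{2n}}$ together with normality of $SpR_{2n}$ and birationality of $\pi$ forces $\SF^a_{2n}$ to be normal (its normalization is dominated by $SpR_{2n}$, so its structure sheaf is squeezed between $\cO_{\SF^a_{2n}}$ and $\pi_{*}\cO_{SpR_{2n}}$); pushing the Frobenius splitting of $SpR_{2n}$ forward along $\pi$ then gives Frobenius splitting of $\SF^a_{2n}$ over $\overline{\mathbb F}_p$ for all $p$; and normality together with the vanishing of $R^{i}\pi_{*}\cO_{SpR_{2n}}$ for $i>0$ is precisely rationality of the singularities.

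\emph{Terminality and the main obstacle.} Since $\SF^a_{2n}$ is Gorenstein with $\omega$ as above, and $\omega_{SpR_{2n}}$ is the tensor product of the pulled-back relative dualizing sheaves of the successive $\bP^{1}$-fibrations, both $K_{SpR_{2n}}$ and $\pi^{*}K_{\SF^a_{2n}}$ are explicit classes in $\mathrm{Pic}(SpR_{2n})$, which carries an explicit basis coming from the tower. I would then write $K_{SpR_{2n}}=\pi^{*}K_{\SF^a_{2n}}+\sum_i a_iE_i$ over the $\pi$-exceptional divisors $E_i$ (the strict transforms of the boundary divisors), compute the discrepancies $a_i$, and check that $a_i\ge 1$ for all $i$; since $SpR_{2n}$ is smooth this already yields terminal singularities (any further exceptional divisor over $\SF^a_{2n}$ arises from a center of codimension $\ge 2$ in $SpR_{2n}$ and so keeps discrepancy $\ge 1$). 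I expect this discrepancy computation to be the one genuinely delicate step: in type ${\tt A}$ the analogous desingularization is crepant, so the degenerate flag variety there is only canonical, whereas in the symplectic case the extra isotropy constraint at the last step must push all discrepancies strictly positive, and confirming this requires a careful comparison of the line bundle $K_{SpR_{2n}}-\pi^{*}K_{\SF^a_{2n}}$ with the exceptional divisors along the tower.
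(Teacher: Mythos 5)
Your proposal follows the same broad skeleton as the paper (desingularization $SpR_{2n}$ as a $\bP^1$-tower, explicit defining equations, discrepancy computation), but a few of your intermediate steps go a genuinely different route, and one of them has a real logical gap.

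On the local complete intersection claim, you realize $\SF^a_{2n}$ as the zero locus of a tautological section of the bundle $\mathcal E$ on $\prod_k Gr_k(W)$ and observe that the support is irreducible of the expected codimension; the paper instead works with the affine scheme $Q_{2n}$ of linear maps $(A_i,B_j)$ and the $\Gamma$-torsor $Q^\circ_{2n}\to\SF^a_{2n}$. These are two packagings of the same idea. But in both cases the count of equations only gives that the zero \emph{scheme} is a local complete intersection; one must still argue it is reduced (so that it agrees with the variety $\SF^a_{2n}$). The paper does this explicitly by combining the l.c.i.\ property with the fact that the singular locus has codimension $\ge 2$, invoking Serre's criterion (Prop.\ 5.8.5 and Thm.\ 5.8.6 of EGA IV). Your proof skips reducedness entirely. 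The fix is easy — you already have the desingularization $\pi_{2n}$ and the description of the non-exceptional divisors $Z^\circ_{i,j}$, which shows $\SF^a_{2n}$ is regular off codimension two; combined with $S_2$ from Cohen–Macaulayness this gives both reducedness of the zero scheme and normality at one stroke — but you need to say it.

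The more serious issue is your route to $\pi_*\cO_{SpR_{2n}}=\cO_{\SF^a_{2n}}$ and $R^{>0}\pi_*\cO=0$. You assert that $H^\bullet(SpR_{2n},\pi^*\mathcal L^m)$ ``is computed from the tower: it is concentrated in degree $0$ and has dimension $\dim V^a_{m\lambda}=\dim V_{m\lambda}$.'' The vanishing of higher cohomology on the tower is at least plausible step by step, but identifying the dimension of $H^0$ with $\dim V_{m\lambda}$ is not a routine consequence of the $\bP^1$-bundle structure: it is essentially the degenerate Borel–Weil–Bott statement, which the paper proves at the very end (Theorem in Section 7) precisely as a corollary of normality, rationality, Frobenius splitting and flat degeneration. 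Using it here to establish normality is circular unless you supply an independent argument. The paper avoids this entirely: normality comes from l.c.i.\ plus Serre's criterion as above, and $\pi_*\cO=\cO$ then follows from normality via Zariski's main theorem, with the BWB theorem a genuine downstream consequence. You should do the same; with l.c.i.\ and smooth-off-codimension-two in hand, the cohomological detour is unnecessary.

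For rationality, you use the direct definition ($\pi_*\cO=\cO$, $R^{>0}\pi_*\cO=0$), which inherits the circularity above. The paper instead uses Elkik's criterion (Gorenstein plus $\pi_*\omega_{SpR_{2n}}=\omega_{\SF^a_{2n}}$), which needs only the explicit formula for $\omega_{SpR_{2n}}$ in terms of the exceptional divisors and the projection formula; that is the cleaner path once Gorenstein and the discrepancy formula are available. Your treatment of terminality matches the paper's strategy exactly — the explicit discrepancy computation on the divisors $Z_{i,j}$ with $j\ge n$, $i+j<2n$ — and you are right that this is the genuinely delicate step and that it contrasts with the crepant type $\mathtt A$ case; you just haven't carried it out. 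The paper does it in Theorem 6.4 and its corollary via the explicit formula for $\cO(Z_{i,j})$ in terms of the determinant bundles $\om_{i,j}$. Your treatment of Frobenius splitting is in the right spirit (an explicit anticanonical section supported on the $Z_{i,j}$'s via the Mehta–Ramanathan criterion, then push forward using $\pi_*\cO=\cO$), and is the same mechanism the paper uses through Lemma 6.8 and Proposition $8$ of Mehta–Ramanathan.
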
           
We note that an important difference with the type ${\tt A}$ case is that the 
resolution of singularities $SpR_{2n}\to \SF^a_{2n}$ is no longer crepant,
though it seems likely the resolution is still semismall.

Finally, we make a connection between the geometry of $\SF^a_{2n}$ and the representations $V_\la^a$.
Namely, we prove an analogue of the  Borel-Weil-Bott theorem (see e.g. \cite{K}). Let us denote by $\imath_\la$ the natural 
map $\SF^a_{2n}\to \bP(V_\la^a)$. Then we have:
\begin{thm}
$H^0(\SF^a_{2n},\imath_\la^* \cO(1))^*=V_\la^a$, and
$H^k(\SF^a_{2n},\imath_\la^* \cO(1))^*=0$ for $k>0$.
\end{thm}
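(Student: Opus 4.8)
The plan is to prove the statement by induction on the Bott–Samelson type resolution $SpR_{2n}\to\SF^a_{2n}$ constructed in the previous section, transferring the cohomology computation from the smooth tower to the singular variety. First I would record that, since $SpR_{2n}$ is built as a tower of $\bP^1$-fibrations, the pullback of $\imath_\la^*\cO(1)$ to $SpR_{2n}$ is globally generated and its higher cohomology can be computed fibrewise: on each $\bP^1$-bundle one uses the projection formula and the vanishing $H^{>0}(\bP^1,\cO(m))=0$ for $m\ge 0$, together with a careful bookkeeping of the line bundle twists that arise at each step, exactly as in the classical Bott–Samelson / Demazure character argument. This yields $H^{>0}(SpR_{2n},\pi^*\imath_\la^*\cO(1))=0$ and identifies $H^0$ with the expected Demazure-type module; in the degenerate symplectic setting this space must be $V_\la^a$, which one checks by comparing with the PBW monomial basis of $S^\bullet(\fn^-)v_\la$ produced by the fibration structure (the dimension count at the end of the tower reproduces $\dim V_\la^a$).

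Next I would descend along the resolution. Because $\SF^a_{2n}$ has rational singularities (Theorem stated above), we have $R^i\pi_*\cO_{SpR_{2n}}=0$ for $i>0$ and $\pi_*\cO_{SpR_{2n}}=\cO_{\SF^a_{2n}}$; by the projection formula the same holds after tensoring with the (pullback of the) line bundle $\imath_\la^*\cO(1)$, provided that bundle is locally free on the base, which it is. The Leray spectral sequence for $\pi$ then degenerates and gives $H^k(\SF^a_{2n},\imath_\la^*\cO(1))\cong H^k(SpR_{2n},\pi^*\imath_\la^*\cO(1))$ for all $k$, so the vanishing for $k>0$ and the identification of $H^0$ pass down verbatim. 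Finally, the natural map $V_\la^a\hookrightarrow H^0(\SF^a_{2n},\imath_\la^*\cO(1))^*$ coming from the embedding $\imath_\la$ is visibly injective (the affine cone over $\SF^a_{2n}$ spans $V_\la^a$ since it contains the $G^a$-orbit of $v_\la$ whose linear span is all of $V_\la^a$), and the cohomology computation shows both sides have the same dimension, so it is an isomorphism.

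Two points deserve care. The first is establishing that the line bundle on $SpR_{2n}$ that one naturally controls on the tower is actually the pullback $\pi^*\imath_\la^*\cO(1)$ rather than some larger twist: this requires identifying the Picard-group elements correctly along the resolution and using that $\SF^a_{2n}$ is normal so that sections extend. The second, and what I expect to be the main obstacle, is the precise determination of $H^0(SpR_{2n},\pi^*\imath_\la^*\cO(1))$ as a $G^a$-module — i.e. showing it equals $V_\la^a$ and not a strictly larger space. In the type $\tt A$ case this was handled via an explicit monomial basis matching the PBW basis; here one must redo this with the extra Lagrangian/symplectic condition $pr_{k+1}V_k\subset V_{k+1}$, $V_n$ Lagrangian, ensuring that the combinatorics of the $\bP^1$-tower reproduces exactly the FFoL-type PBW basis of $S^\bullet(\fn^-)v_\la$ for $\msp_{2n}$. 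Once the smooth-case statement is pinned down, the descent via rational singularities is essentially formal.
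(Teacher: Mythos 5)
Your overall skeleton matches the paper's: descend from $SpR_{2n}$ to $\SF^a_{2n}$ via rational singularities, observe that the restriction map $(V_\la^a)^*\to H^0(\SF^a_{2n},\cL_\la)$ is injective because the open $(N^-)^a$-orbit through $\bC v_\la$ spans $V_\la^a$, and conclude by a dimension count. But for the two hard steps, vanishing of higher cohomology and the value of $\dim H^0$, you take a genuinely different and substantially heavier route. For vanishing, the paper works directly on the singular base: $\cL_\la$ is very ample on the image of $\imath_\la$ (on $\SF^a_{2n}$ itself if $\la$ is regular, on the parabolic $\SF^a_\bd$ otherwise), Serre vanishing kills $H^{>0}(\cL_\la^{\T N})$ for $N\gg 0$, and Frobenius splitting of $\SF^a_{2n}$, already established from the anticanonical computation on $SpR_{2n}$, pushes this down to $N=1$ by the Mehta--Ramanathan criterion; for non-regular $\la$ the intermediate fibration $\eta:SpR_{2n}\to SpR_\bd$, whose fibers are towers of $\bP^1$-fibrations, is used to transfer the vanishing. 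Your proposed fibrewise Demazure-type argument on the tower would instead require, at each of the $n^2$ stages, a check that $\pi_{2n}^*\cL_\la$ has nonnegative degree on the $\bP^1$ fiber (this depends nontrivially on the chosen ordering $\beta_1,\dots,\beta_{n^2}$ of the roots) and an inductive computation of $R^0\rho_*$ whose rank grows at each step; this is exactly the bookkeeping the paper's Frobenius-splitting route was designed to avoid. For the dimension of $H^0$, the paper again takes a shortcut you do not: $\SF^a_{2n}$ was shown earlier to be a flat degeneration of $Sp_{2n}/B$, so once vanishing holds, constancy of the Euler characteristic along the flat family together with classical Borel--Weil gives $\dim H^0=\dim V_\la=\dim V_\la^a$. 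Your alternative of matching the tower combinatorics against the FFoL PBW monomial basis is precisely the step you flag as the main obstacle, and with good reason: it would amount to an independent geometric re-derivation of the PBW-basis theorem, a project comparable in weight to the whole of \cite{FFoL2}. So while your plan is sound in outline, the paper replaces both of your hardest subarguments with pieces already proved, and you should at least record (as a small correction) that the natural map goes $(V_\la^a)^*\hookrightarrow H^0(\SF^a_{2n},\cL_\la)$, i.e.\ $H^0(\SF^a_{2n},\cL_\la)^*\twoheadrightarrow V_\la^a$, rather than the injection $V_\la^a\hookrightarrow H^0(\SF^a_{2n},\cL_\la)^*$ you wrote.
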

Using the rationality of the singularities of $\SF^a_{2n}$ and the Atiyah-Bott-Lefschetz fixed points formula for
$SpR_{2n}$ we derive a formula for the $q$-character of $V_\la^a$ (see \cite{FFoL2} for the combinatorial formula).

Our paper is organized as follows: \\
In Section \ref{Definitions} we introduce notation, recall definitions and collect main results
to be used in the main body of the paper.\\
In Section \ref{Filtrations} we establish a connection between the PBW filtrations on $\msp_{2n}$-modules 
and on $\msl_{2n}$-modules.\\
In Section \ref{Explicit} we derive an explicit description for the symplectic Grassmannians and, more generally, 
for the symplectic degenerate flag varieties. We also prove that $\SF_\la^a$ are flat degeneration of their classical
analogues $\SF_\la$.\\
In Section \ref{Resolution} we construct a resolution of singularities of $\SF^a_\la$.\\
In Section \ref{Normal_l.c.i.} we prove that all symplectic degenerate flag varieties are normal locally 
complete intersections and thus Cohen-Macaulay and Gorenstein.\\
In Section \ref{Singularities} we show that the singularities of $\SF^a_\la$ are terminal and thus canonical and
rational.\\
In Section \ref{Applications} we prove that the varieties $\SF^a_\la$ are Frobenius split. This allows
to prove the degenerate analogue of the Borel-Weil-Bott theorem. We also derive an Atiyah-Bott-Lefschetz type
formula for the graded characters of $V_\la^a$.

\section{Degenerations: definitions and the type ${\tt A}$ case}\label{Definitions}
In this section we fix the notation and recall the main results on the degenerate representations and  
degenerate flag varieties. We also collect important constructions and theorems  from algebraic geometry, 
which we use  in the paper.

\subsection{Notation.}
Let $\g$ be a simple Lie algebra. We fix a Cartan decomposition $\g=\fn\oplus\fh\oplus\fn^-$. 
Let $\fb=\fn\oplus\fh$ be the corresponding Borel subalgebra. We denote by $\al_i$, $i=1,\dots,\mathrm{rk}(\g)$
the simple roots of $\g$ and by $\omega_i$ the corresponding fundamental weights. Let $\Phi^+$ be
the set of positive roots for $\g$. For a root $\al$ we often write $\al>0$ instead of $\al\in \Phi^+$.
In particular, for $\g=\msl_n$ the simple roots are $\al_1,\dots,\al_{n-1}$ and all positive roots 
are of the form
\begin{equation}\label{prsln}
\al_{i,j}=\al_i+\al_{i+1}+\dots + \al_j, 1\le i\le j\le n-1.
\end{equation}
For $\g=\msp_{2n}$ the simple roots are $\al_1,\dots,\al_n$ and  all positive roots are of the form 
\begin{gather} \label{ij}
\al_{i,j}=\al_i+\al_{i+1}+\dots +\al_j,\ 1\le i\le j\le n,\\
\label{ibarj}
\al_{i,j}=\al_i+\al_{i+1}+\dots +\al_n +\al_{n-1}+ \dots +\al_{2n-j},\ 1\le i\le n <j, i+j\le 2n.   
\end{gather}
%For $j>n$ we sometimes denote the corresponding root $\al_{i,j}$ by $\al_{i,\bar j}$. 
For a positive 
root $\al$ we fix a weight $-\al$ element $f_\al\in \fn^-$ and a weight $\al$ element $e_\al\in \fn$.
Then the $f_\al$, $\al\in \Phi^+$ form a basis in $\fn^-$ and so do the root vectors $e_\al$ in $\fn$.
We use the shorthand notations $f_{i,j}=f_{\al_{i,j}}$.

Let $P^+ = \sum_{i=1}^{\mathrm{rk}(\g)} \bZ_{\ge 0}\omega_i$ be the submonoid of the weight lattice for $\g$
generated by the fundamental weights, the elements of $P^+$ are the dominant integral weights. 
For $\la\in P^+$ we denote by $V_\la$ the $\g$-module of highest weight $\la$. 
We also fix a highest weight vector $v_\la\in V_\la$. In particular, one has
\[
\fn v_\la=0,\ hv_\la=\la(h)v_\la, h\in\fh,\ V_\la=U(\fn^-) v_\la.
\] 

Let $G$ be a simple complex algebraic group with the Lie algebra $\g$. We denote by $B,N,T,N^-$ the 
subgroups in $G$ corresponding to the Lie subalgebras $\fb,\fn,\fh,\fn^-$. Let $P$ be a parabolic subgroup in $G$,
the corresponding generalized flag variety is the quotient $G/P$. These varieties can be also
realized as follows: for a dominant integral weight $\la$ the group $G$ acts naturally on the 
projective space $\bP(V_\la)$. Assume that $(\la,\omega_i) = 0$ if and only if $f_{\al_i}$ 
belongs to the Lie algebra $\fp$ of $P$. Then $G/P$ is isomorphic to the $G$-orbit 
$G (\bC v_\la)\subset \bP(V_\la)$. 

In what follows we denote the orbit $G (\bC v_\la)$
by $\Fl_\la$. Note that $\Fl_\la$ depends only on the class of regularity of $\la$, i.e. 
$\Fl_\la\simeq \Fl_\mu$ if and only if the supports of $\la$ and $\mu$ coincide, i.e. 
$(\la,\omega_i)\ne 0$ if and only if $(\mu,\omega_i)\ne 0$. 

For example, for $G=SL_n$ the Grassmann variety $Gr_d(n)$ of $d$-dimensional
subspaces in $n$-dimensional space can be realized either as the quotient of $SL_n$ by a maximal parabolic
subgroup or as an orbit in the projective space of the fundamental 
representation $V_{\omega_d}$.
In general, all the quotients $SL_n/P$, where $P\supseteq B$ is a parabolic subgroup,  are partial flag varieties:  
let $1\le d_1<\dots< d_k\le n$ be the integers such that $f_{\al_d}\in\fp$ if 
and only if $d=d_i$ for some $i$.
Then $G/P$ is known to coincide with the variety of collections $(V_i)_{i=1}^k$ of subspaces $V_i$ in an 
$n$-dimensional space such that $\dim V_i=d_i$ and $V_i\subset V_{i+1}$. We note also that 
the same partial flag variety sits inside 
$\bP(V_{\omega_{d_1}+\dots +\omega_{d_k}})$ as the orbit of the highest weight line.
We denote this variety by $\Fl_\bd$, $\bd=(d_1,\dots,d_k)$.  

Another important example for us is $G=Sp_{2n}$. The symplectic Grassmannians $SpGr_d(2n)$ (quotients of
$Sp_{2n}$ by maximal parabolic subgroups) are known to coincide with the variety of isotropic
$d$-dimensional subspaces in a $2n$-dimensional vector space equipped with a non-degenerate symplectic form.
In general, as in the $SL_{2n}$ case,
let $1\le d_1<\dots< d_k\le n$ be the numbers such that $f_{\al_d}\in\fp$ if and only if $d=d_i$ for some $i$
($\fp$ is the Lie algebra of a parabolic subgroup $P\subset Sp_{2n}$).
Then $Sp_{2n}/P$ is known to coincide with the variety of collections $(V_i)_{i=1}^n$ of subspaces 
$V_i\in SpGr_{d_i}(2n)$ such that $V_i\subset V_{i+1}$. In addition,  
the same partial flag variety sits inside 
$\bP(V_{\omega_{d_1}+\dots +\omega_{d_k}})$ as the orbit of the highest weight line.     
We denote this variety by $Sp\Fl_\bd$, $\bd=(d_1,\dots,d_k)$.

\subsection{Algebraic geometry.} 
We  recall the definition of a Frobenius split variety (see \cite{BK}). Let  $X$ be an algebraic 
variety over an algebraically closed field of characteristics $p>0$.
Let $F:X\to X$ be the Frobenius morphism, i.e. an identity map on the underlying space
$X$ and the $p$-th power map on the space of functions.
Then $X$ is called Frobenius split if there exists a projection $F_*\cO_X\to \cO_X$ such that
the composition $\cO_X\to  F_*\cO_X\to \cO_X$ is the identity map.
The Frobenius split varieties enjoy the following important property (see. e.g.
Proposition $1$ of \cite{MR}):
\begin{prop}
Let $X$ be a Frobenius split projective variety with a line bundle $\cL$ such that for some $i$ and
all large enough $m$  $H^i(X,\cL^m)=0$. Then $H^i(X,\cL)=0$.
\end{prop}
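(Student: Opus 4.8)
The plan is to use the Frobenius splitting to exhibit $H^i(X,\cL)$ as a direct summand of $H^i(X,\cL^{p^k})$ for every $k\ge 0$, and then to apply the vanishing hypothesis with $m=p^k$ for $k$ large.

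I would first record two elementary properties of the Frobenius morphism $F\colon X\to X$. First, $F$ is affine (it is a homeomorphism on the underlying space, and locally it is the $p$-th power map of rings), so its higher direct images vanish and $H^i(X,F_*\cF)=H^i(X,\cF)$ for every quasi-coherent sheaf $\cF$; equivalently the Leray spectral sequence for $F$ degenerates. Second, $F^*\cL\cong\cL^{p}$, which is the defining property of the Frobenius twist of a line bundle.

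Then I would tensor the splitting $\cO_X\to F_*\cO_X\xrightarrow{\varphi}\cO_X$ (with $\varphi$ the given retraction, $\varphi\circ(\mathrm{unit})=\mathrm{id}$) by $\cL$ and apply the projection formula $F_*(F^*\cL)\cong\cL\otimes F_*\cO_X$, which is legitimate since $\cL$ is locally free. This produces $\cO_X$-linear maps
\[
\cL\longrightarrow F_*(F^*\cL)=F_*\bigl(\cL^{p}\bigr)\longrightarrow\cL
\]
whose composite is the identity, so $\cL$ is a direct summand of $F_*(\cL^{p})$. Applying $F_*$ (exact on quasi-coherent sheaves) to the analogous splitting of $\cL^{p}$ inside $F_*(\cL^{p^2})$ and composing yields a split inclusion $\cL\hookrightarrow(F^2)_*(\cL^{p^2})$; iterating, $\cL$ is a direct summand of $(F^k)_*(\cL^{p^k})$ for every $k\ge1$, where $F^k$ is the $p^k$-power Frobenius. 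Passing to cohomology and using that $F^k$ is affine, $H^i(X,\cL)$ becomes a direct summand of $H^i(X,\cL^{p^k})$. For $k\gg 0$ the exponent $p^k$ lies in the range covered by the hypothesis, so $H^i(X,\cL^{p^k})=0$, and hence $H^i(X,\cL)=0$.

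I do not expect a genuine obstacle here; the only points requiring a little care are the bookkeeping that the splittings iterate (a retraction of the composite $\cL\hookrightarrow(F^2)_*(\cL^{p^2})$ is obtained by applying $F_*$ to the retraction of the second map and composing with the retraction of the first), and the verification that the projection formula is being used in the quasi-coherent setting — which is fine because $\cL$ is invertible, hence flat.
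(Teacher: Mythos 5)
Your proof is correct and is essentially the standard argument from Mehta--Ramanathan (Proposition~1 of \cite{MR}), which the paper cites without reproving: tensor the splitting $\cO_X\to F_*\cO_X\to\cO_X$ with $\cL$, use the projection formula and $F^*\cL\simeq\cL^p$ to realize $H^i(X,\cL)$ as a direct summand of $H^i(X,\cL^{p^k})$, then let $k\to\infty$. The only cosmetic slip is your parenthetical claim that $F$ is affine ``because it is a homeomorphism''; affineness follows from the preimage of an affine open being affine (indeed equal, with the $p$-th power ring map), which you also state.
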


The following two statements are proved in \cite{MR}, Proposition $4$ and Proposition~$8$:
\begin{prop}
Let $f:Z\to X$ be a proper morphism of algebraic varieties such that $f_*\cO_Z=\cO_X$.
If $Z$ is Frobenius split, then $X$ is also Frobenius split.
\end{prop}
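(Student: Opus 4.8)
My plan is to obtain the splitting of $X$ by simply pushing the given splitting of $Z$ forward along $f$. Denote by $F_Z\colon Z\to Z$ and $F_X\colon X\to X$ the (absolute) Frobenius morphisms, and by $u_Z\colon\cO_Z\to (F_Z)_*\cO_Z$, $u_X\colon\cO_X\to (F_X)_*\cO_X$ the corresponding $p$-th power maps, so that a Frobenius splitting of $Z$ is a morphism $\psi\colon (F_Z)_*\cO_Z\to\cO_Z$ with $\psi\circ u_Z=\mathrm{id}_{\cO_Z}$. The first point is that the Frobenius morphism is natural, in the sense that $F_X\circ f=f\circ F_Z\colon Z\to X$; applying $(-)_*$ to $\cO_Z$ therefore gives a canonical identification $f_*(F_Z)_*\cO_Z=(F_X)_*f_*\cO_Z$, and by the hypothesis $f_*\cO_Z=\cO_X$ this reads $f_*(F_Z)_*\cO_Z=(F_X)_*\cO_X$.

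Granting this, I would set $\phi:=(f^\#)^{-1}\circ f_*(\psi)\circ\bigl((F_X)_*f^\#\bigr)\colon (F_X)_*\cO_X\to\cO_X$, where $f^\#\colon\cO_X\xrightarrow{\ \sim\ }f_*\cO_Z$ is the structure isomorphism. To see that $\phi$ is a splitting it suffices to verify $\phi\circ u_X=\mathrm{id}_{\cO_X}$. This comes down to one commutative square: since $f^\#$ is a homomorphism of $\cO_X$-algebras it commutes with the $p$-th power maps, i.e. $(f_*u_Z)\circ f^\#=\bigl((F_X)_*f^\#\bigr)\circ u_X$. Substituting into the formula for $\phi$ and using functoriality of $f_*$ together with $\psi\circ u_Z=\mathrm{id}_{\cO_Z}$ gives $\phi\circ u_X=(f^\#)^{-1}\circ f_*(\psi\circ u_Z)\circ f^\#=(f^\#)^{-1}\circ f^\#=\mathrm{id}_{\cO_X}$, as wanted.

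The argument is entirely formal, and I do not anticipate a genuine obstacle: the only input is the hypothesis $f_*\cO_Z=\cO_X$, which is used twice — to turn $f_*\psi$ into an endomorphism of $(F_X)_*\cO_X$, and again in the last displayed line — while properness of $f$ serves to make $f_*\cO_Z$ coherent so that this hypothesis has content. The one place that needs a little care is the bookkeeping identification of $f_*$ applied to the Frobenius structure map on $Z$ with that on $X$; as indicated, this is nothing more than the statement that a ring homomorphism in characteristic $p$ preserves $p$-th powers, so I expect the verification to be routine.
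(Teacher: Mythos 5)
Your argument is correct and is exactly the standard pushforward proof, which is also the one given in Mehta--Ramanathan (the paper does not reprove the statement but cites [MR, Proposition 4], whose proof is precisely this). One small inaccuracy in your closing remark: properness of $f$ plays no role in the argument and is not needed to make $f_*\cO_Z$ coherent (the hypothesis $f_*\cO_Z=\cO_X$ already forces coherence); it is simply part of the ambient setup in the paper and the statement holds for any morphism with $f_*\cO_Z=\cO_X$.
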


\begin{thm}\label{MR}
Let $Z$ be a smooth projective variety of dimension $M$ and let $Z_1, \dots,Z_M$ be codimension one
subvarieties satisfying the following conditions:
\begin{enumerate}
\item $\forall\,I\subset\{1,\dots,M\}$: the intersection $\cap_{i\in I} Z_i$
is smooth of codimension $\#I$.
\item There exists a global section $s$ of the anti-canonical bundle $\omega^{-1}$ on $Z$ such that the 
zero divisor of $s$ equals $\cO(Z_1+\dots + Z_M+D)$
for some effective divisor $D$ with $\cap_{i=1}^M Z_i\notin supp D$.
\end{enumerate}
Then $Z$ is Frobenius split and for any subset $I\subset \{1,\dots,M\}$ the intersection
$Z_I=\cap_{i\in I} Z_i$ is Frobenius split as well.
\end{thm}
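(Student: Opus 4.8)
The plan is to build an explicit Frobenius splitting of $Z$ out of the section $s$, arranged so as to be compatible with all the $Z_i$, and then to obtain the splittings of the $Z_I$ by restriction. The backbone is the identification, coming from Grothendieck duality for the (finite and flat, since $Z$ is smooth) absolute Frobenius $F\colon Z\to Z$,
\[
\Hom_{\cO_Z}(F_*\cO_Z,\cO_Z)\ \cong\ H^0\big(Z,\omega_Z^{\,1-p}\big),
\]
where one uses $F^*\omega_Z\cong\omega_Z^{\,p}$, so that the dualizing sheaf of $F$ is $\omega_Z\otimes(F^*\omega_Z)^{-1}=\omega_Z^{\,1-p}$. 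Under this identification the ``value at $1$'' map $\varphi\mapsto\varphi(1)$ becomes the top degree Cartier operator $C$: in local coordinates $t_1,\dots,t_M$ at a smooth point, with $e=dt_1\wedge\cdots\wedge dt_M$ the corresponding local generator of $\omega_Z$, a local section $h\,e^{\,1-p}$ of $\omega_Z^{\,1-p}$ corresponds to the $\cO_Z$-linear map $F_*f\mapsto C(fh\,e)/e$, and $C$ is pinned down by $C(g^p\eta)=gC(\eta)$ together with $C\big(t_1^{a_1}\cdots t_M^{a_M}e\big)=t_1^{(a_1+1)/p-1}\cdots t_M^{(a_M+1)/p-1}e$ when $p\mid a_i+1$ for every $i$, and $=0$ otherwise. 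Thus a section $\sigma\in H^0(Z,\omega_Z^{\,1-p})$ gives, after rescaling, a Frobenius splitting exactly when $C$ of its local expression, divided by $e$, is nonzero at some point; since $Z$ is projective and connected, that value is then a nonzero constant.

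I would take $\sigma=s^{\,p-1}$. Hypothesis (2) gives a point $x\in\bigcap_{i=1}^M Z_i$ with $x\notin\mathrm{supp}\,D$; by hypothesis (1) the $Z_i$ are, near $x$, smooth divisors meeting transversally, so one can choose local coordinates $t_1,\dots,t_M$ at $x$ with $Z_i=\{t_i=0\}$. Since $\mathrm{div}(s)=Z_1+\cdots+Z_M+D$ and $x\notin\mathrm{supp}\,D$, near $x$ one has $s=(t_1\cdots t_M)\,u\,e^{-1}$ with $u$ a unit, hence $\sigma=(t_1\cdots t_M)^{p-1}u^{p-1}e^{\,1-p}$. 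Expanding $u^{p-1}$ as a polynomial in the $t_i$ with $p$-th power coefficients --- possible over the perfect field $\ol{\mathbb F}_p$ --- a one-line application of $C$ gives $C(\sigma e)/e=u(x)^{(p-1)/p}+(\text{terms vanishing at }x)$, a unit at $x$. So the associated $\varphi_\sigma$ has $\varphi_\sigma(1)\in\ol{\mathbb F}_p^{\times}$, and $\varphi:=\varphi_\sigma(1)^{-1}\varphi_\sigma$ is a Frobenius splitting of $Z$.

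Next I would check that $\varphi$ is compatible with each $Z_i$, i.e.\ $\varphi(F_*\mathcal I_{Z_i})\subseteq\mathcal I_{Z_i}$. Since $\big(\varphi(F_*\mathcal I_{Z_i})+\mathcal I_{Z_i}\big)/\mathcal I_{Z_i}$ is a subsheaf of the torsion-free sheaf $\cO_{Z_i}$ on the integral scheme $Z_i$, it is enough to verify the inclusion on the neighbourhood of $x$ chosen above, which meets $Z_i$ in a dense open subset. There, for a local function $g$ one computes $\varphi_\sigma\big(F_*(t_ig)\big)=C\big(t_i^{\,p}\,\psi\,e\big)/e$ with $\psi$ regular, and expanding $\psi$ into $p$-th power coefficients as before shows that every surviving monomial of $C(t_i^{\,p}\psi e)$ is divisible by $t_i$; hence $\varphi(F_*(t_ig))\in(t_i)$. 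Thus $\varphi$ is compatible with each $Z_i$. Because $\varphi$-compatible ideal sheaves are stable under sums and under passing to the radical, and because $\bigcap_{i\in I}Z_i$ is reduced by (1), $\varphi$ is compatible with $\mathcal I_{Z_I}$ for every $I$; restricting $\varphi$ to $\cO_{Z_I}=\cO_Z/\mathcal I_{Z_I}$ then gives a map $\ol\varphi\colon F_*\cO_{Z_I}\to\cO_{Z_I}$ with $\ol\varphi(1)=1$, i.e.\ a Frobenius splitting of $Z_I$. This would establish both assertions.

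The technical heart is the construction of $\sigma=s^{p-1}$ together with making the duality/Cartier formalism above precise: one must fix the Grothendieck-duality isomorphism and its compatibility with the Cartier operator carefully enough to run the local computation, and then observe that hypotheses (1)--(2) are arranged precisely so that at a suitable point $s$ equals the product of a regular system of parameters up to a unit --- this is exactly the Mehta--Ramanathan splitting criterion, and the rest is formal. The secondary point requiring care is the upgrade of the local compatibility near $x$ to global compatibility with $Z_i$, and thence with all the intersections $Z_I$.
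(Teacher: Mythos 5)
The paper does not prove Theorem~\ref{MR}; it invokes it as Proposition~8 of Mehta--Ramanathan \cite{MR}. Your argument is a correct reconstruction of the original Mehta--Ramanathan proof: the duality isomorphism $\Hom_{\cO_Z}(F_*\cO_Z,\cO_Z)\cong H^0(Z,\omega_Z^{1-p})$, the choice $\sigma=s^{p-1}$, the Cartier-operator computation in local coordinates at a point $x\in\bigcap Z_i$ with $x\notin\mathrm{supp}\,D$ showing $\varphi_\sigma(1)$ is a nonzero constant, local compatibility with each $Z_i$, and the formal closure of compatible ideals under sums to pass to $Z_I$. The one place where your route is slightly indirect is the upgrade of compatibility from a neighbourhood of $x$ to all of $Z_i$: you use density and torsion-freeness of $\cO_{Z_i}$ on the integral scheme $Z_i$, which is fine, but it is even more immediate to observe that $s$ (hence $\sigma$) vanishes to order $\ge p-1$ along $Z_i$ at \emph{every} point of $Z_i$, so the projection formula $C(t_i^{p}\psi e)=t_iC(\psi e)$ gives compatibility locally everywhere on $Z_i$ without appealing to integrality. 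Also, by transversality in hypothesis~(i), $\sum_{i\in I}\mathcal I_{Z_i}$ is already the (reduced) ideal of $Z_I$, so the step through radicals is not actually needed. These are stylistic points; the proof as written is correct.
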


Now we recall some results on the singularities of algebraic varieties. 
%(see \cite{ega}, \cite{H}, \cite{R}).
Let $X$ be a projective algebraic variety. If $X$ is smooth, then $\omega_X$ denotes its canonical 
line bundle of top degree forms on $X$.
For a singular $X$ one can define a dualizing complex $DX$, 
which for Cohen-Macaulay varieties reduces
to a (cohomologically shifted) sheaf. 
If $X$ is  Gorenstein, then this sheaf is a line bundle denoted by $\omega_X$. 
For example if $X$ is a locally complete intersection, then it is known that $X$ is Gorenstein and 
thus $\omega_X$ is a line bundle.

Let $\pi:Y\to X$ be a resolution of singularities of $X$. The singularities of $X$ are called rational if
$\pi_* \cO_Y=\cO_X$ and $R^k\pi_*\cO_Y=0$ for $k>0$ (if this holds for some desingularization, then
it holds for all). An important property is that in this case for any line bundle $\cL$ on $X$
and any $k$ one has $H^k(X,\cL)=H^k(Y,\pi^*\cL)$. 

Let $Z\subset Y$ be an irreducible divisor. Then $Z$ is said to be exceptional if $\dim \pi(Z)<\dim Z$. 
Assume that 
%the canonical classes $K_X$ and $K_Y$ are Cartier and 
$$\omega_Y=\pi^*\omega_X\T\bigotimes_{i=1}^N \cO(a_iZ_i),$$ 
where $Z_1,\dots,Z_N$ is the set of all
exceptional divisors in $Y$. Then the singularities of $X$ are called canonical, if $a_i\ge 0$ for all $i$.
If moreover all $a_i$ are positive then the singularities are called terminal. 
We will use the following theorem from \cite{E}, Theorem 1 (see also \cite{F}, $1.3$):
\begin{thm}
Let $X$ be a Gorenstein variety and $f:Y\to X$ be a desingularization such that the natural inclusion
$f_*\omega_Y\hk \omega_X$ is an isomorphism. Then $X$ has rational singularities.
\end{thm}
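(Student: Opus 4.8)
The plan is to show directly that $Rf_*\cO_Y\simeq\cO_X$ — which, by the definition recalled above, is exactly the assertion that $X$ has rational singularities — by combining Grauert–Riemenschneider vanishing with Grothendieck–Serre duality applied twice. Since $X$ is Gorenstein it is Cohen–Macaulay and $\omega_X$ is an invertible sheaf, so, writing $n=\dim X=\dim Y$ (equal because $f$ is proper and birational), the complex $\omega_X[n]$ is a dualizing complex on $X$, normalized over $\mathrm{Spec}\,\bC$; the corresponding normalized dualizing complex on the smooth variety $Y$ is $\omega_Y[n]$, and by functoriality of the twisted inverse image ($f^!\circ p_X^!=p_Y^!$ for $p_X\colon X\to\mathrm{Spec}\,\bC$ and $p_Y=p_X\circ f$) we have $f^!\bigl(\omega_X[n]\bigr)\simeq\omega_Y[n]$. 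I will abbreviate $\mathbb D_X(-)=R\mathcal{H}om_X(-,\omega_X[n])$.

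First I would apply duality to $\cO_Y$. Grothendieck–Serre duality for the proper morphism $f$ gives
\[
Rf_*\bigl(R\mathcal{H}om_Y(\cO_Y,\,f^!\omega_X[n])\bigr)\ \simeq\ R\mathcal{H}om_X\bigl(Rf_*\cO_Y,\,\omega_X[n]\bigr),
\]
that is, $(Rf_*\omega_Y)[n]\simeq\mathbb D_X(Rf_*\cO_Y)$. By Grauert–Riemenschneider vanishing $R^if_*\omega_Y=0$ for $i>0$, so $Rf_*\omega_Y=f_*\omega_Y$ sits in degree $0$, and by hypothesis $f_*\omega_Y=\omega_X$; hence $\omega_X[n]\simeq\mathbb D_X(Rf_*\cO_Y)$. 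Now I apply $\mathbb D_X$ once more. On the one side, $\mathbb D_X(\omega_X[n])=R\mathcal{H}om_X(\omega_X,\omega_X)=\cO_X$ because $\omega_X$ is a line bundle. On the other side, $Rf_*\cO_Y\in D^b_{\mathrm{coh}}(X)$ since $f$ is proper, so biduality for the dualizing complex $\omega_X[n]$ yields $\mathbb D_X\mathbb D_X(Rf_*\cO_Y)\simeq Rf_*\cO_Y$. Combining, $Rf_*\cO_Y\simeq\cO_X$; reading off cohomology sheaves this says $f_*\cO_Y=\cO_X$ and $R^kf_*\cO_Y=0$ for all $k>0$, which is rationality of the singularities of $X$. (It follows a posteriori that $\cO_X=f_*\cO_Y$ is integrally closed, so $X$ is normal, but this is not needed.)

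The step that needs the most care is the duality bookkeeping: one must work with the \emph{normalized} dualizing complexes so that $f^!(\omega_X[n])\simeq\omega_Y[n]$ holds with no stray twist by a line bundle, and one must know that $\omega_X[n]$ really is a dualizing complex on $X$ — this is precisely where the Gorenstein (equivalently here, Cohen–Macaulay plus $\omega_X$ invertible) hypothesis enters, since it guarantees the biduality isomorphism $\mathbb D_X\mathbb D_X\simeq\mathrm{id}$ on $D^b_{\mathrm{coh}}(X)$ used in the last step. Everything else is formal manipulation. An alternative route, which is essentially Elkik's original argument and avoids the derived-category formalism, is to argue locally on $X$: around a point one uses local duality to translate the hypothesis that $f_*\omega_Y\to\omega_X$ is an isomorphism into the vanishing of $R^kf_*\cO_Y$ for $k>0$ together with $f_*\cO_Y=\cO_X$. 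Note that the classical converse implication (rational singularities $\Rightarrow$ $f_*\omega_Y=\omega_X$) is not used anywhere in this direction.
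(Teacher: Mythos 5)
The paper does not prove this statement: it is quoted verbatim from Elkik (reference [E], Theorem~1; see also Flenner [F], 1.3), so there is no internal proof to compare against. What you have supplied is therefore a free-standing argument, and it is correct. Your route is the classical Grothendieck--Serre duality proof usually attributed to Kempf: from duality for the proper morphism $f$ one gets $(Rf_*\omega_Y)[n]\simeq R\mathcal{H}om_X(Rf_*\mathcal O_Y,\omega_X[n])$; Grauert--Riemenschneider vanishing collapses the left side to $\omega_X[n]$ via the hypothesis $f_*\omega_Y\cong\omega_X$; and applying $\mathbb D_X$ again, using that $\omega_X[n]$ is a (normalized) dualizing complex because $X$ is Gorenstein and that $R\mathcal{H}om_X(\omega_X,\omega_X)=\mathcal O_X$ because $\omega_X$ is invertible, yields $Rf_*\mathcal O_Y\simeq\mathcal O_X$. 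All the duality bookkeeping (normalization of $f^!$, biduality on $D^b_{\mathrm{coh}}(X)$) is handled correctly, and Grauert--Riemenschneider is legitimately available since the paper works over $\bC$. This is a cleaner and more formal derivation than Elkik's original argument, which works locally and proceeds through a lifting/approximation technique rather than global derived duality; the Gorenstein hypothesis here lets you sidestep those subtleties at the price of not handling the more general Cohen--Macaulay-with-$\mathbb Q$-Gorenstein setting. One small remark: you never use the injectivity of $f_*\omega_Y\to\omega_X$ as such, only the asserted isomorphism, which is exactly what is needed; the injectivity is relevant only for seeing that the hypothesis amounts to surjectivity of the trace map, but that is orthogonal to your argument.
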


\subsection{Degenerate representations.}
Consider the PBW filtration $U(\fn^-)_s$ on the universal enveloping algebra of $U(\fn^-)$:
\[
U(\fn^-)_s=\spa (x_1\dots x_l: \ x_i\in\fn^-, l\le s).
\]
The associated graded algebra is isomorphic to the symmetric algebra $S^\bullet(\fn^-)$.
For any  $\la\in P^+$ consider the induced PBW filtration $F_s=U(\fn^-)_s v_\la$ on the space $V_\la$.
We denote the associated graded space by $V^a_\la$, so $V_\la^a$ carries additional grading:
\[
V_\la=\bigoplus_{s\ge 0} V_\la(s)=\bigoplus_{s\ge 0} F_s/F_{s-1}.  
\]
Note that the operators from $\fn^-$ acting on $V_\la$ induce an action of the abelian
algebra $(\fn^-)^a$ on $V^a_\la$, where $(\fn^-)^a$ is isomorphic to $\fn^-$ as vector spaces.  
Thus $V_\la^a=S^\bullet(\fn^-)v_\la$, where (slightly abusing notations) we denote by 
$v_\la\in V_\la^a$ the image of the highest weight vector in $V_\la$.
More concretely,
$V_\la^a\simeq \bC[f_\al]_{\al>0}/I_\la$, where $I_\la$ is an ideal.

We note that $(\fn^-)^a$ comes equipped with the natural structure of $\fb$-module. In fact, we have
isomorphism of vector spaces $(\fn^-)^a\simeq \g/\fb$ and $\fb$ acts on the right hand side  via the
adjoint action. This gives the structure of $\fb$-module on $S^\bullet(\fn^-)$.
We denote this $\fb$-action by $\circ$. By construction,
the ideals $I_\la$ are $\fb$-invariant and thus $\fb$ acts on all modules $V_\la^a$. 
It is convenient to combine the actions of $(\fn^-)^a$ and $\fb$ together.  
Namely, 
consider the Lie algebra 
$\g^a\simeq \fb\oplus (\fn^-)^a$, where $(\fn^-)^a$ is an abelian ideal and the Borel subalgebra $\fb$
acts on $(\fn^-)^a$ via the induced adjoint action on the quotient space $(\fn^-)^a\simeq \g/\fb$.
Then each $V_\la^a$ carries the structure of a $\g^a$-module (see \cite{Fe1}, \cite{Fe2}).
In what follows we will need explicit description of the ideals $I_\la$  for $\g=\msl_n$ (see \cite{FFoL1}),
and $\g=\msp_{2n}$ (see \cite{FFoL2}).

\begin{thm}\label{genrel}
Let $\g=\msl_n$, $\la=\sum_{i=1}^{n-1} m_i\omega_i$. Then 
\[
I_\la=S(\fn^-) \left( U(\fb)\circ f_{\al_{i,j}}^{m_i+\ldots+m_j+1}\mid 1\le i\le j \le n-1\right). 
\]
Let $\g=\msp_{2n}$, $\la=\sum_{i=1}^{n} m_i\omega_i$. Then 
$$
I_\lambda=S(\fn^-)\left(
\begin{array}{l}
U(\fb)\circ f_{\alpha_{i,j}}^{{m_i+\ldots+m_j+1}}\mid 1\le i \le j< n , \\
U(\fb)\circ f_{\alpha_{i,2n-i}}^{{m_i+\ldots+m_n+1}}\mid 1\le i\le n.\\
\end{array}
\right)
$$
\end{thm}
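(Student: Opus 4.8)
The $\msl_n$ statement is quoted from \cite{FFoL1}, so the work is entirely in the symplectic case, and the natural strategy is to reduce it to type ${\tt A}$ via the embedding $\msp_{2n}\hk\msl_{2n}$ that is set up in Section \ref{Filtrations}. Write $\fn^-_{\msp}\subset\fn^-_{\msl}$ for the nilradicals of the lower Borels; the positive roots \eqref{ij}--\eqref{ibarj} of $\msp_{2n}$ are indexed so that $f_{\al_{i,j}}$ with $1\le i\le j<n$ and $f_{\al_{i,2n-i}}$ (the ``long'' root vectors) together with the ``short'' generators $f_{\al_{i,j}}$, $i\le n<j$, $i+j<2n$, span $\fn^-_{\msp}$. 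The first step is to realize $V_\la$ for $\msp_{2n}$ inside (a tensor product of) $\msl_{2n}$-modules and to show, as in Section \ref{Filtrations}, that the PBW filtration on the $\msp_{2n}$-module is the one induced from the PBW filtration on the ambient $\msl_{2n}$-module. This gives a surjection of the type-${\tt A}$ degenerate module onto $V^a_\la$, hence an inclusion of ideals, and reduces the computation of $I_\la$ to intersecting the known type-${\tt A}$ ideal with $S(\fn^-_{\msp})$ and then analyzing the extra relations coming from the symplectic condition.

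Concretely, I would argue the two inclusions separately. For ``$\supseteq$'': each listed generator lies in $I_\la$. For the powers $f_{\al_{i,j}}^{m_i+\dots+m_j+1}$ with $i\le j<n$ this follows because the corresponding $\msl$-type relation $f_{\al_{i,j}}^{m_i+\dots+m_j+1}v_\la=0$ already holds in $V_\la$ (it only involves an $\msl_3$-type, or rather $\msl_{j-i+2}$-type, subalgebra and the fundamental-weight coefficients match); applying $U(\fb)\circ$ and multiplying by $S(\fn^-)$ keeps us inside the $\fb$-stable ideal $I_\la$. For the long roots $\al_{i,2n-i}$, the key point is that the relevant $\msp$-subalgebra generated by the root spaces along $\al_{i,i},\dots,\al_{i,2n-i}$ is a copy of $\msp_{2(n-i+1)}$, and the relation with exponent $m_i+\dots+m_n+1$ is the statement for the one-row case, i.e. it is forced by $\fsl_2$-representation theory inside the $Sp$-fundamental module $V_{\omega_i}$ restricted to that subalgebra; again $\fb$-invariance of $I_\la$ does the rest. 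For ``$\subseteq$'': here one uses the monomial/PBW basis theorem for $V^a_\la$ from \cite{FFoL2} — the FFLV-type basis indexed by symplectic Dyck path polytopes — to show that the quotient of $S(\fn^-)$ by the ideal generated by the listed relations already has dimension (graded dimension) equal to $\dim V_\la$. Since we have a surjection from this quotient onto $V^a_\la$ and $\dim V^a_\la=\dim V_\la$, it must be an isomorphism, so the ideal generated by the relations is all of $I_\la$.

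The main obstacle is the ``$\subseteq$'' direction: one must know that the explicitly listed relations, after applying $U(\fb)\circ$ and saturating by $S(\fn^-)$, already cut $S(\fn^-)$ down to the correct size. This is exactly where the hard combinatorial input of \cite{FFoL2} enters — the spanning property of the symplectic FFLV monomials modulo these relations — and it cannot be shortcut by the type-${\tt A}$ reduction alone, since intersecting the $\msl_{2n}$ ideal with $S(\fn^-_{\msp})$ produces a priori more relations than those listed, and one has to check they are all consequences of the stated generators together with the $\fb$-action. I expect the cleanest writeup to cite \cite{FFoL2} for the spanning/basis statement and to use it as a black box, so that the argument in this paper is the short dimension-count plus the elementary verification that each listed generator annihilates $v_\la$ in $V_\la$.
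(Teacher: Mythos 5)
The paper does not give a proof of Theorem~\ref{genrel}; both halves are recalled from the literature (the sentence immediately preceding the theorem points to \cite{FFoL1} for $\msl_n$ and \cite{FFoL2} for $\msp_{2n}$), and no argument follows. So there is no ``paper's own proof'' to compare against --- what you have written is a reconstruction of how one would prove the symplectic case, essentially the strategy of \cite{FFoL2}, not something carried out in this paper.

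Two concerns with the reconstruction itself. First, the route through Section~\ref{Filtrations} is circular \emph{within this paper}: Proposition~\ref{CgleichV} --- the statement that the PBW filtration on $V_{\la,\msp_{2n}}$ is induced from the ambient $\msl_{2n}$-module and that $\mathfrak{C}_\la\simeq V^a_{\la,\msp_{2n}}$ --- is proved there \emph{using} both halves of Theorem~\ref{genrel} (to produce the surjection $V^a_{\la,\msp_{2n}}\twoheadrightarrow\mathfrak{C}_\la$) together with Theorem~\ref{basis}. You cannot import that comparison as a known input when proving Theorem~\ref{genrel}; you would have to rebuild it from scratch, and the very first step of doing so already requires knowing which elements generate $I_\la$. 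Second, the closing claim that the hard direction reduces to ``a short dimension-count plus the elementary verification'' understates what is left. The spanning statement --- that the monomials $f^{\bf s}$, ${\bf s}\in S_{\msp_{2n}}(\la)$, span $S(\fn^-)/\langle\text{listed generators}\rangle$ --- is not the same as Theorem~\ref{basis} (which is a statement about $V^a_\la$), and it is precisely the combinatorial core of \cite{FFoL2}; there the spanning and the linear independence are established jointly by a straightening/induction argument, not deduced one from the other. Treating that as a black box is fine, but then the whole theorem is effectively being cited, which is exactly what the present paper does. Your easy direction (that each listed generator annihilates $v_\la$, via $\msl_2$-theory along the short roots $\al_{i,j}$, $i\le j<n$, and the long roots $\al_{i,2n-i}$, where $\la(\al^\vee)$ equals $m_i+\cdots+m_j$ and $m_i+\cdots+m_n$ respectively) is correct.
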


We will also use  monomial  bases in $V_\la^a$ (see \cite{FFoL1}, \cite{FFoL2}, \cite{V}).   
Namely, given an element ${\bf s}=(s_\al)_{\al>0}$, $s_\al\in Z_{\ge 0}$, we define 
$f^{\bf s}=\prod_{\al>0} f_\al^{s_\al}$. 
We need the definition of a Dyck path inside the set of positive roots for $\msl_n$ and $\msp_{2n}$. 

\begin{dfn}
We call a sequence $\bp=(\beta(0), \beta(1),\dots, \beta(k))$ 
of positive roots of $\msl_n$ a Dyck path if
$\beta(0)=\al_i$, $\beta(k)=\al_j$ for some $1\le i \le j\le n-1$ and
$$
\text{if } \beta(s)=\al_{p,q} \qquad \text{ then } \qquad \beta(s+1)=\al_{p,q+1} \text{ or }
\beta(s+1)=\al_{p+1,q}.
$$

We call a sequence $\bp=(\beta(0), \beta(1),\dots, \beta(k))$
of positive roots of $\msp_{2n}$ a (symplectic) Dyck path if
$\beta(0)=\al_i$, $\beta(k)=\al_j$ or $\beta(k)=\al_{j,2n-j}$ for some $1\le i \le j\le n-1$ and
$$
\text{if } \beta(s)=\al_{p,q} \qquad \text{ then } \qquad \beta(s+1)=\al_{p,q+1} \text{ or }
\beta(s+1)=\al_{p+1,q}.
$$
\end{dfn}

For a dominant $\msl_n$-weight $\la=\sum_{i=1}^{n-1} m_i\omega_i$ 
let $P_{\msl_n}(\la)\subset \bR^{\frac{1}{2}n(n-1)}_{\ge 0}$ be the polytope consisting of collections
$(r_\al)_{\al\in \Phi^+_{\msl_n}}$ such that for any $\msl_n$ Dyck path $\bp$ with
$\beta(0)=\al_i$, $\beta(k)=\al_j$ one has
\[
r_{\beta(0)} + \dots + r_{\beta(k)}\le m_i + \dots + m_j.
\]
Let $S_{\msl_n}(\la)$ be the set of integral points in $P_{\msl_n}(\la)$.

Similarly, for a dominant $\msp_{2n}$-weight $\la=\sum_{i=1}^{n} m_i\omega_i$ 
let $P(\la)\subset \bR^{n^2}_{\ge 0}$ be the polytope consisting of collections
$(r_ \al)_{\al\in \Phi^+_{\msp_{2n}}}$ such that for any $\msp_{2n}$ Dyck path $\bp$ with
$\beta(0)=\al_i$, $\beta(k)=\al_j$ one has
\[
r_{\beta(0)} + \dots + r_{\beta(k)}\le m_i + \dots + m_j
\]
and for any $\msp_{2n}$ Dyck path $\bp$ with $\beta(0)=\al_i$, $\beta(k)=\al_{j,2n-j}$ one has
\[
r_{\beta(0)} + \dots + r_{\beta(k)}\le m_i + \dots + m_n.
\]
Let $S_{\msp_{2n}}(\la)$ be the set of integral points in $P_{\msp_{2n}}(\la)$.

\begin{thm}\label{basis}
For a dominant $\msl_n$-weight $\la$ the monomials $f^{\bf s} v_\la$, ${\bf s}\in S_{\msl_n}(\la)$ form
a basis of $V_\la^a$. 
For a dominant $\msp_{2n}$-weight $\la$ the monomials $f^{\bf s} v_\la$, ${\bf s}\in S_{\msp_{2n}}(\la)$ form
a basis of $V_\la^a$. 
\end{thm}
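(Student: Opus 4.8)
The plan is to prove the two statements by a single strategy that splits into an upper bound and a lower bound on dimensions, reducing the symplectic case to the already-established type $\tt A$ case wherever possible. Write $B(\la)=\{f^{\bf s}v_\la\mid {\bf s}\in S_{\msp_{2n}}(\la)\}$ for the candidate basis. The first step is \emph{spanning}: one shows that $B(\la)$ spans $V_\la^a$. Using Theorem \ref{genrel}, $V_\la^a\simeq S^\bullet(\fn^-)/I_\la$ with $I_\la$ generated (as an $S(\fn^-)$-module) by the $\fb$-orbits of the powers $f_{\al_{i,j}}^{m_i+\cdots+m_j+1}$ for $i\le j<n$ and $f_{\al_{i,2n-i}}^{m_i+\cdots+m_n+1}$. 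I would run a straightening/PBW-reordering argument: any monomial $f^{\bf s}v_\la$ violating one of the Dyck-path inequalities can be rewritten, modulo $I_\la$ and modulo monomials that are smaller in a suitable lexicographic-type order on exponent sequences, as a combination of monomials indexed by points of $S_{\msp_{2n}}(\la)$. The key input here is that the $\fb$-action $\circ$ on $S^\bullet(\fn^-)$ applied to a single root-power $f_\beta^{m+1}$ produces, after choosing the right root vectors $e_\al$, exactly the "colliding" products $f_{\beta'}f_{\beta''}\cdots$ along a Dyck path summing to $\beta$; this is the standard mechanism from \cite{FFoL1}, \cite{FFoL2}, and the symplectic Dyck path definition is rigged precisely so that the new relation $\beta(k)=\al_{j,2n-j}$ accounts for the second family of generators in Theorem \ref{genrel}.

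The second step is \emph{linear independence}, equivalently the lower bound $\dim V_\la^a\ge \#S_{\msp_{2n}}(\la)$. Since $\dim V_\la^a=\dim V_\la$ (passing to the associated graded preserves dimension), it suffices to show $\#S_{\msp_{2n}}(\la)\le\dim V_\la$ together with spanning, OR to produce an explicit enough filtration. I would instead argue independence directly: order the monomials $f^{\bf s}v_\la$ by total degree and then by a homogeneous lexicographic order on $(s_\al)_{\al>0}$, and show that the images of $\{f^{\bf s}v_\la : {\bf s}\in S_{\msp_{2n}}(\la)\}$ have distinct leading terms under the natural inclusion $V_\la^a\hookrightarrow V_\la^a\otimes(\text{something})$ coming from Section \ref{Filtrations}. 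Concretely, the cleanest route is the tensor-product / character argument: one checks that $\#S_{\msp_{2n}}(\la+\mu)\ge \#S_{\msp_{2n}}(\la)\cdot$ (leading behavior) is superadditive, reduces to fundamental weights where $S_{\msp_{2n}}(\omega_d)$ is checked by hand to biject with a known indexing set of the fundamental representation $V_{\omega_d}$ (the symplectic Grassmannian picture), and then uses the $\msl_{2n}$-comparison of Section \ref{Filtrations} to bound everything from below by a product of $\msl_{2n}$-counts which are already known via Theorem \ref{basis} in type $\tt A$.

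The main obstacle, and where the real work lies, is the \textbf{spanning step for the mixed Dyck paths} — those terminating at $\al_{j,2n-j}$ — because here one must check that the $\fb$-span of $f_{\al_{i,2n-i}}^{m_i+\cdots+m_n+1}$ genuinely contains \emph{all} the products $\prod f_{\beta(s)}$ along symplectic Dyck paths from $\al_i$ to $\al_{j,2n-j}$, with no extra relations forcing a smaller polytope and no missing relations forcing a larger one. This requires a careful bookkeeping of how $e_{\al}\circ$ acts across the "turn" at $\al_n$ in the root poset of $\msp_{2n}$, where roots of the form $\al_{p,q}$ with $q\le n$ meet those of the form $\al_{p,q}$ with $q>n$; one must verify the commutator identities in $\g^a$ do not introduce coefficients vanishing in bad characteristic (irrelevant here, as we work over $\bC$) or unexpected cancellations. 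Once spanning and the dimension lower bound match, the theorem follows since a spanning set of size $\le\dim V_\la^a$ is automatically a basis. Both assertions — for $\msl_n$ and for $\msp_{2n}$ — are then obtained, the first being the classical statement of \cite{FFoL1} which we may simply cite, and the second being the new content, proved as above.
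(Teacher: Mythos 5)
The paper does not prove Theorem~\ref{basis}: it is stated as a recollection and is cited directly from \cite{FFoL1} (type ${\tt A}$), \cite{FFoL2} (type ${\tt C}$), and \cite{V}, with no argument given. So there is no ``paper's own proof'' here to compare against, and the expected response is a citation, not a proof.

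That said, your sketch does roughly track the strategy of the cited sources: spanning is obtained by a straightening argument reducing any monomial violating a Dyck-path inequality modulo the ideal $I_\la$ of Theorem~\ref{genrel}, and linear independence is obtained by a dimension count that exploits the Minkowski-sum structure of the polytopes together with the Cartan embedding \eqref{Cartan}, reducing to fundamental weights. Two cautions. First, the linear-independence paragraph conflates several distinct arguments and is not a proof as written: superadditivity of $\#S(\la)$ under Minkowski sums does not by itself give the needed lower bound $\#S_{\msp_{2n}}(\la)\ge\dim V_\la$; one needs the exact Minkowski decomposition $S(\la+\mu)=S(\la)+S(\mu)$ and a separate count at fundamental weights, as in \cite{FFoL1},\,\cite{FFoL2}. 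Second, the reduction of the symplectic lower bound to the already-known $\msl_{2n}$ count via the comparison of Section~\ref{Filtrations} is circular in the context of this paper: Proposition~\ref{CgleichV} \emph{uses} Theorem~\ref{basis} for $\msp_{2n}$ as an input, so that comparison cannot in turn serve as the proof of Theorem~\ref{basis}. The correct posture is simply to cite \cite{FFoL1} and \cite{FFoL2}.
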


Finally, for $\g=\msl_n$ and $\g=\msp_{2n}$ the representations $V_\la^a$ have the following important
property (similar to the classical situation): for two dominant weights $\la$ and $\mu$ the representation
$V^a_{\la+\mu}$ is embedded into the tensor product $V^a_\la\T V^a_\mu$ as the highest component, i.e.
there exists a homomorphism of $\g^a$-modules: 
\begin{equation}\label{Cartan}
V^a_{\la+\mu}\hk V^a_\la\T V^a_\mu,\ v_{\la+\mu}\mapsto v_\la\T v_\mu. 
\end{equation}

\subsection{Degenerate flag varieties.}
Here we follow \cite{Fe1}, \cite{Fe2} and \cite{FF}. 
Let $G^a$ be the Lie group of the Lie algebra $\g^a$. Explicitly, $G^a$ is a semi-direct product of the Borel 
subgoup $B$ and the normal abelian subgroup $(N^-)^a$, which is isomorphic to the product of $\dim \fn^-$
copies of the additive group of the field $\bG_a$. 
Let $\la$ be a dominant integral weight. Then $G^a$ naturally acts on $\bP(V_\la^a)$. By definition,
the degenerate flag variety $\Fl^a_\la$ is the orbit closure of the highest weight line, i.e.
\[
\Fl^a_\la=\overline{G^a (\bC v_\la)}\subset \bP(V_\la^a).
\] 
We note that one has  $\Fl^a_\la=\overline{(N^-)^a\cdot (\bC v_\la)}$, so that the varieties $\Fl^a_\la$
are so-called $\bG_a^{\dim \fn}$-varieties (i.e. the group acts on $\Fl^a_\la$ with the open orbit,
see \cite{HT}, \cite{A}). We also note that in contrast with the classical situation,
the orbit is properly contained in its closure. 

The existence of the embeddings \eqref{Cartan} implies two important properties of the varieties 
$\Fl^a_\la$ in types ${\tt A}$ and ${\tt C}$. 
First, for two dominant weights $\la$ and $\mu$ one has the embedding
of varieties $\Fl^a_{\la+\mu}\hk \Fl^a_\la\times \Fl^a_\mu$ sending the highest weight line $\bC v_{\la+\mu}$
to the product $\bC v_\la\times \bC v_\mu$. Secondly, $\Fl^a_\la\simeq \Fl^a_\mu$ provided $(\la,\omega_i)=0$
if and only if $(\mu,\omega_i)=0$, i.e. the supports of $\la$ and $\mu$ coincide.

By definition, the group $G^a$ acts on the variety $\Fl^a_\la$. 
The group $G^a$ has a natural
one-dimensional extension still acting on $\Fl^a_\la$. Consider the Lie algebra
$\g^a\oplus \bC d$, where $[d,\fb]=0$ and $[d,f_\al]=f_\al$ for any $\al>0$. Then the extended algebra
acts on $V_\la^a$ and $d$ acts as ``a counting the PBW-degree operator", i.e. on $V_\la(s)$ the operator
$d$ acts as a scalar $s$. Let $G^a\rtimes \bC^*$ be the corresponding extended Lie group. Note that
the dimension of the torus of the extended group is increased by one. 

Assume now $G=SL_n$. Then the varieties $\Fl^a_\la$ are known to be flat degenerations of the 
corresponding classical flag varieties. They have analogues of the Pl\" ucker embeddings 
defined by the degenerate Pl\" ucker relations. 
The varieties $\Fl^a_\la$ share some important properties with their classical analogues. In particular, 
$\Fl^a_\la\simeq \Fl^a_\mu$ if and only if $(\la,\omega_i)=0$ implies $(\mu,\omega_i)=0$ and vice versa. 
They also enjoy the following explicit description.
Let $W$ be an $n$-dimensional vector space with a basis $w_1,\dots,w_n$. We define the projection operators
$pr_k:W\to W$, $k=1,\dots,n$  by the formula $pr_k(\sum_{i=1}^n c_iw_i)=\sum_{i\ne k} c_i w_i$.
Fix a collection of numbers $1\le d_1<\dots <d_k\le n$. 
Then $\Fl^a_{\omega_{d_1}+\dots +\omega_{d_k}}$ is isomorphic to the variety of collections $(V_i)_{i=1}^k$ of 
subspaces $V_i\subset W$ satisfying for $i=1,\dots,k-1$
\[
\dim V_i=d_i,\ pr_{d_i+1}\dots pr_{d_{i+1}} V_i\subset V_{i+1}.
\]
In what follows, we sometimes denote this variety by $\Fl^a_{(d_1,\dots,d_k)}$ or simply by
$\Fl^a_\bd$ with $\bd=(d_1,\dots,d_k)$.
For instance, $\Fl^a_{(d)}\simeq \Fl_{\omega_d}\simeq Gr_d(n)$.  

The varieties $\Fl^a_\bd$ are singular projective algebraic varieties with rational singularities. 
They come equipped with line bundles $\cL_\mu$, where $(\mu,\omega_d)=0$ unless $d\in\bd$ 
and $\cL_\mu$ is the pullback of the line bundle $\cO(1)$ under the map $\Fl^a_\bd\to \bP(V^a_\mu)$. 
One of the main tools for the study of algebro-geometric properties of the degenerate flag varieties is
an explicit construction of the resolution of singularities of $\Fl^a_\la$. Namely, 
given a collection $\bd=(d_1,\dots,d_n)$ we denote by $P_\bd$ the set of positive roots in the 
radical of the parabolic subalgebra of $\g$ containing (exactly) the simple roots
$\al_{d_1},\dots,\al_{d_k}$. We define $R_\bd$ as the variety 
of collections of subspaces $V_{i,j}$, $1\le i\le j\le n-1$, $\al_{i,j}\in P_\bd$, satisfying the 
following conditions:
\begin{gather}
\dim V_{i,j}=i,\ V_{i,j}\subset \spa (w_1,\dots,w_i,w_{j+1},\dots,w_n),\\
V_{i,j}\subset V_{i+1,j},\ V_{i,j}\subset V_{i,j+1}\oplus \bC w_{j+1}.
\end{gather}     
Using this desingularization, the following theorem was proved in \cite{FF}.
\begin{thm}
\begin{enumerate}
\item The resolution $R_\bd\to\Fl^a_\bd$ is crepant.
\item The varieties $\Fl^a_\bd$ are normal locally complete intersections (thus Cohen-Macaulay and Gorenstein).
\item The varieties $\Fl^a_\bd$ have rational singularities and are Frobenius split.
\item The cohomology groups $H^m(\Fl^a_\bd,\cL_\mu)$ vanish unless $m=0$ and the zero cohomology is
isomorphic to $(V_\mu^a)^*$. 
\end{enumerate}
\end{thm}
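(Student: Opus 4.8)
\emph{Setup and the resolution.} Write $Y=\prod_{i=1}^k Gr_{d_i}(W)$ and let $\pi\colon R_\bd\to Y$ send a point $(V_{i,j})$ of $R_\bd$ to the tuple of its outermost subspaces, of dimensions $d_1,\dots,d_k$. First I would order the positive roots in $P_\bd$ so that, when the spaces $V_{i,j}$ are adjoined one at a time, each new $V_{i,j}$ is a line in the two-dimensional space $(V_{i,j+1}\oplus\bC w_{j+1})/V_{i-1,j}$ determined by the previously chosen data; this exhibits $R_\bd$ as an iterated $\bP^1$-bundle over a point, hence a smooth rational projective variety of dimension $M:=|P_\bd|$, with $\mathrm{Pic}(R_\bd)$ freely generated by the relative hyperplane classes. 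The map $\pi$ is an isomorphism over the open $(N^-)^a$-orbit of $\Fl^a_\bd$ (there the intermediate spaces are uniquely reconstructed), hence birational; its image is closed and dense in $\Fl^a_\bd$, so $\pi(R_\bd)=\Fl^a_\bd$ and $\dim\Fl^a_\bd=M$ (consistent with $\dim\Fl^a_\bd=\dim\Fl_\bd$ coming from the flat degeneration). Finally I would give $\Fl^a_\bd$ a scheme structure inside $Y$: on a tuple $(V_i)$ the conditions $pr_{d_i+1}\cdots pr_{d_{i+1}}V_i\subset V_{i+1}$ are the vanishing of the induced linear maps $V_i\to W/V_{i+1}$, which in a standard affine chart of $Y$ become explicit polynomial equations, the degenerate Pl\"ucker relations, defining a subscheme $Z\subset Y$ through which $\pi$ factors.

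\emph{Locally complete intersection and normality (proof of (2)).} The combinatorial core is to check that in each standard chart the number of these incidence equations equals $\dim Y-M$, so that $Z$ is everywhere of codimension $\dim Y-M$ in the smooth variety $Y$; this makes $Z$ a locally complete intersection, hence Cohen-Macaulay, Gorenstein and equidimensional. Since the irreducible $M$-dimensional $R_\bd$ maps onto $Z$ (with $\Fl^a_\bd=\pi(R_\bd)\subseteq Z$ set-theoretically), $Z$ is generically reduced, and being Cohen-Macaulay it has no embedded primes, so $Z$ is reduced and $Z=\Fl^a_\bd$. Normality then follows from Serre's criterion: $S_2$ holds since $\Fl^a_\bd$ is Cohen-Macaulay, and $R_1$ holds because the singular locus, contained in the complement of the dense orbit, is checked directly on the $(N^-)^a$-orbit stratification to have codimension at least two. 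This proves (2).

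\emph{Crepancy, rational singularities, Frobenius splitting (proofs of (1), (3)).} By adjunction for the locally complete intersection $Z=\Fl^a_\bd\subset Y$ one has $\omega_{\Fl^a_\bd}\cong\omega_Y|_{\Fl^a_\bd}\T\det\mathcal N_{Z/Y}$, where $\mathcal N_{Z/Y}=(\mathcal I/\mathcal I^2)^{\vee}$ is the (locally free) normal sheaf, whose determinant I would compute as a combination of tautological classes read off from the incidence relations of the previous step; on the other side $\omega_{R_\bd}$ is the product over the tower of the relative canonical bundles of the $\bP^1$-fibrations. Pulling $\omega_{\Fl^a_\bd}$ back along $\pi$ and matching the two expressions by a Chern-class bookkeeping in $\mathrm{Pic}(R_\bd)$ gives $\omega_{R_\bd}\cong\pi^*\omega_{\Fl^a_\bd}$, i.e.\ the resolution is crepant, which is (1). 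Consequently, by the projection formula together with $\pi_*\cO_{R_\bd}=\cO_{\Fl^a_\bd}$, the natural inclusion $\pi_*\omega_{R_\bd}\hk\omega_{\Fl^a_\bd}$ is an isomorphism, and the rationality criterion recalled above (\cite{E}) shows $\Fl^a_\bd$ has rational singularities. For Frobenius splitting I would produce on the smooth $R_\bd$ a global section of $\omega_{R_\bd}^{-1}$ whose zero divisor has the form $Z_1+\dots+Z_M+D$, with $Z_1,\dots,Z_M$ the boundary divisors of the tower (the locus where the $s$-th chosen subspace meets a coordinate hyperplane non-generically) and $D$ effective and disjoint from $\bigcap_s Z_s$; Theorem~\ref{MR} then shows $R_\bd$ is Frobenius split, and the descent property for a proper surjection with $\pi_*\cO=\cO$ (\cite{MR}) shows $\Fl^a_\bd$ is Frobenius split. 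This proves (3).

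\emph{Borel-Weil (proof of (4)) and the main obstacle.} Since $\Fl^a_\bd$ has rational singularities, $H^m(\Fl^a_\bd,\cL_\mu)\cong H^m(R_\bd,\pi^*\cL_\mu)$ for all $m$, and $\pi^*\cL_\mu$ is a globally generated (in particular nef) line bundle on the Bott-Samelson-type variety $R_\bd$; the vanishing $H^m(R_\bd,\pi^*\cL_\mu)=0$ for $m>0$ is then standard (e.g.\ via the Frobenius splitting of $R_\bd$ just obtained together with the cohomology-vanishing property recalled above, or by induction on the tower). Hence $h^0(\Fl^a_\bd,\cL_\mu)=\chi(\Fl^a_\bd,\cL_\mu)=\chi(\Fl_\bd,\cL_\mu)=\dim V^a_\mu$, using flatness of the degeneration and classical Borel-Weil on the generic fibre. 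Finally, the pullback map $(V^a_\mu)^*=H^0(\bP(V^a_\mu),\cO(1))\to H^0(\Fl^a_\bd,\cL_\mu)$ is injective because the image of $\Fl^a_\bd$ spans $\bP(V^a_\mu)$---already the $(N^-)^a$-orbit of $\bC v_\mu$ spans $V^a_\mu=S^\bullet(\fn^-)v_\mu$---so, being an injection between spaces of the same finite dimension, it is an isomorphism; dualizing gives $H^0(\Fl^a_\bd,\cL_\mu)^*\cong V^a_\mu$, which is (4). The main obstacle throughout is the pair of intertwined computations in the middle steps: identifying the ideal of $\Fl^a_\bd$ in $\prod_i Gr_{d_i}(W)$ and proving the purely combinatorial fact that the degenerate incidence relations cut out a subscheme of exactly the expected codimension, and then matching the Chern classes of the resulting normal bundle against those of the relative cotangent bundles of the tower $R_\bd$; once these are in hand, the $\bP^1$-bundle structure of $R_\bd$, the anticanonical section, and the Borel-Weil argument are comparatively formal.
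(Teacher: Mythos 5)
The theorem you are proving is only quoted in this paper (as a recollection from \cite{FF}); the paper's own proofs of the analogous symplectic statements appear in Sections 5--8 and are the natural point of comparison. Your architecture --- iterated $\bP^1$-tower resolution, equation count for the local complete intersection property, canonical bundle bookkeeping for crepancy and then Elkik's criterion for rationality, Mehta--Ramanathan for Frobenius splitting, and rational singularities together with flat degeneration and spanning for the BWB statement --- tracks the paper's quite closely. Two points deserve comment.

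First, there is a genuine gap in the l.c.i.\ step as you have written it. You argue that since the incidence conditions contribute exactly $\dim Y - M$ local equations, the scheme $Z\subset Y$ they cut out has codimension $\dim Y - M$, and a few lines later you say ``the irreducible $M$-dimensional $R_\bd$ maps onto $Z$.'' But Krull's principal ideal theorem only gives that every component of $Z$ has dimension \emph{at least} $M$; to conclude l.c.i.\ you need the upper bound $\dim Z\le M$, and at the point where you invoke it you only know $\pi(R_\bd)=\Fl^a_\bd\subseteq Z$, not the reverse inclusion. Ruling out excess components of $Z$ is exactly the content of the surjectivity of the resolution onto the incidence variety --- the nontrivial lemma in which one reconstructs, step by step, the off-diagonal $V_{i,j}$ from a tuple $(V_i)$ satisfying the degenerate incidences (the paper's Lemma in Section 5 for the symplectic case, and its type-A counterpart in \cite{FF}). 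That lemma must come \emph{before} the dimension count, not after; once it is in place, $Z$ is the image of the irreducible $R_\bd$ and hence irreducible of dimension $M$, which is what the count needs. The paper avoids chart-by-chart work in $\prod_i Gr_{d_i}(W)$ altogether by passing to a $\Gamma$-torsor $Q^\circ$ over an affine space of linear maps $A_i,B_j$, where the defining equations are literally matrix entries and the dimension of $Q^\circ$ is read off the set-theoretic torsor structure over the already-controlled $\Fl^a_\bd$; this linearization is what makes the count clean.

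Second, a real but legitimate difference: for crepancy you compute $\omega_{\Fl^a_\bd}$ by adjunction for the l.c.i.\ $\Fl^a_\bd\subset\prod_i Gr_{d_i}(W)$ and match Chern classes against the tower formula for $\omega_{R_\bd}$. The paper instead works entirely inside the resolution: it computes $\omega_{SpR}$ via the $\bP^1$-fibration formula (its Lemma \ref{Kom} and Lemma \ref{Zom}) in terms of the tautological bundles $\om_{i,j}$, and then reads off $\omega$ of the singular variety by restricting to the open cell union the non-exceptional boundary (Corollary \ref{KSF} and its proof). Your adjunction route is more ``ambient'' and yours naturally reuses the normal bundle data from the l.c.i.\ argument; the paper's route keeps the whole computation on the smooth tower. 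In type ${\tt A}$, where there are no exceptional divisors, both must yield $\omega_{R_\bd}\simeq\pi^*\omega_{\Fl^a_\bd}$, but the intermediate bookkeeping is genuinely different. The Frobenius-splitting and BWB steps of your sketch coincide with the paper's.
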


\section{Filtrations: $\msp_{2n}$ vs $\msl_{2n}$}\label{Filtrations}
Let  $w_1,\ldots, w_{2n}$ be a basis of a $2n$-dimensional vector space $W$.
We fix a non-degenerate sympletic form $\langle\cdot,\cdot\rangle$ defined by the conditions
$\langle w_i,w_{2n+1-i}\rangle=1$ for $1\le i\le n$ and $\langle w_i,w_j\rangle=0$ for all
$1\le i,j\le 2n$, $j\not=2n+1-i$. We realize the symplectic group $Sp_{2n}$ as the group of 
automorphisms of $W$ leaving the form invariant. The diagonal matrices
$$
T=\left\{ t= \left(
\begin{array}{ccccc}
t_1 & 0 & 0 & 0 & 0 \\0 & t_2 & 0 & 0 & 0
\\0 & 0 & \ddots & 0 & 0 \\0 & 0 & 0 & t_2^{-1} & 0
\\0 & 0 & 0 & 0 & t_1^{-1}
\end{array}\right)\mid t_1,\ldots,t_n\in\bc^*\right\}
$$
form a maximal torus $T\subset Sp_{2n}$, and the subgroup
$B\subset Sp_{2n}$ of upper triangular matrices  is a Borel subgroup for $Sp_{2n}$.
In such a realization the root vectors of $\msp_{2n}=\text{Lie}\, Sp_{2n}$ are explicitly given 
by the formulas:
\begin{equation}\label{matrix}
f_{i,j}=f_{\al_{i,j}}=
\begin{cases}
E_{j+1,i}-E_{2n+1-i,2n-j},\ 1\le i \le j <n,\\
E_{j+1,i} + E_{2n+1-i,2n-j},\ j\ge  n, i+j<2n,\\
E_{2n+1-i,i},\ 1\le i\le n.
\end{cases}
\end{equation}
As usual, $E_{j,i}$ is the matrix having zero entries everywhere except for the entry 1 in the
$j$-th row, $i$-th column.

Given a dominant weight $\lambda=a_1\omega_1+\ldots+a_n\omega_n$ for the group $Sp_{2n}$, we can consider
this also as dominant weight for the larger group $SL_{2n}\supset Sp_{2n}$. In fact,
let $V_{\lambda,\msl_{2n}}$ be the corresponding irreducible $\msl_{2n}$-representation and
fix a highest weight vector $v_{\msl_{2n}}\in V_{\lambda,\msl_{2n}}$. We identify $V_{\lambda,\msp_{2n}}$
with the irreducible $\msp_{2n}$-submodule of $V_{\lambda,\msl_{2n}}$ generated by $v_{\msl_{2n}}$.

Consider now the action of $\msl_{2n}$ on $V_{\lambda,\msl_{2n}}$. 
Let $\fn^+_{\msl_{2n}},\fn^-_{\msl_{2n}} \subset {\mathfrak{sl}}_{2n}$
respectively be the Lie algebra of the unipotent radical of the Borel subgroup of upper triangular matrices
$B_{SL_{2n}}\subset SL_{2n}$  and of the opposite Borel subgroup $B^-_{SL_{2n}}$. 
Recall the group $SL_{2n}^a=B_{SL_{2n}}\ltimes N_{SL_{2n}}^-$ and the degenerate flag varieties
$\Fl^a_\lambda:=\overline{SL_{2n}^a\cdot v_{\msl_{2n}}}\subset \pc(V_{\lambda,\msl_{2n}}^a)$.

The inclusions $Sp_{2n}\subset SL_{2n}$, $B\subset B_{SL_{2n}}$ and $\nn\subset \fn^-_{\msl_{2n}}$ give 
rise to an action
of $S(\fn^-)$ on $V_{\lambda,\msl_{2n}}^{a}$. 
We want to compare this action with the action on 
$V_{\lambda,\msp_{2n}}^{a}$. Let ${\mathfrak{C}}\subset  V_{\lambda,\msl_{2n}}^{a}$ be the cyclic module 
${\mathfrak{C}_\lambda}:=S(\fn^-).v_{\msl_{2n}}\subset V_{\lambda,\msl_{2n}}$.
\begin{prop}\label{CgleichV}
%$\dim {\mathfrak{C}}=\dim \V^{ab}$.
${\mathfrak{C}_\lambda}\simeq V_{\lambda,\msp_{2n}}^{a}$ as $S(\fn^-)$-module.
\end{prop}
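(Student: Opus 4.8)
The plan is to split the statement into a surjection and an injection, using on the one hand the presentation of $V_{\lambda,\msp_{2n}}^{a}$ by generators from Theorem \ref{genrel}, and on the other hand the monomial bases of Theorem \ref{basis}. First one records that in the matrix realization \eqref{matrix} all root vectors $f_{i,j}$ are strictly lower triangular, so $\nn\subset\fn^-_{\msl_{2n}}$, and in fact $\msp_{2n}^{a}$ is a Lie subalgebra of $\msl_{2n}^{a}$: for $b\in\fb$, $x\in\nn$ the element $[b,x]$ lies in $\msp_{2n}$, and since $\msp_{2n}\cap\fb_{\msl_{2n}}=\fb$ and $\nn\subset\fn^-_{\msl_{2n}}$ its decompositions along $\msp_{2n}=\fb\oplus\nn$ and along $\msl_{2n}=\fb_{\msl_{2n}}\oplus\fn^-_{\msl_{2n}}$ have the same $\nn$-part; hence the $\fb$-module structures on $\nn\simeq\msp_{2n}/\fb$ and on its image in $\msl_{2n}/\fb_{\msl_{2n}}$ agree. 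Consequently $V_{\lambda,\msl_{2n}}^{a}$ is a $\msp_{2n}^{a}$-module by restriction, and $\mathfrak C_\lambda=S(\fn^-)v_{\msl_{2n}}=U(\msp_{2n}^{a})v_{\msl_{2n}}$ is the cyclic $\msp_{2n}^{a}$-submodule generated by the highest weight vector $v_{\msl_{2n}}$ of weight $\lambda$.

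\emph{The surjection.} For any positive root $\al$ of $\msp_{2n}$ one has $f_\al^{\langle\lambda,\al^\vee\rangle+1}v_{\msl_{2n}}=0$ in $V_{\lambda,\msp_{2n}}\subset V_{\lambda,\msl_{2n}}$ by $\msl_2$-theory; since $f_\al$ is a single element of $\nn$, the monomial $f_\al^{\langle\lambda,\al^\vee\rangle+1}$ has PBW-degree exactly $\langle\lambda,\al^\vee\rangle+1$, so the same relation holds for $f_\al\in S(\fn^-)$ acting on $v_\lambda\in V_{\lambda,\msl_{2n}}^{a}$, hence in $\mathfrak C_\lambda$. Using the identity $b\cdot(mv_\lambda)=(b\circ m)v_\lambda+m\cdot(bv_\lambda)$, valid in any $\g^a$-module for $b\in\fb$, $m\in S(\fn^-)$, together with $\fb v_\lambda\subset\bC v_\lambda$, an induction shows that $U(\fb)\circ f_\al^{\langle\lambda,\al^\vee\rangle+1}$, and hence the ideal it generates, annihilates $v_{\msl_{2n}}$ in $\mathfrak C_\lambda$. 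Applying this to the generators of $I_\lambda$ from Theorem \ref{genrel} (the roots $\al_{i,j}$ with $i\le j<n$ and the long roots $\al_{i,2n-i}$, for which the exponents are indeed $\langle\lambda,\al^\vee\rangle+1$) gives $I_\lambda\subseteq\operatorname{Ann}_{S(\fn^-)}(v_{\msl_{2n}})$, i.e. a surjection of $S(\fn^-)$-modules $\psi\colon V_{\lambda,\msp_{2n}}^{a}=S(\fn^-)/I_\lambda\twoheadrightarrow\mathfrak C_\lambda$, $v_\lambda\mapsto v_{\msl_{2n}}$.

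\emph{The injection.} By Theorem \ref{basis}, $\dim V_{\lambda,\msp_{2n}}^{a}=|S_{\msp_{2n}}(\lambda)|$, so it suffices to exhibit that many linearly independent vectors in $\mathfrak C_\lambda$; then $\psi$ is an isomorphism in each graded piece. From \eqref{matrix}, under $\nn\hookrightarrow\fn^-_{\msl_{2n}}$ the vector $f^{\msp}_{\al_{i,j}}$ is $\pm f^{\msl_{2n}}_{\al_{i,j}}$ (the \emph{same} root index $(i,j)$) plus, for the short roots, a second $\msl_{2n}$ root vector of index $(2n-j,2n-i)$. Fix a total order on $\Phi^+_{\msl_{2n}}$ in which $\al_{i,j}$ is larger than $\al_{2n-j,2n-i}$ for every symplectic-admissible pair $(i,j)$ — possible since then $i<2n-j$, e.g. ordering $\al_{a,b}$ first by $a$ (smaller larger) — and pass to the induced homogeneous monomial order $\prec$ on $S(\fn^-_{\msl_{2n}})$. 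Then the $\prec$-leading term of $f^{\bf s}_{\msp}=\prod(f^{\msp}_{\al_{i,j}})^{s_{i,j}}$ is $\pm f^{\bf t}_{\msl_{2n}}$, where ${\bf t}={\bf t}({\bf s})$ carries $s_{i,j}$ on $\al_{i,j}\in\Phi^+_{\msl_{2n}}$ for symplectic-admissible $(i,j)$ and $0$ otherwise; the map ${\bf s}\mapsto{\bf t}({\bf s})$ is injective. The key combinatorial input is
\begin{equation*}
{\bf s}\in S_{\msp_{2n}}(\lambda)\ \Longrightarrow\ {\bf t}({\bf s})\in S_{\msl_{2n}}(\lambda).
\end{equation*}
Granting this, in the $\msl_{2n}$ monomial basis $f^{\bf s}_{\msp}v_{\msl_{2n}}=f^{{\bf t}({\bf s})}_{\msl_{2n}}v_\lambda+(\text{terms }\prec f^{{\bf t}({\bf s})}_{\msl_{2n}}v_\lambda)$ with $f^{{\bf t}({\bf s})}_{\msl_{2n}}v_\lambda$ itself a basis vector; ordering $S_{\msp_{2n}}(\lambda)$ so that ${\bf t}(\cdot)$ decreases makes the relevant transition matrix unitriangular, so the $f^{\bf s}_{\msp}v_{\msl_{2n}}$ are linearly independent and $\dim\mathfrak C_\lambda\ge|S_{\msp_{2n}}(\lambda)|$, forcing $\psi$ to be an isomorphism.

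\emph{On the combinatorial claim (the main obstacle).} As an $\msl_{2n}$-weight, $\lambda$ has coordinates $m'_k=m_k$ for $k\le n$ and $m'_k=0$ for $n<k<2n$. The point is that along an $\msl_{2n}$ Dyck path $\bp'$ from $\al_i$ to $\al_j$ both $p$ and $p+q$ are non-decreasing, so the roots $\al_{p,q}$ of $\bp'$ lying in the symplectic-admissible range $\{p\le n,\ p+q\le 2n\}$ form an initial segment $(\beta(0),\dots,\beta(k'))$. If $j\le n$ the whole of $\bp'$ is admissible and is a symplectic Dyck path, so $\sum_{\beta\in\bp'}{\bf t}({\bf s})_\beta=\sum{\bf s}_\beta\le m_i+\dots+m_j=m'_i+\dots+m'_j$. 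If $j>n$, the segment $(\beta(0),\dots,\beta(k'))$ is a symplectic Dyck path from $\al_i$ that terminates at $\al_n$ or at a long root $\al_{l,2n-l}$ with $i\le l<n$, so $\sum_{\beta\in\bp'}{\bf t}({\bf s})_\beta=\sum_{s\le k'}{\bf s}_{\beta(s)}\le m_i+\dots+m_n=m'_i+\dots+m'_j$; and if $i>n$ both sides vanish. I expect this bookkeeping — together with the mild requirement that the term order $\prec$ be compatible with the straightening law of the $\msl_{2n}$ monomial basis of \cite{FFoL1} — to be the only genuine content of the proof, everything else being formal.
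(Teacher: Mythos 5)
Your strategy is the same as the paper's: use Theorem~\ref{genrel} to deduce that $I_\lambda$ annihilates $v_{\msl_{2n}}$, hence a surjection $V^a_{\lambda,\msp_{2n}}\twoheadrightarrow\mathfrak{C}_\lambda$, and then exhibit $\dim V^a_{\lambda,\msp_{2n}}$ linearly independent vectors $f^{\bf s}_{\msp}v_{\msl_{2n}}$, ${\bf s}\in S_{\msp_{2n}}(\lambda)$, by a leading-term argument tied to the injection $\varphi:\Phi_{\msp_{2n}}^+\to\Phi_{\msl_{2n}}^+$, $\al_{i,j}\mapsto\al_{i,j}$. The Dyck-path bookkeeping you outline is precisely what the paper needs for its unproved assertion \eqref{slbasis}.

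The genuine gap is the one you flag yourself and should not call ``mild'': that your monomial order $\prec$ on $S(\fn^-_{\msl_{2n}})$ is compatible with the straightening of the FFL basis. Unitriangularity of your transition matrix requires that when a non-leading summand $f^{\bf u}_{\msl_{2n}}v_\lambda$ (with ${\bf u}\prec{\bf t}({\bf s})$) is re-expressed in the FFL basis, every monomial that appears is still $\prec f^{{\bf t}({\bf s})}_{\msl_{2n}}$; verifying this for your ad hoc order is at least as much work as the rest of the linear-independence argument. The paper avoids the issue entirely by replacing a monomial order with the lexicographic order on the \emph{root weight} $\zeta(v_{(\ell_\beta)})=(r_1,\dots,r_{2n-1})$, the coordinates of $\sum\ell_\beta\beta$ in the simple roots. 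Since straightening preserves $T$-weight, it preserves $\zeta$; and each replacement of a factor $F_{i,j}$ by $F_{2n-j,2n-i}$ strictly decreases $\zeta$ in lex order (the first changed coordinate is $r_i$, which drops by one, as $i<2n-j$). Hence the leading summand $f^{{\bf t}({\bf s})}_{\msl_{2n}}v_\lambda$ of $f^{\bf s}_{\msp}v_{\msl_{2n}}$ is the unique summand in its weight space — no straightening of the other summands can ever affect it. Nonvanishing is then immediate, and linear independence follows by projecting a putative relation onto the weight space of lex-maximal root weight among the leading terms, where only those leading terms survive and they are distinct basis vectors because $\varphi$ is injective. Substituting this weight-based order for your $\prec$ closes your argument with no extra verification.
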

\begin{proof}
Let $\Phi_{\msl_{2n}}^+, \Phi_{\msp_{2n}}^+$ be the sets of positive roots for $\msl_{2n}$ respectively $\msp_{2n}$,
see \eqref{prsln},  \eqref{ij}, \eqref{ibarj}.
%$$
%\Phi_{SL_{2n}}^+=\{\epsilon_i-\epsilon_j\mid 1\le i<j\le \bar 1\},\
%\Phi_{Sp_{2n}}^+=\{\epsilon_i \pm \epsilon_j,-2\epsilon_i \mid 1\le i,j\le n,i<j\}.
%$$
We define an injective map $\varphi: \Phi_{\msp_{2n}}^+ \rightarrow \Phi_{\msl_{2n}}^+$ between the sets of
positive roots by the formula $\al_{i,j}\mapsto\al_{i,j}$.
%$$\varphi:\left\{
%\begin{array}{rcl}
%\epsilon_i-\epsilon_j\mapsto &  \epsilon_i-\epsilon_j&\text{for}\,1\le i<j\le n,\\
%\epsilon_i+\epsilon_j \mapsto&  \epsilon_i-\epsilon_{\bar j}&\text{for}\,1\le i<j\le n,\\
%2\epsilon_i \mapsto&\epsilon_i-\epsilon_{\bar i}&\text{for}\,1\le i \le n.\\
%\end{array}\right.
%$$
For $\alpha=\al_{i,j}\in \Phi_{\msl_{2n}}^+$ we write $F_{\alpha}$ for the root vector associated
to $-\alpha$.
Note that $F_\alpha=E_{j,i}$.  
For $\alpha\in \Phi_{\msp_{2n}}^+$ we keep the usual notation $f_{\alpha}$ for the root 
vector in $\fn^-$ associated to $-\alpha$. The matrix corresponding to $f_{\alpha_{i,j}}$ is given
in \eqref{matrix}. 
We use the shorthand notations $F_{i,j}=F_{\al_{i,j}}$, $f_{i,j}=f_{\al_{i,j}}$.
Note that since $E_{j,i}$ with $i>n$ acts trivially on $V^a_{\lambda,\msl_{2n}}$ 
($(\la,\omega_k)=0$ for $k>n$), 
the image of $f_\alpha$ in $\text{End}\,V_{\lambda,\msl_{2n}}^{a}$
is equal to the image of $F_{\varphi(\alpha)}$.

%where $Y_{\alpha}$  is the matrix equal to
%$$
%E_{j,i} - E_{\bar i,\bar j},\ E_{\bar i, j} + E_{\bar j, i},
%\quad\text{or}\quad E_{\bar i, i},
%$$
%depending on $\alpha$ being equal to $\epsilon_i-\epsilon_j$, $1\le i<j\le n$, or
%$\epsilon_i+\epsilon_j$, $1\le j<i\le n$
%respectively $2\epsilon_i$, $1\le i\le n$.

The cyclic $\fn^-_{\msl_{2n}}$-module $V_{\lambda,\msl_{2n}}^{a}$ can be described in terms of
generators and relations, see Theorem \ref{genrel}.
% as follows (see [FFL1]):
%\begin{equation}\label{slequation}
%V_{\lambda,SL_{2n}}^{ab} \simeq S(\fn_{SL_{2n}}^-)/\langle U(\fn_{SL_{2n}}^+)\circ
%F^{a_i+\ldots+a_j+1}_{\alpha}\mid \alpha=\epsilon_i-\epsilon_j,1\le i<j \le \bar 1 \rangle.
%\end{equation}
Let us write ${\mathfrak{C}_\lambda}$ as a quotient $S(\fn^-)/I_{\mathfrak{C}_\lambda}$.
Then the ideal $I_{\mathfrak{C}_\lambda}$ contains the ideal:
$$
I_\lambda=\left\langle
\begin{array}{l}
U(\fn^+)\circ f_{i,j}^{{a_i+\ldots+a_j+1}}\mid 1\le i \le j\le n-1, \\
U(\fn^+)\circ f_{i,2n-i}^{{a_i+\ldots+a_n+1}}\mid 1\le i\le n.\\
\end{array}
\right\rangle
$$
Theorem \ref{genrel} implies that one has a natural $S(\fn^-)$-equivariant surjective map
$V_{\lambda,\msp_{2n}}^{a}\rightarrow {\mathfrak{C}_\lambda}$. To prove that the
map is an isomorphism, it suffices to show that both modules have the same dimension.
For the proof we use the bases of $V_{\lambda,\msp_{2n}}^{a}$ 
and $V_{\lambda,\msl_{2n}}^{a}$, see Theorem~\ref{basis}.
We define an injective map
$$
\varphi: P_{\msp_{2n}}(\lambda)\rightarrow P_{\msl_{2n}}(\lambda),\quad
(c_\beta)_{\beta\in \Phi_{\msp_{2n}}^+}\mapsto (d_{\gamma})_{\gamma\in \Phi_{\msl_{2n}}^+},
$$
where $(d_{\gamma})_{\gamma\in \Phi_{\msl_{2n}}^+}:=\varphi((c_\beta)_{\beta\in \Phi_{\msp_{2n}}^+}) $ is
defined by the rule:
$$
d_{\gamma}:=
\begin{cases}
c_\beta&\text{if\ } \gamma =\varphi(\beta)\ \text{for some $\beta\in \Phi_{\msp_{2n}}^+$},\\
0 &\text{otherwise}.
\end{cases}
$$
It is now easy to see that
\begin{equation}\label{slbasis}
(d_{\gamma})_{\gamma\in \Phi_{\msl_{2n}}^+}=\varphi((c_\beta)_{\beta\in \Phi_{\msp_{2n}}^+})\in P_{\msl_{2n}}(\lambda)
\end{equation}
is an element in the associated
polytope for $\g=\msl_{2n}$. We know  that the vectors
$$
\{\prod_{\beta\in \Phi_{\msp_{2n}}^+     } f_\beta^{c_\beta}v_{\msp_{2n}}\mid
(c_\beta)_{\beta\in \Phi_{\msp_{2n}}^+}\in P_{\msp_{2n}}(\lambda)\}\subset  V^{a}_{\lambda,\msp_{2n}}
$$
form a basis. We want to show that the vectors
$$
\{\prod_{\beta\in \Phi_{\msp_{2n}}^+     } f_\beta^{c_\beta}v_{\msl_{2n}} \mid
(c_\beta)_{\beta\in \Phi_{\msp_{2n}}^+}\in P_{\msp_{2n}}(\lambda)\}\subset
{\mathfrak{C}_\lambda}\subset V^{a}_{\lambda,\msl_{2n}}
$$
are linearly independent. Recall that $f_\beta$ acts on $V^{a}_{\lambda,\msl_{2n}}$ in the same way as $F_{\varphi(\beta)}$
for the roots $\beta=\al_{i,j}$, $1\le i\le j< n$ and $\beta=\al_{i,2n-i}$, $1\le i\le n$, and for
$\beta=\al_{i,j}$ with $j\ge n$, $i+j<2n$  $f_\beta$ acts as
$F_{i,j}+F_{2n-j,2n-i}$.
Hence for
$(d_{\gamma})_{\gamma\in \Phi_{\msl_{2n}}^+}=\varphi((c_\beta)_{\beta\in \Phi_{\msp_{2n}}^+}) $ we get
\begin{equation}\label{vector}
\begin{array}{rcl}
b_{(c_\beta)}&:=&\prod_{\beta\in \Phi_{\msp_{2n}}^+     } f_\beta^{c_\beta}v_{\msl_{2n}}\\
&=&
(\prod_{i+j<2n, j>n} (F_{i,j}+F_{2n-j,2n-i})^{d_{\alpha_{i,j}}})
(\prod_{1\le i\le j <n} F_{i,j}^{d_{\alpha_{i,j}}})
(\prod_{1\le i\le n} F_{i,2n-i}^{d_{\alpha_{i,2n-i}}})v_{\msl_{2n}}.
\end{array}
\end{equation}
%where $\overline{(\epsilon_i-\epsilon_{\bar j})}=\epsilon_j-\epsilon_{\bar i}$. 
Let $\alpha_1,\ldots,\alpha_{2n-1}$
be the set of simple roots for the root system $\Phi_{\msl_{2n}}$ (Bourbaki enumeration). 
We associate to a vector
$v_{(\ell_\beta)}=\prod_{\beta\in \Phi_{\msl_{2n}}^+ } F_\beta^{\ell_\beta}v_{\msl_{2n}}$
the collection
$$
\zeta(v_{(\ell_\beta)})=(r_1,\ldots,r_{2n-1})\quad\text{such that\ } \sum_{\beta\in \Phi_{\msl_{2n}}^+ }
\ell_\beta \beta=\sum_{i=1}^{2n-1}r_i\alpha_i,
$$
called the {\it root weight}. We define a partial order on these vectors by:
$$
v_{(\ell_\beta)} >v_{(k_\beta)}\quad\text{ if }\quad
\zeta(v_{(\ell_\beta)})> \zeta(v_{(k_\beta)})
$$
with respect to the induced lexicographic order on the $(2n-1)$-tuples.
It follows for the vector $b_{(c_\beta)}$
in $(\ref{vector})$:
\begin{equation}\label{develop}
b_{(c_\beta)} = v_{(d_\alpha)} + \text{\, a sum $\sum x_{(\ell_\beta)}v_{(\ell_\beta)}$ of smaller terms},
\end{equation}
because if one chooses in a factor in $(\ref{vector})$ the factor $F_{2n-j,2n-i}$ instead of 
$F_{i,j}$, then the corresponding $(2n-1)$-tuple is strictly smaller with respect to the
partial order above. It follows in particular
that $b_{(c_\beta)}\not= 0$ because $v_{(d_\alpha)}$ is a basis vector for 
$V_{\lambda,\msl_{2n}}^{a}$ by $(\ref{slbasis})$
and the smaller summands are weight vectors of different weights.

The linear independence of the vectors $\{b_{(c_\beta)}\mid (c_\beta)\in P_{\msp_{2n}}(\lambda)\}$,
follows along the same lines: given a linear
dependence relation for the $b_{(c_\beta)}$, choose a maximal element $(p_1,\ldots,p_{2n-1})$ among the $\zeta(v_{(\varphi(c_\beta)})$.
By weight reasons the linear independence of the $b_{(c_\beta)}$ implies a linear independence relation
between the
summands of associated root weight $(p_1,\ldots,p_{2n-1})$.
By maximality, a summand of $b_{(c_\beta)}$ as in $(\ref{develop})$ has root weight $(p_1,\ldots,p_{2n-1})$
if and only if $v_{(d_\alpha)}$ has root weight $(p_1,\ldots,p_{2n-1})$, so we obtain a linear dependence
relation between the $v_{(d_\alpha)}$, which is not possible.

It follows: $\dim {\mathfrak{C}_\lambda}= \dim V_{\lambda,\msp_{2n}}^{a}$ and the canonical
map $V_{\lambda,\msp_{2n}}^{ab}\rightarrow {\mathfrak{C}_\lambda}$ is an isomorphism.
\end{proof}

\begin{cor}
$\SF_\lambda\subset \Fl_\lambda$ for any $\la$ of the form $\sum_{i=1}^n a_i\omega_i$.
\end{cor}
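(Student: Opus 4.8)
The plan is to deduce the corollary directly from Proposition~\ref{CgleichV} by relating the degenerate flag varieties to the cyclic modules that were just compared. First I would recall that, by definition, $\SF^a_\lambda$ is the closure of the orbit $G^a_{\msp_{2n}}(\bC v_{\msp_{2n}})$ inside $\bP(V^a_{\lambda,\msp_{2n}})$, and in fact equals $\overline{(N^-)^a\cdot(\bC v_{\msp_{2n}})}$; similarly $\Fl^a_\lambda=\overline{SL^a_{2n}\cdot v_{\msl_{2n}}}=\overline{(N^-_{\msl_{2n}})^a\cdot(\bC v_{\msl_{2n}})}$ inside $\bP(V^a_{\lambda,\msl_{2n}})$. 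The key point is that the orbit of the highest weight line under the abelian group $(N^-)^a$ is nothing but the image of the affine space $S(\fn^-)$ acting on $v_\lambda$, i.e. it is governed entirely by the $S(\fn^-)$-module structure on the cyclic module generated by $v_\lambda$.

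Next I would invoke Proposition~\ref{CgleichV}: the inclusion $\fn^-\subset\fn^-_{\msl_{2n}}$ realizes $\mathfrak{C}_\lambda=S(\fn^-).v_{\msl_{2n}}$ as a submodule of $V^a_{\lambda,\msl_{2n}}$, and the proposition identifies $\mathfrak{C}_\lambda\simeq V^a_{\lambda,\msp_{2n}}$ as $S(\fn^-)$-modules, compatibly with the highest weight vectors $v_{\msp_{2n}}\leftrightarrow v_{\msl_{2n}}$. Passing to projectivizations, this gives a commutative picture: the $(N^-)^a$-orbit $(N^-)^a(\bC v_{\msp_{2n}})\subset\bP(V^a_{\lambda,\msp_{2n}})$ is carried isomorphically onto the $(N^-)^a$-orbit $(N^-)^a(\bC v_{\msl_{2n}})\subset\bP(\mathfrak{C}_\lambda)\subset\bP(V^a_{\lambda,\msl_{2n}})$, and the latter orbit is contained in the bigger orbit $(N^-_{\msl_{2n}})^a(\bC v_{\msl_{2n}})$. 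Taking closures inside $\bP(V^a_{\lambda,\msl_{2n}})$ and using that the embedding $\bP(\mathfrak{C}_\lambda)\hookrightarrow\bP(V^a_{\lambda,\msl_{2n}})$ is a closed immersion, I would conclude $\SF^a_\lambda=\overline{(N^-)^a(\bC v_{\msp_{2n}})}\subset\overline{(N^-_{\msl_{2n}})^a(\bC v_{\msl_{2n}})}=\Fl^a_\lambda$.

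One technical point worth spelling out is that the identification of orbits is as subsets of the \emph{same} projective space: Proposition~\ref{CgleichV} says the image of $f_\beta$ in $\operatorname{End} V^a_{\lambda,\msl_{2n}}$ equals (for $\beta$ a root not of the mixed type) the image of $F_{\varphi(\beta)}$ and (for the mixed roots) equals $F_{i,j}+F_{2n-j,2n-i}$, so the exponential action of $(N^-)^a$ on $v_{\msl_{2n}}$ through $\fn^-\subset\fn^-_{\msl_{2n}}$ literally lands inside the orbit of the full $(N^-_{\msl_{2n}})^a$. Thus no abstract transport of structure is needed; the containment of orbits is on the nose, and closure is monotone. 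The main obstacle, such as it is, is purely bookkeeping: making sure the highest weight lines are matched and that one is comparing closures inside a single ambient projective space rather than in two a priori unrelated ones. Given Proposition~\ref{CgleichV}, there is no serious analytic or combinatorial content left, and the corollary follows in a few lines.
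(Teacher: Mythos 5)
Your proof is correct and follows essentially the same route as the paper: invoke Proposition~\ref{CgleichV} to identify $V^a_{\lambda,\msp_{2n}}$ with the cyclic submodule $\mathfrak{C}_\lambda=S(\fn^-).v_{\msl_{2n}}\subset V^a_{\lambda,\msl_{2n}}$, transport the $Sp^a_{2n}$-orbit of the highest weight line into $\bP(\mathfrak{C}_\lambda)\subset\bP(V^a_{\lambda,\msl_{2n}})$, and observe that its closure sits inside the closure of the larger $SL^a_{2n}$-orbit. The paper's proof is a two-line restatement of exactly this; you have merely spelled out the bookkeeping (matching of orbits, closed immersion $\bP(\mathfrak{C}_\lambda)\hookrightarrow\bP(V^a_{\lambda,\msl_{2n}})$, monotonicity of closure) that the paper leaves implicit.
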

\begin{proof}
By Proposition~\ref{CgleichV}, we can identify
$\SF^a_\lambda:=\overline{Sp_{2n}^{a}\cdot v_{Sp_{2n}}}\subset \pc(V_{\lambda,Sp_{2n}}^{a})$
with $\overline{Sp_{2n}^a\cdot v_{\msl_{2n}}}\subset \pc({\mathfrak{C}_\lambda})$ and hence
$$
\SF^a_\lambda=\overline{Sp_{2n}^a\cdot v_{\msl_{2n}}}\subset 
\overline{SL_{2n}^a\cdot v_{\msl_{2n}}}
=\Fl^a_\lambda\subset \pc(V_{\lambda,\msl_{2n}}^a).
$$
\end{proof}
%We get in particular for the degenerate symplectic Grassmann variety:
%\begin{cor}
%$SpGrass_k\bc^{2n}\subset Grass_k\bc^{2n}$.
%\end{cor}

\section{Explicit description}\label{Explicit}
In this section we give an explicit description of the symplectic degenerate flag  varieties
in the linear algebra terms. We start with the case of symplectic Grassmannians.  

\subsection{The degenerate symplectic Grassmann variety}
Let $V_{\omega_k}$ be the irreducible fundamental $Sp_{2n}$-representation
of highest weight $\omega_k$ (Bourbaki enumeration). In particular, $V_{\omega_1}\simeq W$ is the
standard vector representation.
Let $V_{\omega_k}\hookrightarrow \Lambda^k W$
be the canonical embedding defined by mapping a fixed highest weight vector to $w_1\wedge\ldots\wedge w_k$.
In the following we will identify $V_{\omega_k}$ with the image.
By definition, the symplectic degenerate Grassmann variety $SpGr^a_k(W)\subset \bP(V_{\omega_k}^a)$ 
is equal to $\SF^a_{\omega_k}$.  

Let $P\subset Sp_{2n}$ be the maximal parabolic subgroup
associated to $\omega_k$. Set $\p=\text{Lie}\,P$ and let $\g=\n^-\oplus \lfr\oplus\n$ be a
decomposition such that
$\lfr$ is the Lie algebra of the Levi subgroup of $P$ containing $T$, $\p=\lfr\oplus\n$,
and $\n^-$ is the Lie algebra
of the unipotent radical of the parabolic subgroup $P^-$ opposite to $P$. The Lie algebra $\n^-$ consists
of matrices of the form
\begin{equation}\label{liealg}
\left(\begin{array}{cccc}
0 & 0 & 0 & 0  \\
A & 0 & 0 & 0  \\
B & 0 & 0 & 0  \\
C & B^{tn} & -A^{nt} & 0
\end{array}\right),
\end{equation}
where $A,B$ are $(n-k)\times k$ matrices, $X^{nt}$ denotes the transposed matrix with respect to the
skew diagonal,
and $C$ is a $k\times k$ matrix such that $C^{nt}=C$.

We write $W=W_{k,1}\oplus W_{k,2}\oplus W_{k,3}$, where
$$
W_{k,1}=\text{span}(w_1,\ldots, w_k),\ W_{k,2}=\text{span}(w_{k+1},\ldots, w_{2n-k}),
\ W_{k,3}=\text{span}(w_{2n-k+1},\ldots, w_{2n}).
$$

Denote by $p_{1,3}$ the projection $p_{1,3}:W\rightarrow W_{k,1}\oplus W_{k,3}$, i.e.,
$$
pr_{1,3}(x_1,\ldots,x_{2n})
=(x_1,\ldots,x_k,0,\ldots,0,x_{2n-k+1},\ldots,x_{2n}).
$$
\begin{prop}\label{ZKsympgrass}
$$
SpGr^a_k(W)=\{U\in Gr_k(W)\mid pr_{1,3}(U)\ \text{is isotropic} \}.
$$
\end{prop}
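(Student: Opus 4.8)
The idea is to realize $SpGr^a_k(W)$ as the closure of an $Sp^a_{2n}$-orbit inside an ordinary Grassmannian, compute that orbit, and match it with the right-hand side by a dimension count. By Proposition~\ref{CgleichV} and its corollary, $SpGr^a_k(W)=\SF^a_{\omega_k}$ sits inside $\Fl^a_{\omega_k}$ for $\msl_{2n}$; since $\omega_k$ is a single fundamental weight this degenerate flag variety carries no degenerate Pl\"ucker relations and is just $Gr_k(W)\subset\pc(\Lambda^k W)$ in its Pl\"ucker embedding. As $B$ fixes the highest weight line, $SpGr^a_k(W)=\overline{(N^-)^a\cdot[W_{k,1}]}$, where $(N^-)^a=\exp(\nn^a)$ and $[W_{k,1}]=[w_1\wedge\dots\wedge w_k]$. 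Put $Y=\{U\in Gr_k(W)\mid pr_{1,3}(U)\text{ isotropic}\}$; this is closed in $Gr_k(W)$, being the zero locus of the section of $\Lambda^2\mathcal U^*$ ($\mathcal U$ the tautological subbundle) assigning to $[U]$ the alternating form $(u,u')\mapsto\langle pr_{1,3}u,pr_{1,3}u'\rangle$ on $U$.

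Next I would compute the orbit. Carry the $\nn^a$-action on $V^a_{\omega_k,\msp_{2n}}$ over to the submodule $\mathfrak C_{\omega_k}\subset\Lambda^k W$ of Proposition~\ref{CgleichV}. Since $\nn^a$ is abelian and every root vector attached to a root of the Levi of the parabolic annihilates $v_{\omega_k}$ — hence annihilates all of $S^\bullet(\nn)v_{\omega_k}$ — only the root vectors of the nilradical \eqref{liealg} act, each of them as a sum of one or two degenerate lowering operators $\bar E_{p,q}$ with $q\le k<p$. Because such operators satisfy $\bar E_{p,q}\bar E_{p',q'}v_{\omega_k}=0$, their exponentials are linear in the parameters, and one computes that $\exp$ of the element of \eqref{liealg} with block data $(A,B,C)$ — with $A,B$ arbitrary $(n-k)\times k$ matrices and $C$ satisfying $C^{nt}=C$ — sends $[W_{k,1}]$ to the column span $[U_{A,B,C}]$ of the block matrix $(I_k;A;B;C)$. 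Thus the orbit is the affine space $\{(A,B,C):C^{nt}=C\}$, of dimension $2k(n-k)+\binom{k+1}{2}$. Finally, a short computation with the symplectic form shows that $pr_{1,3}(U_{A,B,C})$ — the column span of $(I_k;0;0;C)$ — is isotropic if and only if $C^{nt}=C$. Hence the orbit lies in $Y$, so $SpGr^a_k(W)\subseteq Y$ and $\dim SpGr^a_k(W)=2k(n-k)+\binom{k+1}{2}$.

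It remains to show $Y$ is irreducible of that same dimension. Let $Y^\circ\subset Y$ be the open subset where $pr_{1,3}|_U$ is injective. Sending $U$ to $pr_{1,3}(U)$ identifies $Y^\circ$ with the total space of $\underline{\Hom}(\mathcal U,W_{k,2})$ over the Lagrangian Grassmannian $\mathrm{LGr}(W_{k,1}\oplus W_{k,3})$ — if $pr_{1,3}(U)=\Lambda$ then $U$ is the graph of a homomorphism $\Lambda\to W_{k,2}$ — so $Y^\circ$ is irreducible of dimension $\binom{k+1}{2}+k(2n-2k)=2k(n-k)+\binom{k+1}{2}$. And $Y^\circ$ is dense in $Y$: given $U\in Y$, choose a Lagrangian $\Lambda\supseteq pr_{1,3}(U)$ in $W_{k,1}\oplus W_{k,3}$, a complement $U'$ of $U\cap W_{k,2}$ in $U$, a basis $u_1,\dots,u_m$ of $U\cap W_{k,2}$, and lifts $\ell_1,\dots,\ell_m\in\Lambda$ of a basis of $\Lambda/pr_{1,3}(U)$; then $U_t:=U'\oplus\spa(u_1+t\ell_1,\dots,u_m+t\ell_m)$ is a family of $k$-planes, algebraic in $t\in\bC$, lying in $Y$ throughout, with $U_0=U$ and $U_t\in Y^\circ$ for $t\neq0$. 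Hence $Y=\overline{Y^\circ}$ is irreducible of dimension $2k(n-k)+\binom{k+1}{2}$; since $SpGr^a_k(W)$ is an irreducible closed subvariety of $Y$ of the same dimension, $SpGr^a_k(W)=Y$.

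The heart of the argument is the orbit computation: pushing the abelian $\nn^a$-action through the identification of Proposition~\ref{CgleichV} and checking \emph{both} that exactly the nilradical \eqref{liealg} acts — which pins the orbit dimension — \emph{and} that the degenerate exponential is linear, so that $A$ and $B$ remain free and only the condition $C^{nt}=C$ survives. It is precisely this linearity, absent in the classical $Sp_{2n}$-action, that makes the degenerate orbit strictly larger than the classical symplectic cell. Once the orbit is described, the equivalence ``$pr_{1,3}(U_{A,B,C})$ isotropic $\Leftrightarrow C^{nt}=C$'' and the density of $Y^\circ$ in $Y$ are routine linear algebra.
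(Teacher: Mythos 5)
Your proof is correct, but it reaches the goal by a genuinely different route than the paper, and the comparison is instructive. Both arguments begin by computing the $(N^-)^a$-orbit of the highest weight line: its points are the column spans of block matrices $(I_k;A;B;C)$ with $A,B$ free and $C^{nt}=C$, and such a space has $pr_{1,3}$-image isotropic, so the orbit sits inside the candidate variety $Y$. From there the two arguments diverge. The paper establishes irreducibility of $Y=Z_k$ by exhibiting it as $Sp_{2k}\cdot Gr_k(W_{k,1}\oplus W_{k,2})$ (the image of a proper map from an irreducible variety), and then recognizes the orbit as exactly the open affine Pl\"ucker chart $\{p_{(1,\ldots,k)}\neq0\}$ of $Z_k$, whence $\overline{\text{orbit}}=Z_k$ directly with no dimension count. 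You instead establish irreducibility by identifying the open locus $Y^\circ$ (where $pr_{1,3}|_U$ is injective) with the total space of $\underline{\Hom}(\mathcal U,W_{k,2})$ over the Lagrangian Grassmannian $\mathrm{LGr}(W_{k,1}\oplus W_{k,3})$, showing density of $Y^\circ$ by an explicit one-parameter degeneration, and then finishing with a dimension comparison. Your fibration description of $Y^\circ$ is actually a clean way to see the $Sp_{2k}$-homogeneity the paper uses implicitly, since $\mathrm{LGr}(W_{k,1}\oplus W_{k,3})=Sp_{2k}/P_k$, and your explicit family $U_t$ is a nice hands-on replacement for the paper's ``$Z_k$ irreducible $\Rightarrow$ the chart is dense'' step. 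The trade-off is that the paper's chart argument makes the equality of the orbit with a full affine piece of $Z_k$ transparent, whereas your argument relies on the two irreducible varieties having the same dimension.

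One small wrinkle worth fixing in your write-up: the assertion ``$\bar E_{p,q}\bar E_{p',q'}v_{\omega_k}=0$'' is false as stated (for instance $\bar E_{3,1}\bar E_{4,2}(w_1\wedge w_2)=w_3\wedge w_4\neq0$ when $k=2$). What is true, and what you actually need, is that each individual $\bar E_{p,q}$ with $q\le k<p$ squares to zero on $\Lambda^kW$, so $\exp(x\bar E_{p,q})=1+x\bar E_{p,q}$; the commuting product $\prod_{p,q}(1+x_{p,q}\bar E_{p,q})$ applied to $w_1\wedge\cdots\wedge w_k$ is a polynomial (of degree $k$, not linear) in the parameters, whose value in the Grassmannian is precisely the column span of $\bigl(\begin{smallmatrix}I_k\\X\end{smallmatrix}\bigr)$. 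This does not affect your conclusion but should be stated accurately.
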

Denote by $Z_k$ the subvariety $\{U\in Gr_k(W)\mid pr_{1,3}(U)\ \text{is isotropic} \}$.
The first simple observation in the proof of the proposition is:
\begin{lem}\label{grassgleichZk}
$Gr_k(W_{k,1}\oplus W_{k,2})\subset Z_k$.
\end{lem}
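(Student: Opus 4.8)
The statement to prove is that every $k$-dimensional subspace $U\subset W_{k,1}\oplus W_{k,2}$ lies in $Z_k$, i.e.\ that $pr_{1,3}(U)$ is isotropic. The plan is simply to unwind the definitions and exploit the block structure of the symplectic form relative to the decomposition $W=W_{k,1}\oplus W_{k,2}\oplus W_{k,3}$. First I would observe that if $U\subset W_{k,1}\oplus W_{k,2}$, then $pr_{1,3}(U)\subset W_{k,1}$, since the projection $pr_{1,3}$ kills the $W_{k,2}$-component and $U$ has no $W_{k,3}$-component to begin with. So it suffices to show that $W_{k,1}=\spa(w_1,\dots,w_k)$ is itself isotropic for the symplectic form.

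This last point is immediate from the chosen normalization of the form: $\langle w_i,w_j\rangle=0$ unless $j=2n+1-i$, and for $1\le i,j\le k\le n$ one has $2n+1-i\ge 2n+1-k>k\ge j$, so $j\ne 2n+1-i$ and hence $\langle w_i,w_j\rangle=0$. Therefore $W_{k,1}$ is isotropic, a fortiori so is any subspace of it, and in particular $pr_{1,3}(U)$ is isotropic. This shows $U\in Z_k$, completing the proof.

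There is essentially no obstacle here; the only thing to be careful about is the direction of the indices (that $W_{k,1}$ is isotropic because $k\le n$, whereas $W_{k,3}$ would pair with $W_{k,1}$ and not with itself either — in fact all three summands $W_{k,1}$, $W_{k,3}$ are isotropic and $W_{k,2}$ is nondegenerate). The lemma is a warm-up observation, and the real content of Proposition \ref{ZKsympgrass} will come later, presumably by showing $Z_k$ is irreducible of the correct dimension and that the degenerate orbit $Sp_{2n}^a\cdot(\bC v_{\omega_k})$ is dense in it; but none of that is needed for this lemma.
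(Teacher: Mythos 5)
Your proof is correct and matches the paper's argument exactly: note that $pr_{1,3}(U)\subset W_{k,1}$ because $U$ has no $W_{k,3}$-component, and then observe that $W_{k,1}=\spa(w_1,\dots,w_k)$ is isotropic since $k\le n$. You simply spell out the isotropy of $W_{k,1}$ more explicitly than the paper does, which is fine.
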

\begin{proof}
Since $pr_{1,3}(U)\subset W_{k,1}$ for all $U\in Gr_k(W_{k,1}\oplus W_{k,2})$, we have 
$pr_{1,3}(U)$ isotropic and hence $U\in Z_k$.
\end{proof}
Denote by $Sp_{2k}\subset Sp_{2n}$ the symplectic subgroup acting only on the first and last $k$ coordinates.
The matrix for an element of this subgroup looks like
$$
\left(\begin{array}{cccc}A & 0 & 0 & B \\
0 & 1\hskip -2.5pt \hbox{I} & 0 & 0 \\
0 & 0 & 1\hskip -2.5pt \hbox{I} & 0 \\C & 0 & 0 & D\end{array}\right),
$$
where $A,B,C,D$ are $k\times k$ matrices,
$$
\left(\begin{array}{cc}A & B \\C & D\end{array}\right)
$$
is a $2k\times 2k$-sympletic matrix, and $1\hskip -2.5pt \hbox{I}$ is an 
$(n-k)\times (n-k)$ identity matrix.
Further, let
$$
P_k:=\{ g\in Sp_{2k}\mid C=0\},
$$
then $P_k$ is the maximal parabolic subgroup of $Sp_{2k}$ associated to the long simple root.
The next simple observation is:
\begin{lem}
$Z_k=Sp_{2k}\cdot Gr_k(W_{k,1}\oplus W_{k,2})$.
\end{lem}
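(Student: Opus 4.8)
The inclusion $\supseteq$ is almost immediate: by the previous lemma $Gr_k(W_{k,1}\oplus W_{k,2})\subset Z_k$, so it suffices to check that $Z_k$ is stable under the action of $Sp_{2k}$. This follows because $Sp_{2k}\subset Sp_{2n}$ preserves the decomposition $W = (W_{k,1}\oplus W_{k,3})\oplus W_{k,2}$ as a sum of $Sp_{2k}$-submodules (it acts as the identity on $W_{k,2}$), so $pr_{1,3}$ is $Sp_{2k}$-equivariant; moreover $Sp_{2k}$ acts on $W_{k,1}\oplus W_{k,3}$ preserving the symplectic form restricted there, hence it carries isotropic subspaces of $W_{k,1}\oplus W_{k,3}$ to isotropic subspaces. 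Thus $g\cdot Z_k = Z_k$ for all $g\in Sp_{2k}$.

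For the reverse inclusion $\subseteq$, I would take $U\in Z_k$, so $\dim U = k$ and $P:=pr_{1,3}(U)\subset W_{k,1}\oplus W_{k,3}$ is isotropic for the symplectic form on the $2k$-dimensional space $W_{k,1}\oplus W_{k,3}$. Since $Sp_{2k}$ acts transitively on the variety of isotropic subspaces of $W_{k,1}\oplus W_{k,3}$ of any fixed dimension $\le k$, I can find $g\in Sp_{2k}$ with $g\cdot P \subseteq W_{k,1}$. Then $g\cdot U$ satisfies $pr_{1,3}(g\cdot U) = g\cdot pr_{1,3}(U) = g\cdot P\subseteq W_{k,1}$, which forces $g\cdot U\subseteq W_{k,1}\oplus W_{k,2}$ (a vector whose $(1,3)$-projection lands in $W_{k,1}$ has zero component in $W_{k,3}$). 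Since $\dim(g\cdot U)=k$, we get $g\cdot U\in Gr_k(W_{k,1}\oplus W_{k,2})$, i.e. $U = g^{-1}\cdot (g\cdot U)\in Sp_{2k}\cdot Gr_k(W_{k,1}\oplus W_{k,2})$.

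The only genuine point requiring care is the transitivity of $Sp_{2k}$ on isotropic subspaces of $W_{k,1}\oplus W_{k,3}$ of a given dimension. This is the classical Witt-type extension theorem for symplectic forms: any two isotropic subspaces of the same dimension in a symplectic space are conjugate under the symplectic group. One subtlety is that $P$ may have dimension strictly less than $k$ (if $U$ meets $W_{k,2}$ nontrivially), but the statement and argument go through verbatim for any $\dim P\le k$. I expect this transitivity step to be the main (though standard) ingredient; everything else is bookkeeping with the block decomposition $W = W_{k,1}\oplus W_{k,2}\oplus W_{k,3}$ and the explicit matrix form of $Sp_{2k}$ recorded just before the lemma.
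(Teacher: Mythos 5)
Your proof is correct and follows essentially the same route as the paper: establish one inclusion via stability of $Z_k$ under $Sp_{2k}$ together with the preceding lemma, then for the reverse inclusion move $pr_{1,3}(U)$ into $W_{k,1}$ by an element of $Sp_{2k}$ (implicitly Witt's theorem) and observe that this forces $g\cdot U\subset W_{k,1}\oplus W_{k,2}$. You make explicit the transitivity on isotropic subspaces and the point that $\dim pr_{1,3}(U)$ may be less than $k$, both of which the paper leaves implicit; the paper additionally remarks that $Gr_k(W_{k,1}\oplus W_{k,2})$ is $P_k$-stable so $Sp_{2k}\cdot Gr_k(W_{k,1}\oplus W_{k,2})$ is closed, a side observation not needed for the set equality itself.
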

\begin{proof}
Note first that $Y=Gr_k(W_{k,1}\oplus W_{k,2})$ is stable under the action of $P_k$, so 
$Sp_{2k}\cdot Y$ is closed.
Since $Z_k$ is $Sp_{2k}$-stable, by Lemma~\ref{grassgleichZk} we have $Sp_{2k}\cdot Y\subset Z_k$.
Now assume $U\in Z_k$ and let
$\bar U=pr_{1,3}(U)$. This subspace of $W_{k,1}\oplus W_{k,3}$ is isotropic by assumption, 
so there exists a $g\in Sp_{2k}$
such that $g\cdot \bar U\subset W_{k,1}$, and hence $g\cdot U\subset W_{k,1}\oplus W_{k,2}$. 
It follows $g\cdot U\in Gr_k(W_{k,1}\oplus W_{k,2})$ and hence 
$Z_k=Sp_{2k}\cdot Gr_k(W_{k,1}\oplus W_{k,2})$.
\end{proof}
As an immediate consequence one sees that $Z_k$ is the image of the canonical product map
$$
\tilde \pi: Sp_{2k}\times  Gr_k(W_{k,1}\oplus W_{k,2})\rightarrow Z_k\subset Gr_k(W),
\quad (g,U)\mapsto g\cdot U,
$$
and hence:
\begin{cor}
$Z_k$ is irreducible.
\end{cor}
\noindent
{\it Proof of Proposition~\ref{ZKsympgrass}.}
Recall the description of the Lie algebra $\n^-$ in $(\ref{liealg})$
and the inclusion of the abelianized action described in Proposition~\ref{CgleichV}.
It follows that if $\gamma\in \n^-$ is a matrix as in $(\ref{liealg})$,
then in $V_{\omega_k}^a$ we have $\exp \gamma\cdot [w_1\wedge \ldots \wedge w_k]$ is the $k$-dimensional
subspace having as basis the column vectors of the matrix
\begin{equation}\label{liealg3}
\left(\begin{array}{c}
1\hskip -3pt \text{I}  \\
A   \\
B   \\
C
\end{array}\right).
\end{equation}
The symmetry condition of the matrix $C$ implies that these subspaces lie in $Z_k$.
To prove equality, for  $\ui=(i_1,\ldots,i_k)$, $1\le i_1< \ldots <i_k\le 2n$,
let $w_\ui=w_{i_1}\wedge\ldots\wedge w_{i_k}\in \Lambda^k W$
and denote by $p_\ui$ the corresponding Pl\"ucker coordinate,
i.e. $p_\ui(w_\uj)=\delta_{\ui,\uj}$.

Consider in $Z_k$ the open affine set
$$
{Z_k}_{(1,2,\ldots,k)}=\{U \in Z_k\mid p_{(1,2,\ldots,k)}(U)\not=0 \}.
$$
The spaces having a basis as in $(\ref{liealg3})$ lie in ${Z_k}_{(1,2,\ldots,k)}$, so this set is
non-empty and (since $Z_k$
is irreducible) dense. Now given an element in ${Z_k}_{(1,2,\ldots,k)}$, one can find a basis
corresponding to the columns of a matrix of the form
\begin{equation}\label{liealg4}
\left(\begin{array}{c}
1\hskip -3pt \text{I}  \\
A   \\
B   \\
C
\end{array}\right),
\end{equation}
where $A,B$ are $(n-k)\times k$ matrices, and the condition $pr_{1,3}(U)$ is isotropic implies that
$C=C^{nt}$. It follows that ${Z_k}_{(1,2,\ldots,k)}=Sp_{2n}^a\cdot [w_1\wedge\ldots\wedge w_k]$,
and hence $Z_k= \ SpGr^a_k(W)$.
\qed

\begin{rem} {\it a)} It is easy to check that the canonical map 
$$\pi: Sp_{2k}\times_{P_k} Gr_k(W_{k,1}\oplus W_{k,2})\rightarrow \ SpGr^a_k(W)$$
is a desingularization.
\par
{\it b)} Denote by $Z_{j}$, $0\le j\le k$, the subset
$$
Z_{j}:=\{U\in SpGr^a_k(W)\mid \dim pr_{1,3}(U)=j\}
$$
and define 
$$
Gr_k^j(W_{k,1}\oplus W_{k,2}):=\{U\in Gr_k(W_{k,1}\oplus W_{k,2})\mid \dim pr_{1,3}(U)=j\}.
$$
We have obviously a partition:
$$
SpGr_k(W):=\bigcup_{j=0,\ldots,k}Z_{j}.
$$ 
One can show: ${Z_{j}}$ is smooth,
$\overline{Z_{j}}=\bigcup_{i=0,\ldots,j}Z_{j}$, the desingularization map above
is compatible with the partition and induces maps
$$
\pi^j: Sp_{2k}\times_{P_k} Gr^j_k(W_{k,1}\oplus W_{k,2})\rightarrow \ Z_{j}
$$
such that the fibres of $\pi^j$ are all of dimension $\frac{1}{2}(k-j)(k-j+1)$.
In particular, the map $\pi$ is semismall but not small (failure for $j=k-1$).
\end{rem}

\subsection{Symplectic degenerate flag varieties}
Based on Proposition \ref{ZKsympgrass} we derive explicit description for all symplectic degnerate flag
varieties. 
For a regular dominant weight $\la$ we denote the complete symplectic degenerate flag variety 
$\Fl^a_\la$ by $\SF^a_{2n}$.  

Recall the basis $w_1,\dots,w_{2n}$ of $W$. We denote by $pr_i:w\to W$ the projections along the $w_i$, i.e.
$pr_i (\sum_{j=1}^{2n} c_jw_j)=\sum_{j\ne i} c_jw_j$.
\begin{thm}
The degenerate symplectic flag variety $\SF^a_{2n}$ is naturally embedded into the product
$\prod_{i=1}^n SpGr^a_i(2n)$ of degenerate symplectic Grassmannians. The image of the
embedding is equal to the set of collections $(V_i)_{i=1}^n$, $V_i\in SpGr^a_i(2n)$ satisfying
the conditions
\[
pr_{i+1} V_i\subset V_{i+1}, i=1,\dots,n-1.
\]
\end{thm}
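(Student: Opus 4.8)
The plan is to realize $\SF^a_{2n}$ inside $\prod_{i=1}^n SpGr^a_i(2n)$ via the Cartan embeddings \eqref{Cartan}, combined with the already-known type ${\tt A}$ description of $\Fl^a_{(1,2,\dots,n)}$ together with the Grassmannian case Proposition~\ref{ZKsympgrass}. First I would use the iterated embedding $V^a_{\omega_1+\dots+\omega_n}\hk V^a_{\omega_1}\T\dots\T V^a_{\omega_n}$ to obtain a closed embedding $\SF^a_{2n}=\Fl^a_{\omega_1+\dots+\omega_n}\hk\prod_{i=1}^n SpGr^a_i(2n)$ sending the highest weight line to the product of highest weight lines; this is the symplectic analogue of the compatibility recorded earlier in the excerpt for types ${\tt A}$ and ${\tt C}$. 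Because each coordinate projection $\SF^a_{2n}\to SpGr^a_i(2n)$ is $G^a$-equivariant and surjective onto the $i$-th factor's orbit closure, the image of $\SF^a_{2n}$ lands in the set $X$ of collections $(V_i)$ with $V_i\in SpGr^a_i(2n)$, so one inclusion is essentially formal once we check that the orbit of $(\bC v_{\omega_1},\dots,\bC v_{\omega_n})$ actually satisfies $pr_{i+1}V_i\subset V_{i+1}$.

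For that verification I would use the Corollary to Proposition~\ref{CgleichV}, which gives $\SF^a_{2n}\subset \Fl^a_{2n}$ as subvarieties of the type ${\tt A}$ degenerate flag variety $\Fl^a_{(1,\dots,2n-1)}$ for $\msl_{2n}$ (here restricting the weight support appropriately to land in $\Fl^a_{(1,\dots,n)}$). The type ${\tt A}$ description recalled in Section~\ref{Definitions} says points of $\Fl^a_{(1,\dots,n)}$ are collections $(V_i)_{i=1}^n$ with $\dim V_i=i$ and $pr_{i+1}V_i\subset V_{i+1}$; and Proposition~\ref{ZKsympgrass} (rephrased in the $pr_i$-language used in the statement: $pr_{1,3}(U)$ isotropic is equivalent, after the change of description, to the Lagrangian/isotropy condition cut out on each factor) tells us precisely which $V_i$ occur. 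Intersecting these two conditions gives exactly the set $X$ in the statement, so $\SF^a_{2n}\subseteq X$.

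The substantive direction is the reverse inclusion $X\subseteq\SF^a_{2n}$, and this is where I expect the main obstacle. The cleanest route is a dimension-plus-irreducibility argument: show $X$ is irreducible of the right dimension and contains the open $G^a$-orbit $G^a(\bC v_{\omega_1},\dots,\bC v_{\omega_n})$ as a dense subset, whence $X=\overline{G^a(\ldots)}=\SF^a_{2n}$. Irreducibility of $X$ I would get by exhibiting $X$ as the image of a product/iterated-fibration construction analogous to the desingularization $SpR_{2n}$ promised in the introduction (a tower of $\bP^1$-fibrations over the symplectic Grassmannian resolution of Remark~(a)), or more directly by noting $X$ is a closed subvariety of the irreducible $\Fl^a_{(1,\dots,n)}$ cut out by the extra isotropy conditions, and checking on the big cell that these conditions define an irreducible (indeed smooth affine, a product of affine spaces, as in the matrix description \eqref{liealg4}) open dense subset. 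Counting dimensions: the big cell is parametrized by the matrices $A,B,C$ with $C^{nt}=C$ for each layer, matching $\dim\SF^a_{2n}=\dim Sp_{2n}/B=n^2$; since both $X$ and $\SF^a_{2n}$ are irreducible, closed in the same ambient product, of the same dimension, and $\SF^a_{2n}\subseteq X$, they coincide. The delicate point is making the dictionary between the two incarnations of the Grassmannian condition ($pr_{1,3}$ isotropic in Proposition~\ref{ZKsympgrass} versus the chain $pr_{i+1}V_i\subset V_{i+1}$ ending in a Lagrangian) completely precise, and confirming that the intersection with the type ${\tt A}$ flag conditions does not cut the cell down further than expected; I would handle this by the explicit column-matrix computation as in the proof of Proposition~\ref{ZKsympgrass}, applied layer by layer.
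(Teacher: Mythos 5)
Your proposal lands on the paper's own argument once stripped of some redundancy: identify the $G^a$-orbit of the highest weight line with the intersection of the candidate locus $X$ with the big cell (this is exactly what the paper extracts from the proof of Proposition~\ref{ZKsympgrass}), conclude $\SF^a_{2n}=\overline{\mathrm{orbit}}\subset X$, and then finish by showing $X$ is irreducible via the surjective map $\pi_{2n}\colon SpR_{2n}\to X$, which is precisely the paper's Lemma~\ref{surj} and Corollary~\ref{irred}. The dimension count you add is superfluous once irreducibility of $X$ is in hand, since a closed subvariety of an irreducible variety containing a nonempty open subset must be the whole thing.

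One caution about the ``more direct'' alternative you float for irreducibility: saying $X$ is cut out in the irreducible $\Fl^a_{(1,\dots,n)}$ by isotropy conditions, and then ``checking on the big cell'' that the cell is irreducible, does not by itself establish that $X$ is irreducible --- a priori $X$ could have extra components disjoint from the big cell, and the density of the big cell in $X$ is essentially equivalent to the irreducibility you are trying to prove. This is exactly why the paper routes the argument through the fibration tower $SpR_{2n}$ and the surjectivity of $\pi_{2n}$ onto all of $X$ (not just the big cell); that surjectivity is the substantive input, and your first option correctly identifies it.
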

\begin{proof}
According to the proof of Proposition \ref{ZKsympgrass} 
we have 
\[
\SF^a_{2n}\cap \prod_{i=1}^n Z_{i(1,\dots,i)}=\{(V_1,\dots,V_i):\ V_i\in Z_{i(1,\dots,i)}, 
pr_{i+1}V_i\subset V_{i+1}\}.
\]
Moreover this subvariety of $\SF^a_{2n}$ coincides with the $Sp^a_{2n}$-orbit of the highest weight line.
Thus we only need to show that the variety defined above is irreducible. 
This is proved in Corollary \ref{irred}, using the desingularization for $\SF^a_{2n}$.
\end{proof}

For a subspace $V\subset W$ we denote by $V^\perp\subset W$ the orthogonal 
complement to $V$. 
Define an order two automorphism $\sigma\in Aut(\prod_{i=1}^{2n-1} Gr_i(2n))$ by the formula
\[
\sigma (V_i)_{i=1}^{2n-1}=(V_{2n-1}^\perp,V_{2n-2}^\perp,\dots,V_1^\perp).
\]
\begin{prop}\label{sigma}
The automorphism $\sigma$ defines an order two automorphism of the complete degenerate flag variety
$\Fl^a_{2n}$. The set of $\sigma$-fixed points $(\Fl^a_{2n})^\sigma$ is isomorphic to the complete
symplectic degenerate flag variety $\SF^a_{2n}$.
\end{prop}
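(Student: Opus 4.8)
The plan is to realise $\sigma$ as an automorphism of the ambient product of Grassmannians, check that it preserves $\Fl^a_{2n}$, and then match its fixed locus termwise against the explicit description of $\SF^a_{2n}$ obtained above. Recall from Section~\ref{Definitions} that $\Fl^a_{2n}$ is the variety of chains $(V_1,\dots,V_{2n-1})$ in $W$ with $\dim V_i=i$ and $pr_{i+1}(V_i)\subseteq V_{i+1}$ for $i=1,\dots,2n-2$. Since $V\mapsto V^\perp$ is an algebraic isomorphism $Gr_j(W)\xrightarrow{\ \sim\ }Gr_{2n-j}(W)$ and $\dim V_i^\perp=2n-i$, the formula for $\sigma$ does define an algebraic order-two automorphism of $\prod_{i=1}^{2n-1}Gr_i(W)$.

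The only computational input is the elementary identity
\[
pr_k(A)\subseteq B\quad\Longleftrightarrow\quad pr_{2n+1-k}(B^\perp)\subseteq A^\perp,\qquad A,B\subseteq W,\ 1\le k\le 2n.
\]
It is proved by a one-line calculation: for $x\in B^\perp$ and $a\in A$ write $pr_{2n+1-k}(x)=x-x_{2n+1-k}w_{2n+1-k}$, use the relation $pr_k(a)=a-a_kw_k\in B$ to rewrite $\langle x,a\rangle=a_k\langle x,w_k\rangle$, and observe that the two correction terms cancel because $w_{2n+1-k}$ pairs with $w_k$ and with no other basis vector. Feeding this into the incidence relations, the condition $pr_{i+1}(V_{2n-i}^\perp)\subseteq V_{2n-i-1}^\perp$ demanded of $\sigma(V_\bullet)$ is equivalent to the condition $pr_{2n-i}(V_{2n-i-1})\subseteq V_{2n-i}$ satisfied by $V_\bullet$; together with $\dim V_{2n-i}^\perp=i$ this shows that $\sigma$ restricts to an involution of $\Fl^a_{2n}$, which is the first assertion.

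Next, $(V_\bullet)\in(\Fl^a_{2n})^\sigma$ precisely when $V_i^\perp=V_{2n-i}$ for all $i$; such a flag is recovered from its truncation $(V_1,\dots,V_n)$, and then $V_n^\perp=V_n$ is Lagrangian. I would therefore study the truncation morphism $\Psi\colon(\Fl^a_{2n})^\sigma\to\prod_{i=1}^nGr_i(W)$ and the candidate inverse $(V_1,\dots,V_n)\mapsto(V_1,\dots,V_n,V_{n-1}^\perp,\dots,V_1^\perp)$; both are restrictions of morphisms of the ambient products, so it suffices to check that $\Psi$ maps $(\Fl^a_{2n})^\sigma$ bijectively onto $\SF^a_{2n}$ with the displayed extension as inverse. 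Given $(V_\bullet)\in(\Fl^a_{2n})^\sigma$: the lower incidences $pr_{i+1}(V_i)\subseteq V_{i+1}$, $i<n$, are inherited from $\Fl^a_{2n}$; $V_n$ is Lagrangian; and for $i<n$ one has $V_i\in SpGr^a_i(W)$, because $pr_{1,3}^{(i)}=pr_{i+1}\cdots pr_{2n-i}$ gives $pr_{1,3}^{(i)}(V_i)\subseteq V_{2n-i}=V_i^\perp$, and since $W_{i,2}=\mathrm{span}(w_{i+1},\dots,w_{2n-i})$ is orthogonal to $W_{i,1}\oplus W_{i,3}$ this forces $pr_{1,3}^{(i)}(V_i)$ to be isotropic. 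Hence $\Psi$ lands in $\SF^a_{2n}$. Conversely, for $(V_1,\dots,V_n)\in\SF^a_{2n}$ the extended chain is $\sigma$-fixed by construction, and it lies in $\Fl^a_{2n}$: besides the given lower incidences one needs $pr_{2n-k+1}(V_k^\perp)\subseteq V_{k-1}^\perp$ for $k=2,\dots,n$ (the case $k=n$ being the middle relation $pr_{n+1}(V_n)\subseteq V_{n-1}^\perp$, using $V_n^\perp=V_n$), and by the identity above each of these is equivalent to the given $pr_k(V_{k-1})\subseteq V_k$. The two maps manifestly undo one another, so $\Psi$ is an isomorphism $(\Fl^a_{2n})^\sigma\xrightarrow{\ \sim\ }\SF^a_{2n}$.

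Modulo the displayed identity the argument is pure bookkeeping, and I expect the only genuine friction to be in the step where one must verify that a $\sigma$-fixed flag in $\Fl^a_{2n}$ really satisfies the symplectic isotropy conditions $V_i\in SpGr^a_i(W)$ defining $\SF^a_{2n}$ (the converse — extending a point of $\SF^a_{2n}$ — is entirely mechanical via the identity), together with keeping the substitution $k\leftrightarrow 2n+1-k$ straight so that the middle incidence $i=n$ is accounted for exactly once.
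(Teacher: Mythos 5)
Your proof is correct and takes the approach the paper intends: the paper's own proof is the single phrase ``Follows from the definition,'' and your argument supplies exactly the unwinding (the duality identity $pr_k(A)\subseteq B\iff pr_{2n+1-k}(B^\perp)\subseteq A^\perp$ for the incidence conditions, plus the check that $\sigma$-fixed chains land in $SpGr^a_i(W)$) that the authors leave implicit.
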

\begin{proof}
Follows from the definition.
\end{proof}

Let $\bd=(d_1,\dots,d_k)$ be a collection of integers such that
$1\le d_1<\dots <d_k\le n$. Let $\SF^a_\bd$ be the parabolic degenerate 
flag variety, corresponding to the highest weight $\sum_{i=1}^k \omega_{d_i}$.
\begin{thm}
The parabolic degenerate symplectic flag variety $\SF^a_\bd$ is naturally 
embedded into the product
$\prod_{i=1}^k SpGr^a_{d_i}(2n)$ of degenerate symplectic Grassmannians. The image of the
embedding is equal to the variety of collections $(V_i)_{i=1}^k$, $V_i\in SpGr^a_{d_i}(2n)$ satisfying
the conditions
\[
pr_{d_i+1}\dots pr_{d_{i+1}} V_{d_i}\subset V_{d_{i+1}}, i=1,\dots,k-1.
\]
\end{thm}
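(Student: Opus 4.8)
The plan is to reduce the general parabolic case to the complete-flag case that has just been settled, via the standard pushdown/pullback machinery along the projection of flag varieties. First I would set up the forgetful map: for $\bd=(d_1,\dots,d_k)$, the embedding $\SF^a_\bd\hookrightarrow\prod_{i=1}^k SpGr^a_{d_i}(2n)$ is obtained from the embedding $\SF^a_{2n}\hookrightarrow\prod_{i=1}^n SpGr^a_i(2n)$ of the previous theorem by remembering only the components $V_{d_1},\dots,V_{d_k}$. This is legitimate because of the Cartan-type embedding \eqref{Cartan}: the map $V^a_{\omega_{d_1}+\dots+\omega_{d_k}}\hookrightarrow V^a_{\omega_1}\T\cdots\T V^a_{\omega_n}$ (composing the embeddings into $V^a_{\omega_{d_1}}\T\cdots\T V^a_{\omega_{d_k}}$ and then into the full tensor product via further Cartan embeddings) identifies $\SF^a_\bd$ with the image of $\SF^a_{2n}$ under the projection $\pi_\bd\colon\prod_{i=1}^n SpGr^a_i(2n)\to\prod_{i=1}^k SpGr^a_{d_i}(2n)$. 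In particular $\SF^a_\bd$ is irreducible, being the image of an irreducible variety.

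Next I would identify the image concretely. On the dense $Sp_{2n}^a$-orbit the constraints $pr_{i+1}V_i\subset V_{i+1}$ for $i=1,\dots,n-1$ cut out $\SF^a_{2n}$; composing the one-step inclusions along the missing indices gives $pr_{d_i+1}pr_{d_i+2}\cdots pr_{d_{i+1}}V_{d_i}\subset V_{d_{i+1}}$. Here one uses that the projections $pr_j$ commute with each other and that for a subspace $U$ and index $j>\dim$-relevant one has $pr_{j}U\subset U+\bC w_j$, so that telescoping the chain $pr_{d_i+1}V_{d_i}\subset V_{d_i+1}$, $pr_{d_i+2}V_{d_i+1}\subset V_{d_i+2}$, \dots, $pr_{d_{i+1}}V_{d_{i+1}-1}\subset V_{d_{i+1}}$ and applying $pr_{d_i+2}\cdots pr_{d_{i+1}}$ to the first, etc., yields exactly the stated relation. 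Thus the $Sp_{2n}^a$-orbit part of $\SF^a_\bd$ lies in the claimed variety $\mathcal X_\bd:=\{(V_i)\in\prod SpGr^a_{d_i}(2n)\mid pr_{d_i+1}\cdots pr_{d_{i+1}}V_{d_i}\subset V_{d_{i+1}}\}$, and hence so does its closure $\SF^a_\bd$ since $\mathcal X_\bd$ is closed.

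For the reverse inclusion $\mathcal X_\bd\subset\SF^a_\bd$ I would argue that $\mathcal X_\bd$ is irreducible of the right dimension, so that the closed irreducible subvariety $\SF^a_\bd\subset\mathcal X_\bd$ must be all of it. The cleanest route: construct a surjection onto $\mathcal X_\bd$ by lifting each collection $(V_{d_1},\dots,V_{d_k})\in\mathcal X_\bd$ to a full collection $(V_1,\dots,V_n)\in\SF^a_{2n}$. Given $V_{d_i}$ with $pr_{d_i+1}\cdots pr_{d_{i+1}}V_{d_i}\subset V_{d_{i+1}}$, one defines $V_{d_i+1}$ as any $(d_i+1)$-dimensional isotropic-projection subspace containing $pr_{d_i+1}V_{d_i}$ and inductively filling up to $V_{d_{i+1}-1}$, respecting the one-step relations; the fibre of the forgetful map over a point of $\mathcal X_\bd$ is then a tower of Grassmannian-bundle-type choices, in particular irreducible, and a standard fibre-dimension count shows $\dim\SF^a_{2n}=\dim\mathcal X_\bd+(\text{fibre dimension})$ consistently. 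Irreducibility of $\mathcal X_\bd$ then follows from irreducibility of $\SF^a_{2n}$ together with irreducibility of the fibres, so $\SF^a_\bd=\mathcal X_\bd$.

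The main obstacle I anticipate is the reverse inclusion, specifically verifying that the forgetful map $\SF^a_{2n}\to\mathcal X_\bd$ is surjective with irreducible fibres, i.e. that one can always complete a partial collection satisfying the composite relations into a full one satisfying the one-step relations while preserving the symplectic/isotropic-projection conditions at each intermediate step. This requires a careful local analysis of the defining conditions of $SpGr^a_j$ in terms of the Pl\"ucker-type coordinates used in the proof of Proposition~\ref{ZKsympgrass}, tracking how $pr_{1,3}$-isotropy behaves along the intermediate subspaces. An alternative that sidesteps this is to quote the forthcoming desingularization of $\SF^a_\bd$ (analogous to the $SpR_{2n}$ of the introduction and built in Section~\ref{Resolution}) which exhibits $\SF^a_\bd$ as the image of an explicit irreducible Bott--Samelson-type variety, together with a dimension comparison with $\mathcal X_\bd$; this is the route the authors will most plausibly take, mirroring the use of Corollary~\ref{irred} in the complete-flag case.
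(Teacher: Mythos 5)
Your anticipated alternative is in fact exactly the paper's argument. The paper's proof is a single sentence: as in the complete-flag theorem, the $Sp^a_{2n}$-orbit of the highest weight line is identified with the open chart of the candidate variety $\mathcal X_\bd$ (all leading Pl\"ucker coordinates nonzero), so the only thing left to show is that $\mathcal X_\bd$ is irreducible, and this is Corollary~\ref{parirred}, which follows from the surjection of the Bott--Samelson-type resolution $SpR_\bd$ (a tower of $\bP^1$-fibrations, hence irreducible) onto $\mathcal X_\bd$. So you have correctly read the structure of the argument.

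Your primary proposal --- reduce to the complete-flag theorem via the forgetful map and then verify surjectivity of $\SF^a_{2n}\to\mathcal X_\bd$ by filling in intermediate subspaces --- is a genuinely different organization, but as you yourself observe, the filling-in step is precisely where all the work is. Concretely, completing a partial collection while preserving the isotropy-of-projection conditions at every intermediate step is not an easy Grassmann-bundle count: one has to produce a $(d_i+\ell)$-dimensional subspace satisfying both the chain inclusions and the constraint that $pr_{j+1}\cdots pr_{2n-(d_i+\ell)}$ of it be isotropic, and this is exactly the content of (the parabolic analogue of) Lemma~\ref{surj}. So your first route does not buy a simplification; it repackages the same induction. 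There is also a mild subtlety worth flagging: in the degenerate setting the forgetful map $\SF^a_{2n}\to\prod_{i}SpGr^a_{d_i}(2n)$ is $Sp^a_{2n}$-equivariant and proper, so its image is indeed the orbit closure $\SF^a_\bd$; but one should say this explicitly rather than appeal only to the Cartan embedding~\eqref{Cartan}, since that embedding goes from the ``small'' tensor product into the ``large'' one and does not by itself produce a map $\SF^a_{2n}\to\SF^a_\bd$. With that noted, the proposal is sound, and the fallback you describe is the argument the paper actually gives.
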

\begin{proof}
As in above, we only need to show the irreducibility of $\SF^a_\bd$. This is proved in Corollary
\ref{parirred}.
\end{proof}

Let us fix a collection $\bd=(d_1,\dots,d_k)$, $1\le d_1<\dots <d_k\le n)$.
Assume $d_k<n$. Then we define an extended collection ${\bf D}$ by the formula
\[
{\bf D}=(d_1,\dots,d_k,2n-d_k,\dots,2n-d_1). 
\]
If $d_k=n$, then we set ${\bf D}=(d_1,\dots,d_k,2n-d_{k-1},\dots,2n-d_1)$.  
We define an order two automorphism 
$\sigma_\bd\in Aut(\prod_{d\in {\bf D}} Gr_d(2n)$ 
by the formula
\[
\sigma_\bd (V_i)_{i=1}^{l}=(V_l^\perp,V_{l-1}^\perp,\dots,V_1^\perp),
\]
where $l=2k$ if $d_k<n$ and $l=2k-1$ otherwise.
\begin{prop}
The automorphism $\sigma_\bd$ defines an order two automorphism of the 
$SL_{2n}$ parabolic degenerate flag variety
$\Fl^a_{\bf D}$. The set of $\sigma_\bd$-fixed points $(\Fl^a_{\bf D})^{\sigma_\bd}$ is isomorphic to 
the parabolic symplectic degenerate flag variety $\SF^a_{\bf D}$.
\end{prop}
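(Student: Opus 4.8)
The plan is to follow the proof of Proposition~\ref{sigma}, treating both assertions --- that $\sigma_\bd$ is an order two automorphism of $\Fl^a_{\bf D}$, and that its fixed locus is $\SF^a_\bd$ --- as linear algebra with the symplectic form. The only property of $\langle\cdot,\cdot\rangle$ that I would use is the adjunction $\langle pr_k x,y\rangle=\langle x,pr_{2n+1-k}y\rangle$, i.e. $pr_k^{*}=pr_{2n+1-k}$ (immediate from $\langle w_i,w_j\rangle=0$ for $j\neq 2n+1-i$), together with the standard equivalence $TV\subseteq U\Leftrightarrow T^{*}U^\perp\subseteq V^\perp$ for a linear operator $T$ on $W$. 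A preliminary observation, to be checked case by case from the two definitions of ${\bf D}=(D_1,\dots,D_l)$, is that ${\bf D}$ is \emph{palindromic}: $D_{l+1-j}=2n-D_j$ for all $j$. This is exactly what makes $\sigma_\bd\colon(V_i)\mapsto(V_l^\perp,\dots,V_1^\perp)$ a well-defined involution of $\prod_{d\in{\bf D}}Gr_d(2n)$ --- it preserves dimensions since $\dim V_{l+1-j}^\perp=2n-D_{l+1-j}=D_j$, and $V\mapsto V^\perp$ squares to the identity because $\langle\cdot,\cdot\rangle$ is non-degenerate.

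For the first assertion, take $(V_i)\in\Fl^a_{\bf D}$ and set $W_j=V_{l+1-j}^\perp$. The $\Fl^a_{\bf D}$-incidence at index $j$ for $(W_j)$ involves $T=pr_{D_j+1}\cdots pr_{D_{j+1}}$, whose adjoint is $T^{*}=pr_{2n+1-D_{j+1}}\cdots pr_{2n-D_j}=pr_{D_{l-j}+1}\cdots pr_{D_{l+1-j}}$ by the adjunction and the palindrome identity. Hence $TW_j\subseteq W_{j+1}\Leftrightarrow T^{*}W_{j+1}^\perp\subseteq W_j^\perp\Leftrightarrow pr_{D_{l-j}+1}\cdots pr_{D_{l+1-j}}V_{l-j}\subseteq V_{l+1-j}$, which is precisely the $\Fl^a_{\bf D}$-incidence at index $l-j$ for $(V_i)$. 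As $j$ runs over $\{1,\dots,l-1\}$ so does $l-j$, so $\sigma_\bd(V_i)\in\Fl^a_{\bf D}$; being an involution, $\sigma_\bd$ is then an automorphism of $\Fl^a_{\bf D}$ of order two. (Equivalently, one identifies $\sigma_\bd$ as the descent of $\sigma$ along the forgetful projection $\Fl^a_{2n}\to\Fl^a_{\bf D}$, which is $(\sigma,\sigma_\bd)$-equivariant precisely because of the palindrome property, and invokes Proposition~\ref{sigma}.)

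For the fixed locus: $(V_i)\in\Fl^a_{\bf D}$ is $\sigma_\bd$-fixed iff $V_j=V_{l+1-j}^\perp$ for all $j$. The equations with $j>l+1-j$ only express the upper-half subspaces through $(V_1,\dots,V_k)$, so the fixed locus is parametrized by $(V_1,\dots,V_k)$; the equation with $j=l+1-j$ occurs exactly when $d_k=n$ (then $l=2k-1$, $j=k$) and says $V_k$ is Lagrangian, i.e. $V_k\in SpGr^a_n(2n)$. Among the $\Fl^a_{\bf D}$-incidences, those at indices $i<k$ are the defining relations $pr_{d_i+1}\cdots pr_{d_{i+1}}V_i\subseteq V_{i+1}$ of $\SF^a_\bd$; by the computation of the previous paragraph applied to the now $\sigma_\bd$-fixed $(V_i)$, the incidence at index $i$ is equivalent to the one at $l-i$, so those with $i>k$ are redundant. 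When $d_k<n$ ($l=2k$) the one remaining, self-mirror incidence at $i=k$ reads $qV_k\subseteq V_k^\perp$ with $q=pr_{d_k+1}\cdots pr_{2n-d_k}$ the projection of $W$ onto $W_{d_k,1}\oplus W_{d_k,3}$; since $\{d_k+1,\dots,2n-d_k\}$ is stable under $i\mapsto 2n+1-i$, $q$ is a self-adjoint idempotent, so $\langle qx,qy\rangle=\langle q^{*}qx,y\rangle=\langle q^{2}x,y\rangle=\langle qx,y\rangle$ on $V_k$, whence $qV_k\subseteq V_k^\perp$ if and only if $qV_k$ is isotropic, if and only if $V_k\in SpGr^a_{d_k}(2n)$ (Proposition~\ref{ZKsympgrass}); when $d_k=n$ this incidence is instead equivalent, via the adjunction, to the one at $i=k-1$, and the Lagrangian condition plays the role of ``$V_k\in SpGr^a_n$''. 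Finally, ``$V_k\in SpGr^a_{d_k}(2n)$'' together with the incidences forces ``$V_i\in SpGr^a_{d_i}(2n)$'' for all $i<k$: chaining incidences gives $a_iV_i\subseteq V_k$ with $a_i=pr_{d_i+1}\cdots pr_{d_k}$, and the corresponding projection $q_i$ of $W$ onto $W_{d_i,1}\oplus W_{d_i,3}$ factors as $q_i=q_k a_i a_i^{*}$ with $a_i^{*}=pr_{2n-d_k+1}\cdots pr_{2n-d_i}$, so $q_iV_i=a_i^{*}(q_k a_i V_i)\subseteq a_i^{*}(q_k V_k)\subseteq a_i^{*}(V_k^\perp)\subseteq V_i^\perp$, the last inclusion being $a_iV_i\subseteq V_k\Leftrightarrow a_i^{*}V_k^\perp\subseteq V_i^\perp$ once more. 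Thus the fixed locus is exactly the variety of $(V_i)_{i=1}^k$ with $V_i\in SpGr^a_{d_i}(2n)$ and $pr_{d_i+1}\cdots pr_{d_{i+1}}V_i\subseteq V_{i+1}$, which is $\SF^a_\bd$ by the explicit description established above; since the parametrization is a closed immersion, $(\Fl^a_{\bf D})^{\sigma_\bd}\cong\SF^a_\bd$.

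The conceptual content is minimal --- as with Proposition~\ref{sigma}, everything ``follows from the definitions''. The step I expect to cause the real friction is the index bookkeeping: pairing each incidence of $\Fl^a_{\bf D}$ with its mirror under reversal-and-$\perp$, isolating the single ``crossing'' relation, and recognizing that this crossing relation is exactly the isotropy condition that cuts $SpGr^a_{d_k}$ out of $Gr_{d_k}$ --- and then checking that isotropy propagates back down the chain to the lower $V_i$. A secondary nuisance is that the two shapes of ${\bf D}$ (according as $d_k<n$ or $d_k=n$, with $l=2k$ or $l=2k-1$) must be carried along in parallel, but the identity $D_{l+1-j}=2n-D_j$ holds in both and keeps the argument uniform.
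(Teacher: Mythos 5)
Your proof is correct. The paper states this proposition without any proof (the preceding Proposition~\ref{sigma} is likewise given only the one-line justification ``Follows from the definition''), so there is no proof of record to compare against; you have supplied the missing verification. The two observations that organize your argument --- the palindrome identity $D_{l+1-j}=2n-D_j$ for both shapes of ${\bf D}$, and the adjunction $pr_k^{*}=pr_{2n+1-k}$ feeding the equivalence $TV\subseteq U\Leftrightarrow T^{*}U^\perp\subseteq V^\perp$ --- reduce both claims to mechanical bookkeeping, exactly in the spirit of the paper's ``follows from the definition.'' The one step that genuinely goes beyond that gloss, and which I would flag as the substantive content, is the propagation argument: the self-mirror incidence at $i=k$ only gives isotropy of $pr_{1,3}V_k$, and identifying the fixed locus with the paper's explicit description of $\SF^a_\bd$ (requiring $V_i\in SpGr^a_{d_i}(2n)$ for \emph{all} $i$) requires pushing that isotropy back down the chain. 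Your factorization $q_i = q_k a_i a_i^{*}$, combined with $a_iV_i\subseteq V_k\Leftrightarrow a_i^{*}V_k^\perp\subseteq V_i^\perp$, does precisely this, and it also handles the degenerate case $d_k=n$ (where $q_k=\mathrm{id}$ and the Lagrangian condition plays the role of the self-mirror incidence) uniformly. You also correctly read the paper's ``$\SF^a_{\bf D}$'' in the statement as $\SF^a_\bd$; this is almost certainly a typographical slip, since ${\bf D}$ has entries exceeding $n$ while the notation $\SF^a_{\bullet}$ is only defined for sequences bounded by $n$. The parenthetical alternative via descent of $\sigma$ along $\Fl^a_{2n}\to\Fl^a_{\bf D}$ is fine in spirit, but the direct argument you lead with is self-contained and preferable.
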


\subsection{The degeneration}
In this subsection we prove that the varieties $\SF^a_\bd$ are flat degenerations of their classical
analogues. 
%%%%%%%%%%%%%%%
\makeatletter
\def\Ddots{\mathinner{\mkern1mu\raise\p@
\vbox{\kern7\p@\hbox{.}}\mkern2mu
\raise4\p@\hbox{.}\mkern2mu\raise7\p@\hbox{.}\mkern1mu}}
\makeatother
%%%%%%%%%%%%%%%
Let $J_s$ be the $2n\times 2n$-matrix
$$
\left(\begin{array}{cccc}
0 & 0 & 0 & I_k \\
0 & 0 & sI_{n-k} & 0 \\
0 & -sI_{n-k}& 0 & 0 \\
-I_k & 0 & 0 & 0
\end{array}\right),\quad\text{where $I_l$ is a $l\times l$ matrix}\
\left(\begin{array}{ccc}
0 & 0  & 1 \\
0 &  \Ddots & 0 \\
1 & 0 & 0\end{array}\right).
$$
The matrix defines a non-degenerate symplectic form for $s\not=0$, the group
$Sp_{2n}$ is the one leaving invariant the form for $s=1$, and
$$
SpGr_k(2n)=\{U\in Gr_k(\bc^{2n}) \mid U \text{\ is isotropic with respect to $J_0$} \}.
$$
Denote by $\eta$ the following one-parameter subgroup:
$$
\eta:\bc^*\rightarrow D=\text{\ diagonal matrices in $GL_{2n}$},\quad s\mapsto
\left(\begin{array}{ccc}
1\hskip -3pt \text{I}_{k} & 0 & 0 \\
0 & s1\hskip -3pt \text{I}_{2(n-k)} & 0 \\
0 & 0 & 1\hskip -3pt \text{I}_{k}\end{array}\right).
$$
then $\eta(s)^t J_1\eta(s)=J_{s^2}$, and it follows that if $U\in SpGr_k(\bc^{2n})$ is a subspace
isotropic with respect to $J_1$, then $\eta(s^{-1})(U)$ is isotropic with respect to $J_{s^2}$.

Recall that $Sp_{2n}/P_k$ is sitting in the Grassmann variety as the set
of isotropic subspaces. Consider
\begin{equation}\label{Y}
Y:=\overline{\{(\eta(s^{-1})(U),s)\mid s\in\bc^{*},U\in Sp_{2n}/P_k\}}\subset Gr_k(\bc^{2n})\times \bc
\end{equation}
together with the natural projection $\phi:Y\rightarrow \bc$ onto 
the second factor.
\begin{prop}
The projection map $\phi$ is flat, $\phi^{-1}(s)\simeq Sp_{2n}/P_k$ for all $s\not=0$ and $\phi^{-1}(0)=SpGr^a_k(\bc^{2n})$.
\end{prop}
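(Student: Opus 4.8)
The plan is to exhibit $Y$ as a flat family of isotropic Grassmannians for the degenerating family of bilinear forms $J_{s^2}$, the member at $s=0$ being the variety of subspaces isotropic for the rank $2k$ form $J_0$, which by Proposition~\ref{ZKsympgrass} is exactly $SpGr^a_k(\bc^{2n})$. Concretely, I would introduce the closed subscheme
\[
\mathcal Z=\{(U,s)\in Gr_k(\bc^{2n})\times\bc\mid U\text{ is isotropic for }J_{s^2}\},
\]
cut out of the tautological Grassmann family by the conditions $\langle u,u'\rangle_{J_{s^2}}=0$ on the tautological subbundle, which are polynomial in $s$. Since $\eta(s)^tJ_1\eta(s)=J_{s^2}$, for $s\neq0$ the subspace $\eta(s^{-1})(U)$ is $J_{s^2}$-isotropic precisely when $U$ is $J_1$-isotropic, i.e. when $U\in Sp_{2n}/P_k$; hence the generating set in \eqref{Y} lies in $\mathcal Z$, and so $Y\subseteq\mathcal Z$. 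Observe also that $Y$ is irreducible, being the closure of the image of the irreducible variety $\bc^*\times Sp_{2n}/P_k$ under $(s,U)\mapsto(\eta(s^{-1})(U),s)$, and that $\phi$ is projective since $Y$ is closed in $Gr_k(\bc^{2n})\times\bc$.

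First I would dispose of flatness and of the generic fibres. As $Y$ is integral and $\phi$ is a non-constant morphism onto the smooth affine curve $\bc=\mathrm{Spec}\,\bc[s]$, each local ring of $Y$ is a domain in which a uniformizer of the base is a nonzerodivisor, hence torsion-free and therefore flat over the corresponding discrete valuation ring; so $\phi$ is flat. For $s\neq0$ the morphism $\bc^*\times Sp_{2n}/P_k\to Gr_k(\bc^{2n})\times\bc^*$, $(s,U)\mapsto(\eta(s^{-1})(U),s)$, is a closed immersion, so its image is closed and equals $Y\cap(Gr_k(\bc^{2n})\times\bc^*)$; thus $\phi^{-1}(s)=\eta(s^{-1})(Sp_{2n}/P_k)$. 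Since $\eta(s^{-1})$ is an automorphism of $Gr_k(\bc^{2n})$ that carries the $J_1$-isotropic Grassmannian onto the $J_{s^2}$-isotropic one, and any two non-degenerate symplectic forms on $\bc^{2n}$ are equivalent, this gives $\phi^{-1}(s)\simeq Sp_{2n}/P_k$ for every $s\neq0$.

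The remaining and central point is to identify $\phi^{-1}(0)$. The inclusion $\phi^{-1}(0)\subseteq SpGr^a_k(\bc^{2n})$ is immediate: $Y\subseteq\mathcal Z$ puts $\phi^{-1}(0)$ inside the set of $J_0$-isotropic $k$-subspaces, and since $\ker J_0=W_{k,2}$ the form $J_0$ restricts to a non-degenerate symplectic form on $W_{k,1}\oplus W_{k,3}$, so $J_0$-isotropy of $U$ is equivalent to isotropy of $pr_{1,3}(U)$; by Proposition~\ref{ZKsympgrass} that set is $SpGr^a_k(\bc^{2n})$. For the reverse inclusion I would compare dimensions: the map $(s,U)\mapsto(\eta(s^{-1})(U),s)$ is injective over $\bc^*$, whence $\dim Y=\dim Sp_{2n}/P_k+1$; since $\phi$ is a non-constant morphism from the irreducible variety $Y$ to a curve, every fibre, and in particular $\phi^{-1}(0)$, is pure of dimension $\dim Sp_{2n}/P_k$. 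On the other hand $SpGr^a_k(\bc^{2n})=Z_k$ is irreducible (Section~\ref{Explicit}, where $Z_k=Sp_{2k}\cdot Gr_k(W_{k,1}\oplus W_{k,2})$) and has dimension $\dim Sp_{2k}/P_k+\dim Gr_k(W_{k,1}\oplus W_{k,2})=\dim Sp_{2n}/P_k$, a direct count which also follows from the desingularization $\pi$ of Section~\ref{Explicit} being birational. A closed subvariety of full dimension in an irreducible variety is the whole variety, so $\phi^{-1}(0)=SpGr^a_k(\bc^{2n})$ at the level of varieties.

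The one delicate point, which I expect to be the main obstacle while everything else is formal, is to promote this to a scheme-theoretic equality, i.e. to rule out nilpotents or embedded components in the flat limit $\phi^{-1}(0)$. I would argue by constancy of Hilbert polynomials under the projective flat morphism $\phi$: $\phi^{-1}(0)$ and $\phi^{-1}(s)\simeq Sp_{2n}/P_k$ have the same Hilbert polynomial for the Pl\"ucker bundle $\mathcal O(1)$, so it suffices to show that the reduced closed subscheme $SpGr^a_k(\bc^{2n})\hookrightarrow\phi^{-1}(0)$ already realizes that polynomial, i.e. that $\dim H^0(SpGr^a_k(\bc^{2n}),\mathcal O(m))=\dim V_{m\omega_k}$ for $m\gg0$; this is a special case of the degenerate Borel--Weil statement proved in Section~\ref{Applications}. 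For the geometric content of the proposition, namely flatness of $\phi$ together with the identification of all fibres as varieties, the first three paragraphs already suffice.
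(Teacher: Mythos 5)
Your argument is correct at the level of rigor the paper operates at, and it takes a genuinely different route for the key identification $\phi^{-1}(0)=SpGr^a_k(\bc^{2n})$. The flatness argument (dominant morphism from an integral variety to a smooth curve) is the same as the paper's, which cites Hartshorne III.9.7. For the special fibre, however, the paper introduces an auxiliary subscheme $\tilde Y$ cut out by the isotropy map $\Psi$, shows via Proposition~\ref{ZKsympgrass} that the fibre over $0$ of $\tilde Y\to\bc$ is $SpGr^a_k(\bc^{2n})$ and hence that all fibres are irreducible of the same dimension, deduces that $\tilde Y$ is irreducible, and concludes $Y=\tilde Y$. You bypass $\tilde Y$ altogether: you first check the one-sided inclusion $Y\subseteq\mathcal Z$, and then close the gap by a pure dimension count — $\phi^{-1}(0)$ is pure of dimension $\dim Sp_{2n}/P_k$ since $\phi$ is flat, and $SpGr^a_k(\bc^{2n})=Z_k$ is irreducible of that same dimension, so the inclusion must be an equality. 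Your route is more direct and avoids the slightly delicate step of deducing irreducibility of $\tilde Y$ from equidimensionality of its fibres; the paper's route has the advantage of identifying $Y$ with the scheme $\tilde Y$ given by explicit equations, which in principle gives ideal-theoretic control on the flat limit.

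One caution on your last paragraph: invoking the degenerate Borel--Weil theorem from Section~\ref{Applications} to establish scheme-theoretic equality would be circular, since that theorem's proof uses the flat-degeneration statement you are proving (it compares dimensions of $H^0$ across the flat family). If scheme-theoretic reducedness of the limit is really wanted, a non-circular route would be to use the paper's $Y=\tilde Y$ identification directly, or to show independently that the quadratic isotropy equations defining $\phi^{-1}(0)$ generate a radical ideal. That said, the paper's own proof also leaves this level of detail implicit, and your explicit flagging of the issue — and your observation that the variety-level statement is already established by the dimension count — is appropriate.

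One small bonus of your write-up: you consistently use $J_{s^2}$ when defining the family of isotropy conditions, which is what is forced by the identity $\eta(s)^tJ_1\eta(s)=J_{s^2}$. The paper's definition of $Q_s$ via $J_s$ appears to be a reparametrization slip; your version makes the inclusion $Y'\subset\mathcal Z$ immediate.
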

\begin{proof}
The map is flat because $Y$ is irreducible and
$\phi$ is dominant (\cite{H}, Chap.~III, Proposition 9.7).
It remains to show that the fibres are the spaces described above. We assume without
loss of generality: $k\ge 2$.
Consider the map
$$
\begin{array}{ccl}
\Psi:\Lambda^k\bc^{2n}\times \bc &\rightarrow &\Lambda^{k-2}\bc^{2n},\\
(v_1\wedge\ldots\wedge v_k,s)&\mapsto&\sum_{\ell<m} Q_s(v_\ell,v_m)(-1)^{m+\ell-1}
v_1\wedge\ldots \wedge \hat v_\ell\wedge\ldots\wedge \hat v_m \wedge\ldots \wedge v_k
\end{array}
$$
where $Q_s(v,w)=v^t J_s w$. The map is homogeneous with respect to $\Lambda^k\bc^{2n}$, the preimage
$\Psi^{-1}(0)$ defines hence a closed subset of $\pc(\Lambda^k\bc^{2n})\times \bc$, and the 
intersection with the Grassmann variety defines a closed subset $\tilde Y\subset Gr_k(\bc^{2n})\times \bc$.
Let $\sigma:\tilde Y\rightarrow \bc$ be the projection onto the second 
component. By the definition of $\Psi$
one has $(U,s)\in \sigma^{-1}(s)\subset \tilde Y$ if and only if 
$\Psi(v_1\wedge\ldots\wedge v_k,s)=0$ for a basis $\{v_1,\ldots,v_k\}$ of $U$, 
that is iff $U$ is an isotropic subspace with respect
to the form $Q_s$. It follows that $\tilde Y$ and $Y$ have in common the subset
$$
Y'=\{(\eta(s^{-1})(U),s)\mid s\in\bc^{*},U\in Sp_{2n}/P_k\}\subset Y.
$$
so $Y=\overline{Y'}$ is an irreducible component of $\tilde Y$. Since $\sigma^{-1}(0)=SpGr^a_k(\bc^{2n})$,
is follows
that $\sigma$ has equidimensional and irreducible fibers, and hence $\tilde Y$ is irreducible, which implies
that $Y=\tilde Y$ and $\sigma=\phi$.
\end{proof}

Let $\bd=(d_1,\dots,d_m)$.
\begin{thm}
The varieties $\SF^a_\bd$ are flat degenerations of their classical analogues $\SF_\bd$.
\end{thm}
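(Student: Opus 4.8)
The plan is to imitate, for the whole product of Grassmannians, the construction used for the degenerate symplectic Grassmannian in the preceding proposition. Write $\bd=(d_1,\dots,d_m)$ and let ${\bf D}$ be the associated extended collection. Recall that $\SF_\bd$ sits inside $\prod_{i}Gr_{d_i}(\bc^{2n})$ as the variety of isotropic flags, that $\SF^a_\bd$ admits the linear algebra description given by the two preceding theorems, that $\SF^a_\bd=(\Fl^a_{\bf D})^{\sigma_\bd}$ by Proposition~\ref{sigma} and its parabolic version, and that, likewise, classically $\SF_\bd=(\Fl_{\bf D})^{\sigma_\bd}$. By the type ${\tt A}$ theory recalled in Section~\ref{Definitions} (and \cite{FF}), $\Fl^a_{\bf D}$ is a flat degeneration of $\Fl_{\bf D}$, realised inside $\big(\prod_{d\in{\bf D}}Gr_d(\bc^{2n})\big)\times\bc$ by applying a one-parameter subgroup $\gamma$ of $\prod_{d\in{\bf D}}GL_{2n}$ to $\Fl_{\bf D}$; under $\gamma$ the classical incidences $V_{d_i}\subset V_{d_{i+1}}$ degenerate to $pr_{d_i+1}\cdots pr_{d_{i+1}}V_{d_i}\subset V_{d_{i+1}}$. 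I would then set
\[
\mathcal Y_\bd:=\overline{\big\{\,(\gamma(s^{-1})\cdot\mathbf V,\,s)\ \big|\ s\in\bc^*,\ \mathbf V\in\SF_\bd\,\big\}}\subset\Big(\textstyle\prod_{d\in{\bf D}}Gr_d(\bc^{2n})\Big)\times\bc,
\]
with $\phi\colon\mathcal Y_\bd\to\bc$ the projection to the last coordinate. As the closure of the image of the irreducible variety $\bc^*\times\SF_\bd$, the scheme $\mathcal Y_\bd$ is irreducible and $\phi$ is dominant, hence flat (\cite{H}, Chap.~III, Proposition~9.7); for $s\ne0$ one has $\phi^{-1}(s)=\gamma(s^{-1})\cdot\SF_\bd\simeq\SF_\bd$, and since $\SF_\bd\subset\Fl_{\bf D}$ the special fibre $\phi^{-1}(0)$ lies inside $\Fl^a_{\bf D}$.

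It then remains to identify $\phi^{-1}(0)$ with $\SF^a_\bd$, and this is the crux. For the inclusion $\phi^{-1}(0)\subseteq\SF^a_\bd$ I would try to arrange, after rescaling each factor of $\gamma$ by a scalar, that $\gamma$ commute with $\sigma_\bd$: then each point $\gamma(s^{-1})\cdot\mathbf V$ with $\mathbf V\in\SF_\bd=(\Fl_{\bf D})^{\sigma_\bd}$ is itself $\sigma_\bd$-fixed, so $\mathcal Y_\bd$ lies in the $\sigma_\bd$-fixed locus and $\phi^{-1}(0)\subseteq(\Fl^a_{\bf D})^{\sigma_\bd}=\SF^a_\bd$; should a $\sigma_\bd$-invariant $\gamma$ effecting the type ${\tt A}$ degeneration not be available, one instead computes $\lim_{s\to0}\gamma(s^{-1})\cdot\SF_\bd$ directly, combining the Grassmannian computation of the previous proposition on each factor with the explicit description of $\SF^a_\bd$. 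For the reverse inclusion it is then enough to know that $\dim\SF^a_\bd=\dim\SF_\bd$: indeed $\SF^a_\bd$ is irreducible (Corollaries~\ref{irred},~\ref{parirred}) and $\phi^{-1}(0)$ is equidimensional of dimension $\dim\SF_\bd$ by flatness of $\phi$, while the two dimensions agree since both varieties possess a big cell isomorphic to an affine space of the same dimension — the classical one to the nilradical of the relevant parabolic, and the degenerate one to the open orbit $Sp^a_{2n}\cdot(\bc v_{\la})$, $\la=\sum_i\omega_{d_i}$, whose stabilizer in $(N^-)^a$ is spanned by exactly the same root vectors.

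The main obstacle, I expect, is this coherent choice of $\gamma$: one must check that a single one-parameter degeneration can simultaneously produce the degenerate symplectic Grassmannians on the individual factors (as in the preceding proposition) and the degenerate incidence relations between them (as in type ${\tt A}$), in a way compatible with $\sigma_\bd$, so that $\phi^{-1}(0)$ is forced to be exactly $\SF^a_\bd$ and not merely a proper closed subvariety of it. Once $\phi^{-1}(0)=\SF^a_\bd$ is established, the family $\phi\colon\mathcal Y_\bd\to\bc$ exhibits $\SF^a_\bd$ as a flat degeneration of $\SF_\bd$; the complete flag statement for $\SF^a_{2n}$ is the special case ${\bf D}=(1,\dots,2n-1)$.
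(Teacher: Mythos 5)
Your approach is sound in outline but takes a genuinely different route from the paper, and you are right that the ``crux'' you flag is the difficult point in your version. The paper circumvents it entirely by working directly with $\bd$ (not the extended set ${\bf D}$ and the involution $\sigma_\bd$) and by realising the degenerating family as an \emph{intersection} of two families, each of which controls one set of conditions. It sets $Y_\bd$ to be the fibered product over $\bc$ of the Grassmannian families $Y_{d_i}$ from \eqref{Y} (so at $s=0$ this gives $\prod_i SpGr^a_{d_i}(2n)$, controlling the degenerate isotropy), and takes $M_\bd$ to be the flat degeneration $\Fl_\bd\rightsquigarrow\Fl^a_\bd$ of the type ${\tt A}$ flag variety recalled from \cite{Fe1} (controlling the degenerate incidences $pr_{d_i+1}\cdots pr_{d_{i+1}}V_{d_i}\subset V_{d_{i+1}}$). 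Both sit in $\bc\times\prod_i Gr_{d_i}(\bc^{2n})$, and one intersects. The special fibre of $\varphi_\bd\colon M_\bd\cap Y_\bd\to\bc$ is then read off immediately: $\Fl^a_\bd\cap\prod_i SpGr^a_{d_i}(2n)$ is exactly the linear-algebraic description of $\SF^a_\bd$ proved earlier in this section, so no one-parameter subgroup needs to be made $\sigma_\bd$-equivariant and no separate dimension count is needed to identify the fibre. Irreducibility of the total space is obtained from $M_\bd\cap Y_\bd=\overline{\varphi_\bd^{-1}(\bc^*)}$, and flatness then follows from \cite{H}, Chap.~III, Prop.~9.7. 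The total space $M_\bd\cap Y_\bd$ is in fact what you call $\mathcal Y_\bd$; the methodological difference is that you try to compute $\lim_{s\to 0}$ directly (hence the need for the $\sigma_\bd$-compatibility of $\gamma$ or a hands-on calculation), whereas the paper pins the limit down by intersecting two families whose special fibres are already known. Note also that your reverse inclusion and the paper's identity $M_\bd\cap Y_\bd=\overline{\varphi_\bd^{-1}(\bc^*)}$ both implicitly rely on irreducibility of $\SF^a_\bd$ (Corollary~\ref{parirred}) together with the equality $\dim\SF^a_\bd=\dim\SF_\bd$; so that part of your argument is on target, and the genuine gap to close, should you want to complete your route, is precisely the compatibility of $\gamma$ with $\sigma_\bd$.
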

\begin{proof}
Let us denote by $Y_k$ the variety defined by \eqref{Y}. 
Recall the projections $\phi_k:Y_k\to \bC$. Let $Y_\bd$ be the fibered product of all $Y_{d_i}$ 
over $\bC$, i.e.
\[
Y_\bd=\{(y_1,\dots,y_m):\ y_i\in Y_{d_i}, \phi_{d_1}(y_1)=\dots = \phi_{d_m}(y_m)\}.
\]
Recall 
(see \cite{Fe1}) that there exists
flat degeneration $\psi_\bd:M_\bd\to \bc$ of the $SL_{2n}$ flag variety $\Fl_\bd$ into the degenerate version $\Fl^a_\bd$, so $\psi^{-1}_\bd (0)\simeq \Fl^a_\bd$ and the general fiber is isomorphic to  $\Fl_\bd$.
We note that both $M_\bd$ and $Y_\bd$ are subvarieties in the product 
$\bC\times\prod_{i=1}^m Gr_{d_i}(\bc^{2n})$. Consider the intersection $M_\bd\cap Y_\bd$ and the natural
projection $\varphi_\bd:M_\bd\cap Y_\bd\to \bC$.
Then the general fiber is isomorphic to the classical symplectic flag variety $\SF_\bd$ and 
$\varphi^{-1}_\bd(0)\simeq \SF^a_\bd$. Since $M_\bd\cap Y_\bd=\overline{\varphi_\bd^{-1}(\bc^*)}$, 
the left hand side is irreducible. Now \cite{H}, Chap.~III, Proposition 9.7 implies the theorem.
\end{proof}

\section{Resolution of singularities}\label{Resolution}
In this section we construct varieties $SpR_\bd$ which serve as 
desingularizations for 
symplectic degenerate flag varieties $\SF^a_\bd$. We start with the case of complete flags.

\subsection{Complete flag varieties.}
Recall the $2n$-dimensional vector space $W$ with a basis $w_i$, $i=1,\dots,2n$.
Let $W_{i,j}$ be the linear span of the vectors 
$w_1,\dots,w_i$, $w_{j+1},\dots,w_{2n}$.

We define $SpR_{2n}$ as the variety of collections of subspaces
\[
V_{i,j},\ 1\le i\le j\le 2n,\ i+j\le 2n
\]
subject to the conditions
\begin{gather}
\label{1}
\dim V_{i,j}=i,\ V_{i,j}\subset W_{i,j},\\
\label{2}
V_{i,j}\subset V_{i+1,j},\ pr_{j+1}V_{i,j}\subset V_{i,j+1},\\
\label{3}
V_{i,2n-i} \text{ are isotropic for } i=1,\dots,n.
\end{gather}
\begin{rem}
It is often convenient to view the spaces $V_{i,j}$ as being attached to the positive roots
$\al_{i,j}$ of $\msp_{2n}$ (see \eqref{ij}, \eqref{ibarj}). So in what follows we sometimes
denote $V_{i,j}$ by $V_{\al_{i,j}}$. 
\end{rem}
We note that there is a natural embedding $SpR_{2n}\subset \prod Gr_i(W_{i,j})$, where the
product is taken over $i\le j$, $i+j\le 2n$.

Let us order the set of positive roots of $\msp_{2n}$ into a sequence $\beta_1,\beta_2,\dots,\beta_{n^2}$ as follows:
\begin{gather*}
\beta_1=\al_{1,2n-1},\\ \beta_2=\al_{1,2n-2},\\
\beta_3=\al_{2,2n-2},\ \beta_4=\al_{1,2n-3},\\
\beta_5=\al_{2,2n-3},\ \beta_6=\al_{1,2n-4},\\
\beta_7=\al_{3,2n-3},\ \beta_8=\al_{2,2n-4},\ \beta_9=\al_{1,2n-5},
\end{gather*}
and, finally,
\[
\beta_{n^2-n+1}=\al_{n,n},\ \beta_{n^2-n+2}=\al_{n-1,n-1},\cdots, \beta_{n^2}=\al_{1,1}.
\]
%For $\al_{i,j}=\beta_k$ we sometimes denote $V_{i,j}$ by $V_{\beta_k}$.
For $l=1,\dots,n^2$ we define the step $l$ variety $SpR_{2n}(l)$ as the image of projection
\[
(V_{\beta_k})_{k=1}^{n^2}\mapsto (V_{\beta_k})_{k=1}^l.
\]
\begin{lem}
The variety $SpR_{2n}(l)$ consists of collections $(V_{\beta_k})_{k=1}^l$ satisfying conditions 
\eqref{1}, \eqref{2}, \eqref{3} whenever the pairs of indices $(i,j)$ appearing in the conditions
satisfy $\al_{i,j}=\beta_m$ with $m\le l$. 
\end{lem}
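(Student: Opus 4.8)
The plan is to prove the lemma by induction on $l$, exploiting the tree-like structure of the ordering $\beta_1,\dots,\beta_{n^2}$. Let me denote by $\widetilde{SpR}_{2n}(l)$ the variety of collections $(V_{\beta_k})_{k=1}^l$ satisfying exactly those instances of \eqref{1}, \eqref{2}, \eqref{3} in which every space that is mentioned has already been introduced, i.e.\ is among $V_{\beta_1},\dots,V_{\beta_l}$. The claim is that $SpR_{2n}(l)=\widetilde{SpR}_{2n}(l)$. The inclusion $SpR_{2n}(l)\subseteq\widetilde{SpR}_{2n}(l)$ is immediate from the definitions, since any point of $SpR_{2n}$ satisfies all the conditions, so its truncation to the first $l$ coordinates satisfies all the conditions that only involve those coordinates. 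The content is the reverse inclusion: every collection $(V_{\beta_k})_{k=1}^l$ satisfying the ``visible'' conditions extends to a full point of $SpR_{2n}$.

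First I would check the base case $l=1$: here $\beta_1=\alpha_{1,2n-1}$, and the only visible condition is $\dim V_{1,2n-1}=1$, $V_{1,2n-1}\subset W_{1,2n-1}=\bC w_1$, so $V_{1,2n-1}=\bC w_1$ is forced; this obviously extends (e.g.\ to the point corresponding to the standard flag). For the inductive step, suppose $SpR_{2n}(l-1)=\widetilde{SpR}_{2n}(l-1)$ and take a collection $(V_{\beta_k})_{k=1}^l\in\widetilde{SpR}_{2n}(l)$. Its truncation lies in $\widetilde{SpR}_{2n}(l-1)=SpR_{2n}(l-1)$, so it extends to a point $(V_{\beta_k})_{k=1}^{n^2}$ of $SpR_{2n}$; the task is to re-choose the coordinate $V_{\beta_l}$ to equal the prescribed one while keeping all of \eqref{1}, \eqref{2}, \eqref{3} satisfied. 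The key combinatorial observation is that in the given ordering, when $\beta_l=\alpha_{i,j}$ is introduced, the spaces it must \emph{contain}, namely $V_{i-1,j}$ (from \eqref{2}) and $pr_{j+1}^{-1}$-preimages in the condition $pr_{j+1}V_{i,j-1}\subset V_{i,j}$ read the other way — wait, more carefully: $V_{\beta_l}$ appears on the \emph{small} side of $V_{i,j}\subset V_{i+1,j}$ and of $pr_{j+1}V_{i,j}\subset V_{i,j+1}$, and on the \emph{large} side of $V_{i-1,j}\subset V_{i,j}$, of $pr_{j+1}V_{i,j-1}\subset V_{i,j}$ (when $j<2n$), and possibly of the isotropy condition when $i+j=2n$. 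Inspecting the ordering shows that all the conditions in which $V_{\beta_l}$ is the \emph{large} side involve only $\beta_1,\dots,\beta_{l-1}$ (they are ``visible''), whereas the conditions in which $V_{\beta_l}$ is the \emph{small} side involve a later $\beta_m$, $m>l$, and those later spaces are still free to be adjusted.

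Hence the argument runs as follows: replace $V_{\beta_l}$ by the prescribed space $V_{\beta_l}^{\mathrm{new}}$; by hypothesis this still satisfies all visible conditions, i.e.\ all conditions in which $V_{\beta_l}$ is the large side. Now propagate the change forward: for each $\beta_m$ with $m>l$ in turn, the conditions forcing a lower bound on $V_{\beta_m}$ now involve possibly-changed spaces, but one can always enlarge (or re-choose) $V_{\beta_m}$ to a space of the correct dimension $\dim=i_m$ inside $W_{i_m,j_m}$ containing the required lower bounds and, when $i_m+j_m=2n$, isotropic — this last point is where one uses that an isotropic subspace of the correct small dimension can always be extended to an isotropic subspace of dimension $i_m\le n$ inside the appropriate coordinate subspace, because $W_{i_m,j_m}$ with $i_m+j_m=2n$ is a coordinate subspace on which the form restricts with enough room. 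Carrying this forward through $\beta_{l+1},\dots,\beta_{n^2}$ produces a genuine point of $SpR_{2n}$ projecting to our given collection, establishing $\widetilde{SpR}_{2n}(l)\subseteq SpR_{2n}(l)$.

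The main obstacle is the bookkeeping in the inductive step: one must verify precisely that in the chosen ordering the ``large side'' conditions on $V_{\beta_l}$ are exactly the visible ones, and that the forward-propagation never creates a conflict — in particular that at each stage the lower-bound spaces imposed on $V_{\beta_m}$ genuinely fit inside $W_{i_m,j_m}$ and, in the isotropic cases, that they are isotropic and of dimension $\le n$ so that an isotropic extension exists. This amounts to checking that the ordering is a linear extension of the natural partial order on positive roots under which $\alpha_{i,j}$ precedes both $\alpha_{i+1,j}$ and $\alpha_{i,j+1}$, together with the observation that the isotropy constraints \eqref{3} only ever constrain $V_{i,2n-i}$ from below and are compatible with such extensions; both facts are transparent from the explicit list of $\beta_k$'s but require a careful case analysis to state cleanly.
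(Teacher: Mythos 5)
The paper gives no proof of this lemma --- it is treated as immediate, with the real content pushed into Lemma~\ref{P1}, whose proof identifies exactly which visible constraints bear on $V_{\beta_l}$ and shows they carve out a $\bP^1$; the unlabelled lemma then follows because a $\bP^1$-fibration is surjective, so iterating $\widetilde{SpR}_{2n}(l+1)\twoheadrightarrow\widetilde{SpR}_{2n}(l)$ up to $l=n^2$ shows every partial collection satisfying the visible conditions extends to a point of $SpR_{2n}$. Your general plan (prove that every visible-condition collection extends) is the right idea, but your execution has two concrete errors.

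First, the base case is wrong. By the paper's definition $W_{i,j}=\spa(w_1,\dots,w_i,w_{j+1},\dots,w_{2n})$, so $W_{1,2n-1}=\spa(w_1,w_{2n})$ is \emph{two}-dimensional, not $\bC w_1$; hence $SpR_{2n}(1)$ is a $\bP^1$ of lines (each automatically isotropic), not a point, and you would need to observe that each of these lines extends to a full point of $SpR_{2n}$, which is not as trivial as "take the standard flag".

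Second, and more seriously, the "key combinatorial observation" is false. The ordering $\beta_1,\dots,\beta_{n^2}$ proceeds by decreasing $j-i$; thus among the four neighbours of $\alpha_{i,j}$, the \emph{earlier} (visible) ones are $\alpha_{i-1,j}$ and $\alpha_{i,j+1}$, while the \emph{later} ones are $\alpha_{i+1,j}$ and $\alpha_{i,j-1}$. In the condition $V_{i-1,j}\subset V_{i,j}$ the space $V_{i,j}$ is indeed on the large side and the other index is earlier; but in the visible condition $pr_{j+1}V_{i,j}\subset V_{i,j+1}$ the space $V_{i,j}$ is on the \emph{small} side, and in the invisible condition $pr_{j}V_{i,j-1}\subset V_{i,j}$ the space $V_{i,j}$ is on the \emph{large} side. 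So the visible constraints at step $l$ are one lower bound ($V_{i-1,j}\subset V_{i,j}$) and one upper bound ($V_{i,j}\subset W_{i,j}\cap(V_{i,j+1}\oplus\bC w_{j+1})$), plus isotropy when $i+j=2n$; your dichotomy "large side $=$ visible, small side $=$ later" is wrong in both directions, and the forward-propagation step as you describe it (always enlarging) does not account for the upper bounds coming from already-fixed earlier spaces. The clean argument is to prove the $\bP^1$-fibration statement directly on the visible-condition varieties (that is, prove Lemma~\ref{P1} first) and then deduce the present lemma from surjectivity of $\bP^1$-fibrations.
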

For example, $SpR_{2n}(n^2)=SpR_{2n}$. We also set $SpR_{2n}(0)=pt$.

\begin{lem}\label{P1}
For all $l=1,\dots,n^2$ the map $SpR_{2n}(l)\to SpR_{2n}(l-1)$ is a $\bP^1$-fibration.
\end{lem}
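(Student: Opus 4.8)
The plan is to exhibit, for each $l$, the new space $V_{\beta_l}$ as a point of a $\mathbb{P}^1$ varying over $SpR_{2n}(l-1)$. Write $\beta_l=\al_{i,j}$. There are two cases according to whether $\al_{i,j}$ lies on the ``boundary'' $i+j=2n$ (so $V_{i,j}=V_{i,2n-i}$ must be isotropic) or in the ``interior'' $i+j<2n$. First I would record which spaces $V_{i-1,j}$ and $V_{i,j-1}$ (equivalently $V_{\beta_m}$, $m<l$) have already been constructed: by the chosen ordering of the roots, when we add $\beta_l=\al_{i,j}$ the spaces $V_{i-1,j}$ (index $i-1+j<i+j$) and $V_{i,j+1}$ (appearing earlier in the antidiagonal sweep) are already available; the constraints \eqref{2} involving $V_{i,j}$ are precisely $V_{i-1,j}\subset V_{i,j}$ and $pr_{j+1}V_{i,j}\subset V_{i,j+1}$, i.e. $V_{i,j}\subset pr_{j+1}^{-1}(V_{i,j+1})$, together with $V_{i,j}\subset W_{i,j}$ and $\dim V_{i,j}=i$.

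The key step is to show that for a fixed collection in $SpR_{2n}(l-1)$ the set of admissible $V_{i,j}$ is a $\mathbb{P}^1$. In the interior case, the ambient space in which $V_{i,j}$ must live is $A:=W_{i,j}\cap pr_{j+1}^{-1}(V_{i,j+1})$, it contains $V_{i-1,j}$, and the conditions say $V_{i-1,j}\subset V_{i,j}\subset A$ with $\dim V_{i,j}=i=\dim V_{i-1,j}+1$; so $V_{i,j}/V_{i-1,j}$ is a line in $A/V_{i-1,j}$, and one checks $\dim A/V_{i-1,j}=2$. Indeed $\dim W_{i,j}=2n-(j-i)$, intersecting with the codimension-one condition coming from $pr_{j+1}$ and from $\dim V_{i,j+1}=i$ one computes $\dim A = i+1$ (using that $w_{j+1}\notin W_{i,j}$ when $j+1>i$, which holds since $i\le j$), whence $\dim A/V_{i-1,j}=(i+1)-(i-1)=2$. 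This identifies the fibre with $\mathbb{P}^1=\mathbb{P}(A/V_{i-1,j})$. In the boundary case $j=2n-i$ there is the extra condition that $V_{i,2n-i}$ be isotropic; here I would use that $V_{i-1,2n-i}$ is already isotropic (it is $V_{\beta_m}$ for a smaller $m$, hence satisfies \eqref{3} by the previous Lemma) and that the perp of $V_{i-1,2n-i}$ cuts down the relevant $2$-dimensional quotient by one more dimension only if the symplectic form is nondegenerate on it — a short linear-algebra check using the explicit form $\langle w_t,w_{2n+1-t}\rangle=1$ shows the isotropic lines through $V_{i-1,2n-i}$ inside $A/V_{i-1,2n-i}$ again form a $\mathbb{P}^1$ (the form restricted to a $2$-dimensional space either vanishes, giving all of $\mathbb{P}^1$, or is nondegenerate, and one verifies the degenerate alternative does not occur because $A$ still contains a vector pairing nontrivially with $V_{i-1,2n-i}^\perp$).

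The remaining point is that the map $SpR_{2n}(l)\to SpR_{2n}(l-1)$ is not merely surjective with $\mathbb{P}^1$ fibres set-theoretically but is a Zariski-locally trivial $\mathbb{P}^1$-bundle: this follows because $A$ and $V_{i-1,j}$ vary as (the total spaces of) algebraic vector bundles over $SpR_{2n}(l-1)$ — $A$ is the kernel of a bundle map, $V_{i-1,j}$ a subbundle — so $\mathbb{P}(A/V_{i-1,j})$ is a $\mathbb{P}^1$-bundle, and in the boundary case the isotropic condition cuts out a sub-quadric-bundle of a $\mathbb{P}^1$-bundle which is again a $\mathbb{P}^1$-bundle by the dimension count above. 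I expect the main obstacle to be the bookkeeping in the boundary case: verifying uniformly that the symplectic form never degenerates on the pertinent $2$-plane so that the isotropic locus is all of $\mathbb{P}^1$ rather than two points, and checking that $SpR_{2n}(l-1)$ (inductively smooth and irreducible) makes the ambient quotient bundle have constant rank $2$ — i.e. that the earlier conditions \eqref{1},\eqref{2},\eqref{3} force $\dim A$ to be exactly $i+1$ with no jumping. Once constant rank is established, local triviality is automatic and the lemma follows.
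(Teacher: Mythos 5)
Your overall approach is the same as the paper's: fix the partial collection, split on whether $\beta_l=\alpha_{i,j}$ lies in the interior ($i+j<2n$) or on the boundary ($i+j=2n$), and identify the fibre as a $\mathbb{P}^1$ by linear algebra; the added remark on Zariski-local triviality (fibre $=\mathbb{P}$ of a rank--$2$ quotient bundle) is a sensible supplement that the paper leaves implicit. However, the boundary case as you have written it contains two genuine errors.

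First, the assertion that $V_{i-1,2n-i}$ ``satisfies \eqref{3} by the previous Lemma'' is false: condition \eqref{3} concerns the spaces $V_{i',2n-i'}$, i.e.\ those on the antidiagonal $i'+j'=2n$, whereas $(i-1)+(2n-i)=2n-1$. The space $V_{i-1,2n-i}$ \emph{is} isotropic, but this needs an argument, which is exactly what the paper supplies: the chain of conditions $pr_{2n-i+1}V_{i-1,2n-i}\subset V_{i-1,2n-i+1}$ together with \eqref{3} applied to $V_{i-1,2n-(i-1)}=V_{i-1,2n-i+1}$ shows that $pr_{2n-i+1}V_{i-1,2n-i}$ is isotropic, and then isotropy of $V_{i-1,2n-i}$ itself follows because $w_{2n-i+1}$ pairs non-trivially only with $w_i\notin W_{i-1,2n-i}$, so the form on $W_{i-1,2n-i}$ does not see the $w_{2n-i+1}$-component.

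Second, your dichotomy ``the form on the $2$-plane either vanishes, giving all of $\mathbb{P}^1$, or is nondegenerate'' reflects a confusion between isotropy as a quadratic versus a linear condition. Writing $V_{i,2n-i}=V_{i-1,2n-i}\oplus\bC v$, the isotropy of $V_{i,2n-i}$ is equivalent to $v\in V_{i-1,2n-i}^\perp$ (the condition $\langle v,v\rangle=0$ holds automatically for an alternating form), which is \emph{linear}. So the correct step is to cut the ambient $W_{i,2n-i}$ down to $A:=V_{i-1,2n-i}^\perp\cap W_{i,2n-i}$, a space of dimension $2i-(i-1)=i+1$, and observe that the choices of $V_{i,2n-i}$ are precisely lines in the $2$-dimensional quotient $A/V_{i-1,2n-i}$, giving $\mathbb{P}^1$ with no further conditions; whether the induced form on that $2$-plane is zero or nondegenerate is irrelevant. (Also, a small slip in the interior case: $w_{j+1}\in W_{i,j}$, not $w_{j+1}\notin W_{i,j}$; what you need is $w_{j+1}\notin W_{i,j+1}$, which gives $\dim(V_{i,j+1}\oplus\bC w_{j+1})=i+1$.) The paper's own treatment of the boundary case is brief but avoids both pitfalls by reducing directly to the correct ambient.
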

\begin{proof}
Fix a point $(V_{\beta_k})_{k=1}^{l-1}\in SpR_{2n}(l-1)$. We need to show that 
the possible choices of $V_{\beta_l}$ are parametrized by points of $\bP^1$. 
Let $\beta_l=\al_{i,j}$. First, let $i+j=2n$. If $i=1$, then we are looking for a one-dimensional
subspace in $\spa (w_1,w_{2n})$ (it is automatically isotropic). This gives the isomorphism
$SpR_{2n}(1)\simeq \bP^1$. Now assume $i>1$ and still $i+j=2n$. Then we have the following conditions 
for the space $V_{i,2n-i}$:
\[
V_{i-1,2n-i}\subset V_{i,2n-i}\subset W_{i,2n-i},\ V_{i,2n-i} \text{ is isotropic}.
\] 
We note that $V_{i-1,2n-i}\subset \spa(w_1,\dots,w_{i-1},w_{2n-i+1},\dots,w_{2n})$ is an 
$(i-1)$-dimen\-sional
vector space such that $pr_{2n-i+1} V_{i-1,2n-i}$ is isotropic. Since $V_{i,2n-i}$ has to be isotropic,
the datum of an $i$-dimensional $V_{i,2n-i}\subset W_{i,j}$ is equivalent to 
the datum of a line in the 2-dimensional space
$W_{i,2n-i}/V^\perp_{i-1,2n-i}$. Therefore, the fiber of the map
$SpR_{2n}(l)\to SpR_{2n}(l-1)$ is $\bP^1$.

Now assume $i+j<2n$ and $\beta_l=\al_{i,j}$. Then the conditions for $V_{i,j}$ are 
\[
V_{i-1,j}\subset V_{i,j}\subset W_{i,j}\cap (V_{i,j+1}\oplus \bC w_{j+1}).
\]  
It is easy to see that the variety of choices of such a  $V_{i,j}$ is isomorphic to $\bP^1$ 
(actually, the same situation appears in the  $\msl_{2n}$ case).
\end{proof}

\begin{cor}
The variety $SpR_{2n}$ is a tower of successive $\bP^1$-fibrations.
\end{cor}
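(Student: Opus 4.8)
The plan is to deduce the corollary immediately from Lemma~\ref{P1} together with the chain of projections constructed just above it. First I would recall the setup: we have surjective morphisms
\[
SpR_{2n}=SpR_{2n}(n^2)\longrightarrow SpR_{2n}(n^2-1)\longrightarrow\cdots\longrightarrow SpR_{2n}(1)\longrightarrow SpR_{2n}(0)=\mathrm{pt},
\]
each of which forgets one of the subspaces $V_{\beta_l}$ in the ordering $\beta_1,\dots,\beta_{n^2}$ of the positive roots. By Lemma~\ref{P1} every arrow $SpR_{2n}(l)\to SpR_{2n}(l-1)$ is a $\bP^1$-fibration, so by induction on $l$ the variety $SpR_{2n}(l)$ is obtained from a point by $l$ successive $\bP^1$-fibrations; taking $l=n^2$ gives the claim.

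To make the induction rigorous I would spell out what "tower of successive $\bP^1$-fibrations" means and check the base case and the inductive step. The base case $l=0$ is trivial since $SpR_{2n}(0)$ is a point. For the inductive step, assuming $SpR_{2n}(l-1)$ is such a tower, Lemma~\ref{P1} says $SpR_{2n}(l)\to SpR_{2n}(l-1)$ is a $\bP^1$-fibration, so prepending this fibration to the tower for $SpR_{2n}(l-1)$ exhibits $SpR_{2n}(l)$ as a tower of length $l$. Since $l=n^2$ recovers $SpR_{2n}$, we conclude. I would also remark that, being an iterated $\bP^1$-fibration over a point, $SpR_{2n}$ is in particular a smooth irreducible projective variety of dimension $n^2=\dim\fn^-_{\msp_{2n}}$ — facts that are needed later when $SpR_{2n}$ is used as a resolution of singularities.

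Honestly, there is no real obstacle here: the only subtlety is purely bookkeeping, namely confirming that the listed ordering $\beta_1,\dots,\beta_{n^2}$ of positive roots is such that at each step the conditions \eqref{1}, \eqref{2}, \eqref{3} defining $V_{\beta_l}$ only involve spaces $V_{\beta_m}$ with $m<l$, so that the projection $SpR_{2n}(l)\to SpR_{2n}(l-1)$ is genuinely well-defined and its fibers are as computed in Lemma~\ref{P1}. That compatibility is exactly the content of the preceding unnumbered lemma describing $SpR_{2n}(l)$, so it may simply be cited. Thus the proof is a two-line induction, and I would present it as such.

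\begin{proof}
By construction we have a chain of surjective morphisms
\[
SpR_{2n}=SpR_{2n}(n^2)\to SpR_{2n}(n^2-1)\to\cdots\to SpR_{2n}(1)\to SpR_{2n}(0)=\mathrm{pt},
\]
and by Lemma~\ref{P1} each map $SpR_{2n}(l)\to SpR_{2n}(l-1)$ is a $\bP^1$-fibration. Arguing by induction on $l$, with the trivial base case $SpR_{2n}(0)=\mathrm{pt}$, we see that $SpR_{2n}(l)$ is obtained from a point by $l$ successive $\bP^1$-fibrations. For $l=n^2$ this is exactly the assertion for $SpR_{2n}$. In particular $SpR_{2n}$ is a smooth irreducible projective variety of dimension $n^2$.
\end{proof}
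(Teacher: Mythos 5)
Your proof is correct and matches the paper's intended reasoning: the corollary is stated without proof precisely because it follows immediately from Lemma~\ref{P1} applied to the chain $SpR_{2n}(n^2)\to\cdots\to SpR_{2n}(0)=\mathrm{pt}$, which is exactly the induction you spell out. The extra remark about smoothness, irreducibility and dimension $n^2$ is a harmless and accurate bonus.
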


We now construct sections of the maps $SpR_{2n}(l)\to SpR_{2n}(l-1)$ similar to the ones
in type ${\tt A}$ (see \cite{FF}). A section $s_l:SpR_{2n}(l-1)\to SpR_{2n}(l)$ 
is determined by the $\beta_l$-th component
of the image $s_l(\bV)$ for $\bV\in SpR_{2n}(l-1)$. So let $\beta_l=\al_{i,j}$. Then we set
\[
(s_l(\bV))_{i,j}=
\begin{cases}
\bC w_{j+1},\text{ if } i=1,\\
V_{i-1,j+1}\oplus \bC w_{j+1}, \text{ if } i>1.
\end{cases}
\]
We sometimes denote $s_l$ by $s_{i,j}$.
Let us denote by $Z_{i,j}\subset SpR_{2n}$ the divisor corresponding to the 
above section, i.e.
\[
Z_{i,j}=\{\bV\in SpR_{2n}:\ V_{i,j}\in (\mathrm{Im} s_{i,j})_{i,j}\}.
\]
We also denote by $Z^\circ_{i,j}\subset Z_{i,j}$ an open part of $Z_{i,j}$ which is the complement to the 
intersection of $Z_{i,j}$ with all other divisors:
\begin{equation}\label{circ}
Z_{i,j}^\circ=Z_{i,j}\setminus \cup_{(i_0,j_0)\ne (i,j)} Z_{i_0,j_0}.
\end{equation}

Let us denote by $SpR_{2n}^\circ\subset SpR_{2n}$ the complement in $SpR_{2n}$ to the union of the 
divisors $Z_{i,.j}$.
\begin{lem}
$SpR_{2n}^\circ$ is an open cell in $SpR_{2n}$ enjoying the following explicit description:
\[
SpR_{2n}^\circ=\{\bV\in SpR_{2n}:\ p_{1,\dots,i}V_{i,j}\ne 0 \text{ for all } i,j\},
\]
where $p_{1,\dots,i}$ is the corresponding Pl\" ucker coordinate.
\end{lem}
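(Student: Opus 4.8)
The claim identifies the ``big cell'' $SpR_{2n}^\circ$ (the complement of all section divisors $Z_{i,j}$) with the locus where every Pl\"ucker-type coordinate $p_{1,\dots,i}V_{i,j}$ is nonzero; and asserts this locus is an affine cell. The plan is to proceed by induction along the tower $SpR_{2n}(l)\to SpR_{2n}(l-1)$ of $\bP^1$-fibrations from Lemma~\ref{P1}, using that at each step the section $s_{i,j}$ picks out exactly one point of the fiber $\bP^1$, namely the point whose $V_{i,j}$-component equals $\bC w_{j+1}$ (if $i=1$) or $V_{i-1,j+1}\oplus\bC w_{j+1}$ (if $i>1$). First I would record the elementary fact that, in the fiber $\bP^1$ over a point of $SpR_{2n}(l-1)$, the vanishing of the Pl\"ucker coordinate $p_{1,\dots,i}$ on $V_{i,j}$ cuts out precisely the point $(s_l(\bV))_{i,j}$: indeed in the proof of Lemma~\ref{P1} the fiber is identified with a line in a $2$-dimensional space $W_{i,j}\cap(V_{i,j+1}\oplus\bC w_{j+1})$ (or $W_{i,2n-i}/V_{i-1,2n-i}^\perp$ in the isotropic case), and one of the two coordinate points of that $\bP^1$ is exactly $w_{j+1}$ modulo the relevant subspace, which is the one lying in the section.

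\textbf{Key steps.} Step one: show $Z_{i,j}=\{\bV: p_{1,\dots,i}V_{i,j}=0\}$ as divisors in $SpR_{2n}$, by the fiberwise analysis just described; hence $SpR_{2n}\setminus\bigcup_{i,j}Z_{i,j}=\{\bV: p_{1,\dots,i}V_{i,j}\ne0\ \forall i,j\}$, which gives the stated description. Step two: show this locus is an affine cell isomorphic to affine space, by induction on $l$: the preimage in $SpR_{2n}(l)$ of the cell in $SpR_{2n}(l-1)$ is, over that cell, the total space of the $\bP^1$-bundle minus its section $s_l$, i.e.\ a line bundle over an affine space minus... no --- more simply, over the cell the bundle $SpR_{2n}(l)\to SpR_{2n}(l-1)$ is trivial with fiber $\bP^1$, removing the section point $\bC w_{j+1}$ (resp.\ $V_{i-1,j+1}\oplus\bC w_{j+1}$) leaves $\bA^1$, so by induction $SpR_{2n}(l)^\circ\cong SpR_{2n}(l-1)^\circ\times\bA^1$, and thus $SpR_{2n}^\circ\cong\bA^{n^2}$. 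Here one uses that the complement of a section in a $\bP^1$-bundle that is Zariski-locally trivial is, over a cell, an $\bA^1$-bundle and hence trivial.

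\textbf{Main obstacle.} The routine part is the linear algebra identifying the section point with a coordinate hyperplane of the fiber $\bP^1$; the one point that needs care is the isotropic case $i+j=2n$, where the fiber was identified in Lemma~\ref{P1} with a line in $W_{i,2n-i}/V_{i-1,2n-i}^\perp$ rather than with a line inside $W_{i,j}\cap(V_{i,j+1}\oplus\bC w_{j+1})$. I would check that the image $\bar w_{j+1}$ of $w_{j+1}$ in that $2$-dimensional quotient spans one of the two coordinate directions, and that the opposite coordinate direction is exactly the one on which $p_{1,\dots,i}$ is nonzero --- equivalently that the section datum $V_{i-1,j+1}\oplus\bC w_{j+1}$ is isotropic and satisfies the constraints, which follows from the definition of $s_l$ and the fact that $w_{j+1}=w_{i}$ when $j=2n-i$ pairs trivially with $W_{i,j}$ except against $w_{2n+1-i}\notin W_{i,j}$. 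So the genuine content is bookkeeping: that for \emph{every} index $(i,j)$ the single section point is the unique point of the fiber on which the chosen Pl\"ucker coordinate vanishes, uniformly across the three cases ($i=1$; $i>1$, $i+j<2n$; $i>1$, $i+j=2n$). Once that is in hand, the inductive cell structure and the explicit description both drop out immediately.
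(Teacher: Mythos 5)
Your approach — an induction up the tower of $\bP^1$-fibrations, identifying the section point in each fiber with the unique Pl\"ucker zero — is genuinely different from the paper's, but Step one as written contains a real error. The claimed equality $Z_{i,j}=\{\bV:\ p_{1,\dots,i}V_{i,j}=0\}$ is false in general: the right-hand side is usually strictly larger (a reducible divisor). The underlying reason is precisely that your ``fiberwise analysis'' is \emph{not} valid over an arbitrary point of $SpR_{2n}(l-1)$. Take $n=2$: over a point of $SpR_4(1)$ with $V_{1,3}=\bC w_4$, the fiber for $V_{1,2}$ is a line in $W_{1,2}\cap(V_{1,3}\oplus\bC w_3)=\spa(w_3,w_4)$, so $p_1V_{1,2}=0$ at \emph{every} point of that $\bP^1$, not just at the section point $\bC w_3$. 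Hence $\{p_1V_{1,2}=0\}$ contains the entire preimage of $Z_{1,3}$ in addition to $Z_{1,2}$, and your deduction of the set equality $SpR_{2n}\setminus\bigcup Z_{i,j}=\{\text{all Pl\"uckers nonzero}\}$ from the (false) individual divisor equalities does not go through.

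The repair is to fuse your Steps one and two into a single induction on $l$: prove simultaneously that $SpR_{2n}(l)^\circ$ equals the locus where $p_{1,\dots,k}V_{k,m}\neq 0$ for all $\beta_r=\al_{k,m}$ with $r\le l$, and that this locus is an affine cell. Over a point already in $SpR_{2n}(l-1)^\circ$, the earlier Pl\"ucker coordinates are nonzero, and \emph{this} is what makes $w_{j+1}$ span the unique Pl\"ucker-zero line in the two-dimensional fiber space (including in the isotropic case $i+j=2n$ you flagged), so the fiberwise identification and the inductive step are then both correct. For comparison, the paper avoids the tower entirely: given $\bV\in SpR_{2n}^\circ$, it supposes $\dim pr_{i+1}\cdots pr_j V_{i,i}<i$ for some $i,j$, chooses $i$ minimal and then $j$ minimal, and shows directly that this forces $V_{i,j-1}=V_{i-1,j}\oplus\bC w_j$, i.e.\ $\bV\in Z_{i,j-1}$, a contradiction; the converse inclusion is immediate since $\bV\in Z_{i,j}$ gives $w_{j+1}\in V_{i,j}$ with $j+1>i$ and hence $p_{1,\dots,i}V_{i,j}=0$. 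Your tower approach, once corrected, has the small advantage of delivering the cell structure and the explicit description in one stroke, whereas the paper states the cell property without separate argument.
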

\begin{proof}
First we prove that for any $\bV\in SpR_{2n}^\circ$ one has 
\begin{equation}\label{bound}
\dim pr_{i+1}\dots pr_{2n-i} V_{i,i} =i,\ i=1,\dots,n.
\end{equation}
In fact, let $i$ be the minimal number with the property that there exists
$i+1\le j\le 2n-i$ such that $\dim pr_{i+1}\dots pr_j V_{i,i}=i-1$.  For such
an $i$ let us take minimal $j$ with the property as above. Then
since $pr_{i+1}\dots pr_j V_{i,i}\subset V_{i,j}$ we obtain $w_j\in V_{i,j-1}$.
Because of the assumption of the minimality of $i$, $w_j\notin V_{i-1,j-1}$. Hence,
$\dim pr_j V_{i-1,j-1}=i-1$ and $V_{i-1,j}=pr_j V_{i-1,j-1}$. In addition,
$V_{i,j-1}=V_{i-1,j}\oplus \bC w_j$. This implies $V_{i,j-1}=V_{i-1,j}\oplus\bc w_j$, which
means $\bV\in Z_{i,j-1}$.

Second, we claim that \eqref{bound} implies
\begin{equation}\label{p}
\dim pr_{i+1}\dots pr_{2n} V_{i,i} =i,\ i=1,\dots,n.
\end{equation}
In fact, since $\dim pr_{i+1}\dots pr_{2n-i} V_{i,i} =i$, we obtain
\[
V_{i,2n-i}=pr_{i+1}\dots pr_{2n-i} V_{i,i}.
\]
Recall that $V_{i,2n-i}$ is an $i$-dimensional isotropic subspace in 
$W_{i,2n-i}$. This implies the claim.   

Finally, since for any $i,j$ we have $V_{i,j}=pr_{i+1}\dots pr_j V_{i,i}$ and
\eqref{p} holds, we obtain $p_{1,\dots,i}V_{i,j}\ne 0$. 
\end{proof}

We have a natural map $\pi_{2n}:SpR_{2n}\to \SF^a_{2n}$
\[
\pi_{2n} (V_{i,j})_{i,j}=(V_{i,i})_{i=1}^n
\]
forgetting the off-diagonal elements.

\begin{lem}\label{surj}
The map $\pi_{2n}$ is a birational isomorphism.
\end{lem}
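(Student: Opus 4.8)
The plan is to show that $\pi_{2n}$ restricts to an isomorphism over the open cell $\SF^{a,\circ}_{2n} := Sp^a_{2n}(\bC v)$, the open $Sp^a_{2n}$-orbit of the highest weight line, and that the preimage of this cell is exactly $SpR_{2n}^\circ$. Since $SpR_{2n}$ is irreducible (being a tower of $\bP^1$-fibrations by the Corollary after Lemma~\ref{P1}) and $\SF^a_{2n}$ is irreducible, and since $\pi_{2n}$ is a morphism of projective varieties whose restriction to a dense open subset is an isomorphism onto a dense open subset, this gives birationality once we also check surjectivity.

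First I would set up the inverse map on the cell. Over $\SF^{a,\circ}_{2n}$, a point is a collection $(V_{i,i})_{i=1}^n$ with $V_{i,i}\in SpGr^a_i(2n)$, satisfying $pr_{i+1}V_{i,i}\subset V_{i+1,i+1}$ and with $p_{1,\dots,i}(V_{i,i})\ne 0$ for all $i$ (the non-vanishing of the leading Plücker coordinate characterizes the open $Sp^a_{2n}$-orbit, exactly as in the type $\tt A$ situation and as used in the proof of Proposition~\ref{ZKsympgrass}). Given such data, define $V_{i,j} := pr_{i+1}pr_{i+2}\cdots pr_j\, V_{i,i}$ for $i\le j$, $i+j\le 2n$. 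I must check that this collection lies in $SpR_{2n}$: the dimension condition $\dim V_{i,j}=i$ follows from $p_{1,\dots,i}(V_{i,i})\ne 0$ and the fact that applying the projections $pr_{i+1},\dots,pr_j$ to a space containing $w_1,\dots,w_i$ (modulo lower terms) cannot drop dimension on the relevant coordinate; the containment $V_{i,j}\subset W_{i,j}$ is immediate from the definition; $V_{i,j}\subset V_{i+1,j}$ follows from $pr_{i+1}V_{i,i}\subset V_{i+1,i+1}$ after applying the remaining projections and using that the $pr$'s commute; $pr_{j+1}V_{i,j}=V_{i,j+1}$ holds by construction; and $V_{i,2n-i}$ isotropic follows from $V_{i,i}$ isotropic, because $V_{i,i}$ being isotropic in the degenerate sense (i.e. $pr_{1,3}V_{i,i}$ isotropic, via Proposition~\ref{ZKsympgrass}) translates precisely into isotropy of $pr_{i+1}\cdots pr_{2n-i}V_{i,i}$. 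This assignment is manifestly a morphism, is inverse to $\pi_{2n}$ on $SpR_{2n}^\circ$, and lands in $SpR_{2n}^\circ$ by the explicit description in the preceding Lemma.

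For surjectivity of $\pi_{2n}$ onto all of $\SF^a_{2n}$: the image of $\pi_{2n}$ is closed (as $SpR_{2n}$ is projective) and contains the dense orbit $\SF^{a,\circ}_{2n}$, hence equals $\SF^a_{2n}$. (Here one uses that $\SF^a_{2n}$ is defined as the closure of that orbit, and that $\pi_{2n}$ is a well-defined map into it, which follows because on the orbit the target collection $(V_{i,i})$ satisfies $pr_{i+1}V_{i,i}\subset V_{i+1,i+1}$ and the $SpGr^a_i$ conditions, and these are closed conditions that therefore hold on all of $SpR_{2n}$ — this is essentially the content of the Lemma describing $SpR_{2n}(l)$ and the Corollary stating $SpR_{2n}$ is a tower of $\bP^1$-fibrations, together with the explicit description of $\SF^a_{2n}$ as a subvariety of $\prod SpGr^a_i(2n)$ in the Theorem just before Proposition~\ref{sigma}.) Combining: $\pi_{2n}$ is a surjective morphism of irreducible projective varieties restricting to an isomorphism over the dense open $\SF^{a,\circ}_{2n}$, hence a birational isomorphism.

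The main obstacle I anticipate is the dimension bookkeeping in the forward direction: verifying that $\dim(pr_{i+1}\cdots pr_j\,V_{i,i})=i$ on the whole cell, not just generically, and that all the incidence relations \eqref{1}–\eqref{3} are preserved exactly (not just up to closure). The cleanest route is to run the argument coordinate-by-coordinate using the leading Plücker coordinate $p_{1,\dots,i}$, which the preceding Lemma already identifies as the defining non-vanishing condition for $SpR_{2n}^\circ$; once one knows $p_{1,\dots,i}(V_{i,j})\ne 0$ for all $i,j$ on the cell (which that Lemma supplies via the identity $V_{i,j}=pr_{i+1}\cdots pr_j V_{i,i}$ and relation \eqref{p}), the dimension counts and the isotropy translation are forced. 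So in practice the work reduces to quoting the preceding Lemma for the shape of $SpR_{2n}^\circ$ and checking that the two constructions — forgetting off-diagonal spaces, and reconstructing them by successive projections — are mutually inverse morphisms on that cell.
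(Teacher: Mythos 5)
Your argument for birationality over the open cell is sound and matches the paper, but your surjectivity argument has a gap that goes to the heart of why this lemma matters in the paper's logical chain. You argue: the image of $\pi_{2n}$ is closed, contains the dense orbit, and therefore equals $\SF^a_{2n}$, ``using that $\SF^a_{2n}$ is defined as the closure of that orbit.'' This does prove that $\pi_{2n}$ surjects onto the orbit closure. But the reason the paper needs this lemma is to establish Corollary~\ref{irred}, which feeds into the theorem identifying $\SF^a_{2n}$ (orbit closure) with the \emph{linear-algebra variety} $X$ of collections $(V_i)_{i=1}^n$ with $V_i\in SpGr^a_i(2n)$ and $pr_{i+1}V_i\subset V_{i+1}$. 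At that stage one only knows $\SF^a_{2n}\subset X$ (the containment conditions are closed and hold on the orbit); the problem is to show that $X$ is not larger. The paper's route is: prove $\pi_{2n}$ surjects onto all of $X$, conclude $X$ is irreducible (image of the irreducible $SpR_{2n}$), and only then deduce $X=\SF^a_{2n}$ because the orbit is open in $X$. Your closedness argument cannot run before that identification is established: you would be using ``$\pi_{2n}$ lands in the orbit closure'' as an input, which is precisely what is in question for an arbitrary point of $X$. So the argument is circular for the paper's purposes, and in particular would not yield Corollary~\ref{irred}.

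What your proposal is missing is the actual content of the paper's proof: an explicit, double-inductive construction of a preimage $\bV\in SpR_{2n}$ for an \emph{arbitrary} point $(V_i)_{i=1}^n$ satisfying only the linear-algebra conditions, with no appeal to the orbit or to irreducibility of the target. Concretely one constructs $V_{i,j}$ by increasing induction on $j$ and decreasing induction on $i$, and the nontrivial step is what to do when $w_j\in V_{i,j-1}$ so that $\dim pr_j V_{i,j-1}=i-1$: one must exhibit an $i$-dimensional space pinched between $pr_j V_{i,j-1}$ and $W_{i,j}\cap V_{i+1,j}$ for which the isotropy condition $pr_{j+1}\cdots pr_{2n-i}V_{i,j}$ isotropic can be maintained, and at the turn $j=2n-i$ one needs a perpendicularity argument using nondegeneracy of the form on $W_{2n-j,j}$. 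This existence argument is the substance of Lemma~\ref{surj}; the topological shortcut you propose sidesteps it and, as a result, proves a weaker statement than what the surrounding results require.
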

\begin{proof}
We prove that $\pi_{2n}$ is surjective and one-to-one on $SpR_{2n}^\circ$. 
The second statement is simple: since $\dim pr_{i+1}\dots pr_j V_i =i$ for any
$j=i+1,\dots,2n-i$ we have $V_{i,j}=pr_{i+1}\dots pr_j V_i$ (recall $\dim V_{i,j}=i$).
Let us prove the surjectivity.

Given a point $(V_i)_{i=1}^n\in\SF^a_{2n}$, we need to construct a point $\bV\in SpR_{2n}$ such that
$\pi_{2n} \bV=(V_i)_{i=1}^n$. We construct the entries $V_{i,j}$ of $\bV$ by increasing induction on 
$j$. 
In the course of the construction, we always check that $pr_{j+1}\dots pr_{2n-i} V_{i,j}$ is isotropic
(this is a necessary condition for  $\bV\in SpR_{2n}$). 
For $j=1$ we have the only space $V_{1,1}$, which has to be equal to $V_1$. 

Now assume $j\le n$. 
We need to define $V_{i,j}$ with $i=1,\dots,j$. We do this by decreasing induction on $i$.
For $i=j$ we have $V_{j,j}=V_j$. Now assume $V_{i+1,j}$ is already defined . Then the conditions
for $V_{i,j}$ are as follows:
\[
pr_j V_{i,j-1}\subset V_{i,j}\subset W_{i,j}\cap V_{i+1,j},\ pr_{j+1}\dots pr_{2n-i} V_{i,j} \text{ is isotropic}.
\]
If $\dim pr_j V_{i,j-1}=i$, then we have $V_{i,j}=pr_j V_{i,j-1}$. Assume that the dimension drops, i.e.
$w_j\in V_{i,j-1}$. Then, by induction $pr_j V_{i,j-1}\subset W_{i,j}\cap V_{i+1,j}$. In addition
$\dim (W_{i,j}\cap V_{i+1,j})\ge i$. Therefore there exists an $i$-dimensional subspace in between
$pr_j V_{i,j-1}$ and $W_{i,j}\cap V_{i+1,j}$. Now since $pr_{j+1}\dots pr_{2n-i-1} V_{i+1,j}$ is isotropic
and $V_{i,j}\subset W_{i,j}\cap V_{i+1,j}$, we obtain that 
$pr_{j+1}\dots pr_{2n-i} V_{i,j}$ is also  isotropic. 

Finally, assume $j>n$. 
We need to define $V_{i,j}$ with $i=1,\dots,2n-j$. Again, we do this by decreasing induction on $i$.
Let us start with $i=2n-j$. Then the conditions we have are
\[
pr_{j+1} V_{2n-j,j-1}\subset V_{2n-j,j}\subset W_{2n-j,j},\ V_{2n-j,j}\text{ is isotropic}. 
\]
If $\dim pr_{j+1}V_{2n-j,j-1}=2n-j$, then we are done, since $pr_{j+1}V_{2n-j,j-1}$ is isotropic by induction.
If $\dim pr_{j+1}V_{2n-j,j-1}=2n-j-1$, then we need to show that 
$(pr_{j+1}V_{2n-j,j-1})^\perp\cap W_{2n-j,j}$ is not zero. But the first space is $(2n-j-1)$-dimensional and
the second one is $2(2n-j)$-dimensional and the restriction of the symplectic form to $W_{2n-j,j}$ is non-degenerate.
Now assume that we have fixed $V_{i+1,j}$. Using the same arguments as above one can show that 
there exists $V_{i,j}$ satisfying
\[
pr_j V_{i,j-1} \subset V_{i,j}\subset V_{i+1,j}\cap W_{i,j}, \ pr_{j+1}\dots pr_{2n-i}V_{i,j} 
\text{ is isotropic}.
\] 
\end{proof}

\begin{cor}\label{irred}
The variety $\SF^a_{2n}$ is irreducible.
\end{cor}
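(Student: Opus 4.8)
The variety $\SF^a_{2n}$ is irreducible.

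The plan is to deduce irreducibility of $\SF^a_{2n}$ directly from the results already established about its resolution $SpR_{2n}$. First I would invoke the Corollary stating that $SpR_{2n}$ is a tower of successive $\bP^1$-fibrations: starting from $SpR_{2n}(0)=\mathrm{pt}$ and using Lemma \ref{P1}, each map $SpR_{2n}(l)\to SpR_{2n}(l-1)$ is a $\bP^1$-fibration, so by induction on $l$ the total space $SpR_{2n}=SpR_{2n}(n^2)$ is a smooth irreducible projective variety (a Zariski-locally-trivial, or at least fiberwise-irreducible, bundle over an irreducible base with irreducible fibers is irreducible).

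Next I would use Lemma \ref{surj}, which asserts that the forgetful map $\pi_{2n}:SpR_{2n}\to\SF^a_{2n}$ is a birational isomorphism; in particular its proof shows $\pi_{2n}$ is \emph{surjective}. Since $SpR_{2n}$ is irreducible and $\pi_{2n}$ is a surjective (continuous, in fact morphism of) map, the image $\SF^a_{2n}=\pi_{2n}(SpR_{2n})$ is irreducible as well. That is the entire argument.

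I do not anticipate a genuine obstacle here: the corollary is a formal consequence of the two preceding results. The only point requiring a word of care is the inductive claim that a tower of $\bP^1$-fibrations over a point is irreducible — one should note that each $SpR_{2n}(l)$ is the total space of a $\bP^1$-bundle (in the Zariski-locally-trivial sense, as follows from the explicit description in the proof of Lemma \ref{P1} where the fiber is cut out as a $\bP^1$ inside an explicit rank-$2$ bundle), so that irreducibility propagates up the tower. Once that is in hand, surjectivity of $\pi_{2n}$ from Lemma \ref{surj} finishes it immediately.
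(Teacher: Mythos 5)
Your proof is correct and is exactly the argument the paper intends: the irreducibility of $SpR_{2n}$ comes from the tower-of-$\bP^1$-fibrations structure (Lemma~\ref{P1} and its corollary), and then surjectivity of $\pi_{2n}$ from Lemma~\ref{surj} transfers irreducibility to the image. The paper simply states Corollary~\ref{irred} immediately after Lemma~\ref{surj} without spelling this out, so your reconstruction fills in precisely the implied reasoning.
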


We now establish a connection between the $SL_{2n}$ resolution $R_{2n}$ and its symplectic analogue
$SpR_{2n}$. Define an involution $\sigma:R_{2n}\to R_{2n}$ by the following formula: let
$\bV=(V_{i,j})_{1\le i\le j}$ be a point in $R_{2n}$. Then
\begin{equation}\label{barsigma}
(\sigma\bV)_{i,j}=V_{2n-j,2n-i}^\perp\cap W_{i,j}.
\end{equation}
\begin{prop}
Formula \eqref{barsigma} defines an order two automorphism of $R_{2n}$. The variety of $\sigma$
fixed points $R_{2n}^{\sigma}$ is isomorphic to $SpR_{2n}$. For the resolution map
$\pi_{2n}:SpR_{2n}\to \SF^a_{2n}$ one has $\pi_{2n}\sigma=\sigma\pi_{2n}$ (i.e. $\pi_{2n}$ is
$\sigma$-equivariant).
\end{prop}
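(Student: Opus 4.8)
The plan is to verify the three claims in turn, all by direct computation with the defining conditions \eqref{1}, \eqref{2}, \eqref{3} of $SpR_{2n}$ and the analogous conditions for the type ${\tt A}$ resolution $R_{2n}$ recalled in Section~\ref{Definitions}. First I would check that \eqref{barsigma} is well-defined, i.e.\ that $(\sigma\bV)_{i,j}=V_{2n-j,2n-i}^\perp\cap W_{i,j}$ really has dimension $i$ and lies in $W_{i,j}$. The inclusion into $W_{i,j}$ is built into the formula; for the dimension count one uses that $V_{2n-j,2n-i}$ is a $(2n-j)$-dimensional subspace of $W_{2n-j,2n-i}=\spa(w_1,\dots,w_{2n-j},w_{2n-i+1},\dots,w_{2n})$, so $V_{2n-j,2n-i}^\perp$ is $(2n-(2n-j))=j$-dimensional wait --- $V_{2n-j,2n-i}^\perp$ has dimension $2n-(2n-j)=j$, and one must show it meets $W_{i,j}$ (which has dimension $2n-(j-i)=2n-j+i$) in exactly dimension $i$; this follows because $W_{2n-j,2n-i}^\perp = \spa(w_{i+1},\dots,w_{j})$ wait, more carefully: the orthogonal complement of $W_{2n-j,2n-i}$ is spanned by those $w_k$ with $w_k$ paired to a basis vector outside $W_{2n-j,2n-i}$, and one checks $W_{2n-j,2n-i}^\perp\subset V_{2n-j,2n-i}^\perp$ and $W_{2n-j,2n-i}^\perp\subset W_{i,j}$, together with a transversality argument. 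Then $\sigma^2=\mathrm{id}$ follows from $(U^\perp\cap W_{i,j})^\perp\cap W_{2n-j,2n-i} = U$ when $U\subset W_{2n-j,2n-i}$, using that the symplectic form is non-degenerate on the relevant spans.

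Next I would show $\sigma$ preserves the incidence relations defining $R_{2n}$, namely $V_{i,j}\subset V_{i+1,j}$ and $V_{i,j}\subset V_{i,j+1}\oplus\bC w_{j+1}$ (in the normalization used in Section~\ref{Definitions} for $R_\bd$). Applying $\perp$ reverses inclusions: $V_{2n-j,2n-i}\subset V_{2n-j,2n-i+1}\oplus\bC w_{2n-i+1}$ gives, after taking orthogonal complements and intersecting with the appropriate $W$'s, the relation $(\sigma\bV)_{i,j}\subset(\sigma\bV)_{i+1,j}$, and similarly the other relation is self-dual under $\sigma$. This is the routine bookkeeping part: each of the two chain conditions maps to one of the two chain conditions, and one tracks carefully how $W_{i,j}^\perp$ interacts with $w_{j+1}$. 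Once $\sigma$ is shown to be an order-two automorphism of $R_{2n}$, the fixed-point locus $R_{2n}^\sigma$ is the set of $\bV$ with $V_{i,j}=V_{2n-j,2n-i}^\perp\cap W_{i,j}$ for all $i\le j$.

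To identify $R_{2n}^\sigma$ with $SpR_{2n}$, I would argue that the $\sigma$-fixed condition is equivalent to: the data is determined by the subfamily $(V_{i,j})_{i+j\le 2n}$, and for those the condition $V_{i,2n-i}=V_{i,2n-i}^\perp\cap W_{i,2n-i}$, i.e.\ $V_{i,2n-i}\subset V_{i,2n-i}^\perp$, says exactly that $V_{i,2n-i}$ is isotropic --- which is precisely \eqref{3}. Moreover the ``interior'' incidence relations $V_{i,j}\subset V_{i+1,j}$ and $pr_{j+1}V_{i,j}\subset V_{i,j+1}$ for $i+j\le 2n$ are exactly \eqref{1},\eqref{2}, while the relations involving indices with $i+j>2n$ are forced by $\sigma$-equivariance from the ones with $i+j<2n$ together with the isotropy at the boundary; conversely any point of $SpR_{2n}$ extends uniquely to a $\sigma$-fixed point of $R_{2n}$ by the formula \eqref{barsigma}. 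Here the subtle point --- the one I expect to be the main obstacle --- is showing that the ``extension'' of an $SpR_{2n}$-collection to all $(i,j)$ via $\sigma$ actually satisfies the full set of $R_{2n}$-relations for the mixed-range indices, i.e.\ that no extra constraints are needed and no existing ones are violated; this requires a careful inductive check mirroring the dimension and isotropy arguments already used in the proof of Lemma~\ref{surj}. Finally, $\sigma$-equivariance of $\pi_{2n}$ is immediate: $\pi_{2n}$ reads off the diagonal entries $V_{i,i}$, and $(\sigma\bV)_{i,i}=V_{2n-i,2n-i}^\perp\cap W_{i,i}$, which matches the automorphism $\sigma$ of $\SF^a_{2n}$ from Proposition~\ref{sigma} under the identification of $SpR_{2n}$ with $R_{2n}^\sigma$ and $\SF^a_{2n}$ with $(\Fl^a_{2n})^\sigma$; so $\pi_{2n}\sigma=\sigma\pi_{2n}$ holds by restriction of the type ${\tt A}$ commutation, which itself is clear from the definitions.
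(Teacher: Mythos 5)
Your proposal takes essentially the same route as the paper: verify the dimension count $\dim(V_{2n-j,2n-i}^\perp\cap W_{i,j})=i$ directly, check $\sigma^2=\mathrm{id}$ and the chain relations by taking orthogonals, observe that the fixed-point condition on the anti-diagonal $i+j=2n$ reduces exactly to isotropy of $V_{i,2n-i}$, and use $W_{i,i}=W$ for the equivariance; the paper spells out only the dimension step and leaves the rest as a direct verification. One small slip in your dimension argument: you assert $W_{2n-j,2n-i}^\perp\subset W_{i,j}$, but $W_{2n-j,2n-i}^\perp=\spa(w_{i+1},\dots,w_j)$ is in fact a \emph{complement} to $W_{i,j}$ in $W$, not a subspace of it; the transversality you need is precisely $W_{2n-j,2n-i}^\perp\oplus W_{i,j}=W$, which together with $W_{2n-j,2n-i}^\perp\subset V_{2n-j,2n-i}^\perp$ gives $V_{2n-j,2n-i}^\perp+W_{i,j}=W$ and hence $\dim(V_{2n-j,2n-i}^\perp\cap W_{i,j})=j+(2n-j+i)-2n=i$.
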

\begin{proof}
First, we need to show that $\dim (V_{2n-j,2n-i}^\perp\cap W_{i,j})=i$.
We consider the case $i+j>2n$ (the opposite case is very similar). We note that
$\dim V_{2n-j,2n-i}=2n-j$ and thus $\dim V_{i,j}^\perp=j$. Since $V_{2n-j,2n-i}\subset W_{2n-j,2n-i}$ we have
\[
V^\perp_{2n-j,2n-i}\supset W^\perp_{2n-j,2n-i}\supset\{w_{i+1},\dots,w_j\}.
\]
Since $W_{i,j}=\spa(w_{i+1},\dots,w_j)$, we arrive at $\dim (V_{2n-j,2n-i}^\perp\cap W_{i,j})=j-(j-i)=i$.

Second, we need to prove that all other conditions from the definition of $R_{2n}$ are satisfied.
This is a direct verification.
\end{proof}

Recall that for a resolution of singularities $\pi:Y\to X$ 
an irreducible divisor $Z\subset Y$ is called
exceptional if $\dim\pi(Z)<\dim Z$.
\begin{prop}\label{except}
A divisor $Z_{i,j}$ is exceptional if and only if $j\ge n$ and $i+j<2n$.
\end{prop}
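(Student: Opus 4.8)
The plan is to convert exceptionality into a dimension estimate for the images $\pi_{2n}(Z_{i,j})\subset\SF^a_{2n}$. By Lemma~\ref{P1} the variety $SpR_{2n}$ is smooth of dimension $n^2$; $\SF^a_{2n}$ is a flat degeneration of $Sp_{2n}/B$ and so also has dimension $n^2$; by Lemma~\ref{surj} the map $\pi_{2n}$ is birational; and $Z_{i,j}$ is the preimage of the section $s_{i,j}$ inside a single $\bP^1$-fibration, hence $\dim Z_{i,j}=n^2-1$. Therefore $Z_{i,j}$ is exceptional if and only if $\pi_{2n}|_{Z_{i,j}}$ has positive-dimensional generic fibre, i.e.\ $\dim\pi_{2n}(Z_{i,j})\le n^2-2$; equivalently, a generic diagonal $(V_{p,p})_{p=1}^n$ of a point of $Z_{i,j}$ fails to determine the point up to finitely many choices.

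On $Z_{i,j}$ the only frozen entry is $V_{i,j}$, which by the formula for $s_{i,j}$ always contains $w_{j+1}$, and I would first record the two effects of this. Propagating the relations $pr_qV_{i,q-1}\subset V_{i,q}$ downwards in $q$ forces $V_{i,i}$ and the intermediate $V_{i,q}$ $(i\le q\le j)$ into a proper Schubert-type subvariety, so $\pi_{2n}(Z_{i,j})$ is cut down by a definite codimension. On the other hand, since $w_{j+1}\in V_{i,j}$ we have $\dim pr_{j+1}V_{i,j}=i-1$, so the relation $pr_{j+1}V_{i,j}\subset V_{i,j+1}$ no longer pins down $V_{i,j+1}$, and, iterating through $V_{i,j}\subset V_{i+1,j}$ as well, the entries built after $V_{i,j}$ in the sequence $\beta_1,\dots,\beta_{n^2}$ acquire extra freedom. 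The proposition amounts to saying that these two effects cancel exactly when $j\le n-1$ or $i+j=2n$, and that the extra freedom survives to a positive-dimensional fibre exactly when $n\le j$ and $i+j<2n$ --- which is precisely the condition that $\alpha_{i,j}$ be a short root of $\msp_{2n}$ whose support contains $\alpha_n$.

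For the ``if'' direction I would carry out the count of $\dim\pi_{2n}(Z_{i,j})$ directly from the description of $\SF^a_{2n}$ as a chain of degenerate symplectic Grassmannians, building $\pi_{2n}(Z_{i,j})$ fibrewise over the $V_{p,p}$ with each $V_{p,p}$ confined by the downward constraints above, and comparing the total with $n^2-1$. The model case is $\msp_4$, $Z_{1,2}$: here $\pi_4|_{Z_{1,2}}$ forgets $V_{1,3}$, which on $Z_{1,2}$ is an unconstrained line in the isotropic plane $\spa(w_1,w_4)$, so the fibres are $\bP^1$'s. For general $(i,j)$ with $j\ge n$, $i+j<2n$ the presence of $\alpha_n$ in the support of $\alpha_{i,j}$ forces the chain of entries linked by the projection inclusions starting at $V_{i,j}$ to run into a terminal or isotropic entry --- whose remaining parameter is invisible to the diagonal --- before any downstream relation can reabsorb the freedom; one concludes $\dim\pi_{2n}(Z_{i,j})\le n^2-2$, so $Z_{i,j}$ is exceptional.

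For the ``only if'' direction ($j\le n-1$ or $i+j=2n$) I would run the reconstruction procedure of the proof of Lemma~\ref{surj} starting from a generic diagonal of a point of $Z_{i,j}$, dragging along the side condition $V_{i,j}=s_{i,j}$, and check that every entry is still forced up to finite ambiguity. When $i+j=2n$ the frozen entry is itself one of the terminal isotropic pieces, so nothing lies above it to be liberated, and its only new consequence, $w_{j+1}\in V_{i+1,j}$ and its iterates, trims the admissible diagonals by exactly the amount by which the relevant Grassmannian fibre grows. When $j\le n-1$ the entry $V_{i,j+1}$ is liberated as in the previous paragraph, but $\alpha_{i,j+1}$ is still internal, so the relation $V_{i,j+1}\subset V_{i+1,j+1}\cap W_{i,j+1}$ --- whose right side is generically a line of the correct dimension --- pins it down again, while once more the downward constraint is compensated by a one-parameter enlargement of the admissible diagonals (this is the computation making $\msp_6$, $Z_{1,2}$ non-exceptional). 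In either case $\pi_{2n}|_{Z_{i,j}}$ is generically finite, so $Z_{i,j}$ is not exceptional. The hard part, and what I expect to cost the most, is exactly this matching of the codimension lost in the image against the freedom opened up after $V_{i,j}$, uniformly in $(i,j)$ and $n$; I would organize it by first isolating the unique entry that becomes underdetermined and settling its fate (reabsorbed when $j\le n-1$ or $i+j=2n$, genuinely free otherwise), and only then invoking the Lemma~\ref{surj} reconstruction verbatim for all remaining entries, the short-versus-long root dichotomy being what governs which alternative occurs.
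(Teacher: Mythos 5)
Your framing matches the paper's: $\pi_{2n}$ is birational between $n^2$-dimensional varieties, $\dim Z_{i,j}=n^2-1$, so exceptionality of $Z_{i,j}$ is equivalent to the generic fibre of $\pi_{2n}|_{Z_{i,j}}$ being positive-dimensional, and your ``short root with $\alpha_n$ in its support'' slogan is the right reformulation of $j\ge n$, $i+j<2n$. But the proposal explicitly defers what you call the hard part, and that is precisely where the two numerical conditions are actually produced. For exceptionality the paper does not run a codimension count: it exhibits a $\bP^1$-family inside $Z_{i,j}$ over a single diagonal. The precise source of the freedom is the isotropic entry $V_{2n-j-1,j+1}$. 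From $\bV\in Z_{i,j}$ one has $w_{j+1}\in V_{i,j}\subset V_{2n-j-1,j}$ (the chain needs $i\le 2n-j-1$, i.e.\ $i+j<2n$, and $2n-j-1\le j$, i.e.\ $j\ge n$), so $\dim pr_{j+1}V_{2n-j-1,j}=2n-j-2$; since the form on $W_{2n-j-1,j+1}$ is non-degenerate, the admissible Lagrangian extensions form the Lagrangian Grassmannian of a $2$-dimensional symplectic space, i.e.\ a $\bP^1$. This entry lies off the diagonal exactly because $2n-j-1<n$, again $j\ge n$. One then completes $\bU$ downward as in Lemma~\ref{surj}, the choice $U_{i-1,j+1}=V_{i-1,j+1}$ keeping $\bU\in Z_{i,j}$. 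Your phrase ``terminal or isotropic entry whose remaining parameter is invisible to the diagonal'' is the right instinct, but without naming $V_{2n-j-1,j+1}$ and reading the two inequalities off its existence and position, the argument stays an ansatz.

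For the non-exceptional cases, the ``codimension-matching / compensation'' bookkeeping you propose is more machinery than is needed and is not how the paper argues: it simply proves $\pi_{2n}$ is injective on $Z^{\circ}_{i,j}$. Concretely, $\bV\in Z^{\circ}_{i,j}$ forces $w_{l+1}\in V_{k,l}$ only for $l=j$, $i\le k\le j$ (any other occurrence would put $\bV$ on another divisor), so $pr_{l+1}V_{k,l}=V_{k,l+1}$ away from that strip and the diagonal determines all entries except the $V_{k,j+1}$; then $p_{1,\ldots,k}V_{k,l}\ne 0$ for $l>j$ makes $V_{k+1,j+1}\cap W_{k,j+1}$ exactly $k$-dimensional, pinning $V_{k,j+1}$ down by decreasing induction from $V_{j+1,j+1}=V_{j+1}$. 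Generic finiteness drops out with no compensation count. So the approach is the same, but the exceptional direction of your sketch is genuinely missing its core computation, and the non-exceptional direction as framed would take a detour the actual proof avoids.
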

\begin{proof}
Fix a pair $i<j$ such that $j\ge n$ and $i+j<2n$.
We  prove that the divisor $Z_{i,j}$ is exceptional. Fix a point $(V_i)_{i=1}^n\in\SF^a_{2n}$ and
a point $\bV\in Z_{i,j}$ such that $\pi_{2n} \bV=(V_i)_{i=1}^n$, i.e. $V_{i,i}=V_i$. 
We prove that the preimage $\pi_{2n}^{-1} (V_i)_{i=1}^n$ is at least one-dimensional.
Let us construct a one-dimensional family of elements $\bU\in Z_{i,j}$ such that $\pi_{2n}\bU=\pi_{2n}\bV$.
We set $U_{k,l}=V_{k,l}$ if $l\le j$. Now (as in the proof of Lemma \ref{surj}) let us try to 
extend already fixed components of $\bU$ to some element of $Z_{i,j}$. 
We start with $U_{2n-j-1,j+1}$. In order to guarantee that $\bU\in SpR_{2n}$, we need the following 
conditions:
\begin{equation}\label{U}
pr_{j+1} U_{2n-j-1,j}\subset U_{2n-j-1,j+1}\subset W_{2n-j-1,j+1},\ U_{2n-j-1,j+1} \text{ is isotropic}.
\end{equation}
Since $V_{i,j}\supset w_{j+1}$ ($\bV\in Z_{i,j}$) we have $\dim pr_{j+1} U_{2n-j-1,j}=2n-j-2$.
The restriction of the symplectic form to $W_{2n-j-1,j+1}$ is non-degenerate, thus 
the set of solutions of \eqref{U} is isomorphic to $\bP^1$. 
Now using the same arguments as in Lemma \ref{surj} we define successively 
$U_{2n-j-2,j+1},\dots,U_{i,j+1}$. Now let us try to define $U_{i-1,j+1}$. 
We want $U_{i-1,j+1}$ to be contained in the (already defined) $U_{i,j+1}$. We note that
$U_{i,j}=V_{i,j}=V_{i-1,j+1}\oplus \bC w_{j+1}$ and $pr_{j+1} U_{i,j}\subset U_{i,j+1}$ (by induction).
Therefore, we can set $U_{i-1,j+1}=V_{i-1,j+1}$, which leads to $U_{i,j}=V_{i,j}=U_{i-1,j+1}\oplus \bC w_{j+1}$. 
We note that since we want $\bU\in Z_{i,j}$ such a relation has to hold. 
Now according to the proof 
of Lemma \ref{surj}, we can define the remaining $U_{k,l}$ (by increasing induction on $l$ and decreasing
induction on $k$) in such a way that $\bU\in SpR_{2n}$. Moreover, by the construction as above, 
$\bU\in Z_{i,j}$. 

Now let us fix $j<n$. We prove that the divisor $Z_{i,j}$ with $i\le j$ is non-exceptional. 
More precisely, we prove that
$\pi_{2n}$ restricted to $Z_{i,j}^\circ$ (see \eqref{circ}) is one-to-one. So let $\bV\in Z_{i,j}^\circ$.

First, we claim that if $w_{l+1}\in V_{k,l}$ then $l=j$ and $i\le k\le j$. In fact, assume 
that for a given $l$ the number $k$ is minimal with the property $w_{l+1}\in V_{k,l}$. Then
$k\ne 1$ (unless $i=1$) since $w_{l+1}\in V_{1,l}$ means $\bV\in Z_{1,l}$. We know that
$w_{l+1}\notin V_{k-1,l}$. Hence $pr_{l+1} V_{k-1,l}=V_{k-1.l+1}$ and $V_{k,l}=V_{k-1,l}\oplus\bC w_{l+1}$.
Therefore, $V_{k,l}=V_{k-1,l+1}\oplus \bC w_{l+1}$, implying $\bV\in Z_{k,l}$. Since $\bV\in Z_{i,j}^\circ$,
we arrive at $(k,l)=(i,j)$. 

Second, we note that the above claim implies that
for any pair $(k,l)$ except for $l=j$, $i\le k\le j$ we have
\begin{equation}\label{pr}
\dim pr_{l+1} V_{k,l}=V_{k,l+1}.
\end{equation}
In fact, for such pairs $(k,l)$ the left and right hand sides have the same dimensions and the left hand
side is contained in the right hand side. From equation \eqref{pr} we obtain that
if a point $(V_i)_{i=1}^n\in \SF^a_{2n}$ is fixed, the spaces $V_{k,l}$ with $k<i$ or $k>j$ or
$l\le j$ are fixed as well. Also, if one proves that the spaces $V_{k,j+1}$ are also uniquely fixed
by the choice of $(V_i)_{i=1}^n$, this would imply that the whole collection $\bV\in Z_{i,j}^\circ$ with
the property $\pi_{2n} \bV=(V_i)_{i=1}^n$ is unique.

Third, we note that $\bV\in Z_{i,j}^\circ$ implies that for $l>j$ one has 
$p_{1,\dots,k} V_{k,l}\ne 0$ (where $p_{1,\dots,l}$ 
is the corresponding Pl\"ucker coordinate). In fact, first $V_{1,2n-1}$ is a one-dimensional space,
not equal to $\bC w_{2n}$ (since $\bV\in Z_{i,j}^\circ$). Therefore, $p_1(V_{1,2n-1})\ne 0$.
Now the statement can be verified directly by decreasing induction on $l$ and increasing induction on $k$.
      
Finally, we are left to show that the values of $V_i$ and of $V_{k,l}$ with $l\le j$ fix $V_{k,j+1}$. 
First, $V_{j+1,j+1}=V_{j+1}$ is fixed. Now, we use the decreasing induction on $k$. 
The space $V_{k,j+1}$ has to satisfy
\[
V_{k,j+1}\subset V_{k+1,j+1}\cap W_{k,j+1}.
\]
We claim that the right hand side is (at most) $k$-dimensional. In fact, we have
$p_{1,\dots,k+1}(V_{k+1,j+1})\ne 0$.
Since $w_{k+1}\notin W_{k,j+1}$, the $\dim (V_{k+1,j+1}\cap W_{k,j+1})\le k$. Since 
$\dim V_{k,j+1}=k$, the space $V_{k,j+1}$ is uniquely fixed.

The last statement of the proposition is that the divisors $Z_{i,2n-i}$ are non-exceptional. This can be
proved along the same lines as the above case.       
\end{proof}

\subsection{Parabolic case.} 
We define a desingularization $SpR_\bd$ of $\SF^a_\bd$ in the following way.
Let $P_\bd$ be the subset of the set of positive roots for $\msp_{2n}$ corresponding to the
radical of the parabolic subalgebra defined by the roots $\al_{d_1},\dots,\al_{d_k}$. Explicitly,
$\al\in P_\bd$ if and only if there exists an $i=1,\dots,k$ such that $(\al,\omega_{d_i})>0$.
We sometimes write $(i,j)\in P_\bd$ instead of $\al_{i,j}\in P_\bd$.
Now, a point in $SpR_\bd$ is a collection of subspaces
$V_{i,j}\subset W$, $(i,j)\in P_\bd$ subject to the conditions
\begin{gather*}
\dim V_{i,j}=i,\ V_{i,j}\subset W_{i,j},\\
V_{i,j}\subset V_{i+1,j},\ pr_{j+1}V_{i,j}\subset V_{i,j+1},\\
V_{i,2n-i} \text{ are isotropic for } i=1,\dots,n.
\end{gather*}
We note that there is a natural embedding $SpR_\bd\subset \prod_{(i,j)\in P_\bd} Gr_i(W_{i,j})$.

\begin{prop}
$SpR_\bd$ is a tower of successive $\bP^1$-fibrations.
\end{prop}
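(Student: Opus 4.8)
The plan is to mimic the proof of Lemma~\ref{P1} and its corollary verbatim, after restricting the root ordering to $P_\bd$. First I would order the roots of $P_\bd$ as $\gamma_1,\dots,\gamma_N$ ($N=\#P_\bd$) by extracting from the sequence $\beta_1,\dots,\beta_{n^2}$ the subsequence of those $\beta_m$ lying in $P_\bd$. For $0\le l\le N$ set $SpR_\bd(l)$ to be the image of $SpR_\bd$ under the projection $(V_{\gamma_m})_{m=1}^N\mapsto (V_{\gamma_m})_{m=1}^l$; exactly as in the complete case, one checks that $SpR_\bd(l)$ is the variety of tuples $(V_{\gamma_1},\dots,V_{\gamma_l})$ satisfying those defining conditions of $SpR_\bd$ all of whose indices occur among $\gamma_1,\dots,\gamma_l$ (surjectivity onto this set follows from nonemptiness of the fibers of the later steps). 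Since $SpR_\bd(0)$ is a point and $SpR_\bd(N)=SpR_\bd$, it suffices to show that each map $SpR_\bd(l)\to SpR_\bd(l-1)$ is a $\bP^1$-fibration.

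Fix $l$ and write $\gamma_l=\al_{i,j}$. In the sequence $\beta_1,\dots,\beta_{n^2}$ the root $\al_{p,q}$ is placed by decreasing $q-p$ and then decreasing $p$; consequently the only defining conditions of $SpR_{2n}$ that constrain $V_{i,j}$ in terms of already-constructed spaces are $V_{i-1,j}\subset V_{i,j}$ (when $i>1$) and $pr_{j+1}V_{i,j}\subset V_{i,j+1}$ (when $i+j<2n$), because $\al_{i-1,j}$ and $\al_{i,j+1}$ precede $\al_{i,j}$, while the conditions tying $V_{i,j}$ to $V_{i+1,j}$ and $V_{i,j-1}$ involve later roots and only bear on later steps. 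The one point that needs attention is that $\al_{i-1,j}$ and $\al_{i,j+1}$ still lie in $P_\bd$: since $P_\bd$ consists of the roots in the nilradical of a parabolic subalgebra it is closed under adding positive roots, and $\al_{i-1,j}-\al_{i,j}=\al_{i-1}$ as well as $\al_{i,j+1}-\al_{i,j}$ (which equals $\al_{j+1}$, $\al_{n-1}$ or $\al_{2n-j-1}$ according as $j<n$, $j=n$ or $j>n$) are simple roots. Hence whenever $\al_{i-1,j}$, resp. $\al_{i,j+1}$, is a root of $\msp_{2n}$ it belongs to $P_\bd$, so the full set of incoming constraints on $V_{\gamma_l}$ over $SpR_\bd(l-1)$ is literally the same as the corresponding one for $SpR_{2n}$.

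Therefore the local computation of Lemma~\ref{P1} goes through unchanged: if $i+j<2n$ the admissible $V_{i,j}$ are the hyperplanes of the rank-two bundle $(V_{i,j+1}\oplus\bC w_{j+1})/V_{i-1,j}$ over $SpR_\bd(l-1)$ (reading $V_{i-1,j}=0$ when $i=1$); if $i+j=2n$ and $i>1$ they are the lines of the rank-two bundle $W_{i,2n-i}/V_{i-1,2n-i}^\perp$, using that the symplectic form restricts non-degenerately to $W_{i,2n-i}$; and for $(i,j)=(1,2n-1)$ they are the lines of the $2$-dimensional space $W_{1,2n-1}$. In each case $SpR_\bd(l)\to SpR_\bd(l-1)$ is the projectivization of a rank-two vector bundle, i.e. a $\bP^1$-fibration, and iterating over $l=1,\dots,N$ exhibits $SpR_\bd$ as a tower of successive $\bP^1$-fibrations. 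The only genuinely new ingredient beyond the complete-flag argument is the closure property of $P_\bd$ recorded above; I expect that to be the main (and only minor) obstacle, everything else being a transcription of Lemma~\ref{P1}.
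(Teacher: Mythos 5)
Your argument is correct and is precisely the content the paper elides with ``very similar to the proof of Lemma~\ref{P1} and we omit it'': you order $P_\bd$ by restricting the sequence $\beta_1,\dots,\beta_{n^2}$, and the only genuinely new point --- that $P_\bd$, being the set of roots of a nilradical, is closed under adding simple roots, so $\al_{i-1,j}$ and $\al_{i,j+1}$ stay in $P_\bd$ whenever $\al_{i,j}$ does and hence every incoming constraint from Lemma~\ref{P1} is still present at the corresponding step --- is exactly what makes the local $\bP^1$-fibration computation transfer verbatim.
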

\begin{proof}
The proof is very similar to the proof of Lemma \ref{P1} and we omit it.
\end{proof}

\begin{prop}
A map $(V_{i,j})_{(i,j)\in P_\bd}\mapsto (V_{d_i,d_i})_{i=1}^k$ is a birational surjective map
from $SpR_\bd$ to $\SF^a_\bd$.
\end{prop}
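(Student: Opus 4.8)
The plan is to prove the two halves of the statement — surjectivity onto the explicit variety and injectivity over a dense open subset — in parallel with the complete-flag case (Lemma~\ref{surj}), and to note that, since $SpR_\bd$ is a tower of $\bP^1$-fibrations and hence irreducible, the same argument simultaneously yields Corollary~\ref{parirred}. Throughout I identify $\SF^a_\bd$ with the explicit variety $X_\bd$ of collections $(U_1,\dots,U_k)$, $U_m\in SpGr^a_{d_m}(2n)$, satisfying $pr_{d_m+1}\cdots pr_{d_{m+1}}U_m\subset U_{m+1}$, as in the previous theorem, and write $\pi_\bd\colon SpR_\bd\to X_\bd$ for the map $(V_{i,j})_{(i,j)\in P_\bd}\mapsto (V_{d_1,d_1},\dots,V_{d_k,d_k})$. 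The first thing to check is that $\pi_\bd$ is well defined: for every $m$ the pairs $(d_m,j)$ with $d_m\le j\le d_{m+1}$ and $(i',d_{m+1})$ with $d_m\le i'\le d_{m+1}$ all lie in $P_\bd$, so, using that the $pr_l$ commute and chaining the defining relations $pr_{j+1}V_{i,j}\subset V_{i,j+1}$ and $V_{i,j}\subset V_{i+1,j}$ from $V_{d_m,d_m}$ up to $V_{d_{m+1},d_{m+1}}$, one gets $pr_{d_m+1}\cdots pr_{d_{m+1}}V_{d_m,d_m}\subset V_{d_{m+1},d_{m+1}}$; hence $\pi_\bd(SpR_\bd)\subseteq X_\bd$.

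For surjectivity I would rerun the inductive construction from the proof of Lemma~\ref{surj}, now only over the pairs $(i,j)\in P_\bd$: given $(U_1,\dots,U_k)\in X_\bd$, build the $V_{i,j}$ by increasing induction on $j$ and, for fixed $j$, decreasing induction on $i$, setting $V_{d_m,d_m}=U_m$. The key observation is that restricting the index set only \emph{removes} constraints: the containment $pr_jV_{i,j-1}\subset V_{i,j}$ is imposed only when $(i,j-1)\in P_\bd$, the containment $V_{i,j}\subset V_{i+1,j}$ only when $(i+1,j)\in P_\bd$, and isotropy only when $j=2n-i$. Hence the dimension counts and the symplectic non-degeneracy argument of Lemma~\ref{surj} — the base cases $i=j$ and $i=2n-j$ as well as the inductive step — apply unchanged and always leave room for a valid $V_{i,j}$; the compatibility relations assumed on $(U_1,\dots,U_k)$ are precisely what makes $V_{d_m,d_m}=U_m$ consistent with the chains of containments. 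Alternatively one can argue by reduction to the complete case: the forgetful morphism $\SF^a_{2n}\to\SF^a_\bd$ is surjective (its image is closed, irreducible, and contains the dense $Sp^a_{2n}$-orbit), so a point of $\SF^a_\bd$ lifts to $\SF^a_{2n}$, then through $\pi_{2n}$ by Lemma~\ref{surj}, and one finally discards the components $V_{i,j}$ with $(i,j)\notin P_\bd$ — the surviving data still satisfies the $SpR_\bd$ relations, which form a subset of the $SpR_{2n}$ relations.

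For injectivity I would introduce $SpR_\bd^\circ\subset SpR_\bd$, the complement of the union of the divisors $Z_{i,j}$, $(i,j)\in P_\bd$, cut out by the sections $s_{i,j}$ defined exactly as in the complete case; it is dense open. On $SpR_\bd^\circ$ the Plücker-coordinate analysis from the proofs concerning $SpR_{2n}^\circ$ and from Proposition~\ref{except} carries over: one first shows the relevant projections are dimension-preserving, so $V_{i,j}=pr_j\cdots pr_{j'+1}V_{i,j'}$ whenever $(i,j'),\dots,(i,j)\in P_\bd$, whence each $V_{i,j}$ is recovered from $V_{i,j_0(i)}$, where $j_0(i)$ is the smallest element of $\bd$ that is $\ge i$; and then $V_{i,j_0(i)}=V_{j_0(i),j_0(i)}\cap W_{i,j_0(i)}$ generically, so that in the end every $V_{i,j}$ is determined by the diagonal entries $V_{d_1,d_1},\dots,V_{d_k,d_k}$. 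Thus $\pi_\bd$ is one-to-one on $SpR_\bd^\circ$, which maps isomorphically onto the open $Sp^a_{2n}$-orbit of $X_\bd$, exactly as in the complete case; therefore $\pi_\bd$ is birational and surjective. Since $SpR_\bd$ is irreducible, $X_\bd=\overline{Sp^a_{2n}\cdot(\text{highest weight line})}=\SF^a_\bd$ is irreducible, which also establishes Corollary~\ref{parirred}.

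The step I expect to cost the most effort is the bookkeeping in both halves: $P_\bd$ is \emph{not} closed under $(i,j)\mapsto(i+1,j)$ nor under $(i,j)\mapsto(i,j-1)$, so some of the neighbouring spaces used in the proof of Lemma~\ref{surj} are simply absent, and one has to verify at each omitted pair that the remaining constraints still admit (for surjectivity) and still pin down (for injectivity) the space $V_{i,j}$. A secondary point worth stating carefully is that the image of $SpR_\bd^\circ$ genuinely lies in the open $Sp^a_{2n}$-orbit of $X_\bd$ — this is what lets surjectivity of $\pi_\bd$ together with irreducibility of $SpR_\bd$ force $X_\bd=\SF^a_\bd$, so that the map in the statement really does have $\SF^a_\bd$ as its target.
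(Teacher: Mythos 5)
Your outline — adapt the proof of Lemma~\ref{surj} to the index set $P_\bd$, then use irreducibility of $SpR_\bd$ plus surjectivity onto the explicit variety to obtain Corollary~\ref{parirred} — is what the paper means by ``the proof is very similar to the proof of Lemma~\ref{surj} and we omit it,'' and your well-definedness check and the recovery scheme for injectivity are sound. But the surjectivity paragraph contains a genuine error that your closing paragraph half-senses and then contradicts in the body: the claim that the inductive step of Lemma~\ref{surj} ``applies unchanged'' because ``restricting the index set only \emph{removes} constraints'' is not correct. Fewer constraints must be \emph{satisfied} by the space being built, true, but the inductive step of Lemma~\ref{surj} also \emph{uses} constraints on the already-built spaces as hypotheses, and deleting those weakens the hypotheses rather than easing the step. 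Concretely, to squeeze an $i$-dimensional $V_{i,j}$ between $pr_jV_{i,j-1}$ and $W_{i,j}\cap V_{i+1,j}$, Lemma~\ref{surj} derives the key inclusion $pr_jV_{i,j-1}\subset V_{i+1,j}$ from the square $V_{i,j-1}\subset V_{i+1,j-1}$, $pr_jV_{i+1,j-1}\subset V_{i+1,j}$. If $(i+1,j-1)\notin P_\bd$ this derivation evaporates, and one checks that the only pattern with $(i,j-1),(i,j),(i+1,j)\in P_\bd$ but $(i+1,j-1)\notin P_\bd$ occurs exactly at the corners $(i,j)=(d_m,d_{m+1})$ with $d_{m+1}>d_m+1$. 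At such a corner the spaces $V_{d_m+1,d_{m+1}},\dots,V_{d_{m+1}-1,d_{m+1}}$ were built with no constraint from column $d_{m+1}-1$, while the row $V_{d_m,d_m},\dots,V_{d_m,d_{m+1}-1}$ was built with no constraint from above, so for arbitrary intermediate choices the needed inclusion $pr_{d_{m+1}}V_{d_m,d_{m+1}-1}\subset V_{d_m+1,d_{m+1}}$ simply fails.

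The fix is to make coherent rather than arbitrary choices: along the row $i=d_m$, whenever a projection drops dimension one should choose the one-dimensional complement inside the relevant preimage of $U_{m+1}\cap W_{d_m,d_{m+1}}$; the hypothesis $pr_{d_m+1}\cdots pr_{d_{m+1}}U_m\subset U_{m+1}$ together with $\dim\bigl(U_{m+1}\cap W_{d_m,d_{m+1}}\bigr)\ge d_m$ shows there is always room to do so, and only after this is the decreasing-$i$ induction in column $j=d_{m+1}$ possible. So what you defer as ``bookkeeping'' is exactly where the argument genuinely departs from the complete case and cannot be waved through as ``unchanged.'' A secondary point: your ``alternative'' surjectivity argument, lifting a point of $\SF^a_\bd$ through $\SF^a_{2n}$ and then through $SpR_{2n}$, only produces preimages of points of the orbit closure $\SF^a_\bd$, not of an arbitrary point of the explicit variety it is identified with; since that identification is precisely what the primary argument together with irreducibility of $SpR_\bd$ is meant to deliver in Corollary~\ref{parirred}, this route is circular for the corollary and should be dropped in favour of the repaired direct construction.
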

\begin{proof}
The proof is very similar to the proof of Lemma \ref{surj} and we omit it.
\end{proof}

\begin{cor}\label{parirred}
The varieties $\SF^a_\bd$ are irreducible.
\end{cor}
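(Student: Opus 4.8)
The plan is to read the statement off directly from the two immediately preceding propositions, exactly as in the complete-flag case. First I would recall that a $\bP^1$-fibration over an irreducible variety is again irreducible: over a dense open subset of the base the fibration is a genuine $\bP^1$-bundle, hence the preimage of that open set is irreducible, and it is dense in the total space, so the total space is irreducible. Feeding this into the proposition asserting that $SpR_\bd$ is a tower of successive $\bP^1$-fibrations and starting the induction from $SpR_\bd(0)=\mathrm{pt}$, one concludes after $\#P_\bd$ steps that $SpR_\bd$ is irreducible (in fact smooth and connected).

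Next I would invoke the proposition stating that the map $(V_{i,j})_{(i,j)\in P_\bd}\mapsto (V_{d_i,d_i})_{i=1}^k$ is a birational \emph{surjective} morphism $SpR_\bd\to\SF^a_\bd$. Since $\SF^a_\bd$ is the image of an irreducible variety under a morphism of varieties, and the continuous image of an irreducible space is irreducible, it follows at once that $\SF^a_\bd$ is irreducible, which is the assertion of the corollary. This is the same two-line argument by which Corollary~\ref{irred} is deduced from Lemma~\ref{surj} together with the tower structure of Lemma~\ref{P1}.

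There is no genuine obstacle here: the corollary is a formal consequence of structural results already in hand. The one point worth a glance is that the $\bP^1$-fibration analysis for $SpR_\bd$ really does survive the passage from the complete case to the parabolic one after discarding the coordinates $V_{i,j}$ with $\al_{i,j}\notin P_\bd$; but that is precisely what the proposition on $SpR_\bd$ being a tower of $\bP^1$-fibrations records, and its proof runs verbatim as in Lemma~\ref{P1}, so nothing new is needed.
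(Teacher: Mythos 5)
Your argument is exactly the paper's (implicit) one: the tower of $\bP^1$-fibrations over a point makes $SpR_\bd$ irreducible, and the surjective morphism $SpR_\bd\to\SF^a_\bd$ then forces $\SF^a_\bd$ to be irreducible as the image of an irreducible variety. Nothing to add.
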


In what follows we denote the desingularization map $SpR_\bd\to\SF^a_\bd$ by $\pi_\bd$.
As in the case of the complete flag varieties, for any pair $(i,j)\in P_\bd$, we define the corresponding
divisor $Z_{i,j}\subset SpR_\bd$.

\begin{prop}\label{nonex}
The non-exceptional divisors are exactly the $Z_{i,j}$ with $(i,j)$ from the following list:
\begin{gather*}
(1,d_i-1), i=2,\dots,k;\qquad (d_i+1,d_j-1), i\le j-2, j\le k;\\
 (1,2n-1);\qquad (d_i+1,2n-d_i-1), i=1,\dots,k-1.
\end{gather*}
\end{prop}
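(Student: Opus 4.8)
The plan is to mimic the analysis carried out for the complete flag variety in Proposition~\ref{except}, using the fact that $SpR_\bd$ is obtained from $SpR_{2n}$ by retaining only the subspaces $V_{i,j}$ with $(i,j)\in P_\bd$. The key structural observation is that the fibre of $\pi_\bd$ over a point of $\SF^a_\bd$ is controlled by how often the ``section relation'' $V_{i,j}=V_{i-1,j+1}\oplus \bC w_{j+1}$ (or $V_{1,j}=\bC w_{j+1}$) is forced, together with whether, once it is forced, the remaining off-diagonal spaces can still be recovered uniquely from the diagonal data. First I would recall from the proof of Lemma~\ref{surj} (parabolic version) that on the open cell $SpR_\bd^\circ$ the map $\pi_\bd$ is an isomorphism, so every divisor $Z_{i,j}$ that maps onto $\SF^a_\bd$ must, on its open part $Z_{i,j}^\circ$, still have one-dimensional generic fibre unless $\pi_\bd|_{Z_{i,j}^\circ}$ is one-to-one; the dichotomy between these two cases is exactly what the list encodes.

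Next I would treat the two types of divisors on the claimed list and show they are non-exceptional. For $Z_{1,d_i-1}$ and $Z_{d_i+1,d_j-1}$ (with $i\le j-2$), i.e. the divisors ``internal'' to the $\msl$-type part of the root poset, the argument is verbatim the second half of the proof of Proposition~\ref{except}: if $\bV\in Z_{i,j}^\circ$ then $w_{l+1}\in V_{k,l}$ forces $(k,l)=(i,j)$, hence $\dim pr_{l+1}V_{k,l}=k$ for all other admissible pairs, so all spaces $V_{k,l}$ with $l\le j$ are determined by the diagonal, and then a decreasing induction on $k$ using $p_{1,\dots,k}V_{k,l}\ne 0$ (valid on $Z_{i,j}^\circ$) determines the spaces $V_{k,j+1}$ as well, giving $\pi_\bd|_{Z_{i,j}^\circ}$ injective. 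For $Z_{1,2n-1}$ and $Z_{d_i+1,2n-d_i-1}$, i.e. the divisors sitting on the ``isotropic diagonal'' $i+j=2n$, one runs the same argument but now must additionally check that the isotropy constraints $V_{m,2n-m}$ isotropic do not produce extra freedom; here one uses that an isotropic $V_{m,2n-m}\subset W_{m,2n-m}$ containing a fixed $(m-1)$-dimensional isotropic subspace of $W_{m,2n-m}$ varies in a $\bP^1$ generically, but on $Z_{i,j}^\circ$ with $i+j=2n$ the relevant lower space already has the ``wrong'' dimension so the $\bP^1$ collapses to a point — this is the mirror of the $j<n$ case and should be stated as ``along the same lines'', just as in Proposition~\ref{except}.

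Finally I would show that every divisor \emph{not} on the list is exceptional, i.e.\ its generic fibre over $\SF^a_\bd$ is positive-dimensional. This is the analogue of the first half of the proof of Proposition~\ref{except}: for $(i,j)\in P_\bd$ with $n\le j$ and $i+j<2n$ (and $(i,j)$ not of the two listed boundary forms), fix $\bV\in Z_{i,j}$ over a point of $\SF^a_\bd$, keep $U_{k,l}=V_{k,l}$ for $l\le j$, and extend: because $w_{j+1}\in V_{i,j}$ the projection $pr_{j+1}U_{2n-j-1,j}$ drops dimension, and since the symplectic form restricted to $W_{2n-j-1,j+1}$ is non-degenerate the choices of the isotropic $U_{2n-j-1,j+1}$ form a $\bP^1$; propagating downward as in Lemma~\ref{surj} produces a genuine one-parameter family in $Z_{i,j}$ with the same image. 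One then checks that the pairs excluded from exceptionality by this construction are precisely the complement of the displayed list within $P_\bd$ — the bookkeeping being: on the $\msl$-diagonal one loses $(1,d_i-1)$ and $(d_i+1,d_j-1)$ with $i\le j-2$, on the isotropic diagonal one loses $(1,2n-1)$ and $(d_i+1,2n-d_i-1)$, and everything else with $j\ge n$, $i+j<2n$ is exceptional (the remaining pairs with $j<n$ or $i+j=2n$ being non-exceptional by the previous paragraph). The main obstacle I anticipate is not any single estimate but the case analysis itself: making sure the parabolic truncation $P_\bd$ does not accidentally remove a space needed in the inductive reconstruction of the $V_{k,j+1}$ on $Z_{i,j}^\circ$, and conversely that the removal of spaces does not accidentally rigidify a divisor that was exceptional in the complete case — so I would organize the proof around which $(i,j)$ have both ``neighbours'' $(i-1,j+1)$ and $(i+1,j)$ still in $P_\bd$, which is exactly what the four families in the statement record.
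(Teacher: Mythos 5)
Your overall strategy -- mimicking Proposition~\ref{except} by splitting into a non-exceptionality argument on $Z_{i,j}^\circ$ and a $\bP^1$-family argument for exceptionality -- is exactly what the paper intends when it writes ``the proof is similar to the proof of Proposition~\ref{except}.'' But your bookkeeping at the end contains a real error that, if carried out literally, would give the wrong set of non-exceptional divisors.

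The error is in the claim that ``the remaining pairs with $j<n$ or $i+j=2n$ \textup{[are]} non-exceptional,'' equivalently that the set $\{(i,j)\in P_\bd:\ j<n \text{ or } i+j=2n\}$ coincides with the displayed list. This is false once $\bd$ is a proper subsequence of $(1,\dots,n)$. For instance take $\bd=(2)$, so $k=1$; the displayed list collapses to the single pair $(1,2n-1)$, yet $P_\bd$ contains every $(i,j)$ with $i\in\{1,2\}$, $j\geq 2$, $j<n$ -- e.g.\ $(1,2),(2,2),(1,3),\dots$ -- none of which is on the list, so all of them must be shown exceptional. (One can confirm this against the discrepancy coefficients $a_{i,j}=b_{i,j}-1$ computed in the theorem immediately following this Proposition: for $d_k\le j<2n-d_k$, $d_l+1\le i\le d_{l+1}$ the paper gives $b_{i,j}=2n-d_k-d_l-j+i$, which is $\geq 2$ unless $(i,j)$ is on the list.) Your $\bP^1$-family construction only applies when $j\geq n$ and $i+j<2n$, because it hinges on the isotropy constraint for $U_{2n-j-1,j+1}$; it says nothing about the pairs with $j<n$ or $i+j=2n$ that are absent from the list, and for those you need a separate argument for exceptionality. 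That argument must use the parabolic truncation itself: when $j+1\notin\bd$ (or, even if $j+1=d_m$, when $i$ is not of the form $1$ or $d_l+1$), the inductive determination of the $V_{k,j+1}$ from the ``diagonal'' entries $V_{d_m,d_m}$ breaks -- the anchor $V_{j+1,j+1}$ that you use to launch the decreasing induction on $k$ is not available as diagonal data, so an extra $\bP^1$ of choices survives even on $Z_{i,j}^\circ$.

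Your suggested organizing principle -- ``which $(i,j)$ have both neighbours $(i-1,j+1)$ and $(i+1,j)$ in $P_\bd$'' -- does not produce the list either. For $\bd=(3,6)$ and $(i,j)=(2,5)$, both $(1,6)$ and $(3,5)$ lie in $P_\bd$, yet $(2,5)$ is not in the list (and is exceptional). The correct pattern in the displayed list is rather that $j+1$ must be one of the $d_m$ (or $j=2n-1$, or $i+j=2n$ with $i=d_l+1$ or $i=1$), and $i$ must be $1$ or immediately follow some $d_l$; this is what makes the reconstruction of $V_{\bullet,j+1}$ on $Z_{i,j}^\circ$ rigid. So the non-exceptionality proof for the list pairs is sound, but the exceptionality half is genuinely incomplete and the proposed heuristic for identifying the list is incorrect.
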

\begin{proof}
The proof is similar to the proof of Proposition \ref{except} and we omit it.
\end{proof}

\section{Normal locally complete intersections}\label{Normal_l.c.i.}
Our next goal is to prove that all symplectic degenerate flag varieties are normal locally
complete intersections (and thus Cohen-Macaulay and Gorenstein).
We start with the case of the complete flags.
\subsection{Complete flag varieties.}
As in \cite{FF}, we first define an affine scheme $Q_{2n}$.
Let $W_1, \dots, W_n$ be a collection of vector spaces with $\dim W_i=i$.
Also recall the space $W_{2n}$ with a basis $w_1,\dots,w_{2n}$, a non-degenerate symplectic form
and the projections $pr_k$ along $w_k$.
We now construct an affine scheme $Q_{2n}$ as follows. A point of $Q_{2n}$ is
a collection of linear maps
$$A_i: W_i\to W_{2n},\ i=1,\dots,n,\qquad  B_j: W_j\to W_{j+1},\ j=1,\dots,n-1$$
subject to the relations 
\begin{equation}\label{cd}
A_{i+1}B_i=pr_{i+1}A_i,\ i=1,\dots,n-1,
\end{equation}
and such that the image $A_n(W_n)$ is isotropic.
The following picture illustrates  the construction:
$$
\begin{picture}(250,80)
\put(10,0){$W_1$}
\put(30,3){\vector(1,0){30}}
\put(70,0){$W_2$}
\multiput(100,0)(5,0){10}{\circle*{1}}
\put(160,0){$W_{n-1}$}
\put(190,3){\vector(1,0){45}}
\put(240,0){$W_{n}$}

\put(10,50){$W_{2n}$}
\put(70,50){$W_{2n}$}
\put(160,50){$W_{2n}$}
\put(240,50){$W_{2n}$}
\put(30,53){\vector(1,0){30}}
\put(185,53){\vector(1,0){50}}
\multiput(100,50)(5,0){10}{\circle*{1}}

\put(0,24){$A_1$}
\put(75,24){$A_2$}
\put(166,24){$A_{n-1}$}
\put(245,24){$A_{n}$}
\put(40,6){$B_1$}
\put(200,6){$B_{n-1}$}

\put(15,14){\vector(0,1){31}}
\put(75,14){\vector(0,1){31}}
\put(165,14){\vector(0,1){31}}
\put(245,14){\vector(0,1){31}}

\put(38,57){$pr_2$}
\put(198,57){$pr_{n}$}
\end{picture}
$$

We also consider an open part $Q_{2n}^\circ\subset Q_{2n}$ consisting of collections $(A_i, B_j)$
such that  $\ker A_i=0$ for all $i$.  
The group $\Gamma=\prod_{i=1}^{n} GL(W_i)$ acts freely on $Q^\circ_{2n}$
via the change of bases. Consider the map 
\[
Q^\circ_{2n}\to \SF^a_{2n},\quad (A_i,B_j)\mapsto (\mathrm{Im} A_1,\dots ,\mathrm{Im} A_{n}). 
\]

\begin{lem}
The map $Q^\circ_{2n}\to \SF^a_{2n}$ is a locally trivial $\Gamma$-torsor in the Zariski topology.
The dimension of $Q^\circ_{2n}$ 
%(and thus of $Q_{2n}$) 
is equal to $n^2 + 1^2+2^2+\ldots+n^2$.
\end{lem}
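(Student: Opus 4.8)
The plan is to show that $Q^\circ_{2n}\to\SF^a_{2n}$ is a Zariski-locally trivial principal $\Gamma$-bundle; the dimension formula then drops out from the torsor property and the known dimension of $\SF^a_{2n}$. First I would check the map is well defined and surjective. For $(A_i,B_j)\in Q^\circ_{2n}$ the condition $\ker A_i=0$ gives $\dim\mathrm{Im}\,A_i=i$; the relation $A_{i+1}B_i=pr_{i+1}A_i$ gives $pr_{i+1}(\mathrm{Im}\,A_i)=\mathrm{Im}(A_{i+1}B_i)\subset\mathrm{Im}\,A_{i+1}$; and $\mathrm{Im}\,A_n$ is isotropic of dimension $n$, hence Lagrangian. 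Thus, by the explicit linear-algebra description of $\SF^a_{2n}$ established above, $(\mathrm{Im}\,A_1,\dots,\mathrm{Im}\,A_n)$ is a point of $\SF^a_{2n}$. Conversely, given $(V_1,\dots,V_n)\in\SF^a_{2n}$, any choice of isomorphisms $A_i\colon W_i\xrightarrow{\ \sim\ }V_i$ together with $B_i:=A_{i+1}^{-1}\circ pr_{i+1}\circ A_i$ (which makes sense because $pr_{i+1}V_i\subset V_{i+1}$ and $A_{i+1}$ is injective) produces a preimage, so the map is onto.

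The second step is to identify the fibres. The computation just made shows that on $Q^\circ_{2n}$ the maps $B_j$ are \emph{forced} by the $A_i$: injectivity of $A_{j+1}$ makes $B_j$ the unique solution of $A_{j+1}B_j=pr_{j+1}A_j$. Hence the fibre over $(V_1,\dots,V_n)$ is canonically the set of tuples of isomorphisms $W_i\xrightarrow{\ \sim\ }V_i$, which is a torsor under $\Gamma=\prod_{i=1}^nGL(W_i)$ acting by change of basis; combined with the freeness of the $\Gamma$-action recorded above, this shows the fibres are exactly the free $\Gamma$-orbits.

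For Zariski-local triviality I would exhibit local sections. Over a standard affine open $U=\prod_iU_i$ of $\prod_{i=1}^nGr_i(W_{2n})$ each tautological subbundle $\mathcal V_i$ is free, so after fixing bases of the $W_i$ we get an algebraic frame $A_i\colon W_i\otimes\mathcal O_U\to\mathcal V_i|_U$; restricting to $U\cap\SF^a_{2n}$ and defining $B_i$ by the formula above yields a section of $Q^\circ_{2n}\to\SF^a_{2n}$ over $U\cap\SF^a_{2n}$. As $U$ runs over an affine cover of $\prod_iGr_i(W_{2n})$ these opens cover $\SF^a_{2n}$, and $(x,g)\mapsto g\cdot s(x)$ identifies each preimage with a product with $\Gamma$. (Alternatively, a product of general linear groups is a special group in the sense of Serre, so an étale-local section already suffices.) Thus $Q^\circ_{2n}\to\SF^a_{2n}$ is a Zariski-locally trivial $\Gamma$-torsor; in particular $Q^\circ_{2n}$ is irreducible, being such a bundle over the irreducible variety $\SF^a_{2n}$ (Corollary~\ref{irred}).

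Finally, the dimension: for a $\Gamma$-torsor one has $\dim Q^\circ_{2n}=\dim\SF^a_{2n}+\dim\Gamma$. Since $\SF^a_{2n}$ is a flat degeneration of $Sp_{2n}/B$, its dimension equals $\#\Phi^+_{\msp_{2n}}=n^2$, while $\dim\Gamma=\sum_{i=1}^ni^2=1^2+2^2+\dots+n^2$, giving the asserted $\dim Q^\circ_{2n}=n^2+1^2+2^2+\dots+n^2$. Essentially everything here is formal once the explicit description and the dimension of $\SF^a_{2n}$ are in hand; the only point that deserves a word of care is Zariski- versus merely étale-local triviality, which is why I would spell out the local sections above (or else invoke that $\Gamma$ is a special group).
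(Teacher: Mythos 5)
Your proof is correct and follows essentially the same strategy as the paper: both establish local triviality by choosing an open cover of the ambient product of Grassmannians over which the tautological bundles are trivial, then trivializing $Q^\circ_{2n}$ over the intersection with $\SF^a_{2n}$ via these frames, and deriving the dimension formula from the torsor property. You add useful detail the paper leaves implicit (surjectivity, the observation that the $B_j$ are forced by the $A_i$ because $A_{j+1}$ is injective, and the alternative appeal to $\Gamma$ being a special group), but the underlying argument is the same.
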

\begin{proof}
Consider the embedding $\SF^a_{2n}\hk \prod_{d=1}^{n} SpGr_d(2n)$. 
For a point $p\in \SF^a_{2n}$ let $U\ni p$ be an open part of $\prod_{d=1}^{n} SpGr_d(2n)$
such that all tautological bundles on Grassmannians are trivial on $U$. Let 
$U'=U\cap \SF^a_{2n}$. Then on $U'$ the map $Q^\circ_{2n}\to \SF^a_{2n}$ has a section.
Now using the $\Gamma$ action on $Q_{2n}$ we obtain that $Q^\circ_{2n}\to \SF^a_{2n}$ is 
$\Gamma$-torsor. In particular, $\dim Q^\circ_{2n}=\dim \SF^a_{2n} + \dim \Gamma$.
\end{proof}

We note that $Q_{2n}$ is a subscheme in the affine
space
\begin{equation}\label{affine}
\prod_{i=1}^{n} Hom(W_i,W_{2n})\times \prod_{i=1}^{n-1} Hom(W_i,W_{i+1}).
\end{equation}

\begin{lem}
$Q^\circ_{2n}$ is a locally complete intersection.
\end{lem}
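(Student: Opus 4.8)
The plan is to count: I would show that locally near any point of $Q^\circ_{2n}$, the scheme is cut out inside the affine space \eqref{affine} by exactly the expected number of equations, namely $\operatorname{codim}$ equals the number of scalar equations. The ambient affine space \eqref{affine} has dimension $2n\cdot\sum_{i=1}^n i + \sum_{i=1}^{n-1} i(i+1)$; wait — more carefully, $\dim\prod_i\operatorname{Hom}(W_i,W_{2n}) = 2n(1+2+\cdots+n)$ and $\dim\prod_i\operatorname{Hom}(W_i,W_{i+1}) = \sum_{i=1}^{n-1} i(i+1)$. The defining equations are the commutativity relations \eqref{cd}, which for each $i=1,\dots,n-1$ form a linear-in-$B$, bilinear system of $\dim\operatorname{Hom}(W_i,W_{2n}) = 2ni$ scalar equations, plus the isotropy condition on $A_n(W_n)$, which (since $A_n$ has trivial kernel on $Q^\circ$, so the image is genuinely $n$-dimensional) is a system of $\binom{n}{2} = \tfrac12 n(n-1)$ equations expressing the vanishing of the alternating form on a basis of the image. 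So altogether the expected codimension is $\sum_{i=1}^{n-1} 2ni + \tfrac12 n(n-1)$.

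Next I would reconcile this with the dimension count from the previous Lemma: $\dim Q^\circ_{2n} = n^2 + \sum_{i=1}^n i^2$, using $\dim\SF^a_{2n}=n^2$ (complete symplectic flag variety for $Sp_{2n}$) and $\dim\Gamma = \sum_{i=1}^n i^2$. The arithmetic identity I need is that $\dim(\text{ambient \eqref{affine}}) - \dim Q^\circ_{2n}$ equals the number of equations above; this is a routine but slightly tedious identity among the sums $\sum i$, $\sum i^2$, $\sum i(i+1)$, and I would simply verify it. Granting the identity, the scheme $Q^\circ_{2n}$ has the correct dimension to be a complete intersection, and since a subscheme of affine space cut out by $c$ equations and of codimension $c$ is automatically a local complete intersection, we are done — provided the equations really do cut it out scheme-theoretically near each point, not just set-theoretically.

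The main obstacle, as usual, is the isotropy condition: near a point of $Q^\circ_{2n}$ one must check that the $\binom n2$ equations $\langle A_ne_p,A_ne_q\rangle=0$ (for a basis $e_1,\dots,e_n$ of $W_n$) are a regular sequence, or at least that together with the commutativity relations they form a system of the expected codimension — and here it matters that $A_n$ is injective, so that the isotropy locus of the image is locally the preimage of the genuine Lagrangian-Grassmannian condition, which is well known to be a complete intersection of codimension $\binom n2$ inside $Gr_n(2n)$. One clean way to handle this: the commutativity relations let one eliminate, locally on $Q^\circ_{2n}$, the data of the $B_i$ and of $A_1,\dots,A_{n-1}$ in terms of $A_n$ together with the free $\Gamma$-action, reducing the question to the statement that the symplectic Grassmannian $SpGr_n(2n)\hookrightarrow Gr_n(2n)$ — equivalently, via Proposition~\ref{ZKsympgrass}, $\SF^a_{\omega_n}$ inside $Gr_n(W)$ — is a local complete intersection; and that is classical (the Lagrangian Grassmannian is cut out in $Gr_n(2n)$ by $\binom n2$ equations and has the expected dimension $\binom{n+1}{2}$). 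So the real content is just the bookkeeping that the fibered/iterated structure of $Q^\circ_{2n}$ propagates the l.c.i.\ property from this one Grassmannian step, exactly as in \cite{FF} for type ${\tt A}$, with the single new ingredient being the isotropy of $A_n(W_n)$.
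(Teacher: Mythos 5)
Your proof takes essentially the same route as the paper's: compare the number of scalar equations defining $Q^\circ_{2n}$ inside the affine space \eqref{affine} (the $\sum_{i=1}^{n-1} 2ni$ commutativity relations plus the $\tfrac12 n(n-1)$ isotropy conditions) against the dimension supplied by the $\Gamma$-torsor lemma, and conclude from the resulting identity. Your closing worry about regular sequences and scheme-theoretic versus set-theoretic cutting-out is unnecessary, so the detour through the Lagrangian Grassmannian can be dropped: $Q_{2n}$ is \emph{defined} as the scheme cut out by these equations in a Cohen--Macaulay ambient affine space, so once the codimension equals the number of generators those generators automatically form a regular sequence locally.
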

\begin{proof}
We note that the condition $A_{i+1}B_i=pr_{i+1}A_i$ produces $2n\times i$ equations
(the number of equations is equal to $\dim Hom(W_i,W_{2n})$). Also the condition that
$A_n(W_n)$ is Lagrangian produces another $n(n-1)/2$ equations.
Now our lemma follows from the equality
$$
\dim Q^\circ_{2n}= \sum_{i=1}^{n} 2ni + \sum_{i=1}^{n-1} i(i+1) -\sum_{i=1}^{n-1} 2ni - \frac{n(n-1)}{2}.
$$
\end{proof}

\begin{thm}\label{lci}
The degenerate flag varieties $\SF^a_{2n}$ are normal locally complete
intersections (in particular, Cohen-Macaulay and even Gorenstein).
\end{thm}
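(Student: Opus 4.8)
\emph{Strategy.} The plan is to obtain all three assertions from the affine scheme $Q_{2n}$ and the resolution $SpR_{2n}$, following the type $\tt A$ argument of \cite{FF}. The local complete intersection property (hence Cohen--Macaulayness and Gorensteinness) is essentially already in hand: the two preceding lemmas show that $Q_{2n}^\circ$ is a complete intersection, sitting inside the affine space \eqref{affine} cut out by the entries of the matrices $A_{i+1}B_i-pr_{i+1}A_i$, $i=1,\dots,n-1$ (see \eqref{cd}), together with the $\frac{n(n-1)}{2}$ quadratic conditions expressing that $A_n(W_n)$ is isotropic, the dimension count in the last lemma showing that the number of these equations equals the codimension of the irreducible scheme $Q_{2n}^\circ$ (irreducible because it is a torsor under the connected group $\Gamma$ over the irreducible variety $\SF^a_{2n}$, cf. Corollary~\ref{irred}). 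Since $Q_{2n}^\circ\to\SF^a_{2n}$ is a Zariski-locally trivial $\Gamma$-torsor, hence a smooth surjection, and since being a local complete intersection is \'etale-local and insensitive to multiplication by the smooth variety $\Gamma$, it follows that $\SF^a_{2n}$ is a local complete intersection, and in particular Cohen--Macaulay and Gorenstein.

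\emph{Normality.} By Serre's criterion it remains only to verify condition $R_1$, since $S_2$ is automatic from Cohen--Macaulayness; and because $Q_{2n}^\circ\to\SF^a_{2n}$ is smooth, $R_1$ for $\SF^a_{2n}$ amounts to showing that $\mathrm{Sing}(\SF^a_{2n})$ has codimension at least $2$. I would deduce this from the resolution $\pi_{2n}\colon SpR_{2n}\to\SF^a_{2n}$ of Section~\ref{Resolution}, which is proper and birational with $SpR_{2n}$ smooth. If $\mathrm{Sing}(\SF^a_{2n})$ contained a prime divisor $D$, then, $\pi_{2n}$ being an isomorphism over the big cell $\pi_{2n}(SpR_{2n}^\circ)$ by Lemma~\ref{surj}, the divisor $D$ would have to be the image of one of the divisors $Z_{i,j}$. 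It cannot be the image of an exceptional $Z_{i,j}$, since those images have codimension $\ge 2$ by Proposition~\ref{except}; and it cannot be the image of a non-exceptional $Z_{i,j}$ either, because, again by the injectivity computations in the proof of Proposition~\ref{except}, $\pi_{2n}$ restricts to an isomorphism over a dense open subset of $\pi_{2n}(Z_{i,j})$, so that $\pi_{2n}(Z_{i,j})$ is not contained in the singular locus. Hence $\mathrm{Sing}(\SF^a_{2n})$ has codimension $\ge 2$, condition $R_1$ holds, and $\SF^a_{2n}$ is normal.

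\emph{Main obstacle.} The delicate point is precisely the codimension estimate for $\mathrm{Sing}(\SF^a_{2n})$: one must be sure that $\pi_{2n}$ fails to be a local isomorphism, in codimension one, \emph{only} along the exceptional divisors, which is what the classification in Proposition~\ref{except} provides but which requires care — in particular one has to control $\pi_{2n}$ on the boundary $Z_{i,j}\setminus Z_{i,j}^\circ$ of each non-exceptional divisor, so as to know that any leftover locus has codimension $\ge 2$. A self-contained alternative, bypassing the resolution entirely, is to compute $\mathrm{Sing}(Q_{2n}^\circ)$ directly as the common zero locus of the defining equations and the maximal minors of their Jacobian, and to check by hand that on $Q_{2n}^\circ$ (where all $\ker A_i=0$) this locus has codimension $\ge 2$; this is more computational but logically simpler, and either route suffices to complete the proof.
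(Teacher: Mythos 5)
Your proposal is correct and follows essentially the same route as the paper: establish that $Q^\circ_{2n}$ is a local complete intersection by the dimension count, transfer this to $\SF^a_{2n}$ via the $\Gamma$-torsor, then reduce normality to Serre's criterion ($S_2$ from Cohen--Macaulayness, $R_1$ from the codimension-$2$ estimate on the singular locus), with that estimate obtained exactly as in the paper from the resolution $\pi_{2n}$ and the classification of exceptional versus non-exceptional divisors $Z_{i,j}$ (Proposition~\ref{except}). The only cosmetic difference is that the paper applies the EGA criteria first to the scheme $Q^\circ_{2n}$ (so as to establish reducedness as well as normality) and then descends, whereas you apply Serre's criterion directly to $\SF^a_{2n}$; both hinge on the same lemma that $\SF^a_{2n}$ is smooth off codimension two, and the ``main obstacle'' you identify --- controlling $\pi_{2n}$ on $Z_{i,j}\setminus Z^\circ_{i,j}$ and producing an honest open $M\subset\SF^a_{2n}$ of codimension-$\ge 2$ complement over which $\pi_{2n}$ is an isomorphism --- is exactly the point the paper's short lemma addresses by taking $M$ to be the union of $\pi_{2n}(SpR^\circ_{2n})$ with the images $\pi_{2n}(Z^\circ_{i,j})$ of the non-exceptional $Z^\circ_{i,j}$.
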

\begin{proof}
Since $Q^\circ_{2n}\to \SF^a_{2n}$ is a torsor, it suffices to prove that 
$Q^\circ_{2n}$ is a normal reduced scheme (i.e. a variety).
Since $Q^\circ_{2n}$ is a locally complete intersection, the property of being reduced
(resp. normality) of $Q^\circ_{2n}$ follows from the fact
that the singularities of $Q^\circ_{2n}$ are contained in the subvariety of codimension
at least two by the virtue of~Proposition~5.8.5 (resp.~Theorem~5.8.6)
of~\cite{ega}. Using again that $Q^\circ_{2n}\to \SF^a_{2n}$ is a torsor,
it suffices to prove that
the codimension of the variety of singular points of $\SF^a_{2n}$ is at least two.
We prove this statement in a separate lemma.
\end{proof}

\begin{lem}
$\SF^a_{2n}$ is smooth off codimension two.
\end{lem}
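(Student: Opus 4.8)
The goal is to show that the singular locus of $\SF^a_{2n}$ has codimension at least two. The natural strategy is to use the resolution $\pi_{2n}\colon SpR_{2n}\to\SF^a_{2n}$ constructed in Section~\ref{Resolution}, together with the fact (Lemma~\ref{surj}) that $\pi_{2n}$ is an isomorphism over the open cell $SpR_{2n}^\circ$. Thus the singular locus is contained in the image under $\pi_{2n}$ of the union of the divisors $Z_{i,j}\subset SpR_{2n}$. Since $\pi_{2n}$ contracts exactly the exceptional divisors (those $Z_{i,j}$ with $j\ge n$, $i+j<2n$, by Proposition~\ref{except}), the images of the exceptional divisors already sit in codimension $\ge 2$ automatically, so the only worry is the non-exceptional divisors: for each such $Z_{i,j}$ I must show that $\SF^a_{2n}$ is actually \emph{smooth} at a generic point of $\pi_{2n}(Z_{i,j})$. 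Equivalently, it suffices to prove smoothness of $\SF^a_{2n}$ at every point lying in the image of exactly one non-exceptional divisor and no exceptional divisor — i.e. along a generic point of $\pi_{2n}(Z_{i,j}^\circ)$ for each non-exceptional $(i,j)$.

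\textbf{Carrying it out.} First I would stratify: the complement of the singular locus contains the smooth big cell $\pi_{2n}(SpR_{2n}^\circ)$, so it remains to examine points $p=(V_1,\dots,V_n)\in\SF^a_{2n}$ lying in exactly one $\pi_{2n}(Z_{i,j})$. For $(i,j)$ exceptional, $\dim\pi_{2n}(Z_{i,j})\le\dim Z_{i,j}-1=\dim\SF^a_{2n}-2$, so these contribute nothing. For $(i,j)$ non-exceptional, by the analysis in the proof of Proposition~\ref{except} the map $\pi_{2n}$ is one-to-one over $Z_{i,j}^\circ$, so $\pi_{2n}(Z_{i,j})$ is a genuine divisor in $\SF^a_{2n}$, and I must verify smoothness there. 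For this I would work with the explicit linear-algebra description from the Explicit Description section: near such a point only finitely many Pl\"ucker-type coordinates are involved, and I can write down an affine chart of $\SF^a_{2n}$ and check that, at a generic point of $\pi_{2n}(Z_{i,j}^\circ)$, the defining equations (the incidence conditions $pr_{i+1}V_i\subset V_{i+1}$ together with the isotropy of $V_n$) cut out a smooth subvariety. Concretely, one shows the Jacobian has maximal rank there; the key point is that at a point of $Z_{i,j}^\circ$ exactly one ``jump'' in the incidence pattern occurs, and the corresponding extra equation is transverse to the rest, so the local structure is that of a product of a smooth factor with the type-$\tt A$ situation (which is handled in \cite{FF}) or with a smooth symplectic Grassmannian chart.

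\textbf{The main obstacle.} The computational heart is the isotropy condition on $V_n$: unlike the type-$\tt A$ incidence equations, the Lagrangian condition is quadratic, so one must check that along a non-exceptional divisor the quadric remains smooth, i.e. that the relevant symmetric matrix of coordinates stays of the expected corank. I expect this to be the one place where a genuine (if short) computation is unavoidable — verifying that at a generic point of $\pi_{2n}(Z_{i,j}^\circ)$ for the non-exceptional divisors of the form $(d_i+1,2n-d_i-1)$ and $(1,2n-1)$ (those touching the long-root direction) the quadratic isotropy equations are transverse. Away from those, the argument is essentially the type-$\tt A$ one from \cite{FF} applied in a chart. Once smoothness along every non-exceptional divisor is established, every singular point of $\SF^a_{2n}$ lies in the image of some exceptional divisor or in an intersection of at least two divisors, both of which have codimension $\ge 2$, completing the proof.
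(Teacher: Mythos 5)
Your proposal and the paper's proof follow the same overall route: reduce to the resolution $\pi_{2n}\colon SpR_{2n}\to\SF^a_{2n}$, observe that the singular locus lies in $\bigcup_{(i,j)}\pi_{2n}(Z_{i,j})$, note that the images of the exceptional divisors already sit in codimension $\ge 2$, and then handle the non-exceptional $Z_{i,j}$. The divergence is at this last step, and it is here that your proposal is both heavier and, as written, incomplete. The paper defines $M=\pi_{2n}(SpR_{2n}^\circ)\cup\bigcup\pi_{2n}(Z^\circ_{i,j})$ over the \emph{non-exceptional} $(i,j)$ and appeals directly to the injectivity of $\pi_{2n}$ on $\pi_{2n}^{-1}(M)$. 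The justification hiding in that short sentence is that the proof of Proposition~\ref{except} produces the unique preimage $(V_{k,l})$ by \emph{regular} algebraic operations (projections and intersections of the diagonal spaces $V_i$), so $\pi_{2n}^{-1}(M)\to M$ is not merely a bijection but an isomorphism of varieties; since $SpR_{2n}$ is smooth, $M$ is smooth, and $\SF^a_{2n}\setminus M$ is covered by images of exceptional divisors and of pairwise intersections $Z_{i_1,j_1}\cap Z_{i_2,j_2}$, both of codimension $\ge 2$.

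You cite the one-to-one statement from Proposition~\ref{except}, but then say you ``must verify smoothness there'' by a Jacobian calculation in an affine chart — checking transversality of the incidence conditions and of the quadratic Lagrangian equations. That is a genuinely different tactic, and you have not carried it out (you yourself flag the quadratic isotropy equations as the unavoidable computation). The more important point is that the calculation is \emph{unnecessary}: set-theoretic injectivity alone would not suffice (a normalization of a cuspidal curve is bijective without the target being smooth), but the explicit construction in Proposition~\ref{except} exhibits the inverse as a regular map, which upgrades ``one-to-one'' to ``isomorphism onto $M$'' and yields smoothness of $M$ for free. What your proposal misses is exactly this observation; with it in hand, the Jacobian step can be skipped entirely and the proof collapses to the two-line argument the paper gives.
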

\begin{proof}
We use the desingularization $\pi_{2n}:SpR_{2n}\to \SF^a_{2n}$. 
Since $SpR_{2n}$ is smooth, it
suffices to show that the map $\pi_{2n}$ is one-to-one on $\pi_{2n}^{-1} M$, where
$M\subset \SF^a_{2n}$ is an open part such that the codimension of the complement of $M$ is at least two.
We set $M$ to be the union of $\pi_{2n} SpR_{2n}^\circ$ ($U$ is the open cell in $SpR_{2n}$) and 
$\pi_{2n} Z^\circ_{i,j}$ 
for all non-exceptional divisors $Z_{i,j}$. 
\end{proof}

The following theorem can be proved along the same lines as Theorem \ref{lci}.
\begin{thm}
The degenerate flag varieties $\SF^a_\bd$ are normal locally complete
intersections (in particular, Cohen-Macaulay and Gorenstein).
\end{thm}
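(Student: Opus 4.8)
\emph{Plan of proof.} The plan is to transcribe the proof of Theorem~\ref{lci} to the parabolic setting, replacing the full chain $W_1,\dots,W_n$ by the subcollection attached to $\bd=(d_1,\dots,d_k)$. First I would introduce an affine scheme $Q_\bd$ whose points are collections of linear maps
\[
A_{d_i}: W_{d_i}\to W_{2n},\ i=1,\dots,k,\qquad B_i: W_{d_i}\to W_{d_{i+1}},\ i=1,\dots,k-1,
\]
where $\dim W_{d_i}=d_i$, subject to the relations $A_{d_{i+1}}B_i=pr_{d_i+1}\cdots pr_{d_{i+1}}A_{d_i}$ together with the isotropy condition defining $SpGr^a_{d_k}(2n)$ imposed on the image $A_{d_k}(W_{d_k})$ (a Lagrangian condition if $d_k=n$). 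Thus $Q_\bd$ is a closed subscheme of the affine space $\prod_{i=1}^k\Hom(W_{d_i},W_{2n})\times\prod_{i=1}^{k-1}\Hom(W_{d_i},W_{d_{i+1}})$, and on the open locus $Q^\circ_\bd$ where every $A_{d_i}$ is injective the relations make $(A_{d_i},B_i)\mapsto(\mathrm{Im}\,A_{d_1},\dots,\mathrm{Im}\,A_{d_k})$ a well-defined morphism $Q^\circ_\bd\to\SF^a_\bd$.

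Next I would rerun the torsor lemma: the group $\Gamma_\bd=\prod_{i=1}^k GL(W_{d_i})$ acts freely on $Q^\circ_\bd$ by change of basis, and using the embedding $\SF^a_\bd\hookrightarrow\prod_{i=1}^k SpGr^a_{d_i}(2n)$ together with local triviality of the tautological bundles one sees that $Q^\circ_\bd\to\SF^a_\bd$ is a Zariski-locally trivial $\Gamma_\bd$-torsor; in particular $\dim Q^\circ_\bd=\dim\SF^a_\bd+\dim\Gamma_\bd$. Since $\pi_\bd\colon SpR_\bd\to\SF^a_\bd$ realizes $SpR_\bd$ as a tower of $\bP^1$-fibrations with $\#P_\bd$ steps, one has $\dim\SF^a_\bd=\#P_\bd$. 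I would then count equations: the $i$-th relation contributes $2n\,d_i=\dim\Hom(W_{d_i},W_{2n})$ equations and the isotropy condition on $A_{d_k}$ its expected number, and checking that the codimension of $Q^\circ_\bd$ in the ambient affine space equals this total number of equations shows that $Q^\circ_\bd$ is a locally complete intersection. This numerical bookkeeping, with the minor case split $d_k=n$ versus $d_k<n$, is the tedious part, but it is completely parallel to the complete-flag computation in Section~\ref{Normal_l.c.i.}.

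Finally, since $Q^\circ_\bd\to\SF^a_\bd$ is a torsor it suffices to prove that $Q^\circ_\bd$ is a variety, i.e. reduced and normal. Being a locally complete intersection, $Q^\circ_\bd$ is reduced (respectively normal) once its singular locus has codimension at least two, by Proposition~5.8.5 and Theorem~5.8.6 of \cite{ega}; by the torsor property this reduces to showing that $\SF^a_\bd$ is smooth off codimension two. For this I would use $\pi_\bd$: let $M\subset\SF^a_\bd$ be the union of $\pi_\bd(SpR^\circ_\bd)$ (with $SpR^\circ_\bd$ the complement in $SpR_\bd$ of all divisors $Z_{i,j}$, $(i,j)\in P_\bd$) with the images $\pi_\bd(Z^\circ_{i,j})$ of the open strata of the non-exceptional divisors listed in Proposition~\ref{nonex}; its complement has codimension at least two. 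As in the proof of Proposition~\ref{except} the map $\pi_\bd$ is one-to-one over $M$, and since $SpR_\bd$ is smooth this forces $\SF^a_\bd$ to be smooth over $M$, completing the argument. I expect the genuine obstacle to be this last step — checking that $\pi_\bd$ is injective over the generic point of each non-exceptional divisor — since it amounts to redoing the linear-algebra analysis of Proposition~\ref{except} in the parabolic case; everything else is a routine adaptation of the complete-flag proof.
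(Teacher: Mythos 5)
Your proposal is correct and is exactly what the paper means by its one-line pointer "can be proved along the same lines as Theorem~\ref{lci}": you introduce the parabolic affine scheme $Q_\bd$, check the torsor and dimension-count statements by replacing the full chain with the $\bd$-subchain and the Lagrangian condition at step $n$ by the isotropy condition from $SpGr^a_{d_k}(2n)$, and finish by reducing to smoothness off codimension two via $\pi_\bd$ and Proposition~\ref{nonex}. You also correctly identify the step the paper itself leaves as "similar to" the complete-flag case (injectivity of $\pi_\bd$ over the open strata of the non-exceptional divisors) as the place where the real verification lives.
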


\section{The singularities of $\SF^a_\bd$ are rational.}\label{Singularities}
\subsection{The complete flag varieties}
In this section we prove that the singularities of $\SF^a_{2n}$ are terminal, canonical and rational. 
We use the sections $s_{i,j}$ as above in order to compute the line bundle
$\omega_{SpR_{2n}}\T\pi_{2n}^* \omega_{\SF^a_{2n}}^{-1}$. 

We use the notation $\om_{i,j}$ for the determinant of the $i$-dimensional bundle
on $SpR_{2n}$, whose fiber at a point $\bV$ equals $\Lambda^i(V_{i,j})^*$.
Also, we denote by
$\om_{i}$ the line bundle on $\SF^a_{2n}$, whose fiber at a point $(V_1,\dots,V_n)$ is equal to
$\Lambda^i(V_i^*)$.

Consider the following general situation.
Let $\rho:E\to B$ be a $\bP^1$-fibration with a section $s:B\to E$. Let $Z=s(B)\subset E$ be the corresponding
divisor.
Then for any line bundle $\cF$ on $E$ such that the restriction of $\cF$ to a fiber of $\rho$
is isomorphic to $\cO(k)$ one has
\begin{equation}\label{F}
\cF=\cO(kZ)\T \rho^*(\cF|_Z\T \cO(-kZ)|_Z).
\end{equation}

Assume that there exists a two-dimensional
bundle $\cL_2$ on $B$ and a line subbundle $\cL_1\subset \cL_2$ on $B$ such that $E=\bP(\cL_2)$ and
$Z=\bP(\cL_1)$. Let $\om$ be a line bundle such that the restriction of $\om$ to a fiber of $\rho$
is isomorphic to $\cO(1)$.
\begin{lem}\label{Kom}
$\omega^{-1}_E=\cO(Z)\T \om\T \rho^*(\om^{-1}|_Z\T \omega_B^{-1}).$
\end{lem}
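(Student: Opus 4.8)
The plan is to apply the general relation \eqref{F} to the $\bP^1$-fibration $\rho : E \to B$ together with the anticanonical bundle $\omega_E^{-1}$, and then identify the three factors that come out. First I would compute the restriction of $\omega_E^{-1}$ to a fiber $F\simeq\bP^1$ of $\rho$: since $F$ is a fiber of a $\bP^1$-fibration, $\omega_F^{-1} = \mathcal O_F(2)$, and $\omega_E^{-1}|_F = \omega_F^{-1}$ because the normal bundle of $F$ in $E$ is trivial (it is pulled back from $B$). So the relevant integer in \eqref{F} is $k = 2$. Hence
\[
\omega_E^{-1} = \mathcal O(2Z)\T\rho^*\bigl(\omega_E^{-1}|_Z\T\mathcal O(-2Z)|_Z\bigr).
\]

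The next step is to rewrite this using the relation $\om|_F = \mathcal O_F(1)$, i.e. the bundle $\om$ also has fiberwise degree one, just like $\mathcal O(Z)$. The standard Euler-sequence computation for $E = \bP(\mathcal L_2)$ gives $\omega_{E/B} = \rho^*\det\mathcal L_2 \T \om^{-2}$ (with $\om = \mathcal O_{\bP(\mathcal L_2)}(1)$), so that $\omega_E^{-1} = \rho^*(\omega_B^{-1}\T\det\mathcal L_2^{-1})\T\om^2$. I would then restrict this identity to $Z = \bP(\mathcal L_1)$: on $Z$ one has $\om|_Z = \mathcal L_1^{-1}$ (the tautological quotient is all of $\mathcal L_1$ here since $\mathcal L_1$ is a line bundle) and $\mathcal O(Z)|_Z = \det(\mathcal L_2/\mathcal L_1) = \det\mathcal L_2\T\mathcal L_1^{-1}$, which is exactly the normal bundle of the section. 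Feeding $\omega_E^{-1}|_Z$ and $\mathcal O(-2Z)|_Z$ back into the displayed formula and collecting terms, the $\det\mathcal L_2$ and $\mathcal L_1$ contributions should combine so that $\mathcal O(2Z)$ collapses to $\mathcal O(Z)$ times one copy of $\om$, leaving
\[
\omega_E^{-1} = \mathcal O(Z)\T\om\T\rho^*\bigl(\om^{-1}|_Z\T\omega_B^{-1}\bigr),
\]
which is the claim.

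Concretely, the bookkeeping goes: from $\omega_E^{-1} = \rho^*(\omega_B^{-1}\T\det\mathcal L_2^{-1})\T\om^2$ and $\mathcal O(Z) = \om\T\rho^*\det\mathcal L_2^{-1}$ (the latter from comparing fiberwise degrees of $\mathcal O(Z)$ and $\om$, or directly from the Euler sequence identifying $\mathcal O(Z)$ with $\om\T\rho^*(\det\mathcal L_2)^{-1}$ — note $Z=\bP(\mathcal L_1)$ is cut out by a section of $\om\T\rho^*\det(\mathcal L_2/\mathcal L_1)^{\vee}$, but one checks $\det(\mathcal L_2/\mathcal L_1) = \det\mathcal L_2\T\mathcal L_1^{-1}$ and that the twist by $\mathcal L_1^{-1}$ is absorbed on restriction), one gets $\omega_E^{-1} = \mathcal O(Z)\T\om\T\rho^*\omega_B^{-1}$ up to a twist by $\rho^*$ of something supported away from the fibers; the remaining twist is precisely $\rho^*(\om^{-1}|_Z)$ because $\om|_Z = \mathcal L_1^{-1}$ records the difference between $\mathcal O(Z)$ and the naive $\om\T\rho^*\det\mathcal L_2^{-1}$ along $Z$. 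I expect the main obstacle to be not any deep geometry but getting the signs and the direction of inclusions right in the Euler sequence for $\bP(\mathcal L_2)$ — in particular being careful whether $\om = \mathcal O(1)$ or $\mathcal O(-1)$ in the chosen convention, and correctly identifying $\om|_Z$ with $\mathcal L_1^{\pm 1}$ and $\mathcal O(Z)|_Z$ with the normal bundle $\det(\mathcal L_2/\mathcal L_1)$. Once those restrictions are pinned down, substitution into \eqref{F} is mechanical.
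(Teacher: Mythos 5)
Your route is genuinely different from the paper's, and the difference is worth noting. The paper's proof never actually uses the $E=\bP(\cL_2)$ hypothesis: it applies \eqref{F} once with $\cF=\om$ and once with $\cF=\omega_E^{-1}$, then eliminates the unknown $\omega_E^{-1}|_Z$ by adjunction along $Z$, namely $\omega_E^{-1}|_Z=\omega_Z^{-1}\T\cO(Z)|_Z=\omega_B^{-1}\T\cO(Z)|_Z$, and combines the two displays. Everything is formal in \eqref{F} and works for any $\bP^1$-fibration with a section. You instead compute $\omega_E$ and $\cO(Z)$ explicitly from the Euler sequence of $\bP(\cL_2)$. That is a legitimate alternative and has the pedagogical advantage of pinning down $\om^{-1}|_Z=\cL_1$, but it is more machinery than the lemma needs.

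The machinery, as written, has a substantive slip, not just an unfixed sign convention. The normal bundle of the section is $N_{Z/E}=\Hom(\cL_1,\cL_2/\cL_1)=\cL_1^{-1}\T(\cL_2/\cL_1)$, so $\cO(Z)|_Z=\cL_1^{-2}\T\det\cL_2$, not $\det(\cL_2/\cL_1)=\cL_1^{-1}\T\det\cL_2$ as you wrote. Equivalently, $Z$ is the zero locus of $\rho^*\cL_1\hk\rho^*\cL_2\twoheadrightarrow Q$ (with $Q$ the tautological quotient), which is a section of $\rho^*\cL_1^{-1}\T Q=\om\T\rho^*(\cL_1^{-1}\T\det\cL_2)$, not of $\om\T\rho^*\det(\cL_2/\cL_1)^\vee$. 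Thus $\cO(Z)=\om\T\rho^*(\cL_1^{-1}\T\det\cL_2)$, not $\om\T\rho^*\det\cL_2^{-1}$. If you substitute your stated $\cO(Z)$ into your stated $\omega_E^{-1}=\rho^*(\omega_B^{-1}\T\det\cL_2^{-1})\T\om^2$ you get $\omega_E^{-1}=\cO(Z)\T\om\T\rho^*\omega_B^{-1}$: the $\rho^*(\om^{-1}|_Z)=\rho^*\cL_1$ factor simply never appears, and the remark that the missing twist ``is absorbed on restriction'' is not an argument --- these are line bundles on $E$ and the identity must hold globally, not after restriction to $Z$. With the corrected $\cO(Z)$ together with $\omega_{E/B}=\om^{-2}\T\rho^*\det\cL_2^{-1}$ (hence $\omega_E^{-1}=\om^2\T\rho^*(\det\cL_2\T\omega_B^{-1})$), the same substitution gives $\cO(Z)\T\om\T\rho^*(\cL_1\T\omega_B^{-1})=\cO(Z)\T\om\T\rho^*(\om^{-1}|_Z\T\omega_B^{-1})$, which is the lemma.
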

\begin{proof}
First, \eqref{F} with $\cF=\om$ gives
\begin{equation}\label{omF}
\om=\cO(Z)\T\rho^*(\om|_Z\T \cO(-Z)|_Z).
\end{equation}
Second, \eqref{F} with $\cF=\omega_E^{-1}$ gives
\begin{equation}\label{omega-1}
\omega^{-1}_E=\cO(2Z)\T\rho^*(\omega_E^{-1}|_Z\T \cO(-2Z)|_Z).
\end{equation}
We note that 
\[
\rho^*(\om^{-1}|_Z\T \cO(-2Z)|_Z)=\rho^*(\omega_B^{-1}|_Z\T \cO(-Z)|_Z).
\]
Therefore, combining \eqref{omF} and \eqref{omega-1}, we arrive at
\[
\omega^{-1}_E=\cO(Z)\T \om\T \rho^*(\om^{-1}|_Z\T \omega_B^{-1}).
\]
\end{proof}

We now consider the $\bP^1$-fibration $\rho_l:SpR_{2n}(l)\to SpR_{2n}(l-1)$ and the corresponding
divisors $Z_{i,j}$.
\begin{lem}\label{Zom}
\[
\cO(Z_{i,j})=
\begin{cases}
\om_{1,j}\T \om_{1,j+1}^*,\text{ if } i=1,\\
\om_{i,j}\T \om_{i-1,j}^*\T \om_{i,j+1}^*\T \om_{i-1.j+1}, \text{ if } i>1 \text{ and } i+j<2n,\\
\om_{i,j}\T (\om_{i-1,j}^*)^{\T 2}\T \om_{i-1.j+1}, \text{ if } i>1 \text{ and } i+j=2n.
\end{cases}
\]
\end{lem}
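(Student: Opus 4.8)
The plan is to compute each line bundle $\cO(Z_{i,j})$ by restricting everything to a single fiber of the $\bP^1$-fibration $\rho_l\colon SpR_{2n}(l)\to SpR_{2n}(l-1)$ on which $Z_{i,j}$ is a section, together with the formula \eqref{F}. Fix $\beta_l=\al_{i,j}$. First I would identify the tautological family of lines in the $\bP^1$-fibre explicitly. In the generic case $i+j<2n$ the fibre over $\bV\in SpR_{2n}(l-1)$ parametrizes subspaces $V_{i,j}$ with $pr_{j+1}V_{i,j-1}=V_{i-1,j}\subset V_{i,j}\subset W_{i,j}\cap(V_{i,j+1}\oplus\bC w_{j+1})$; this is $\bP(\cL_2)$ with $\cL_1\subset\cL_2$ where $\cL_2$ is the rank-two bundle $(W_{i,j}\cap(V_{i,j+1}\oplus\bC w_{j+1}))/V_{i-1,j}$ and $\cL_1$ is the line $V_{i,j+1}/(V_{i,j+1}\cap V_{i-1,j})$, corresponding to the section $s_{i,j}$. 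For $i+j=2n$ the fibre over $\bV$ parametrizes isotropic $V_{i,2n-i}$ with $V_{i-1,2n-i}\subset V_{i,2n-i}\subset W_{i,2n-i}$; as in the proof of Lemma \ref{P1} the isotropy condition cuts this down to lines in the rank-two bundle $W_{i,2n-i}/V_{i-1,2n-i}^{\perp}$, and I must track how the symplectic form identifies this with a bundle built out of the $\om_{\cdot,\cdot}$. For $i=1$ the fibre is $\bP$ of the rank-two bundle $\spa(w_1,\dots)$-type space with the section $\bC w_{j+1}$.

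Next, from the identification $Z_{i,j}=\bP(\cL_1)\subset\bP(\cL_2)=E$ one has the standard relation $\cO(Z_{i,j})=\cO_E(1)\otimes\rho^*(\cL_1^{-1})$ together with $\cO_E(-1)$ being the tautological subbundle, hence $\cO_E(1)|_{\text{fibre}}=\cO(1)$ and the fibrewise degree of $\cO(Z_{i,j})$ is $1$. Concretely I would use \eqref{F} with $\cF=\om_{i,j}$ (whose fibre is $\Lambda^i V_{i,j}^*$, so that along the fibre $V_{i,j}$ varies as the tautological sub-line of $\cL_2$ twisted by the fixed $V_{i-1,j}$, giving fibrewise degree $1$): this yields $\om_{i,j}=\cO(Z_{i,j})\otimes\rho^*(\om_{i,j}|_{Z_{i,j}}\otimes\cO(-Z_{i,j})|_{Z_{i,j}})$. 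Then I restrict $\om_{i,j}$ to the section $Z_{i,j}$, where by definition of $s_{i,j}$ one has $V_{i,j}=V_{i-1,j+1}\oplus\bC w_{j+1}$ (or $V_{1,j}=\bC w_{j+1}$), so $\Lambda^i V_{i,j}^*\cong \Lambda^{i-1}V_{i-1,j+1}^*$ up to the trivial line spanned by $w_{j+1}$, i.e. $\om_{i,j}|_{Z_{i,j}}=\om_{i-1,j+1}|_{Z_{i,j}}$ (and $\om_{1,j}|_{Z_{1,j}}$ is trivial). Matching the remaining $\rho^*$-factors against the pullbacks $\rho^*\om_{i-1,j},\rho^*\om_{i,j+1},\rho^*\om_{i-1,j+1}$ — which are the bundles already fixed on the base $SpR_{2n}(l-1)$ — then gives the three stated formulas; the coefficient $2$ on $\om_{i-1,j}^*$ in the case $i+j=2n$ comes precisely from the quadratic (isotropy) condition, which makes $\cL_2=W_{i,2n-i}/V_{i-1,2n-i}^{\perp}$ rather than $W_{i,2n-i}/V_{i-1,2n-i}$, doubling the contribution of $V_{i-1,j}$.

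I expect the main obstacle to be the case $i+j=2n$: one must show carefully that the two-dimensional bundle of isotropic extensions is $W_{i,2n-i}/V_{i-1,2n-i}^{\perp}$ and, crucially, compute its determinant in terms of the $\om_{\cdot,\cdot}$. The point is that the symplectic form gives a canonical isomorphism $W_{i,2n-i}/V_{i-1,2n-i}^{\perp}\cong (V_{i-1,2n-i}^{\perp}\cap W_{i,2n-i})^{*}\otimes(\text{const})$ along the fibre, or more directly that $\det\big(W_{i,2n-i}/V_{i-1,2n-i}^{\perp}\big)=\det(V_{i-1,2n-i})^{\otimes 2}\otimes(\text{trivial})$ because $W_{i,2n-i}$ carries a nondegenerate form and $V_{i-1,2n-i}^{\perp}\cap W_{i,2n-i}$ has $V_{i-1,2n-i}$-part "counted twice"; once this determinant computation is done, plugging into \eqref{F} gives the third line. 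The cases $i=1$ and $i+j<2n$ are formally identical to the type $\tt A$ computation in \cite{FF} and require only bookkeeping.
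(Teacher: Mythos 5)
Your overall strategy is the paper's: apply \eqref{F} with $\cF=\om_{i,j}$ (which has fibrewise degree $1$), use that $\om_{i,j}|_{Z_{i,j}}=\om_{i-1,j+1}$ because $V_{i,j}=V_{i-1,j+1}\oplus\bC w_{j+1}$ along the section $s_{i,j}$, and compute $\cO(Z_{i,j})|_{Z_{i,j}}=\cL_1^{-2}\T\det\cL_2$ from the $\cL_1\subset\cL_2$ description of the fibre; the paper likewise cites \cite{FF} for the cases $i=1$ and $i+j<2n$ and treats only $i+j=2n$ anew. The difficulty is exactly where you locate it, but your symplectic determinant calculation is wrong in a way that changes the final exponent.

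First, the rank-two bundle is $\cL_2=(V_{i-1,2n-i}^\perp\cap W_{i,2n-i})/V_{i-1,2n-i}$; what you wrote, $W_{i,2n-i}/V_{i-1,2n-i}^\perp$, has rank $i-1$ rather than $2$ (and $V_{i-1,2n-i}^\perp$ taken in $W$ is not even contained in $W_{i,2n-i}$). Second, and this is the decisive error, $\det\cL_2$ is \emph{canonically trivial}, not $(\det V_{i-1,2n-i})^{\T 2}$: the symplectic form on $W_{i,2n-i}$ restricted to $V_{i-1,2n-i}^\perp\cap W_{i,2n-i}$ has radical exactly $V_{i-1,2n-i}$ (an isotropic $(i-1)$-plane in a nondegenerate $2i$-dimensional space), so it descends to a nondegenerate skew form on the two-dimensional quotient $\cL_2$, trivializing its determinant. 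Your proposed value $\det\cL_2=(\det V_{i-1,2n-i})^{\T 2}$, inserted into $\cO(Z)|_Z=\cL_1^{-2}\T\det\cL_2$ with $\cL_1=\om_{i-1,2n-i+1}^*\T\om_{i-1,2n-i}$, would produce $(\om_{i-1,2n-i}^*)^{\T 4}$ in the final answer rather than $(\om_{i-1,2n-i}^*)^{\T 2}$. The correct explanation of the exponent $2$ is the opposite of a doubling: in the generic case $\det\cL_2=\om_{i,j+1}^*\T\om_{i-1,j}$ contributes a $+1$ power of $\om_{i-1,j}$ that partially cancels the $(\om_{i-1,j}^*)^{\T 2}$ coming from $\cL_1^{-2}$, leaving a single $\om_{i-1,j}^*$; when $i+j=2n$ the factor $\det\cL_2$ is trivial, no such cancellation happens, and the $(\om_{i-1,2n-i}^*)^{\T 2}$ from $\cL_1^{-2}$ survives. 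Plugging $\det\cL_2=\cO$ and $\om_{i,2n-i}|_{Z}=\om_{i-1,2n-i+1}$ into \eqref{F} then yields the stated third line.
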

\begin{proof}
The first two cases are worked out in \cite{FF}. The only new case is the last one $i+j=2n$.
Consider the space $V_{i,2n-i}$. We know that
\[
V_{i-1,2n-i}\subset V_{i,2n-i}\subset V_{i-1,2n-i}^\perp\cap W_{i,2n-i}.
\]
Let $\om_{i,j}^\perp$ be the line bundle on $SpR_{2n}$, whose fiber at a point $\bV$ is equal
to the top wedge power of $(V_{i,j}^\perp)^*$. Then one shows that 
$\det \om_{i,j}^\perp=\det \om_{i,j}^*$. Therefore,
using \eqref{F} we arrive at
\[
\om_{i,2n-i}=\cO(Z_{i,2n-i})\T \om_{i-1,2n-i+1}^*\T \om_{i-1,2n-1}^{\T 2}.
\]
\end{proof}

\begin{thm}\label{term}
$\omega_{SpR_{2n}}=\bigotimes_{i=1}^n (\om_{i,i}^*)^{\T 2}\T \prod_{i+j<2n, j\ge n} \cO(Z_{i,j})$.
\end{thm}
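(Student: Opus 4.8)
The plan is to compute $\omega_{SpR_{2n}}$ by iterating Lemma~\ref{Kom} along the tower of $\bP^1$-fibrations $SpR_{2n}=SpR_{2n}(n^2)\to\cdots\to SpR_{2n}(1)\to SpR_{2n}(0)=pt$, just as in type ${\tt A}$. At the $l$-th step, with $\beta_l=\al_{i,j}$, the fibration $\rho_l$ is of the form $\bP(\cL_2)\to B$ with $Z_{i,j}=\bP(\cL_1)$; concretely, when $i>1$ and $i+j=2n$ the rank-two bundle $\cL_2$ has fiber $(V_{i-1,2n-i}^\perp\cap W_{i,2n-i})/V_{i-1,2n-i}$ and $\cL_1$ corresponds to the section $s_{i,2n-i}$, while in the other cases this is exactly the setup of \cite{FF}. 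First I would take $\om=\om_{i,j}$ (the natural bundle with $\cO(1)$-restriction to the fibers of $\rho_l$) and record, via Lemma~\ref{Kom}, that passing from $SpR_{2n}(l-1)$ to $SpR_{2n}(l)$ multiplies the anticanonical bundle by $\cO(Z_{i,j})\T\om_{i,j}\T\rho_l^*(\text{stuff pulled back from }SpR_{2n}(l-1))$. Collecting these contributions over all $l$ and pulling everything back to $SpR_{2n}$ gives
\[
\omega_{SpR_{2n}}^{-1}=\bigotimes_{(i,j)}\cO(Z_{i,j})\T\bigotimes_{(i,j)}\om_{i,j}^{\pm}\T(\text{boundary corrections}),
\]
and the task reduces to a bookkeeping of how often each $\om_{i,j}$ enters with which sign.

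The key mechanism is the same telescoping that occurs in \cite{FF}: when Lemma~\ref{Kom} is applied at step $l$, the restriction $\om^{-1}|_Z\T\omega_B^{-1}$ is pulled back from the base, and $\omega_B^{-1}$ itself was computed at the previous stage; moreover $\om_{i,j}|_{Z_{i,j}}$ is expressed through $\om_{i-1,j+1}$ and $\om_{i-1,j}$ (this is precisely the content of the case analysis in Lemma~\ref{Zom}, where $\cO(Z_{i,j})$ is rewritten in terms of the $\om$'s). So I would substitute the formulas of Lemma~\ref{Zom} into the product, and watch the $\om_{i,j}^*$ contributions cancel in pairs along the ``diagonals'' $i+j=\text{const}$, exactly as in type ${\tt A}$, except that now the bottom row of roots consists of the $\al_{i,2n-i}$ with the isotropy constraint, which produces the extra squared factors. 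The upshot should be that $\omega_{SpR_{2n}}$ equals the product of $\cO(Z_{i,j})$ over the exceptional divisors — i.e.\ those with $j\ge n$, $i+j<2n$ by Proposition~\ref{except} — together with the ``undegenerated'' part $\bigotimes_{i=1}^n(\om_{i,i}^*)^{\T2}$ coming from the diagonal.

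The place where type ${\tt C}$ genuinely differs from type ${\tt A}$, and hence the main obstacle, is the bottom stratum of roots $\al_{i,2n-i}$, $i=1,\dots,n$: there the fibration is not $\bP(V_{i,j+1}\oplus\bC w_{j+1})$ but the quadric-style $\bP$ of the two-dimensional space $V_{i-1,2n-i}^\perp\cap W_{i,2n-i}\big/V_{i-1,2n-i}$, and the identity $\det\om_{i,j}^\perp=\det\om_{i,j}^*$ (noted in the proof of Lemma~\ref{Zom}) is what forces the coefficient $2$ in front of $\om_{i-1,2n-1}$ rather than the coefficient $1$ one would naively expect. I would therefore treat these $n$ steps with extra care, verifying that each such step contributes $(\om_{i,i}^*)^{\T2}$ to the final answer after the diagonal cancellations are carried out — equivalently, that the ``defect'' of the resolution being non-crepant is localized exactly on the exceptional divisors $Z_{i,j}$ with $j\ge n$, $i+j<2n$, consistent with the remark in the introduction. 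The rest is a routine, if lengthy, induction identical in spirit to \cite{FF}; once the formula is established, combining it with the positivity of the coefficients (all exceptional divisors appear with coefficient $+1$) will immediately give that $\SF^a_{2n}$ has terminal singularities, and then Theorem~\ref{MR}-style Frobenius splitting together with \cite{E} yields rationality.
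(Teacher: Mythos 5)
Your approach matches the paper's own proof: both iterate Lemma~\ref{Kom} along the tower of $\bP^1$-fibrations to accumulate the factors $\cO(Z_{i,j})$ and $\om_{i,j}$ in $\omega_{SpR_{2n}}^{-1}$, and then invoke Lemma~\ref{Zom} to rewrite the $\cO(Z_{i,j})$ in terms of the $\om$'s so that the claimed formula reduces to a telescoping identity. You have also correctly pinpointed the one genuinely new ingredient relative to type ${\tt A}$, namely the boundary roots $\al_{i,2n-i}$ where the isotropy constraint and the identity $\det\om_{i,j}^\perp=\det\om_{i,j}^*$ force the squared factors $(\om_{i,i}^*)^{\T2}$ and isolate the non-crepancy on the divisors with $j\ge n$, $i+j<2n$.
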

\begin{proof}
From Lemma \ref{Kom} we obtain
\[
\omega^{-1}_{SpR_{2n}}=
\bigotimes_{i,j} \cO(Z_{i,j}) \T  \bigotimes_{i,j} \om_{i,j} \T
\bigotimes_{j-i>2} \om^*_{i,j}.
\]
Now using Lemma \ref{Zom} we are left to show that
\[
\bigotimes_{i,j} \cO(Z_{i,j}) \T \bigotimes_{i=1}^n \om_{i,i} 
\T \bigotimes_{i=1}^{n-1} \om_{i,i+1}=
\bigotimes_{i=1}^n \om_{i,i}^{\T 2}\T \bigotimes_{j\ge n, i+j<2n} \cO(-Z_{i,j}).
\]
This equality follows from Lemma \ref{Zom} again by writing both sides in terms of $\om_{i,j}$ only.
\end{proof}

\begin{cor}\label{KSF}
$\omega_{\SF^a_{2n}}=\bigotimes_{i=1}^n (\om_i^*)^{\T 2}$.
\end{cor}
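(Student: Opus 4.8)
The plan is to descend the formula of Theorem~\ref{term} from $SpR_{2n}$ to $\SF^a_{2n}$ along the resolution $\pi_{2n}$. Set $\mathcal N:=\bigotimes_{i=1}^n(\om_i^*)^{\T 2}$, a line bundle on $\SF^a_{2n}$. Since under $\pi_{2n}$ the space $V_{i,i}$ pulls back to the tautological subspace $V_i$, we have $\om_{i,i}=\pi_{2n}^*\om_i$, so Theorem~\ref{term} can be rewritten as
\[
\omega_{SpR_{2n}}=\pi_{2n}^*\mathcal N\T\cO(E),\qquad E:=\sum_{j\ge n,\ i+j<2n}Z_{i,j}.
\]
By Theorem~\ref{lci} the variety $\SF^a_{2n}$ is Gorenstein, hence $\omega_{\SF^a_{2n}}$ is an honest line bundle, and the assertion is that it is isomorphic to $\mathcal N$.

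First I would restrict everything to a big open subset. Let $M\subset\SF^a_{2n}$ be the open subset exhibited in the proof that $\SF^a_{2n}$ is smooth off codimension two: the complement $\SF^a_{2n}\setminus M$ has codimension $\ge 2$ and $\pi_{2n}$ is one-to-one over $M$; being moreover proper and birational onto the normal variety $M$, it is an isomorphism over $M$ by Zariski's main theorem, and $M$ is smooth (it is isomorphic to an open subset of the smooth variety $SpR_{2n}$). By Proposition~\ref{except} every component $Z_{i,j}$ of $E$ is an exceptional divisor, so $\dim\pi_{2n}(Z_{i,j})\le\dim\SF^a_{2n}-2$ and no $Z_{i,j}$ can meet $\pi_{2n}^{-1}(M)$ in a divisor; since $\pi_{2n}^{-1}(M)\to M$ is an isomorphism, this forces $Z_{i,j}\cap\pi_{2n}^{-1}(M)$ to be empty, i.e. $\cO(E)|_{\pi_{2n}^{-1}(M)}\cong\cO$. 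Restricting the displayed identity to $\pi_{2n}^{-1}(M)$ therefore yields $\omega_{\pi_{2n}^{-1}(M)}\cong(\pi_{2n}|)^*(\mathcal N|_M)$; and because $M$ is a smooth open subset of $\SF^a_{2n}$, the left-hand side equals $(\pi_{2n}|)^*(\omega_{\SF^a_{2n}}|_M)$. As $\pi_{2n}|$ is an isomorphism over $M$, I conclude $\omega_{\SF^a_{2n}}|_M\cong\mathcal N|_M$.

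It then remains to extend this isomorphism from $M$ to all of $\SF^a_{2n}$. The line bundle $\omega_{\SF^a_{2n}}^{-1}\T\mathcal N$ carries a nowhere-vanishing section over $M$; since $\SF^a_{2n}$ is normal and $M$ is the complement of a codimension $\ge 2$ subset, $j_*\cO_M=\cO_{\SF^a_{2n}}$ for the inclusion $j\colon M\hk\SF^a_{2n}$, so this section and its inverse extend over $\SF^a_{2n}$ and trivialize $\omega_{\SF^a_{2n}}^{-1}\T\mathcal N$. Hence $\omega_{\SF^a_{2n}}\cong\mathcal N=\bigotimes_{i=1}^n(\om_i^*)^{\T 2}$, as claimed.

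I do not expect a real obstacle: the substantive input is entirely contained in Theorem~\ref{term} and Proposition~\ref{except}, which are set up precisely so that the correction divisor $E$ in $\omega_{SpR_{2n}}$ is the reduced sum of the exceptional divisors of $\pi_{2n}$. The only points demanding a little care are the identification $\om_{i,i}=\pi_{2n}^*\om_i$ and the two ``codimension two'' steps — that $\pi_{2n}$ is genuinely an isomorphism over the big open $M$ (for which one invokes the lemma that $\SF^a_{2n}$ is smooth off codimension two, together with Lemma~\ref{surj}), and that line bundles on the normal variety $\SF^a_{2n}$ are determined by their restriction to $M$. An equivalent route I would mention is to apply $\pi_{2n*}$ directly: $\pi_{2n*}\omega_{SpR_{2n}}=\mathcal N\T\pi_{2n*}\cO(E)=\mathcal N$ by the projection formula and the fact that $\pi_{2n*}\cO(E)=\cO_{\SF^a_{2n}}$ for an effective divisor $E$ supported on exceptional divisors over a normal base; combined with the identity $\pi_{2n*}\omega_{SpR_{2n}}=\omega_{\SF^a_{2n}}$ (which is part of the rationality argument of this section), this gives the corollary as well.
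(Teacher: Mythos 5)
Your primary argument — rewriting Theorem~\ref{term} as $\omega_{SpR_{2n}}=\pi_{2n}^*\mathcal N\otimes\cO(E)$, restricting to the big smooth open subset over which $\pi_{2n}$ is an isomorphism, observing that the exceptional divisors miss it, and extending from that open subset by normality — is exactly the paper's proof of Corollary~\ref{KSF}. One caveat concerning the ``alternative route'' you sketch at the end: the identity $\pi_{2n*}\omega_{SpR_{2n}}=\omega_{\SF^a_{2n}}$ is established in this section's rationality theorem \emph{using} Corollary~\ref{KSF} (the paper derives the inclusion $\pi_{2n*}\omega_{SpR_{2n}}\supset\omega_{\SF^a_{2n}}$ from the corollary and then invokes the opposite natural inclusion), so citing it as input to prove the corollary would be circular; stick with your primary codimension-two argument.
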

\begin{proof}
Recall the open cell $SpR_{2n}^\circ\subset SpR_{2n}$ with the property that the restriction of 
$\pi_{2n}$ to $SpR_{2n}^\circ$
is one-to-one. Consider a subvariety $M\subset SpR_{2n}$ defined by
\[
M=SpR_{2n}^\circ\cup \bigcup_{j<n} Z^\circ_{i,j}\cup\bigcup_{i+j=2n} Z^\circ_{i,j}.
\]
Then the restriction of $\pi_{2n}$ to $SpR_{2n}\setminus M$ is one-to-one and 
$\pi_{2n}(SpR_{2n}\setminus M)$ has codimension at least two in $\SF^a_{2n}$.
Because of Theorem \ref{term}, the canonical line bundle $\omega$ on $M$ is
equal to $\bigotimes_{i=1}^n (\om_{i,i}^*)^{\T 2}$.
Since we also have $\pi_{2n}^*\omega_{\SF^a_{2n}}=\omega_{SpR_{2n}}$ on the image of $M$, we 
obtain $\omega_{\SF^a_{2n}}=\bigotimes_{i=1}^n (\om_i^*)^{\T 2}$.
\end{proof}

\begin{thm}
The singularities of $\SF^a_{2n}$ are terminal  and hence  rational.
\end{thm}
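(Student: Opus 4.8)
\emph{Proof plan.} The plan is to read off the discrepancies of the resolution $\pi_{2n}\colon SpR_{2n}\to\SF^a_{2n}$ directly from the computations already assembled. First I would record that $\pi_{2n}^*\om_i=\om_{i,i}$: by construction $\pi_{2n}$ sends $\bV=(V_{k,l})$ to $(V_{i,i})_{i=1}^n$ and identifies $V_i$ with $V_{i,i}$, so the two line bundles have the same fibre $\Lambda^i V_i^*$ at every point. Feeding this into Corollary~\ref{KSF} gives $\pi_{2n}^*\omega_{\SF^a_{2n}}=\bigotimes_{i=1}^n(\om_{i,i}^*)^{\T 2}$, and then Theorem~\ref{term} rewrites as
\[
\omega_{SpR_{2n}}=\pi_{2n}^*\omega_{\SF^a_{2n}}\T\bigotimes_{j\ge n,\ i+j<2n}\cO(Z_{i,j}).
\]
By Proposition~\ref{except} the divisors $Z_{i,j}$ with $j\ge n$ and $i+j<2n$ are precisely the exceptional divisors of $\pi_{2n}$, and in the displayed formula each of them occurs with coefficient $a_{i,j}=1$. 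Since $\SF^a_{2n}$ is Gorenstein by Theorem~\ref{lci}, the definition of canonical/terminal singularities recalled in Section~\ref{Definitions} applies, and as all the $a_{i,j}$ are strictly positive the singularities of $\SF^a_{2n}$ are terminal, in particular canonical.

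For rationality I would then invoke Elkik's theorem (the last theorem of Section~\ref{Definitions}): $\SF^a_{2n}$ is Gorenstein, so it suffices to check that the natural inclusion $\pi_{2n,*}\omega_{SpR_{2n}}\hookrightarrow\omega_{\SF^a_{2n}}$ is an isomorphism. Writing $E=\sum_{j\ge n,\ i+j<2n}Z_{i,j}$ for the (effective, exceptional) discrepancy divisor, the projection formula gives $\pi_{2n,*}\omega_{SpR_{2n}}=\omega_{\SF^a_{2n}}\T\pi_{2n,*}\cO(E)$, and $\pi_{2n,*}\cO(E)=\cO_{\SF^a_{2n}}$ because $E$ is effective and $\pi_{2n}$-exceptional while $\SF^a_{2n}$ is normal: a local section of $\cO(E)$ is a rational function with poles bounded by $E$, hence regular away from a closed set of codimension $\ge 2$, hence regular by normality. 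Thus $\pi_{2n,*}\omega_{SpR_{2n}}=\omega_{\SF^a_{2n}}$, and Elkik's theorem yields rational singularities.

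The computation is essentially complete once Theorem~\ref{term}, Corollary~\ref{KSF} and Proposition~\ref{except} are in place, so the only point demanding genuine care is the bookkeeping that makes these three compatible: one must be sure that the exceptional divisors singled out in Proposition~\ref{except} coincide exactly with the divisors appearing with positive coefficient in Theorem~\ref{term}, and in particular that the non-exceptional divisors $Z_{i,2n-i}$ and the $Z_{i,j}$ with $j<n$ do not contribute to the discrepancy. This, together with the (routine) identity $\pi_{2n}^*\om_i=\om_{i,i}$ and the exceptional-pushforward lemma used above, is the whole argument; I do not expect any serious obstacle beyond this consistency check.
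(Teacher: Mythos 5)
Your proposal is correct and follows essentially the same route as the paper: it combines Theorem~\ref{term}, Corollary~\ref{KSF}, and Proposition~\ref{except} to read off unit discrepancies along exactly the exceptional divisors, then invokes Elkik's theorem for rationality. The only cosmetic difference is at the rationality step, where you show $\pi_{2n,*}\cO(E)=\cO_{\SF^a_{2n}}$ directly via normality, whereas the paper establishes the isomorphism $(\pi_{2n})_*\omega_{SpR_{2n}}\simeq\omega_{\SF^a_{2n}}$ by exhibiting two opposite inclusions; the content is the same.
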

\begin{proof}\
Using Corollary \ref{KSF} and Theorem \ref{term} we obtain
\[
\omega_{SpR_{2n}}=\pi_{2n}^* \omega_{\SF^a_{2n}}\T \prod_{i+j<2n, j\ge n} \cO(Z_{i,j}).
\]
Thus the singularities are terminal (and hence canonical)  by Theorem \ref{term}
(all exceptional divisors appear in the discrepancy with coefficient one).
Now rationality follows from \cite{E}, Theorem 1. In fact, the varieties $\SF^a_{2n}$
are locally complete intersections and so Gorenstein. Now, by the projection formula and Theorem \ref{term},
one gets
\[
(\pi_{2n})_*\omega_{SpR_{2n}}=
\bigotimes_{i=1}^n (\om_{i,i}^*)^{\T 2}\T (\pi_{2n})_*\prod_{i+j<2n, j\ge n} \cO(Z_{i,j}).
\]
Since $\cO(Z_{i,j})\supset \cO$ and $\SF^a_{2n}$ is normal (thus $(\pi_{2n})_*\cO=\cO$), we obtain
\[
(\pi_{2n})_*\omega_{SpR_{2n}}\supset \omega_{\SF^a_{2n}}.
\]
Using the natural opposite inclusion, we arrive at the isomorphism.
\end{proof}

\subsection{Rationality for parabolic flag varieties.}
In this subsection we work out the case of partial degenerate flag varieties.
Our goal is to compute $\omega_{SpR_\bd}\T\pi_\bd^* \omega_{\SF^a_\bd}^{-1}$ 
(note that $\SF^a_\bd$ is singular but Gorenstein, thus
its dualizing complex is a line bundle). 
%Strictly speaking, we do not know at the moment how
%$\omega_{\SF^a_\bd}$ is expressed in terms of $\om_i$ (even we do not know if it can be expressed).
We will prove that the expression for $\omega_{\SF^a_\bd}$ coincides with 
the one in the classical situation.
So let $\SF_\bd$ be the classical parabolic flag variety.
\begin{lem}\label{K}
$\omega^{-1}_{\SF_\bd}=\om_{d_1}^{\T d_2}\T \prod_{i=2}^{k-1} \om_{d_i}^{\T (d_{i+1}-d_{i-1})}\T
\om_{d_k}^{\T (2n+1-d_k-d_{k-1})}.$
\end{lem}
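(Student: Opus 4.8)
The plan is to prove the formula by induction on the length $k$ of $\bd$, at each step forgetting the smallest subspace $V_{d_1}$ so that $\SF_\bd$ becomes a Grassmann bundle over a shorter symplectic partial flag variety. Write $\bd'=(d_2,\dots,d_k)$. Forgetting $V_{d_1}$ gives a morphism $\SF_\bd\to\SF_{\bd'}$ whose fibre over a flag $(V_{d_2}\subset\cdots\subset V_{d_k})$ is $Gr_{d_1}(V_{d_2})$, with no residual isotropy condition in the fibre since everything already lies inside the isotropic space $V_{d_2}$; as the $V_{d_2}$ form the rank-$d_2$ tautological subbundle $\mathcal{S}_{d_2}$ on $\SF_{\bd'}$, this realizes $\SF_\bd$ as the Zariski-locally trivial Grassmann bundle $\rho\colon Gr_{d_1}(\mathcal{S}_{d_2})\to\SF_{\bd'}$. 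Along $\rho$ the bundle $\om_{d_i}$ is pulled back from $\SF_{\bd'}$ for $i\ge2$, while $\om_{d_1}$ is the dual determinant of the relative tautological subbundle.

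The base case $k=1$ is $\SF_{(d_1)}=SpGr_{d_1}(2n)$, which is the smooth zero locus in $Gr_{d_1}(2n)$ of the regular section of $\Lambda^2\mathcal{S}_{d_1}^*$ cut out by the symplectic form. Hence adjunction together with $\omega_{Gr_{d_1}(2n)}^{-1}=\om_{d_1}^{\T 2n}$ and $\det(\Lambda^2\mathcal{S}_{d_1}^*)=\om_{d_1}^{\T(d_1-1)}$ gives $\omega_{SpGr_{d_1}(2n)}^{-1}=\om_{d_1}^{\T(2n+1-d_1)}$, which agrees with the asserted formula for $k=1$ under the convention $d_0:=0$, the leftmost and rightmost nodes then coinciding.

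For the inductive step I would compute the relative anti-canonical bundle of $\rho$ from the relative tangent bundle: for a Grassmann bundle $Gr_d(\mathcal{E})\to X$ with $\mathcal{E}$ of rank $r$ and tautological subbundle $\mathcal{S}\subset\rho^*\mathcal{E}$, the relative tangent bundle is $\mathcal{S}^*\T(\rho^*\mathcal{E}/\mathcal{S})$, whose determinant yields $\omega_\rho^{-1}=\det(\mathcal{S})^{*\,\T r}\T(\det\mathcal{E})^{\T d}$. With $\mathcal{E}=\mathcal{S}_{d_2}$, $r=d_2$, $d=d_1$ this becomes $\omega_\rho^{-1}=\om_{d_1}^{\T d_2}\T\om_{d_2}^{*\,\T d_1}$ on $\SF_\bd$. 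Now $\omega_{\SF_\bd}^{-1}=\omega_\rho^{-1}\T\rho^*\omega_{\SF_{\bd'}}^{-1}$, so it remains to substitute the inductive formula for $\omega_{\SF_{\bd'}}^{-1}$: multiplying it by $\om_{d_2}^{*\,\T d_1}$ lowers the exponent of $\om_{d_2}$ by exactly $d_1$, which turns $\bd'$'s exponent at $d_2$ into $\bd$'s exponent at the (now interior) node $d_2$, while the new factor $\om_{d_1}^{\T d_2}$ supplies the leftmost term of $\bd$ and every other factor is transported unchanged. This produces the asserted expression for $\SF_\bd$.

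The point that requires care is the bookkeeping of the $\det\mathcal{E}$ correction term in the relative tangent computation: it must be attached to $\om_{d_2}^{*}$ rather than to $\om_{d_1}$, and the relevant rank is $d_2$, not $2n$; writing out the rank-two model $\bP(\mathcal{E})$ first is the safest way to fix the exponents. An alternative, purely Lie-theoretic route uses $\omega_{\SF_\bd}^{-1}=\cL_\chi$ with $\chi=\sum_{\alpha\in\Phi^+(\msp_{2n})\setminus\Phi^+(\fl)}\alpha$ and evaluates the coefficients $\langle\chi,\alpha_{d_i}^\vee\rangle$ directly from the root lists \eqref{ij}--\eqref{ibarj}; the three regimes of the formula then record whether $d_i$ is the leftmost node, an interior node, or the node $d_k$ closest to the special long node $n$.
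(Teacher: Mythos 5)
Your proposal is correct and takes a genuinely different route from the paper.

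The paper's proof is the one you describe in your closing paragraph as an ``alternative'': it observes that $\omega^{-1}_{\SF_\bd}$ corresponds to the character $2\rho_{Sp_{2n}}-2\rho_{L_\bd}=\sum_{\alpha\in\Phi^+\setminus\Phi^+(\fl_\bd)}\alpha$ and evaluates the coefficients by direct computation with the root lists \eqref{ij}--\eqref{ibarj}. Your main argument instead proceeds by induction on $k$, forgetting $V_{d_1}$ to realize $\SF_\bd$ as the Grassmann bundle $Gr_{d_1}(\mathcal{S}_{d_2})\to\SF_{\bd'}$ and computing the relative anticanonical via $\det(\mathcal{S}^*\T\rho^*\mathcal{S}_{d_2}/\mathcal{S})=(\det\mathcal{S}^*)^{\T d_2}\T(\det\rho^*\mathcal{S}_{d_2})^{\T d_1}=\om_{d_1}^{\T d_2}\T\om_{d_2}^{*\,\T d_1}$; the base case is handled by adjunction from $Gr_{d_1}(2n)$ using that $SpGr_{d_1}(2n)$ is the zero locus of a regular section of $\Lambda^2\mathcal{S}^*$, with $\det\Lambda^2\mathcal{S}^*=\om_{d_1}^{\T(d_1-1)}$. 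Both bases and the inductive transport of exponents check out (for $k=2$ the middle product is empty and the product collapses to $\om_{d_1}^{\T d_2}\T\om_{d_2}^{\T(2n+1-d_2-d_1)}$, as it should). The paper's route is shorter once the weight formula for $\omega^{-1}_{G/P}$ is granted, but involves mildly intricate root bookkeeping across the three regimes of the answer; your inductive route is longer but purely bundle-theoretic and transparently explains the three shapes of the exponent as the leftmost, interior, and rightmost nodes.
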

\begin{proof}
Since the canonical line bundle is the top wedge power of the cotangent bundle, its weight is given by
$-2\rho_{Sp_{2n}} + 2\rho_{L_\bd}$, where $2\rho_{Sp_{2n}}$ is the sum of all positive roots of $\msp_{2n}$
and $2\rho_{L_\bd}$ is the sum of all positive roots of the Levi subalgebra corresponding to $\bd$.
Now direct computation gives the desired formula.
\end{proof}

Now our strategy is as follows. We define a line bundle $\tilde \omega$ on $\SF^a_\bd$
by the formula
\[
\tilde \omega^{-1}=\om_{d_1}^{\T d_2}\T \prod_{i=2}^{k-1} \om_{d_i}^{\T (d_{i+1}-d_{i-1})}\T
\om_{d_k}^{\T (2n+1-d_k-d_{k-1})}.
\]
We will show that the discrepancy  $\omega_{SpR_\bd}\T\pi_\bd^* (\tilde \omega^{-1})$
is given by the product of exceptional divisors with positive coefficients. Since the images
of the exceptional divisors have codimension at least two, this will prove that in the degenerate case
the expression for the canonical class coincides with the one in the classical situation.

We start with the following lemma.
\begin{lem}\label{Oom}
We have the equality
\[
\omega^{-1}_{SpR_\bd}=\bigotimes_{(i,j)\in P_\bd}\cO(Z_{i,j})\T \bigotimes_{(i,j)\in B_\bd} \om_{i,j},
\]\
where $B_\bd\subset P_\bd$ is the subset of pairs from the following list:
\begin{gather*}
(1,d_1), (2,d_1),\dots,(d_1,d_1),\\
(d_1,d_1+1), (d_1,d_1+2), \dots, (d_1,d_2),\\
(d_1+1,d_2), (d_1+2,d_2),\dots,(d_2,d_2),\\
\dots\\
(d_{k-1}+1,d_k), (d_{k-1}+2,d_k),\dots,(d_k,d_k),\\
(d_k,d_k+1),(d_k,d_k+2),\dots, (d_k,2n-d_k).
\end{gather*}
\end{lem}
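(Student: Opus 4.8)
The plan is to mimic the computation that produced Lemma~\ref{Zom} and Theorem~\ref{term} in the complete flag case, now in the parabolic setting. Recall that $SpR_\bd$ is a tower of successive $\bP^1$-fibrations (one for each $(i,j)\in P_\bd$), so that Lemma~\ref{Kom} applies at each step: writing $\rho_l\colon SpR_\bd(l)\to SpR_\bd(l-1)$ for the $\bP^1$-fibration attached to the root $\beta_l$ (ordered as in Section~\ref{Resolution}, restricted to $P_\bd$), and $\om$ for a line bundle restricting to $\cO(1)$ on the fiber, we get $\omega_{SpR_\bd(l)}^{-1}=\cO(Z_{\beta_l})\T\om\T\rho_l^*(\om^{-1}|_{Z_{\beta_l}}\T\omega_{SpR_\bd(l-1)}^{-1})$. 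Multiplying these relations over all $l$ telescopes to an expression
\[
\omega^{-1}_{SpR_\bd}=\bigotimes_{(i,j)\in P_\bd}\cO(Z_{i,j})\T \bigotimes_{(i,j)\in P_\bd}\om_{i,j}\T\bigotimes_{(i,j)}\om_{i,j}^{*},
\]
where the last product runs over those $(i,j)\in P_\bd$ for which the preceding root $\beta_{l-1}$ in the sequence already lies in $P_\bd$ and contributes a $\om_{i,j}^{*}$ correction term coming from the $\om|_{Z}$ factor (exactly as the $\bigotimes_{j-i>2}\om_{i,j}^{*}$ term arose in Theorem~\ref{term}). The content of the lemma is the combinatorial identification of the surviving monomial in the $\om_{i,j}$: the claim is that after cancellation only the $\om_{i,j}$ with $(i,j)\in B_\bd$ remain, each with multiplicity one.

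Concretely, the steps I would carry out are: (i) fix the ordering of $P_\bd$ and, for each $(i,j)\in P_\bd$, identify the natural two-dimensional bundle $\cL_2$ and line subbundle $\cL_1$ on $SpR_\bd(l-1)$ realizing the fibration $\rho_l$ as $\bP(\cL_2)$ with $Z_{i,j}=\bP(\cL_1)$; here one distinguishes the three cases $i=1$, $i>1$ with $i+j<2n$, and $i>1$ with $i+j=2n$, using the isotropy condition and the identity $\det\om_{i,j}^{\perp}=\det\om_{i,j}^{*}$ from the proof of Lemma~\ref{Zom}. (ii) Identify the line bundle $\om$ of each step: it is $\om_{i,j}$ (or a twist thereof) according to which incidence variety governs the fiber. (iii) Apply Lemma~\ref{Kom} at each step and multiply, obtaining $\omega^{-1}_{SpR_\bd}=\bigotimes_{(i,j)\in P_\bd}\cO(Z_{i,j})\T(\text{a monomial in the }\om_{i,j})$. (iv) Evaluate that monomial: the $\om$ contributions and the $\om^{-1}|_Z$ corrections telescope along each "hook" of roots associated to a fundamental weight $\omega_{d_r}$, and the surviving terms are precisely those indexed by the boundary set $B_\bd$ --- the $r$-th "column" $(1,d_r),\dots,(d_r,d_r)$ together with the "row" $(d_r,d_r+1),\dots$ up to the next index --- as listed.

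The main obstacle is step (iv): keeping track of exactly which $\om_{i,j}$ cancel. Each elementary fibration contributes both a positive occurrence of $\om_{i,j}$ (the $\om$ factor of Lemma~\ref{Kom}) and, via the restriction $\cO(-Z)|_Z$ and the pullback $\rho^*(\om^{-1}|_Z)$, negative occurrences of the $\om$'s of the \emph{neighbouring} roots $\al_{i-1,j}$, $\al_{i,j+1}$, $\al_{i-1,j+1}$ (again as in Lemma~\ref{Zom}). One must check that for an interior root of $P_\bd$ all four contributions --- from its own step and from the steps of its neighbours --- cancel, while for a root on the "staircase boundary" determined by $\bd$ one contribution is unmatched because the relevant neighbour does not lie in $P_\bd$. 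This is a finite bookkeeping exercise with the root ordering, entirely parallel to the type $\tt A$ computation in \cite{FF} and to Theorem~\ref{term}, but it is where all the case analysis lives; I expect to organize it by writing both sides purely in terms of the $\om_{i,j}$, $(i,j)\in P_\bd$, and comparing coefficients root by root. Once the monomial is identified as $\bigotimes_{(i,j)\in B_\bd}\om_{i,j}$, the lemma follows.
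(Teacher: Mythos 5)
Your plan is the paper's: build $SpR_\bd$ as a tower of $\bP^1$-fibrations and telescope Lemma~\ref{Kom}. The issue is in your step (iv), where you misidentify the correction terms, and this would derail the bookkeeping you propose. You say each step contributes, besides the positive $\om_{i,j}$, negative occurrences of the $\om$'s of the \emph{three} neighbouring roots $\al_{i-1,j}$, $\al_{i,j+1}$, $\al_{i-1,j+1}$ ``as in Lemma~\ref{Zom}'', and your intermediate display characterises the surviving $\om^{*}$'s by whether the adjacent root $\beta_{l-1}$ in the ordering lies in $P_\bd$. Neither is right. Lemma~\ref{Zom} computes $\cO(Z_{i,j})$, a different quantity, and you have conflated it with the canonical-bundle telescope. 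In the $\omega^{-1}$ telescope from Lemma~\ref{Kom}, the \emph{only} negative factor produced at step $(i,j)$ is $\rho^*(\om^{-1}|_{Z_{i,j}})$, and because the section $s_{i,j}$ sets $V_{i,j}=V_{i-1,j+1}\oplus\bC w_{j+1}$ (resp.\ $V_{1,j}=\bC w_{j+1}$ when $i=1$), this factor is exactly $\om_{i-1,j+1}^{-1}$ (resp.\ trivial). So the per-step contribution is $\cO(Z_{i,j})\T\om_{i,j}\T\om_{i-1,j+1}^{-1}$, and the telescope is a one-to-one \emph{diagonal} cancellation, not a four-way cancellation among hooks.

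Once this is fixed the combinatorics you are worried about become immediate: $\om_{i_0,j_0}$ survives with coefficient one iff $(i_0,j_0)$ is not of the form $(i-1,j+1)$ for some $(i,j)\in P_\bd$, i.e.\ iff $(i_0+1,j_0-1)\notin P_\bd$ --- and one checks by inspection that this is exactly the list of pairs defining $B_\bd$ in the statement. (You should also verify the small compatibility point that if $(i,j)\in P_\bd$ and $i\ge 2$, $j\le 2n-2$, then $(i-1,j+1)\in P_\bd$; this holds because $\al_{i-1,j+1}$ is supported on a superset of the simple roots supporting $\al_{i,j}$, so the negative contributions never leave the positive index set, and the telescope closes up with no stray $\om^{*}$ factors.) Replacing your step (iv) with this single observation gives the paper's argument; the $\beta_{l-1}$ criterion and the ``four contributions cancel'' picture do not describe what happens.
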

\begin{proof}
Follows from Lemma \ref{Kom}. In fact the sections $s_{i,j}$ are constructed in such
a way that the space $V_{i,j}$ at a point of the section is given by $V_{i-1,j+1}$ plus
some constant vector. The set $B_\bd$ is exactly the set of pairs $(i_0,j_0)$ which can not be witten
as $i_0=i-1$, $j_0=j+1$ for some $(i,j)\in P_\bd$.
\end{proof}

We now prove the main theorem of this section.
\begin{thm}
There exist non-negative integers $a_{i,j}$, $(i,j)\in P_\bd$ such that
\[
\omega_{SpR_\bd}\T\pi_\bd^*(\tilde \omega^{-1})=\bigotimes_{(i,j)\in P_\bd}\cO(a_{i,j}Z_{i,j}).
\]
In addition, $a_{i,j}=0$ if and only if $Z_{i,j}$ is not exceptional.
\end{thm}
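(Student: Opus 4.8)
The plan is to reduce the theorem to a bookkeeping comparison between two explicit formulas for $\omega_{SpR_\bd}$: on one side the resolution-intrinsic formula coming from the tower of $\bP^1$-fibrations (Lemma~\ref{Kom}, Lemma~\ref{Oom}), and on the other side the pullback $\pi_\bd^*(\tilde\omega^{-1})$ expressed through the tautological bundles $\om_{d_i}$ and hence, after pullback, through the $\om_{d_i,d_i}$. First I would use Lemma~\ref{Oom} to write
\[
\omega^{-1}_{SpR_\bd}=\bigotimes_{(i,j)\in P_\bd}\cO(Z_{i,j})\T\bigotimes_{(i,j)\in B_\bd}\om_{i,j},
\]
and then, exactly as in the proof of Theorem~\ref{term}, substitute the expressions for $\cO(Z_{i,j})$ from Lemma~\ref{Zom} (adapted to $P_\bd$: the three cases $i=1$, $i>1$ with $i+j<2n$, and $i+j=2n$) to rewrite everything purely in terms of the $\om_{i,j}$. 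The telescoping along each ``hook'' of roots in $P_\bd$ — the vertical segments $(1,d_m),\dots,(d_m,d_m)$ and the horizontal segments $(d_m,d_m+1),\dots$ listed in $B_\bd$ — collapses the sum, leaving only the corner bundles $\om_{d_i,d_i}$ together with a residual product of $\cO(Z_{i,j})$ over the roots $(i,j)$ that do \emph{not} arise as $(i-1,j+1)$ for another root of $P_\bd$ and are not themselves corners.

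Next I would match this against $\pi_\bd^*\tilde\omega^{-1}=\pi_\bd^*\left(\om_{d_1}^{\T d_2}\T\prod_{i=2}^{k-1}\om_{d_i}^{\T(d_{i+1}-d_{i-1})}\T\om_{d_k}^{\T(2n+1-d_k-d_{k-1})}\right)$, using that $\pi_\bd^*\om_{d_i}=\om_{d_i,d_i}$ since $\pi_\bd$ is the map forgetting off-diagonal data. The claim $\omega_{SpR_\bd}\T\pi_\bd^*(\tilde\omega^{-1})=\bigotimes\cO(a_{i,j}Z_{i,j})$ with $a_{i,j}\ge0$ then amounts to checking that the corner-bundle contributions cancel exactly and that each surviving $a_{i,j}$ is a non-negative integer; the non-negativity should come out of the structure of the telescoping (each uncancelled divisor appears with coefficient $+1$, as in the complete-flag case), and in fact I expect the bound to be sharpened to $a_{i,j}\in\{0,1\}$. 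For the identification of the zero locus I would invoke Proposition~\ref{nonex}: the divisors $Z_{i,j}$ with $(i,j)$ in the list there are precisely the ones that appear in $B_\bd$-adjacent positions so that their coefficient cancels against the $\pi_\bd^*\tilde\omega$ factor, and all remaining $Z_{i,j}$ (exactly the exceptional ones) get coefficient $1$. I would also need the complementary input, analogous to the proof of Proposition~\ref{except}/\ref{nonex}, that for a non-exceptional $Z_{i,j}$ the map $\pi_\bd$ is birational onto its image, so that the coefficient there is forced to be $0$; and conversely that exceptional divisors must appear with strictly positive coefficient because the discrepancy is $\pi_\bd$-exceptional and effective.

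The main obstacle is the combinatorics of the telescoping over the non-rectangular poset $P_\bd$: unlike the complete-flag case of Theorem~\ref{term}, here the root set is the radical of a parabolic, so the ``staircase'' boundary along which the cancellation stops is more intricate, and one must carefully track which $\om_{i,j}$ survive and with what multiplicity as one sums $\cO(Z_{i,j})$ over all of $P_\bd$. Concretely, the delicate points are (a) the behaviour at the long-root corner $V_{i,2n-i}$, where Lemma~\ref{Zom} has the exceptional $(\om_{i-1,j}^*)^{\T2}$ term, which must be reconciled with the coefficient $2n+1-d_k-d_{k-1}$ appearing in $\tilde\omega^{-1}$; and (b) verifying that the residual coefficients $a_{i,j}$ are genuinely non-negative rather than merely integers — this I would handle by showing each $\cO(Z_{i,j})$ that does not cancel enters with coefficient exactly $+1$, mirroring the complete-flag computation, so that the discrepancy divisor is effective and supported precisely on the exceptional locus. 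Once the formula is established, rationality of the singularities of $\SF^a_\bd$ follows from Theorem~\ref{MR}-style arguments together with \cite{E}, Theorem~1, exactly as in the complete-flag subsection.
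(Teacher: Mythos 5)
Your overall strategy — combine Lemma~\ref{Oom} with the expressions for $\cO(Z_{i,j})$ from Lemma~\ref{Zom} and match against $\pi_\bd^*\tilde\omega^{-1}$ — is the same as the paper's, but the proposal contains a substantive misconception that would derail the argument if you tried to carry it out. You write that you ``expect the bound to be sharpened to $a_{i,j}\in\{0,1\}$'' and that ``each uncancelled divisor appears with coefficient $+1$, as in the complete-flag case.'' This is false in the parabolic setting. For instance, the paper's computation shows $b_{d_1,d_1}=d_2-1$, i.e.\ $a_{d_1,d_1}=d_2-2$, which is already $>1$ as soon as $d_2\ge 4$; more generally $b_{d_i,d_i}=d_{i+1}-d_{i-1}-1$ and $b_{d_k,d_k}=2n-d_k-d_{k-1}$, and the other $b_{i,j}$ are given by formulas such as $b_{i,j}=d_2-j+i-1$ for $d_1\le j\le d_2-1$, $1\le i\le d_1$. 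The complete-flag case ($d_i=i$ for all $i$) is special precisely because there all these expressions collapse to $1$. The ``telescoping'' intuition from Theorem~\ref{term} does not telescope to unit coefficients over the poset $P_\bd$; the coefficients genuinely grow with the gaps $d_{i+1}-d_i$.

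This matters because the proof cannot get away with a structural ``each surviving divisor enters with coefficient $+1$'' argument: one really must solve the linear system \eqref{bij} for the $b_{i,j}$ and then \emph{check by inspection of the explicit formulas} that $b_{i,j}\ge 1$, and that $b_{i,j}=1$ exactly for $(i,j)$ on the list in Proposition~\ref{nonex}. Without the explicit formulas, your proposal has no way to establish non-negativity — and the non-negativity is the whole point. Your final remark that ``exceptional divisors must appear with strictly positive coefficient because the discrepancy is $\pi_\bd$-exceptional and effective'' is also circular: effectiveness of the discrepancy is precisely what the theorem asserts, so it cannot be invoked as a premise. The one correct logical move there is the complementary direction — a non-exceptional divisor forces $a_{i,j}=0$ because $\pi_\bd$ is an isomorphism generically along it, so any nonzero coefficient would contradict the identity of line bundles on an open set of full codimension — but the converse (exceptional $\Rightarrow a_{i,j}>0$) must be read off from the computed $b_{i,j}$, not assumed.

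So: same framework, but the combinatorial core (the explicit recursive computation of the $b_{i,j}$ and the verification $b_{i,j}\ge 1$ with equality exactly on the non-exceptional list) is missing, and the intuition you offer in its place ($a_{i,j}\in\{0,1\}$, effectiveness-by-fiat) is wrong and circular respectively.
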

\begin{proof}
We use the formula from Lemma \ref{Zom}:
\begin{equation}\label{expr}
\cO(Z_{i,j})=
\begin{cases}
\om_{1,j}\T \om_{1,j+1}^*,\text{ if } i=1,\\
\om_{i,j}\T \om_{i-1,j}^*\T \om_{i,j+1}^*\T \om_{i-1.j+1}, \text{ if } i>1 \text{ and } i+j<2n,\\
\om_{i,j}\T (\om_{i-1,j}^*)^{\T 2}\T \om_{i-1.j+1}, \text{ if } i>1 \text{ and } i+j=2n.
\end{cases}
\end{equation}
Using Lemma \ref{Oom} we obtain
\[
\omega_{SpR_\bd}\T \pi_\bd^*(\tilde \omega^{-1})=
\bigotimes_{(i,j)\in P_\bd}\cO(-Z_{i,j})\T \bigotimes_{(i,j)\in B_\bd} \om_{i,j}^{-1}\T
\pi_\bd^*(\tilde \omega^{-1}).
\]
Our goal is to find strictly positive integers  $b_{i,j}=a_{i,j}+1$ such that
\begin{multline}\label{bij}
\bigotimes_{(i,j)\in B_\bd} \om_{i,j}^{-1}\T
\om_{d_1}^{\T d_2}\T \prod_{i=2}^{k-1} \om_{d_i}^{\T (d_{i+1}-d_{i-1})}\T
\om_{d_k}^{\T (2n+1-d_k-d_{k-1})}\\
 = \bigotimes_{(i,j)\in P_\bd}\cO(b_{i,j}Z_{i,j}).
\end{multline}
In addition, we want $b_{i,j}=1$ if and only if $Z_{i,j}$ is non-exceptional.

First, formula \eqref{bij} implies $b_{d_1,d_1}=d_2-1$, $b_{d_i,d_i}=d_{i+1}-d_{i-1}-1$ for $i=2,\dots,k-1$
and $b_{d_k,d_k}=2n-d_k-d_{k-1}$. Now formula \eqref{expr} gives unique way to determine other
$b_{i,j}$ in the following order:
\begin{gather*}
b_{d_1,d_1}, b_{d_1-1,d_1},\dots, b_{1,d_1},\\
b_{d_1,d_1+1}, b_{d_1-1,d_1+1},\dots, b_{1,d_1+1},\\
\dots\\
b_{d_1,d_2-1}, b_{d_1-1,d_2-1},\dots, b_{1,d_2-1},\\
b_{d_2,d_2}, b_{d_2-1,d_2},\dots, b_{1,d_2},\\
\dots\\
b_{d_2,d_3-1}, b_{d_2-1,d_3-1},\dots, b_{1,d_3-1},\\
\dots\\
\dots\\
b_{d_k,d_k}, b_{d_k-1,d_k},\dots, b_{1,d_k},\\
b_{d_k,d_k+1}, b_{d_k-1,d_k+1},\dots, b_{1,d_k+1},\\
\dots\\
b_{d_k,2n-d_k}, b_{d_k-1,2n-d_k},\dots, b_{1,2n-d_k},\\
b_{d_k-1,2n-d_k+1}, b_{d_k-2,2n-d_k+1},\dots, b_{1,2n-d_k+1},\\\
\dots\\
b_{2,2n-2}, b_{1,2n-2},\\
b_{1,2n-1}.
\end{gather*}
We now write down the values of the solutions $b_{i,j}$ (we assume $d_0=0$):

\smallskip
\noindent
Let $d_1\le j\le d_2-1$ and $1\le i\le d_1$. Then $$b_{i,j}=d_2-j+i-1.$$
\noindent
Let $d_{s-1}\le j\le d_s-1$ and $d_l+1\le i\le d_{l+1}$ for $0\le l<s<k$. Then
$$b_{i,j}=d_s-d_l-j+i-1.$$
\noindent
Let $d_k\le j< 2n-d_k$ and $d_l+1\le i\le d_{l+1}$ for $0\le l <k$. Then
$$b_{i,j}=2n-d_k-d_l-j+i.$$
\noindent
Let $2n-d_s\le j< 2n-d_{s-1}-1$ and $i=2n-j$ for $1\le s \le k$. Then
$$b_{i,j}=2n-j-d_{s-1}.$$
\noindent
Let $2n-d_s\le j\le  2n-d_{s-1}-1$ and $d_s-1\ge i\ge d_{s-1}+1$ for $1\le s \le k$. Then
$$b_{i,j}=2n-2d_{s-1}-j+i.$$
\noindent
Let $2n-d_s\le j\le  2n-d_{s-1}-1$ and $d_l\ge i\ge d_{l-1}+1$ for $1\le s \le k$, $s-1\ge l\ge 1$. Then
$$b_{i,j}=2n-d_{s-1}-d_{l-1}-j+i.$$

In particular, one sees that $b_{i,j}\ge 1$ and $b_{i,j}=1$ if and only if $(i,j)$ is from the
following list
\begin{gather*}
(1,d_i-1), i=2,\dots,k;\qquad (d_i+1,d_j-1), i\le j-2, j\le k;\\
 (1,2n-1);\qquad (d_i+1,2n-d_i-1), i=1,\dots,k-1.
\end{gather*}
Now Proposition \ref{nonex} gives the desired result.
\end{proof}

\begin{cor}
$\omega_{\SF^a_\bd}=\tilde \omega$.
\end{cor}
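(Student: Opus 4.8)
The plan is to copy the proof of Corollary \ref{KSF} from the complete-flag case, the only substitutions being that the model bundle $\bigotimes_{i=1}^n(\om_{i,i}^*)^{\T 2}$ is replaced by the line bundle $\tilde\omega^{-1}=\om_{d_1}^{\T d_2}\T\prod_{i=2}^{k-1}\om_{d_i}^{\T(d_{i+1}-d_{i-1})}\T\om_{d_k}^{\T(2n+1-d_k-d_{k-1})}$ and Theorem \ref{term} is replaced by the theorem just proved. Since $\SF^a_\bd$ is a normal locally complete intersection it is Gorenstein, so $\omega_{\SF^a_\bd}$ is an honest line bundle. Both $\omega_{\SF^a_\bd}$ and $\tilde\omega$ being line bundles (hence reflexive sheaves) on a normal variety, it suffices to produce an open subset $U\subseteq\SF^a_\bd$ whose complement has codimension at least two and on which $\omega_{\SF^a_\bd}$ and $\tilde\omega$ are isomorphic.

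First I would single out, as in the parabolic analogue of the ``smooth off codimension two'' lemma, the open subset $M\subset SpR_\bd$ obtained from the big cell $SpR_\bd^\circ$ by adjoining the generic parts $Z_{i,j}^\circ$ of all the \emph{non}-exceptional divisors listed in Proposition \ref{nonex}; equivalently $M$ is the complement in $SpR_\bd$ of the union of all exceptional divisors together with all pairwise intersections $Z_{i,j}\cap Z_{i',j'}$, so it is genuinely open. By the birationality of $\pi_\bd$ together with Proposition \ref{nonex}, $\pi_\bd$ restricts to an isomorphism of $M$ onto an open set $U:=\pi_\bd(M)\subseteq\SF^a_\bd$, and $\SF^a_\bd\setminus U$ is contained in the union of the images of the exceptional divisors and of the deeper boundary strata, hence has codimension at least two.

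Next I would restrict the identity $\omega_{SpR_\bd}=\pi_\bd^*(\tilde\omega)\T\bigotimes_{(i,j)\in P_\bd}\cO(a_{i,j}Z_{i,j})$ of the previous theorem to $M$. By that theorem $a_{i,j}=0$ whenever $Z_{i,j}$ is not exceptional, and by construction no exceptional divisor meets $M$; hence $\bigotimes_{(i,j)}\cO(a_{i,j}Z_{i,j})|_M\cong\cO_M$ and therefore $\omega_{SpR_\bd}|_M\cong\pi_\bd^*(\tilde\omega)|_M$. Since $M$ is open in the smooth variety $SpR_\bd$, its left-hand side is $\omega_M$; transporting along the isomorphism $\pi_\bd|_M\colon M\xrightarrow{\sim}U$ gives $\omega_{\SF^a_\bd}|_U=\omega_U\cong\tilde\omega|_U$. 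Because $\SF^a_\bd$ is normal and $\SF^a_\bd\setminus U$ has codimension at least two, two line bundles agreeing on $U$ agree on all of $\SF^a_\bd$, so $\omega_{\SF^a_\bd}\cong\tilde\omega$.

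The only step that is not purely formal is the claim that $\pi_\bd|_M$ is an isomorphism onto an open subset whose complement has codimension at least two, i.e.\ that every codimension-one subvariety of $\SF^a_\bd$ is dominated birationally by some non-exceptional $Z_{i,j}^\circ$ so that nothing of codimension one is lost in passing from $SpR_\bd$ to $M$ and from $M$ to $U$. This is the parabolic counterpart of Proposition \ref{except} and of the ``smooth off codimension two'' lemma, and the combinatorial list in Proposition \ref{nonex} is exactly what makes the bookkeeping go through; I do not expect a genuine obstacle here, only routine verification.
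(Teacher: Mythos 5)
Your proposal is correct and follows essentially the same route the paper takes: the paper's proof of this corollary is one line, "Similar to the proof of Corollary \ref{KSF}," and your argument is a faithful (and somewhat more explicit) parabolic transcription of that proof — restrict the discrepancy formula from the theorem just proved to the open set $M$ consisting of the big cell together with the generic loci of the non-exceptional divisors, observe that on $M$ the correction terms $\cO(a_{i,j}Z_{i,j})$ trivialize, and conclude by normality since the complement of $\pi_\bd(M)$ has codimension at least two. The one thing you flag as "routine verification" — that $\pi_\bd$ is one-to-one on $M$ and that the complement of $U$ has codimension $\ge 2$ — is precisely the content of the parabolic analogue of the injectivity part of Proposition \ref{except}, which the paper invokes implicitly via Proposition \ref{nonex}, so this matches the paper's reasoning.
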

\begin{proof}
Similar to the proof of Corollary \ref{KSF}.
\end{proof}

\begin{cor}
$\SF^a_\bd$ has terminal and thus canonical and rational singularities.
\end{cor}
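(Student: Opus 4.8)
The plan is to transcribe the argument used for the complete flags, now feeding in the discrepancy computation of the preceding theorem together with the identification $\omega_{\SF^a_\bd}=\tilde\omega$ established in the corollary just above. First I would combine these two inputs. The preceding theorem gives
\[
\omega_{SpR_\bd}=\pi_\bd^*(\tilde\omega)\T\bigotimes_{(i,j)\in P_\bd}\cO(a_{i,j}Z_{i,j}),\qquad a_{i,j}\ge 0,
\]
and since $\omega_{\SF^a_\bd}=\tilde\omega$ this reads $\omega_{SpR_\bd}=\pi_\bd^*\omega_{\SF^a_\bd}\T\bigotimes_{(i,j)\in P_\bd}\cO(a_{i,j}Z_{i,j})$. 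By the same theorem $a_{i,j}=0$ exactly for the pairs listed in Proposition \ref{nonex}, i.e.\ exactly for the non-exceptional divisors $Z_{i,j}$; hence the discrepancy divisor of the resolution $\pi_\bd:SpR_\bd\to\SF^a_\bd$ is supported precisely on the exceptional locus and has coefficient $a_{i,j}\ge 1$ along each exceptional divisor. Since $SpR_\bd$ is smooth, this is by definition the statement that $\SF^a_\bd$ has terminal singularities, and \emph{a fortiori} canonical ones.

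For rationality I would invoke Elkik's theorem as quoted in Section \ref{Definitions}: it suffices to check that $\SF^a_\bd$ is Gorenstein and that the canonical trace inclusion $(\pi_\bd)_*\omega_{SpR_\bd}\hookrightarrow\omega_{\SF^a_\bd}$ is an isomorphism. The Gorenstein property is already known, since $\SF^a_\bd$ is a normal locally complete intersection by the main theorem of Section \ref{Normal_l.c.i.}. For the isomorphism, I would apply the projection formula to the displayed identity:
\[
(\pi_\bd)_*\omega_{SpR_\bd}=\omega_{\SF^a_\bd}\T(\pi_\bd)_*\Bigl(\bigotimes_{(i,j)\in P_\bd}\cO(a_{i,j}Z_{i,j})\Bigr).
\]
Because every $a_{i,j}\ge 0$, the bundle $\bigotimes_{(i,j)\in P_\bd}\cO(a_{i,j}Z_{i,j})$ contains $\cO_{SpR_\bd}$, and since $\SF^a_\bd$ is normal and $\pi_\bd$ is proper birational with connected fibres one has $(\pi_\bd)_*\cO_{SpR_\bd}=\cO_{\SF^a_\bd}$; therefore $(\pi_\bd)_*\omega_{SpR_\bd}\supseteq\omega_{\SF^a_\bd}$. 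Combined with the always-available opposite inclusion this gives the required isomorphism, and Elkik's theorem then yields rational singularities.

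There is no genuine obstacle here beyond bookkeeping: the whole statement is a formal consequence of the preceding theorem, of the corollary $\omega_{\SF^a_\bd}=\tilde\omega$, and of the normal-l.c.i.\ theorem, assembled exactly as in the complete-flag case. The one point that deserves care is the equivalence ``$a_{i,j}=0$ if and only if $Z_{i,j}$ is non-exceptional'', matched against Proposition \ref{nonex}: it is precisely this that keeps every non-exceptional divisor out of the discrepancy and forces a strictly positive coefficient along each exceptional one, upgrading ``canonical'' to ``terminal''. The rest is the projection-formula manipulation already performed for $\SF^a_{2n}$.
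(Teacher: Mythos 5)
Your proof is correct and follows essentially the same route the paper takes for the complete‑flag case: read the discrepancy off the preceding theorem, use $\omega_{\SF^a_\bd}=\tilde\omega$ to recognize it as $\omega_{SpR_\bd}\T\pi_\bd^*\omega_{\SF^a_\bd}^{-1}$, note that the coefficients are strictly positive exactly on the exceptional divisors to get terminality, and then apply Elkik via the projection‑formula argument for rationality. The paper leaves the parabolic corollary without a written proof, but what you wrote is precisely the intended argument.
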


\section{Frobenius splitting, the BWB-type theorem and graded character formula}\label{Applications}
In this section we derive several applications of the results in the previous sections. 
\subsection{Frobenius splitting.}
The varieties $\SF^a_\bd$ and $SpR_\bd$ are  defined over any field ${\bf k}$
(simply replacing $\bc$ by ${\bf k}$ in all definitions).
\begin{thm}
The varieties $\SF^a_\bd$ and $SpR_\bd$ over $\overline{\mathbb F}_p$ 
are Frobenius split for all prime $p$.
\end{thm}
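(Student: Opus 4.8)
The plan is to deduce Frobenius splitting of $SpR_\bd$ from Theorem~\ref{MR}, and then to transfer the splitting down to $\SF^a_\bd$ via the resolution map using Proposition~$4$ of~\cite{MR} (quoted as the first of the two Mehta–Ramanathan propositions above). Concretely, $SpR_\bd$ is a smooth projective variety of dimension $M=\#P_\bd$, constructed as a tower of $\bP^1$-fibrations, and it carries the family of divisors $Z_{i,j}$, $(i,j)\in P_\bd$, coming from the sections $s_{i,j}$ of the successive fibrations. So the first step is to verify the two hypotheses of Theorem~\ref{MR}: (1) for every subset $I\subset P_\bd$ the intersection $\bigcap_{(i,j)\in I}Z_{i,j}$ is smooth of codimension $\#I$, and (2) there is a global section of $\omega_{SpR_\bd}^{-1}$ whose divisor of zeros is $\sum_{(i,j)\in P_\bd}Z_{i,j}+D$ for some effective $D$ not containing the point-stratum $\bigcap_{(i,j)}Z_{i,j}$.

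For step (1), I would argue stratum by stratum along the tower: at each stage $SpR_\bd(l)\to SpR_\bd(l-1)$ the divisor $Z_{\beta_l}$ is the image of a section of a $\bP^1$-bundle, hence is a smooth divisor mapping isomorphically to the base, and an intersection of several $Z$'s corresponds to fixing the fibre coordinate to the ``section value'' in several of the stages; since the tower structure makes these conditions independent, the intersection is again (an iterated $\bP^1$-bundle over a point, hence) smooth of the expected codimension. This is essentially the same verification as in the type~$\tt A$ case of~\cite{FF}, adapted to the symplectic list of roots $P_\bd$; the extra isotropy conditions $V_{i,2n-i}$ isotropic do not affect smoothness of the strata because, as shown in the proof of Lemma~\ref{P1}, the relevant fibres are still honest $\bP^1$'s. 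For step (2), the key input is the computation $\omega_{SpR_\bd}^{-1}=\bigotimes_{(i,j)\in P_\bd}\cO(Z_{i,j})\T\bigotimes_{(i,j)\in B_\bd}\om_{i,j}$ from Lemma~\ref{Oom}: the bundles $\om_{i,j}$ for $(i,j)\in B_\bd$ are pullbacks of very ample line bundles on the Grassmannians factors (or, after twisting, are globally generated), so they have nonzero sections whose vanishing loci $D'$ are effective divisors; choosing these sections generically — using that the torus-fixed point $\bigcap Z_{i,j}$ is a single reduced point and the $\om_{i,j}$ are basepoint-free there — one gets an effective $D$ with $\bigcap_{(i,j)}Z_{i,j}\notin\operatorname{supp}D$. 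Plugging this into Theorem~\ref{MR} yields that $SpR_\bd$ is Frobenius split, and in fact every intersection $Z_I$ is too (which is a bonus we do not strictly need here).

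Finally, to pass to $\SF^a_\bd$ I would invoke the second Mehta–Ramanathan proposition (Proposition~$2$ in the excerpt): if $f:Z\to X$ is proper with $f_*\cO_Z=\cO_X$ and $Z$ is Frobenius split then so is $X$. Here $f=\pi_\bd:SpR_\bd\to\SF^a_\bd$ is proper and birational onto a normal variety, and $\SF^a_\bd$ is normal by Theorem~\ref{lci} (and its parabolic analogue), so $(\pi_\bd)_*\cO_{SpR_\bd}=\cO_{\SF^a_\bd}$; hence $\SF^a_\bd$ inherits a Frobenius splitting from $SpR_\bd$. I expect the main obstacle to be step (2): exhibiting the anticanonical section with the precise zero divisor $\sum Z_{i,j}+D$ and checking $\bigcap Z_{i,j}\notin\operatorname{supp}D$, because one must control not only that the $\om_{i,j}$ are globally generated but that suitable sections can be chosen simultaneously to avoid the deepest stratum — this is where the explicit bookkeeping with the list $B_\bd$ and the tower of $\bP^1$'s does the real work, exactly in parallel with the argument in~\cite{FF} but with the symplectic modifications from Lemma~\ref{Zom}.
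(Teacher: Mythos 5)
Your proposal follows the paper's route exactly: combine the anticanonical formula of Lemma~\ref{Oom} with the base-point-freeness of $\bigotimes_{(i,j)\in B_\bd}\om_{i,j}$ to satisfy the Mehta--Ramanathan criterion (Theorem~\ref{MR}) on $SpR_\bd$, then descend the splitting to $\SF^a_\bd$ via $(\pi_\bd)_*\cO_{SpR_\bd}=\cO_{\SF^a_\bd}$ and the pushforward proposition. The paper's own proof is a single terse paragraph that neither spells out the transversality of the divisors $Z_{i,j}$ nor explicitly cites the pushforward step, so your more careful write-up simply makes explicit what the authors leave implicit; there is no gap.
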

\begin{proof}
Lemma \ref{Oom} says that
\[
\omega^{-1}_{SpR_\bd}=\bigotimes_{(i,j)\in P_\bd}\cO(Z_{i,j})\T \bigotimes_{(i,j)\in B_\bd} \om_{i,j}.
\]
Now since $\bigotimes_{(i,j)\in B_\bd} \om_{i,j}$ is base point free, the Frobenius splitting 
for $SpR_\bd$ and $\SF^a_\bd$ follows
from the Mehta-Ramanathan criterion (Proposition $8$ of \cite{MR}).
\end{proof}

\subsection{The BWB-type theorem}
Let $\cL_\la$ be a line bundle on $\SF^a_{2n}$ which is the pull back
of the line bundle $\cO(1)$ on $\bP(V_\la^a)$ for a dominant $\msp_{2n}$-weight $\la$.
We prove the analogue of the Borel-Weil-Bott theorem.
\begin{thm}
We have
\begin{gather*}
H^0(\SF^a_{2n},\cL_\la)^*\simeq H^0(SpR_{2n},\pi_{2n}^*\cL_\la)^*\simeq V_\la^a,\\
H^{>0}(\SF^a_{2n},\cL_\la)=H^{>0}(SpR_{2n},\pi_{2n}^*\cL_\la)=0.
\end{gather*}
\end{thm}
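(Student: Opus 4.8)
The plan is to descend the computation to the singular variety $\SF^a_{2n}$ and, when $\la$ is not regular, one further step to the parabolic variety $\SF^a_\bd$ with $\bd=\{i:(\la,\omega_i)\neq 0\}$, proving the higher vanishing by Frobenius splitting and the dimension count by a flat degeneration. The first step is to use that the singularities of $\SF^a_{2n}$ are rational: then $(\pi_{2n})_*\cO_{SpR_{2n}}=\cO_{\SF^a_{2n}}$ and $R^{>0}(\pi_{2n})_*\cO_{SpR_{2n}}=0$, so by the projection formula $H^k(SpR_{2n},\pi_{2n}^*\cL_\la)\simeq H^k(\SF^a_{2n},\cL_\la)$ ($\g^a$-equivariantly) for every $k$; likewise $H^0(SpR_{2n},\pi_{2n}^*\cL_\la)^*\simeq H^0(\SF^a_{2n},\cL_\la)^*$. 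It therefore suffices to prove $H^{>0}(\SF^a_{2n},\cL_\la)=0$ and $H^0(\SF^a_{2n},\cL_\la)^*\simeq V^a_\la$.

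For the vanishing, the morphism $\imath_\la$ factors as $\SF^a_{2n}\xrightarrow{\,p\,}\SF^a_\bd\xrightarrow{\,\imath^\bd_\la\,}\bP(V^a_\la)$, where $\imath^\bd_\la$ is a closed embedding: it is the restriction to the closed embedding $\SF^a_\bd\hk\prod_{i\in\bd}SpGr^a_i(2n)$ of a multi-Segre embedding, the latter available because of the maps \eqref{Cartan}. Hence $\cL_\la=p^*\cL^\bd_\la$ with $\cL^\bd_\la:=(\imath^\bd_\la)^*\cO(1)$ ample on $\SF^a_\bd$. Since $\SF^a_\bd$ is Frobenius split over $\overline{\mathbb F}_p$, Serre vanishing together with Proposition~1 of \cite{MR} yields $H^{>0}(\SF^a_\bd,\cL^\bd_\la)=0$ over $\overline{\mathbb F}_p$, and a standard semicontinuity argument carries this to characteristic zero. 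To move the vanishing up to $\SF^a_{2n}$ I would observe that $p\circ\pi_{2n}$ coincides with the composite $SpR_{2n}\to SpR_\bd\xrightarrow{\,\pi_\bd\,}\SF^a_\bd$, whose first arrow is a tower of $\bP^1$-fibrations (so $\cO$ pushes forward to $\cO$ with no higher direct images) and whose second arrow has the same property by the rationality of the singularities of $\SF^a_\bd$; combined with $(\pi_{2n})_*\cO=\cO$, $R^{>0}(\pi_{2n})_*\cO=0$ this forces $p_*\cO_{\SF^a_{2n}}=\cO_{\SF^a_\bd}$ and $R^{>0}p_*\cO_{\SF^a_{2n}}=0$, hence $H^{>0}(\SF^a_{2n},\cL_\la)\simeq H^{>0}(\SF^a_\bd,\cL^\bd_\la)=0$. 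When $\la$ is regular, $\bd=(1,\dots,n)$ and $p=\mathrm{id}$, so this reduces to Frobenius splitting plus Serre vanishing on $\SF^a_{2n}$ itself.

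For the zero cohomology I would first observe that the affine cone over the orbit $G^a\cdot[v_\la]$ spans $V^a_\la$ (it contains $(N^-)^a\cdot v_\la$, and $V^a_\la=S^\bullet(\fn^-)v_\la$), so $\SF^a_{2n}$ is linearly nondegenerate in $\bP(V^a_\la)$ and $\imath_\la^*:(V^a_\la)^*=H^0(\bP(V^a_\la),\cO(1))\to H^0(\SF^a_{2n},\cL_\la)$ is an injective $G^a$-morphism; it then remains to match dimensions. The flat degeneration of $Sp_{2n}/B$ into $\SF^a_{2n}$ lives inside $\bC\times\prod_{i=1}^{n}Gr_i(\bC^{2n})$, and $\cL_\la$ is cut out on it by $\cO(m_1,\dots,m_n)$, so $\cL_\la$ extends to a flat family of line bundles; therefore $\chi(\SF^a_{2n},\cL_\la)=\chi(Sp_{2n}/B,\cL_\la)=\dim V_\la$ by the classical Borel--Weil--Bott theorem (see e.g.~\cite{K}), and $\dim V_\la=\dim V^a_\la$ because $V^a_\la$ is the associated graded of $V_\la$. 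Combined with the vanishing already established this gives $\dim H^0(\SF^a_{2n},\cL_\la)=\dim V^a_\la$, so $\imath_\la^*$ is an isomorphism; dualizing and using the first step once more gives $H^0(\SF^a_{2n},\cL_\la)^*\simeq H^0(SpR_{2n},\pi_{2n}^*\cL_\la)^*\simeq V^a_\la$.

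The main difficulty lies in the higher vanishing, and within it in the identity $R^{>0}p_*\cO_{\SF^a_{2n}}=0$: once the question is reduced to an ample line bundle on a Frobenius split variety the argument is routine, but that reduction genuinely requires the $\bP^1$-tower structure of $SpR_\bd$, the rationality of the singularities of $\SF^a_\bd$, and the fact that the two resolutions $SpR_{2n}\to\SF^a_{2n}\to\SF^a_\bd$ and $SpR_{2n}\to SpR_\bd\to\SF^a_\bd$ agree. Checking that these two towers of $\bP^1$-fibrations induce the same map to $\SF^a_\bd$ is the combinatorial heart of this step.
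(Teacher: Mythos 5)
Your proof follows essentially the same route as the paper's: use rational singularities to equate cohomology on $SpR_{2n}$ and $\SF^a_{2n}$; get the higher vanishing by factoring through $\SF^a_\bd$, applying Frobenius splitting plus Serre vanishing there, and climbing back up the commutative square $SpR_{2n}\to SpR_\bd\to\SF^a_\bd$ versus $SpR_{2n}\to\SF^a_{2n}\to\SF^a_\bd$; then prove injectivity of the restriction map by linear nondegeneracy of the orbit and finish with a flat-degeneration dimension count. The two minor stylistic deviations you make — packaging the chain of isomorphisms as $R^{>0}p_*\cO_{\SF^a_{2n}}=0$ via two applications of the Grothendieck spectral sequence, and making the char.~$p$ to char.~$0$ semicontinuity step explicit — are both correct and arguably cleaner than the paper's implicit treatment, but they are not a different method. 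One small point: the identity $p\circ\pi_{2n}=\pi_\bd\circ\eta$, which you flag as the combinatorial heart, is in fact immediate from the definitions (all four maps just forget or restrict the collections $V_{i,j}$), so there is no genuine verification hiding there.
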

\begin{proof}
First, we note that since $\SF^a_{2n}$ has rational singularities,
we have the equalities
\[\
H^k(\SF^a_{2n},\cL_\la)\simeq H^k(SpR_{2n},\pi_{2n}^*\cL_\la)
\]
for all $k\ge 0$.

Second, we prove that all non-zero cohomology $H^k(\SF^a_{2n},\cL_\la)$ vanish. In fact, first assume
$\la$ is regular. Then since the map  $\SF^a_{2n}\to \bP(V_\la^a)$ is an embedding, the line bundle
$\cL_\la$ is very ample. Therefore, for any $k$ and big enough $N$ one has
$H^k(\SF_{2n}^a,\cL_\la^{\T N})=0$. This implies $H^k(\SF_{2n}^a,\cL_\la)=0$, because
$\SF^a_{2n}$ is Frobenius split over $\overline{\mathbb F}_p$ for any $p$.

Now consider a non regular $\la$. Let $\SF^a_\bd$ be the corresponding
degenerate parabolic
flag variety, which is embedded into $\bP(V_\la^a)$. Then we have the following
commutative diagram of projections:
$$
\begin{picture}(100,110)

\put(0,10){$\SF_{2n}^a$}
\put(90,10){$\SF^a_\bd$}
\put(0,90){$SpR_{2n}$}
\put(90,90){$SpR_\bd$}

\put(28,13){\vector(1,0){60}}
\put(28,93){\vector(1,0){60}}
\put(15,87){\vector(0,-1){62}}
\put(98,87){\vector(0,-1){60}}

\put(55,14){$\phi$}
\put(55,95){$\eta$}
\put(0,56){$\pi_{2n}$}
\put(103,56){$\pi_\bd$}
\end{picture}
$$

Let $\cL'_\la$ be a line bundle on $\SF^a_\bd$  which is the pull back of the bundle $\cO(1)$
on $\bP(V_\la^a)$. Then $\cL_\la=\phi^*\cL'_\la$.
Since $\cL'_\la$ is very ample, and $\SF^a_\bd$ is Frobenius split over
$\overline{\mathbb F}_p$ for any $p$, $H^k(\SF^a_\bd,\cL'_\la)=0$
(for positive $k$). Since $\SF^a_\bd$ has rational singularities,
$H^k(SpR_\bd,\pi_\bd^*\cL'_\la)=H^k(\SF^a_\bd,\cL'_\la)(=0$ for positive $k$).
Now since $\eta$ is a fibration with the fibers being towers of successive
$\bP^1$-fibrations, we obtain $H^k(SpR_{2n},\eta^*\pi_\bd^*\cL'_\la)=
H^k(SpR_\bd,\pi_\bd^*\cL'_\la)(=0$ for positive $k$). Finally, since $\SF_{2n}^a$
has rational singularities,
and $\eta^*\pi_\bd^*\cL'_\la=\pi_{2n}^*\cL_\la$, we arrive at 
$$H^k(\SF_{2n}^a,\cL_\la)=
H^k(SpR_{2n},\pi_{2n}^*\cL_\la)=H^k(R_{2n},\eta^*\pi_\bd^*{\cL'}_\la),$$ 
which vanishes for $k>0$.

Third, we note that there exists an embedding $(V_\la^a)^*\hk H^0(\SF^a_{2n},\cL_\la)$.
In fact take an element $v\in (V_\la^a)^*\simeq H^0(\bP(V_\la^a),\cO(1))$.
Then restricting to the embedded variety $\SF^a_{2n}$ we obtain a section of $\cL_\la$. Assume that
it is zero. Then $v$ vanishes on the open cell $(N^-)^a\cdot \bC v_\la$. But
the linear span of the elements of this cell coincides with the whole representation $V_\la^a$.
Therefore, the restriction map  $(V_\la^a)^*\to H^0(\SF^a_{2n},\cL_\la)$ is an embedding.

Finally, we recall that the varieties $\SF_{2n}^a$ are flat degenerations of the classical flag
varieties. Since the higher cohomology of $\cL_\la$ vanish, we arrive at the equality of the dimensions
of $H^0(\SF_{2n}^a,\cL_\la)$ and of $V_\la$.
Therefore, the embedding  $(V_\la^a)^*\to H^0(\SF^a_{2n},\cL_\la)$ is an isomorphism.
\end{proof}

Similarly one proves a parabolic version of the BWB-type theorem:
\begin{thm}
Let $\la$ be a $\bd$-dominant weight, i.e. $(\la,\omega_d)>0$ implies $d\in\bd$.
Then there exists a map $\imath_\la:\SF^a_\bd\to\bP(V_\la^a)$.
We have
$$
H^0(\SF^a_\bd,\imath_\la^*\cO(1))^*\simeq V_\la^a,\
H^{>0}(\SF^a_\bd,\imath_\la^*\cO(1))=0.
$$
\end{thm}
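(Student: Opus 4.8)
The plan is to run the proof of the preceding theorem almost verbatim, reducing a general $\bd$-dominant weight to the very ample case by an auxiliary projection of degenerate flag varieties. Write $\bd'\subseteq\bd$ for the support of $\la$, so that $\la$ is regular for the sub-datum $\bd'$. First I would construct $\imath_\la$ as the composite of the coordinate projection $\phi\colon\SF^a_\bd\to\SF^a_{\bd'}$ with the tautological closed embedding $\iota\colon\SF^a_{\bd'}\hk\bP(V_\la^a)$ of the degenerate parabolic flag variety attached to $\la$; here $\phi$ is induced by $\prod_{i=1}^k SpGr^a_{d_i}(2n)\to\prod_{d_j\in\bd'}SpGr^a_{d_j}(2n)$, and composing the relations $pr_{d_i+1}\cdots pr_{d_{i+1}}V_{d_i}\subset V_{d_{i+1}}$ over the $\bd$-indices lying between two consecutive $\bd'$-indices shows that the image of $\phi$ lies in $\SF^a_{\bd'}$. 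I would also use the matching forgetful morphism $q\colon SpR_\bd\to SpR_{\bd'}$ that drops the coordinates $V_{i,j}$ with $(i,j)\in P_\bd\setminus P_{\bd'}$; exactly as in the proof that $SpR_\bd$ is a tower of $\bP^1$-fibrations, $q$ is itself such a tower, and $\pi_{\bd'}\circ q=\phi\circ\pi_\bd$ by construction.

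Set $\cL_\la:=\imath_\la^*\cO(1)$ and $\cL'_\la:=\iota^*\cO(1)$, so that $\cL_\la=\phi^*\cL'_\la$ and $\cL'_\la$ is very ample on $\SF^a_{\bd'}$. For the vanishing of higher cohomology I would argue as in the complete-flag case: since $\SF^a_{\bd'}$ is Frobenius split over $\overline{\mathbb F}_p$ and $\cL'_\la$ is ample, Serre vanishing together with the splitting criterion (Proposition~1 of \cite{MR}) give $H^{>0}(\SF^a_{\bd'},\cL'_\la)=0$; rationality of the singularities of $\SF^a_{\bd'}$ then gives $H^{>0}(SpR_{\bd'},\pi_{\bd'}^*\cL'_\la)=0$; since $q$ is a tower of $\bP^1$-fibrations, $H^k(SpR_\bd,q^*\pi_{\bd'}^*\cL'_\la)\simeq H^k(SpR_{\bd'},\pi_{\bd'}^*\cL'_\la)$, which vanishes for $k>0$; and as $q^*\pi_{\bd'}^*\cL'_\la=\pi_\bd^*\phi^*\cL'_\la=\pi_\bd^*\cL_\la$, rationality of the singularities of $\SF^a_\bd$ finally yields $H^{>0}(\SF^a_\bd,\cL_\la)\simeq H^{>0}(SpR_\bd,\pi_\bd^*\cL_\la)=0$. (When $\bd'=\bd$ the map $\phi$ is the identity and this is literally the very ample case.)

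For the zeroth cohomology I would first exhibit an injection $(V_\la^a)^*=H^0(\bP(V_\la^a),\cO(1))\hk H^0(\SF^a_\bd,\cL_\la)$ by restriction of linear forms: a form vanishing on $\imath_\la(\SF^a_\bd)$ vanishes in particular on $G^a\cdot\bC v_\la$, whose linear span is all of $V_\la^a=S^\bullet(\fn^-)\,v_\la$, hence it is zero. To see that this injection is onto, I would use that $\SF^a_\bd$ is a flat degeneration of the classical $\SF_\bd$ (Section~\ref{Explicit}) and that $\cL_\la$ fits into this family, being cut out by $\cO(1)$'s on the ambient product of Grassmannians; the constancy of the Euler characteristic, the vanishing just established on the special fibre, and the classical Borel--Weil theorem on the general fibre together give $\dim H^0(\SF^a_\bd,\cL_\la)=\dim H^0(\SF_\bd,\cL_\la)=\dim V_\la=\dim V_\la^a$, so the injection is an isomorphism. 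This proves both assertions.

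Every step is formally the same as in the complete-flag theorem, so I do not expect any new geometric input; the one point that requires genuine, though routine, care is the reduction step — checking that $\phi$ is a well-defined morphism with image in $\SF^a_{\bd'}$, that the induced $q$ is a tower of $\bP^1$-fibrations making the square with $\pi_\bd$ and $\pi_{\bd'}$ commute, and that $\cL_\la$ extends over the flat-degeneration family to the expected classical bundle. I expect this bookkeeping to be the main obstacle.
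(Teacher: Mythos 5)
Your proposal is correct and reproduces essentially the argument the paper intends: the paper states the parabolic version with the words ``Similarly one proves a parabolic version of the BWB-type theorem'' and gives no separate proof, and your adaptation — replacing the paper's projection $\SF^a_{2n}\to\SF^a_\bd$ and $SpR_{2n}\to SpR_\bd$ by the analogous $\phi\colon\SF^a_\bd\to\SF^a_{\bd'}$ and $q\colon SpR_\bd\to SpR_{\bd'}$ attached to the support $\bd'\subseteq\bd$ of $\la$, then running Frobenius splitting plus Serre vanishing, rationality of singularities, the $\bP^1$-tower Leray argument, the linear-span injectivity step, and the flat degeneration count — is exactly the expected instantiation of that ``similarly.''
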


\subsection{The $q$-character formula}
We now compute the $q$-character (PBW-graded character) of the modules $V_\la^a$
(for combinatorial formula see Theorem \ref{basis}).
For this we use the Atiyah-Bott-Lefschetz fixed points formula applied to the variety $SpR_{2n}$.
We first describe the fixed points explicitly.
\begin{lem}
The $T$-fixed points on $SpR_{2n}$ are labeled by the collections $\bs=(S_{i,j})$, $1\le i\le j< 2n$, 
$i+j\le 2n$, where $S_{i,j}$ are subsets of $\{1,\dots,2n\}$ satisfying the following properties:
\begin{itemize}
\item $S_{i,j}\subset \{1,\dots,i,j+1,\dots,n\}$,\  $\# S_{i,j}=i$,
\item $S_{i,j}\subset S_{i+1,j}\subset S_{i+1,j+1}\cup\{j+1\}$,
\item For any $i=1,\dots,n$ if $k\in S_{i,2n-i}$. then $2n+1-i\notin S_{i,2n-i}$. 
\end{itemize}
\end{lem}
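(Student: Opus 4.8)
The plan is to reduce the statement to the standard description of torus-fixed points on a Grassmannian. First I would check that $T\subset Sp_{2n}$ genuinely acts on $SpR_{2n}$: every coordinate subspace $W_{i,j}$ is $T$-stable, each projection $pr_{j+1}$ commutes with the diagonal torus, and isotropy of $V_{i,2n-i}$ is preserved since $T\subset Sp_{2n}$. Moreover the embedding $SpR_{2n}\hookrightarrow\prod Gr_i(W_{i,j})$ is $T$-equivariant, so a point $\bV=(V_{i,j})$ is $T$-fixed if and only if each $V_{i,j}$ is a $T$-fixed point of $Gr_i(W_{i,j})$.

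Next I would record that $T\cong(\bC^*)^n$ acts on the line $\bC w_k$ through a character $\chi_k$, and that the characters $\chi_1,\dots,\chi_{2n}$ — namely $t_1,\dots,t_n,t_n^{-1},\dots,t_1^{-1}$ — are pairwise distinct. Consequently the $T$-fixed points of $Gr_i(W)$ are exactly the coordinate subspaces $W_S:=\spa(w_k\mid k\in S)$ for $S\subset\{1,\dots,2n\}$ with $\#S=i$. Thus for a $T$-fixed $\bV$ we may write $V_{i,j}=W_{S_{i,j}}$ for unique subsets $S_{i,j}$, and $\bV\mapsto(S_{i,j})$ is injective; it remains to translate the defining relations \eqref{1},\eqref{2},\eqref{3} of $SpR_{2n}$ into conditions on the $S_{i,j}$ and to check the resulting list is the one in the statement (and conversely).

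Finally I would carry out the elementary dictionary, relation by relation. The conditions $\dim V_{i,j}=i$ and $V_{i,j}\subset W_{i,j}$ give $\#S_{i,j}=i$ and $S_{i,j}\subset\{1,\dots,i\}\cup\{j+1,\dots,2n\}$; the inclusion $V_{i,j}\subset V_{i+1,j}$ gives $S_{i,j}\subset S_{i+1,j}$; since $pr_{j+1}W_S=W_{S\setminus\{j+1\}}$, the condition $pr_{j+1}V_{i,j}\subset V_{i,j+1}$ gives $S_{i,j}\subset S_{i,j+1}\cup\{j+1\}$; and, using $\langle w_k,w_{2n+1-k}\rangle=1$ with all other basic pairings zero, the coordinate subspace $W_{S_{i,2n-i}}$ is isotropic exactly when $S_{i,2n-i}$ contains no pair $\{k,2n+1-k\}$. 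Re-packaging the two inclusion relations (using that $S_{i,j}\subset S_{i+1,j}$ and $S_{i+1,j}\subset S_{i+1,j+1}\cup\{j+1\}$ hold for all admissible indices) and reading off the isotropy condition yields precisely the three displayed bullet points, and conversely any collection $(S_{i,j})$ obeying them produces a point of $SpR_{2n}$ fixed by $T$. I do not expect a serious obstacle here: the only genuinely type-$\mathtt C$ input is the fact that the characters $\chi_k$ are pairwise distinct, so that the symplectic resolution has the same fixed-point combinatorics as the type-$\mathtt A$ one; the remaining work is bookkeeping with the index ranges to see that the packaged chain of inclusions is equivalent to the separate relations \eqref{2}.
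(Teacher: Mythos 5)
Your proposal is correct and follows essentially the same route as the paper, whose proof is just the two-sentence observation that a $T$-fixed point of $SpR_{2n}$ must have each $V_{i,j}$ equal to a coordinate subspace $\spa(w_l : l\in S_{i,j})$, with the translation of the defining relations \eqref{1}, \eqref{2}, \eqref{3} left implicit. You make this translation explicit (distinctness of the torus characters $\chi_1,\dots,\chi_{2n}$, the dictionary for each relation, $pr_{j+1}W_S=W_{S\setminus\{j+1\}}$, isotropy of a coordinate subspace), which is exactly what the paper glosses over; in doing so you correctly read $\{1,\dots,i,j+1,\dots,2n\}$ for the statement's $\{1,\dots,i,j+1,\dots,n\}$ and the pairing $\{k,2n+1-k\}$ for its ``$2n+1-i$'', which are evidently typographical slips, and you flag the only real care point (that the chained inclusion $S_{i,j}\subset S_{i+1,j}\subset S_{i+1,j+1}\cup\{j+1\}$, taken over the admissible index range, encodes both separate relations of \eqref{2}). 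No gap.
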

\begin{proof}
Obviously, a collection $\bV\in SpR_{2n}$ is a $T$-fixed point if and only if each $V_{i,j}$ is
the linear span of some basis vectors $w_l$. Now each collection $\bS$ as above determines $\bV$ 
by the formula $V_{i,j}=\spa (w_l:\ l\in S_{i,j})$.
\end{proof}
We call a collection $\bS$ satisfying the conditions as above admissible. For an admissible
$\bS$ let $p(\bS)\in SpR_{2n}$ be the corresponding fixed point and let 
$p(S_{i,j})\in Gr_i(W_{i,j})$ be its $(i,j)$-th component.  

Recall the extended degenerate group $G^a\rtimes \bC^*$.
\begin{lem}
The action of the group $G^a$ and its extension $G^a\rtimes \bC^*$ 
on $\SF^a_{2n}$ lifts to $SpR_{2n}$. 
\end{lem}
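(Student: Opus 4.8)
The plan is to exhibit the lift explicitly using the natural action of $G^a\rtimes\bC^*$ on the tower of projective spaces $\bP(V^a_{\omega_{i}})$, $i=1,\dots,n$, and to check that this action preserves the incidence conditions \eqref{1}, \eqref{2}, \eqref{3} defining $SpR_{2n}$ inside $\prod Gr_i(W_{i,j})$. First I would recall that $G^a$ acts on each fundamental module $V^a_{\omega_i}$, hence on $\bP(V^a_{\omega_i})$; since $G^a=B\ltimes(N^-)^a$, it suffices to describe the action of $B$ and of the abelian unipotent part $(N^-)^a$ separately, and to handle the grading operator $d$ generating the extra $\bC^*$. The key point is that the spaces $V_{i,j}$ are not arbitrary subspaces of $W$ but sit inside $W_{i,j}=\spa(w_1,\dots,w_i,w_{j+1},\dots,w_{2n})$, which by \eqref{matrix} and Proposition~\ref{CgleichV} is exactly the subspace on which the degenerate action of the relevant root vectors $f_\alpha$ is ``visible'': recall that in $\mathrm{End}\,V^a_{\lambda,\msl_{2n}}$ the operator $f_{\al_{i,j}}$ with $j\ge n$, $i+j<2n$ acts as $F_{i,j}+F_{2n-j,2n-i}$, while the others act as single matrix units $F_{\varphi(\al)}$.

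Second, I would carry out the verification condition by condition. For $\bV=(V_{i,j})\in SpR_{2n}$ and $g\in G^a\rtimes\bC^*$ I would set $(g\cdot\bV)_{i,j}:=g_i(V_{i,j})$, where $g_i$ denotes the action of $g$ on $W$ in the $i$-th ``slot'' induced from the action on $\Lambda^i W\supset V_{\omega_i}$; concretely, because the abelian part $(N^-)^a$ acts on $V^a_{\omega_i}\hookrightarrow\Lambda^i W$ through exponentials of matrices of the shape \eqref{liealg}, and $B$ acts through genuine symplectic transformations, each $g_i$ is induced by a linear (though $i$-dependent) endomorphism of $W$. The conditions $\dim V_{i,j}=i$ and $V_{i,j}\subset W_{i,j}$ are preserved because the degenerate action of the upper-triangular $B$ and of the strictly-lower-triangular abelian radical sends $W_{i,j}$ into itself (this is precisely the content of formulas \eqref{liealg}, \eqref{liealg3}); the nesting $V_{i,j}\subset V_{i+1,j}$ and $pr_{j+1}V_{i,j}\subset V_{i,j+1}$ are preserved because the various $g_i$ are compatible with the projections $pr_k$ and with the inclusions $\Lambda^i W\hookrightarrow\Lambda^{i+1}W$, by construction of the degenerate flag-variety action in type $\tt A$ (this part is literally the $SL_{2n}$ case, cf.\ the description of $R_{2n}$ in Section~\ref{Definitions}); and the isotropy of $V_{i,2n-i}$ is preserved because the $B$-part acts by symplectic automorphisms while the extra relation imposed by the abelian radical (the symmetry $C^{nt}=C$ in \eqref{liealg}) is exactly what keeps the image isotropic, as already used in the proof of Proposition~\ref{ZKsympgrass}.

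Third, for the extra torus factor $\bC^*$ generated by $d$ I would simply note that $d$ acts on $V^a_{\omega_i}(s)$ as the scalar $s$, hence acts on $\bP(V^a_{\omega_i})$ through the induced $\bC^*$-action, which manifestly preserves every $Gr_i(W_{i,j})$ and all the incidence conditions (it rescales coordinates compatibly with the $\bZ_{\ge0}$-grading). Finally I would check compatibility with $\pi_{2n}$: since $\pi_{2n}$ just forgets the off-diagonal entries and the above $G^a\rtimes\bC^*$-action on $SpR_{2n}$ restricts on the diagonal slots to the given action on $\SF^a_{2n}\subset\prod_{i=1}^n SpGr^a_i(2n)$, the map $\pi_{2n}$ is automatically equivariant.

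The main obstacle I expect is purely bookkeeping: one must verify that the ``slot-by-slot'' endomorphisms $g_i$ of $W$ induced from the degenerate action on $\Lambda^i W$ are mutually compatible (i.e.\ that applying $g$ and then $pr_{j+1}$ agrees with applying $pr_{j+1}$ and then $g$, and similarly for the inclusions $\Lambda^i W\hookrightarrow\Lambda^{i+1}W$), so that the formula $(g\cdot\bV)_{i,j}=g_i(V_{i,j})$ really lands back in $SpR_{2n}$. This compatibility is exactly the statement that the degenerate action on $\prod_i\bP(V^a_{\omega_i})$ respects the Plücker-type incidences, which is known in type $\tt A$ (Section~\ref{Definitions}) and which for type $\tt C$ requires only the additional observation, recorded in Proposition~\ref{CgleichV}, that the $\msp_{2n}$ degenerate action is induced from the $\msl_{2n}$ one; the isotropy condition then comes for free from the symmetry of the block $C$ in \eqref{liealg}. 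Once this is set up, all the individual checks are routine.
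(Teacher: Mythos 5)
Your proposal takes a genuinely different route from the paper, and as written it has a gap. The paper's proof is two lines: it invokes the embeddings $\SF^a_{2n}\hookrightarrow\Fl^a_{2n}$ and $SpR_{2n}\hookrightarrow R_{2n}$ as $\sigma$-fixed loci (from the proposition surrounding equation \eqref{barsigma}), and then quotes the $SL_{2n}$ analogue of the lemma from \cite{FF}; since $Sp^a_{2n}\subset SL^a_{2n}$ and its action commutes with $\sigma$, the lifted action on $R_{2n}$ restricts to $SpR_{2n}$. This reduction sidesteps all the slot-by-slot bookkeeping you set out to do.

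The gap in your direct approach is at its very foundation: the formula $(g\cdot\bV)_{i,j}:=g_i(V_{i,j})$ presupposes that, for each $i$, the $G^a$-action on $V^a_{\omega_i}\subset\Lambda^iW$ is induced from a single linear action on $W$, and that this action preserves each $W_{i,j}$. Neither holds. The PBW filtration on $V_{\omega_i}$ is \emph{not} the $i$-th exterior power of the PBW filtration on $W=V_{\omega_1}$ (already for $\msl_3$, $(\Lambda^2W)^a$ has PBW degrees $0,1$ while $\Lambda^2(W^a)$ would have degrees $0,1,2$), so there is no well-defined ``$g_i\in\mathrm{End}(W)$'' inducing the degenerate action on $V^a_{\omega_i}$. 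Moreover, even the action of the Borel $B$ on $W^a=W/\bC w_1\oplus\bC w_1$ fails to preserve $W_{1,j}=\mathrm{span}(w_1,w_{j+1},\dots,w_{2n})$ for $1<j<2n-1$: an upper-triangular $b$ sends $w_{j+1}$ into $\mathrm{span}(w_2,\dots,w_{j+1})$, which leaves $W_{1,j}$. So the proposed action does not land back in $\prod Gr_i(W_{i,j})$, let alone in $SpR_{2n}$. A correct direct construction is possible (it is what \cite{FF} does in type $\mathtt A$), but it cannot be the naive ``apply $g$ componentwise through a linear action on $W$''. The paper's move --- identify $SpR_{2n}$ with $R_{2n}^\sigma$ and inherit the action from the type $\mathtt A$ lift --- is both shorter and avoids this pitfall entirely, at the cost of relying on the external reference \cite{FF}.
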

\begin{proof}
Recall the embeddings $\SF^a_{2n}\hk \Fl^a_{2n}$ and $SpR_{2n}\hk R_{2n}$ (see Proposition \ref{sigma}).
Since the analogue of our Lemma for $SL_{2n}$ holds (see \cite{FF}), we obtain the desired
result for $Sp_{2n}$ as well.   
\end{proof}

In order to state the theorem we prepare some notations. Let $\bC[e^{\omega_1},\dots,e^{\omega_n},e^d]$
be the group algebra of the weight lattice of the extended Lie algebra $\g^a\oplus\bC d$. We sometimes
use the notations $z_i=e^{\omega_i}$, $q=e^d$. For an element $\mu=md+\sum_{i=1}^n m_i\omega_i$ 
we write $e^\mu=q^m\prod_{i=1}^n z_i^{m_i}$. Also for a homogeneous vector $v\in V_\la^a$ we denote 
by $\operatorname{wt}_q (v)$ the extended weight of $v$. 

Recall the Atiyah-Bott-Lefschetz formula (see \cite{AB}, \cite{T}): let $X$ be a smooth projective algebraic
$M$-dimensional variety and let $\cL$ be a line bundle on $X$. Let $T$ be an algebraic torus acting on $X$ with a finite
set $F$ of fixed points. Assume further that $\cL$ is $T$-equivariant.
Then for each $p\in F$ the fiber $\cL_p$ is $T$-stable. We note also that since $p\in F$, the tangent space
$T_pX$ carries a natural $T$-action. Let $\gamma_1^p,\dots,\gamma_M^p$ be the
weights of the eigenvectors of $T$-action on $T_pX$.
Then the Atiyah-Bott-Lefschetz formula gives the following expression
for the character of the Euler characteristics:
\begin{equation}\label{ABL}
\sum_{k\ge 0} (-1)^k \ch H^k(X,\cL)=\sum_{p\in F} \frac{\ch \cL_p}{\prod_{l=1}^M (1-e^{-\gamma_l^p})}.
\end{equation}
We apply this formula for $X=SpR_{2n}$, $\cL=\pi_{2n}^* \cL_\la$ with the action of the extended 
torus $T\cdot \bC^*$. 
Since $H^{>0}(R_n,\pi_n^*\cL_\la)=0$, the Euler
characteristics coincides with the character of the zeroth cohomology, i.e. with the character of
$(V_\la^a)^*$. Therefore, for each admissible $\bS$ we need to compute the character of 
$\pi_{2n}^* \cL_\la$ at $p(\bs)$ and the eigenvalues of the torus action in $T_{p(\bs)}SpR_{2n}$.

Let $\imath_\la:\SF^a_{2n}\to \bP(V_\la^a)$ be the standard map (which is an embedding for regular $\la$).
Then $\ch (\pi_{2n}^* \cL_\la)_{p(\bs)}=e^{-\operatorname{wt}_q(\imath_\la p(\bs))}$ 
(the minus sign comes from the fact that
a fiber of $\cO(1)$ is a dual line). We note that the weight of $\imath_\la p(\bs)$ depends only on
the diagonal entries $S_{i,i}$.

Now let us compute the eigenvalues of the tangent action of the torus at a point $p_\bs$. 
For each pair $(i,j)$, $1\le i\le j< 2n$, $i+j\le 2n$ define a collection $S'_{i,j}$ as follows. 

First, let $i+j<2n$. Given the sets $S_{i-1,j}$ and $S_{i,j+1}$,
let us look at the possible values of $S_{i,j}$ keeping $\bS$ admissible. 
We denote such a possible collections by $\bar S_{i,j}$ in order to distinguish it from the already
fixed component $S_{i,j}$.
The definition of 
admissibility says that there exist exactly two variants for $\bar S_{i,j}$,
namely
\[
\bar S_{i,j}=S_{i-1,j}\cup \{a\} \text{ or } \bar S_{i,j}=S_{i-1,j}\cup \{b\},
\]
where $\{a,b\}=S_{i,j+1}\cup\{j+1\}\setminus S_{i-1,j}$. Given a collection $\bS$ we denote
the numbers $a,b$ as above by $a^\bs_{i,j}$ and $b^\bs_{i,j}$. We have:
\[
S_{i,j}=S_{i-1,j}\cup \{a^\bs_{i,j}\},\ \ S_{i,j+1}\setminus S_{i-1,j}=\{a^\bs_{i,j},b^\bs_{i,j}\}.
\]
We denote by $S'_{i,j}$ the set $S_{i,j}\setminus \{a^\bs_{i,j}\}\cup \{b^\bs_{i,j}\}$.

Second, assume $i+j=2n$. Given the set $S_{i-1,j}$,
let us look at the possible values of $\bar S_{i,j}$ keeping $\bS$ admissible. The definition of 
admissibility says that there exist exactly two variants for $\bar S_{i,j}$,
namely
\[
\bar S_{i,j}=S_{i-1,j}\cup \{a\} \text{ or } \bar S_{i,j}=S_{i-1,j}\cup \{b\},
\]
where $\{a,b\}=\{1,\dots,i,2n-i+1,\dots,2n\}\setminus \{2n+1-l:\ l\in S_{i-1,j}\}$. 
We also denote the numbers $a,b$ by $a^\bs_{i,j}$ and $b^\bs_{i,j}$. We set
$S_{i,j}=S_{i-1,j}\cup \{a^\bs_{i,j}\}$ and we 
denote by $S'_{i,j}$ the set $S_{i,j}\setminus \{a^\bs_{i,j}\}\cup \{b^\bs_{i,j}\}$.

Recall that the variety $\SF^a_{2n}$ sits inside the product of Grassmann varieties
$\prod Gr_i(W_{i,j})$.
Each $\bigwedge^i(W_{i,j})$ is acted upon by $\g^a\oplus\bC d$
and therefore each
Grassmannian  carries a natural action of the group $G^a\rtimes\bC^* $ (the additional $\bC^*$ part
corresponds to the PBW-grading operator). 
Thus for each collection $S_{i,j}$ we have the corresponding weight $\operatorname{wt}_q p(\bs_{i,j})$, which is the
weight of the corresponding point in $\bigwedge^i(W_{i,j})$.

\begin{thm}
The $q$-character of the representation $V^a_\la$ is given by the sum over all
admissible collections $\bS$ of the summands
\begin{equation}\label{AB}
\frac{e^{\operatorname{wt}_q (\imath_\la p(\bs))}}
{\prod_{\substack{i+j\le 2n\\ 1\le i\le j<2n}} \left(1-e^{\operatorname{wt}_q p(S'_{i,j})-\operatorname{wt}_q {p(S_{i,j})}}\right)}.
\end{equation}
\end{thm}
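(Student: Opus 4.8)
The plan is to apply the Atiyah-Bott-Lefschetz formula \eqref{ABL} with $X=SpR_{2n}$, with the line bundle $\cL=\pi_{2n}^*\cL_\la$, and with the extended torus $T\cdot\bC^*$, acting on $SpR_{2n}$ through the lift of the $G^a\rtimes\bC^*$-action established above. One first checks the hypotheses: $SpR_{2n}$ is smooth projective, being a tower of successive $\bP^1$-fibrations (Lemma~\ref{P1}); $\cL=\imath_\la^*\cO(1)$ is $T\cdot\bC^*$-equivariant because $\cO(1)$ is linearized by the action on $V_\la^a$; and the $T\cdot\bC^*$-fixed points are isolated, being contained in — and in fact equal to — the $T$-fixed points $p(\bs)$ indexed by the admissible collections $\bs$ classified in the lemma above (every coordinate subspace is stable under the whole extended torus). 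By the Borel-Weil-Bott-type theorem just proved, $H^0(SpR_{2n},\cL)\simeq(V^a_\la)^*$ and $H^{>0}(SpR_{2n},\cL)=0$, so the left-hand side of \eqref{ABL} equals $\ch(V^a_\la)^*$, which is the image of $\ch V^a_\la$ under the involution $\iota$ of the group algebra of the extended weight lattice sending $e^\mu$ to $e^{-\mu}$. Hence, once the right-hand side has been computed, applying $\iota$ to the whole identity will give $\ch V^a_\la$ on the left and, as explained below, exactly the sum of the terms \eqref{AB} on the right.

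The two local data at a fixed point $p(\bs)$ are computed as follows. A fibre of $\cO(1)$ over a line $\ell$ is the dual line $\ell^*$, so $\ch\,\cL_{p(\bs)}=e^{-\operatorname{wt}_q(\imath_\la p(\bs))}$; note that $\imath_\la(\pi_{2n}(p(\bs)))$, and hence this weight, depends only on the diagonal entries $S_{i,i}$, since $\pi_{2n}$ forgets the off-diagonal spaces. For the tangent space, the iterated fibration structure $\rho_l\colon SpR_{2n}(l)\to SpR_{2n}(l-1)$ gives a $T\cdot\bC^*$-equivariant decomposition
\[
T_{p(\bs)}SpR_{2n}=\bigoplus_{\substack{1\le i\le j<2n\\ i+j\le 2n}}L_{(i,j)},
\]
where $L_{(i,j)}$ is the vertical tangent line at $p(\bs)$ of the fibration whose fibre parametrizes the space $V_{i,j}$. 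That fibre is a $\bP^1$ with precisely two $T\cdot\bC^*$-fixed points: $p(\bs)$, whose $(i,j)$-component is $S_{i,j}$, and the one whose $(i,j)$-component is $S'_{i,j}$. Comparing the Pl\"ucker weights of these two points shows that $L_{(i,j)}$ has weight $\operatorname{wt}_q p(S'_{i,j})-\operatorname{wt}_q p(S_{i,j})$, which is nonzero as the two points differ. In the case $i+j=2n$ the same conclusion follows from the identification of the fibre with $\bP\bigl(W_{i,2n-i}/V^\perp_{i-1,2n-i}\bigr)$ used in the proof of Lemma~\ref{P1}, together with the description of the two admissible choices of $S_{i,2n-i}$.

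Feeding these into \eqref{ABL} yields
\[
\ch(V^a_\la)^*=\sum_{\bs\ \text{admissible}}\frac{e^{-\operatorname{wt}_q(\imath_\la p(\bs))}}{\prod_{\substack{1\le i\le j<2n\\ i+j\le 2n}}\bigl(1-e^{-(\operatorname{wt}_q p(S'_{i,j})-\operatorname{wt}_q p(S_{i,j}))}\bigr)},
\]
and applying the involution $\iota$ turns the left side into $\ch V^a_\la$ and the right side into exactly the sum of the terms \eqref{AB}, which is the claim. I expect the real work to be the tangent-weight computation: verifying that in every fibration of the tower the two branches $\bar S_{i,j}=S_{i-1,j}\cup\{a\}$ and $\bar S_{i,j}=S_{i-1,j}\cup\{b\}$ exhaust the $T\cdot\bC^*$-fixed points of the relevant $\bP^1$ — both when $i+j<2n$ and when $i+j=2n$ — and that the Lagrangian condition in the latter case does not perturb the uniform answer $\operatorname{wt}_q p(S'_{i,j})-\operatorname{wt}_q p(S_{i,j})$. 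The rest is bookkeeping with the fixed-point combinatorics of the preceding lemma and with the behaviour of $\ch$ under duality.
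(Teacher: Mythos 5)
Your proposal is correct and follows essentially the same route as the paper's proof: apply Atiyah--Bott--Lefschetz to $SpR_{2n}$ with the bundle $\pi_{2n}^*\cL_\la$ and the extended torus, decompose the tangent space at each fixed point along the tower of $\bP^1$-fibrations to read off the weights $\operatorname{wt}_q p(S'_{i,j})-\operatorname{wt}_q p(S_{i,j})$, use the BWB-type theorem for the left-hand side, and pass from $\ch(V^a_\la)^*$ to $\ch V^a_\la$ via $e^\mu\mapsto e^{-\mu}$ (the paper records this last step in a remark). The identification of the vertical tangent weights, which you flag as ``the real work,'' is exactly what the paper verifies by writing out the two admissible extensions of $S_{i-1,j}$ at each step, including the separate case $i+j=2n$.
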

\begin{proof}
Recall that $SpR_{2n}$ can be constructed as a tower of successive $\bP^1$-fibrations 
$SpR_{2n}(l)\to SpR_{2n}(l-1)$. 
Fix an admissible $\bS$. Then the surjections $SpR_{2n}\to SpR_{2n}(l)$ define the $T$-fixed points 
$p(\bS(l))$ in each $SpR_{2n}(l)$
(note that $\bS(l)$ consists of $S_{i,j}$ such that for $\beta_k=\al_{i,j}$ one has $k\le l$).
For each $l=1,\dots,M$ we denote by $v_l\in T_{p(\bS(l))}SpR_{2n}(l)$ a tangent vector to the fiber
of the map $SpR_{2n}(l)\to SpR_{2n}(l-1)$ at the point $p(\bS(l-1))$.
Then it is easy to see that the weights of the eigenvectors of the $T$ action in $T_{p(\bs)}SpR_{2n}$
are exactly the weights of the vectors $v_l$, $l=1,\dots,M$.

So let us fix $l$,  $1\le l\le M$ and $i,j$ with  $\al_{i,j}=\beta_l$.
Let us denote by $Y_l$ the set of all pairs $(k,m)$ such that for the root 
$\al_{k,m}=\beta_r$ one has $r\le l$.
Then the fiber $\bP^1$ of the map $SpR_{2n}(l)\to SpR_{2n}(l-1)$
at the point $p(\bS(l-1))$ consists of all collections $(V_{k,m})$ with $(k,m)\in Y_l$ subject to the following
conditions:
\begin{itemize}
\item $V_{k,m}=p(S_{k,m})$ if $\al_{k,m}\ne \beta_l$,\\
\item $V_{i,j}\supset p(S_{i-1,j})$,\\
\item $V_{i,j}\subset p(S_{i-1,j})\oplus\bC w_{a^\bS_{i,j}}\oplus\bC w_{b^\bS_{i,j}}$.
\end{itemize}
Now it is easy to see that the character of the tangent vector to this fiber at the point
$p(\bS(l-1))$ is equal to $e^{\operatorname{wt}_q p(S'_{i,j})-\operatorname{wt}_q {p(S_{i,j})}}$
(recall $a^\bS_{i,j}\in S_{i,j}$ and $S'_{i,j}=S_{i,j}\setminus \{a^\bS_{i,j}\} \cup \{b^\bS_{i,j}\}$).
\end{proof}

\begin{rem}
We note that the Euler characteristics 
$$\sum_{k\ge 0} (-1)^k \ch H^k(SpR_{2n},\pi_{2n}^*\cL_\la)$$
is equal to $\ch (V_\la^a)^*$. But in each summand \eqref{AB} both numerator and denominator
differ from the corresponding summand in the Atiyah-Bott-Lefschetz formula \eqref{ABL} by the change 
of variables $z_i\to z_i^{-1}$ and $q\to q^{-1}$. Via this change we pass from the character of
$(V_\la^a)^*$ to the character of $V_\la^a$. 
\end{rem}

\section*{Acknowledgments}
The main part of this paper was written during the E.~F. and P.~L. stay at the Hausdorff Research 
Institute for Mathematics.
The hospitality and perfect working conditions of the Institute are gratefully
acknowledged. We are grateful to E.~Amerik for the explanations about singularities and for pointing out 
the reference \cite{E}.
The work of E.~F. was partially supported
by the RFBR Grant 09-01-00058,
by the grant Scientific Schools 6501.2010.2 and by the Dynasty Foundation.
M.~F. was partially supported by the RFBR grant 09-01-00242, the HSE Science
Foundation award No. 11-09-0033,
the Ministry of Education and
Science of Russian Federation, grant No. 2010-1.3.1-111-017-029,
and the AG Laboratory HSE, RF government grant, ag. 11.G34.31.0023.

\end{document}